\documentclass[reqno, 10pt]{amsart}


\usepackage[usenames,dvipsnames]{color}
\usepackage[pdfauthor={Andrea Appel, Valerio Toledano Laredo}, 
pdftitle={A 2--categorical extension of Etingof--Kazhdan quantization}]{hyperref}
\hypersetup{
    colorlinks=true,       			
    linkcolor=blue,          			
    citecolor=blue,		 		
    filecolor=blue,      				
    urlcolor=blue           			
}
\usepackage{amsmath, amssymb, empheq, stmaryrd}
\usepackage{tikz}
\usetikzlibrary{shapes,backgrounds}
\usepackage[all,cmtip,knot]{xy}
\xyoption{arc}



\newcommand{\Omit}[1]{}


\newtheorem*{theorem}{Theorem}
\newtheorem*{proposition}{Proposition}
\newtheorem*{corollary}{Corollary}
\newtheorem*{lemma}{Lemma}

\theoremstyle{definition}
\newtheorem*{definition}{Definition}

\numberwithin{equation}{section}

\newenvironment{pf}{\paragraph{{\sc Proof}}}{\qed\par\medskip}



\newcommand{\remark}{{\bf Remark.\ }}
\newcommand {\example}{{\sc Example.\ }}


\newcommand {\IN}{\mathbb{N}}

\newcommand {\C}{\mathcal C}
\newcommand {\D}{\mathcal D}
\newcommand {\E}{\mathcal E}
\newcommand {\F}{\mathcal F}
\newcommand {\G}{\mathcal G}

\newcommand {\N}{\mathcal N}
\renewcommand {\O}{\mathcal O}

\newcommand {\Q}{\mathcal Q}
\newcommand {\R}{\mathcal R}

\newcommand {\V}{\mathcal V}

\renewcommand {\a}{\mathfrak a}
\renewcommand {\b}{\mathfrak b}
\renewcommand {\c}{\mathfrak c}
\renewcommand {\d}{\mathfrak d}

\newcommand {\g}{\mathfrak{g}}

\newcommand {\m}{\mathfrak m}

\renewcommand {\SS}{\mathfrak{S}}


\newcommand {\sfA}{\mathsf{A}}

\newcommand {\sfD}{\mathsf{D}}


\newcommand {\ie}{{\it i.e., }}

\newcommand {\fd}{finite--dimensional }

\newcommand {\rhs}{right--hand side }

\newcommand {\wrt}{with respect to }

\newcommand {\ol}{\overline}
\newcommand {\wt}{\widetilde}
\newcommand {\wh}{\widehat}

\newcommand {\sDJ}{^{\scriptscriptstyle{\operatorname{DJ}}}}

\newcommand {\DYt}{Drinfeld--Yetter }

\newcommand {\nEK}{Etingof--Kazhdan }

\newcommand {\KM}{Kac--Moody }



\newcommand{\Lp}{\mathfrak{p}}


\def\iip#1#2{\langle{#1},{#2}\rangle}


\def\slantfrac#1#2{\kern.2em\raise.2em\hbox{$#1$}\kern-.2em\left/\lower.4em\hbox{$#2$}\kern-.2em\right.}
\def\backslantfrac#1#2{\kern.2em\lower.4em\hbox{$#1$}\kern-.3em\left\backslash\raise.4em\hbox{$#2$}\right.}


\DeclareMathOperator{\Hom}{Hom}
\DeclareMathOperator{\End}{End}
\DeclareMathOperator{\Ker}{Ker}
\DeclareMathOperator{\colim}{colim}
\DeclareMathOperator{\Ind}{Ind}
\DeclareMathOperator{\Res}{Res}
\DeclareMathOperator{\id}{id}

\DeclareMathOperator{\Rep}{Rep}
\DeclareMathOperator{\obj}{Obj}
\DeclareMathOperator{\vect}{\operatorname{Vect}}
\DeclareMathOperator{\vectk}{\operatorname{Vect}_\sfk}

\DeclareMathOperator{\ad}{ad}




\newcommand {\fml}{[\negthinspace[\hbar]\negthinspace]}


\newcommand {\half}[1]{\frac{#1}{2}}


\newcommand{\ctp}{\wh{\otimes}}
\newcommand{\ten}{\otimes}

\newcommand {\ul}[1]{\underline{#1}}

\newcommand{\ind}{\operatorname{Ind}}

\newcommand{\ima}{\operatorname{Im}}


\newcommand {\Ug}{U\g}

\newcommand {\Uhg}{U_{\hbar}\g}



\newcommand {\aand}{\qquad\text{and}\qquad}



\newcommand {\qc}{quasi--Coxeter }

\newcommand {\qtqbas}{quasitriangular quasibialgebras }







\newcommand {\iD}{i}
\newcommand{\PhiU}{\Phi_{\b}}
\newcommand{\PhiD}{\Phi_{\a}}

\newcommand{\OmegaD}{\Omega_{\a}}

\newcommand{\Lpp}{\mathfrak{p_+}}
\newcommand{\Lpm}{\mathfrak{p_-}}
\newcommand{\Lppm}{\mathfrak{p}_{\pm}}


\newcommand{\Lmm}{\mathfrak{m_-}}
\newcommand{\Lmpm}{\mathfrak{m_{\pm}}}


\renewcommand{\1}{\mathbf{1}}
\newcommand{\Mp}{M_+}
\newcommand{\Mm}{M_-}
\newcommand{\Mpv}{\Mp^\vee}
\newcommand{\Mpd}{\Mp^*}
\newcommand{\Lm}{L_-}
\newcommand{\Np}{N_+}
\newcommand{\Npd}{\Np^*}
\newcommand{\Npv}{\Np^\vee}

\newcommand{\um}{1_-}
\newcommand{\up}{1_+}
\newcommand{\coup}{\epsilon_+}
\newcommand{\cou}{\epsilon}

\newcommand{\ipd}{i_+^{\vee}}
\newcommand{\im}{i_-}
\newcommand{\ips}{i_+^*}
\newcommand{\ip}{i_+}

\newcommand{\Lmh}{\Lm^\hbar}
\newcommand{\Nph}{\Np^{\hbar}}

\newcommand{\Npvh}{(\Np^\vee)^{\hbar}}




\def\Ue#1{U#1}
\def\UE#1{U(#1)}

\newcommand{\Eq}[1]{\E_{#1}}
\newcommand{\hEq}[2]{\E_{#1}^{#2}}

\newcommand{\DrY}[1]{\mathsf{DY}_{#1}}
\newcommand{\aDrY}[1]{\mathsf{DY}\aadm_{#1}}
\newcommand{\aadm}{^{\scriptscriptstyle{\operatorname{adm}}}}
\newcommand{\hDrY}[2]{\mathsf{DY}_{#1}^{#2}}
\newcommand{\ff}{\mathsf{f}}

\newcommand{\gb}{\g_{\b}}
\newcommand{\ga}{\g_{\a}}



\newcommand {\op}{^{\scriptscriptstyle{\operatorname{op}}}}

\newcommand {\<}{\langle}
\renewcommand {\>}{\rangle}

\newcommand {\VecK}{\mathbf{\vectk}}


\newcommand {\sfk}{\mathsf{k}}
\newcommand {\sfK}{\mathsf{K}}

\newcommand {\Kvect}{\vectk}

\newcommand{\PROP}{{\sf PROP}}

\newcommand{\propic}{propic }

\newcommand{\Sym}{\operatorname{Sym}}


\newcommand{\LA}{{\sf LA}}

\newcommand{\Alg}{{\sf Alg}}

\newcommand{\BA}{\sf{BA}}
\newcommand{\preLA}{{\sf LA}}
\newcommand{\preAlg}{{\sf Alg}}

\newcommand{\LBA}{{\sf LBA}}
\newcommand{\PLBA}{{\sf{PLBA}}}
\newcommand{\DY}{{\sf DY}}


\newcommand{\sfEnd}[1]{\mathsf{End}\left(#1\right)}

\newcommand{\Cat}{\mathsf{Cat}}

\newcommand{\sfad}{\mathsf{ad}}





\renewcommand{\LBA}{{\sf LBA}}
\renewcommand{\PROP}{\sf PROP}
\renewcommand{\PLBA}{{\sf{PLBA}}}
\renewcommand{\DY}[1]{{\sf DY}_{#1}}





\renewcommand{\int}{^{\scriptscriptstyle{\operatorname{int}}}}

\newcommand {\sfP}{{\mathsf P}}

\newcommand {\rel}{^{\operatorname{\scriptstyle{rel}}}}


\newcommand{\gu}{\gb}
\newcommand{\gup}{\b_{+}}
\newcommand{\gum}{\b_{-}}
\newcommand{\gupm}{\b_{\pm}}
\newcommand{\gump}{\b_{\mp}}

\newcommand{\gd}{\ga}
\newcommand{\gdp}{\a_{+}}
\newcommand{\gdm}{\a_{-}}
\newcommand{\gdpm}{\a_{\pm}}
\newcommand{\gdmp}{\a_{\mp}}

\newcommand{\Rd}{\R}


\newcommand {\Uha}{U_\hbar\a}

\newcommand {\Uhb}{U_\hbar\b}
\newcommand {\Uhc}{U_\hbar\c}

\newcommand {\sLBA}{\mathsf{sLBA}}
\newcommand {\sfLBA}{\mathsf{LBA}}

\newcommand {\sQUE}{\mathsf{sQUE}}

\newcommand{\EK}{U_{\hbar}}
\newcommand{\EKeq}[1]{\wt{F}_{#1}}
\newcommand{\EKff}[1]{F_{#1}}

\newcommand{\noEKeq}[1]{\wt{F}_{#1}}
\newcommand{\noEKff}[1]{F_{#1}}

%

\newcommand{\hFF}[2]{F_{#1,#2}^{\hbar}}
\newcommand{\FF}[2]{F_{#1,#2}}
\newcommand{\wtFF}[2]{\wt{F}_{#1,#2}}
\newcommand{\resped}{_{\a,\b}}
\newcommand{\presped}{_{\pa,\pb}}

\newcommand{\HL}{U_{\hbar}\mathfrak{m}}
\newcommand{\HB}{U\rel_\hbar\gums}
\newcommand{\HBp}{U\rel_\hbar\pb}

\newcommand {\sff}{\mathsf{f}}

\newcommand {\qcc}{quasi--Coxeter category }

\newcommand {\KMA}{Kac--Moody algebra }

\newcommand{\Kar}[1]{{#1}^{\scriptscriptstyle{\mathsf{kar}}}}
\newcommand{\cKar}[1]{\ul{#1}}
\newcommand{\hext}[1]{{#1}{\fml}}

\newcommand{\pb}{[\b]}
\newcommand{\pa}{[\a]}
\newcommand{\ppm}{[\mathfrak{m}]}
\newcommand{\pp}{[\mathfrak{p}]}
\newcommand{\slbah}{\hext{\cKar{\LBA}}}

\newcommand{\uh}{\wt{\iota}}
\newcommand{\couh}{\wt{\epsilon}}
\newcommand{\mh}{\wt{m}}
\newcommand{\Dh}{\wt{\Delta}}
\newcommand{\Sh}{\wt{S}}

\newcommand{\funU}{\G_{\pb}}
\newcommand{\funL}{\G_{\pa}}

\newcommand{\Lmms}{\mathfrak{m}}

\newcommand{\pDrY}[3]{\mathsf{DY}_{#2}^{#3}}

\newcommand{\splbah}{\hext{\cKarPLBAD}}

\newcommand{\prelpsi}[1]{\psi_{#1}^{\pa,\pb}}
\newcommand{\preleta}[1]{\eta_{#1}^{\pa,\pb}}

\newcommand{\PLBAD}{\PLBA^{\negthinspace{+}}}

\newcommand{\pad}{[\a^\sharp]}
\newcommand{\coadd}{\vartriangleleft}
\newcommand{\coad}{\vartriangleright}

\newcommand{\cKarPLBAD}{\cKar{\PLBA}^{\negthinspace{+}}}

\newcommand {\PMm}{[\Mm]}
\newcommand {\PMpv}{[\Mpv]}
\newcommand {\PLm}{[\Lm]}
\newcommand {\PNpv}{[\Npv]}

\newcommand{\ev}{\operatorname{\mathsf{ev}}}

\newcommand{\ellh}{\ell_\hbar}

\newcommand{\Vm}{{[V]}}

\newcommand{\scs}{\scriptscriptstyle}

\newcommand{\pNpv}{[\Npv]}

\newcommand{\dcs}{\coad\negthinspace\coadd}

\newcommand{\Mpvh}{(\Mp^\vee)^{\hbar}}
\newcommand{\Mmh}{\Mm^{\hbar}}
\newcommand{\MmB}{M_B}
\newcommand{\MpB}{\wh{M^{\vee}_B}}
\newcommand{\MpBp}{{M}^{\vee}_B}

\newcommand{\SC}[1]{\mathsf{SC}(#1)}

\newcommand{\Mv}[1]{M_{#1}}
\newcommand{\Mdv}[1]{M_{#1}^{\vee}}

\newcommand {\BCH}{Baker--Campbell--Hausdorff }

\newcommand {\QQUE}{\operatorname{QUE}}
\newcommand {\QQFSH}{\operatorname{QFSH}}

\newcommand{\cP}{\mathsf{UE_{cP}}}

\newcommand{\Fun}{\mathsf{Fun}}

\newcommand{\bfb}{\mathbf{b}}

\newcommand{\aNpvh}{(N_+^{\vee})_{\a}^{\hbar}}
\newcommand{\gLm}[1]{L_{#1}}
\newcommand{\gNm}[1]{N_{#1}^{\vee}}

\newcommand{\QUE}{\mathsf{QUE}}
\newcommand{\qcBA}{\mathsf{qcBA}}

\newcommand{\dcp}[2]{#1\negthinspace\BonC\negthinspace\negthinspace\ConB\negthinspace#2}

\newcommand{\BonC}{\coad}
\newcommand{\ConB}{\coadd}


\title[A $2$--categorical extension of Etingof--Kazhdan quantisation]
{A $2$--categorical extension of Etingof--Kazhdan quantisation}
\author[A. Appel]{Andrea Appel}
\address{School of Mathematics,
University of Edinburgh,
James Clark Maxwell Building, 
Peter Guthrie Tait Road,
Edinburgh, EH9 3FD, UK}
\email{andrea.appel@ed.ac.uk}
\author[V. Toledano Laredo]{Valerio Toledano Laredo}
\address{Department of Mathematics,
Northeastern University,
360 Huntington Avenue,
Boston MA 02115}
\email{V.ToledanoLaredo@neu.edu}
\thanks{The first author was supported in part through the NSF
grant DMS--1255334, and the second through the NSF grant DMS--1505305.}

\begin{document}

\begin{abstract}
Let $\sfk$ be a field of characteristic zero.
In \cite{ek-1}, Etingof and Kazhdan construct a {quantisation} $\Uhb$
of any Lie bialgebra $\b$ over $\sfk$, which depends on the choice of
an associator $\Phi$. They prove moreover that this quantisation is
functorial in $\b$ \cite{ek-2}. Remarkably, the quantum group $\Uhb$
is endowed with a Tannakian equivalence $\noEKff{\b}$ from the braided
tensor category of Drinfeld--Yetter modules over $\b$, with deformed
associativity constraints given by $\Phi$, to that of Drinfeld--Yetter modules 
over $\Uhb$ \cite{ek-6}.
In this paper, we prove that the equivalence $\noEKff{\b}$ is
functorial in $\b$.
\end{abstract}

\maketitle
\setcounter{tocdepth}{1}
\tableofcontents

\newpage

\section{Introduction}

\subsection{} 

Let $\sfk$ be a field of characteristic zero. In \cite{ek-1}, Etingof and
Kazhdan construct a quantisation $\Uhb$ of any Lie bialgebra $\b$
over $\sfk$. The quantisation depends on the choice of an associator
$\Phi$, and has a number of remarkable properties: it is functorial in
$\b$ \cite{ek-2}, compatible with taking doubles and duals \cite{EG}
and, when $\b$ is a symmetrisable \KMA with its standard bialgebra
structure, coincides with the Drinfeld--Jimbo quantum group $U_\hbar
\sDJ\b$ associated to $\b$ \cite{ek-6}.

The quantum group $\Uhb$ is also compatible with another basic
operation, namely taking the tensor category of \DYt modules (see
\S\ref{ss:LBADY}--\S\ref{ss:HADY} below for definitions). Specifically,
it is endowed with a braided tensor equivalence
\[\EKeq\b: \hDrY{\b}{\Phi}\longrightarrow\aDrY{\Uhb}\]
where $\hDrY{\b}{\Phi}$ is the category of \DYt modules over the
Lie bialgebra $\b$, with deformed associativity constraints given
by $\Phi$, and $\aDrY{\Uhb}$ is the category of admissible
\DYt modules over $\Uhb$ \cite{ek-6}. If $\g$ is a symmetrisable
\KM algebra with negative Borel subalgebra $\b$, this implies in
particular the existence of an equivalence $E_\g$ between category
$\O$ representations of $\g$ and those of the quantum group
$U_\hbar\sDJ\g$.

Motivated by the theory of quasi--Coxeter algebras and categories
\cite{vtl-4, ATL1-2}, we prove in this paper that the equivalence
$\EKeq\b$ is itself functorial with respect to $\b$. This shows in
particular that the \nEK equivalence $E_\g$ is compatible \wrt
restriction to a standard Levi subalgebra.

\subsection{} \label{ss:LBADY}

A \DYt module over a Lie bialgebra $\b$ is a triple $(V,\pi,\pi^*)$ such
that $\pi:\b\otimes V\to V$ gives $V$ the structure of a left $\b$--module,
$\pi^*:V\to\b\otimes V$ that of a right $\b$--comodule, and $\pi,\pi^*$
satisfy a compatibility condition \cite{ek-2}. The latter is designed so
as to give rise to a representation of the Drinfeld double $\gb=\b\oplus
\b^*$ of $\b$, with $\phi\in\b^*$ acting on $V$ by $\phi\otimes\id_V\circ
\pi^*$. If $\b$ is finite--dimensional, the symmetric tensor category
$\hDrY{\b}{}$ of such modules coincides with that of representations
of $\gb$, with the coaction of $\b$ on an object $V$ of the latter category
given by $\pi^*(v)=\sum_i b_i\otimes b^i v$, where $\{b_i\},\{b^i\}$ are
dual bases of $\b$ and $\b^*$. For an arbitrary $\b$, $\hDrY{\b}{}$
coincides with the category of {\it equicontinuous}
representations of $\gb$, which are roughly those carrying a locally
finite action of $\b^*$ \cite{ek-1}.

\subsection{} \label{ss:HADY}

A \DYt module over a Hopf algebra $B$ is a triple $(V,\rho,\rho^*)$,
where $\rho:B\otimes V\to V$ is a left $B$--module, $\rho^*:V\to B
\otimes V$ a right $B$--comodule, and $\rho,\rho^*$ satisfy a suitable
compatibility relation \cite{Y,ek-2}. Such modules form a braided
tensor category $\hDrY{B}{}$, with commutativity constraints
$\beta_{U,V}:U\otimes V\to V\otimes U$ given by
\[\beta_{U,V}=
(1\,2)\circ(\rho_U\otimes \id_V)\circ(1\,2)\circ(\id_U\otimes\rho_V^*).\]
If $B$ is finite--dimensional, the category $\hDrY{B}{}$ coincides with
that of representations of the quantum double of $B$ \cite{drin-2}.

A similar statement holds if $B$ is a quantised universal enveloping
algebra, that is a topological Hopf algebra over $\sfk{\fml}$ such
that $B/\hbar B$ is a universal enveloping algebra $U\b$. If $\b$ is
finite--dimensional, representations of the quantum double of $B$
coincide, as a braided tensor category, with the category $\aDrY{B}$ of 
{\it admissible} \DYt modules over $B$, which are those for
which the coaction $\rho^*:V\to B\otimes V$ factors through $B'
\otimes V$, where $B'\subset B$ is the quantised formal group
corresponding to $B$ defined in \cite{drin-2,gav}.
\newpage

\subsection{} \label{ss:EK}

A crucial role in the quantisation of a Lie bialgebra $\b$ is played
by a deformation $\hDrY{\b}{\Phi}$ of the braided tensor category
$\hDrY{\b}{}$ over $\sfK=\sfk{\fml}$, where the commutativity
and associativity constraints are respectively given by
\[(1\,2)\circ\exp(\hbar\Omega/2)\aand\Phi(\hbar\Omega_{12},\hbar\Omega_{23})\]
with $\Omega\in\gb\wh{\otimes}\gb$ the canonical element representing
the inner product. Indeed, \nEK construct
a fiber functor $\EKff\b:\hDrY{\b}{\Phi}\to\VecK$, and obtain
$\Uhb$ as a sub Hopf algebra of $\End(\EKff{\b})$ \cite{ek-1}.
A remarkable feature of $\Uhb$ is that it is functorial \wrt $\b$ \cite{ek-2},
even though the intermediate steps in its construction, in particular
taking the double of $\b$ and considering the category $\hDrY{\b}{}$,
are not.

The quantum group $\Uhb$ possesses another remarkable
feature. Namely, in addition to acting on any $\EKff{\b}(V)$,
$V\in\hDrY{\b}{\Phi}$, it admits an admissible {\it co}action
on $\EKff{\b}(V)$ which is compatible with its action. This
gives rise to a Tannakian lift of $\EKff\b$ as a braided tensor functor
\[\EKeq\b: \hDrY{\b}{\Phi}\longrightarrow\aDrY{\Uhb}{}\]
where the \rhs are the admissible Drinfeld--Yetter modules
over $\Uhb$. Morever, $\EKeq\b$ is an equivalence \cite{ek-6}.

\subsection{} 

It is natural to ask whether the equivalence $\EKeq\b$ is functorial
\wrt $\b$. The goal of this paper is to prove that this is indeed the
case. 

A proper formulation of this statement requires considering a different
class of morphisms between Lie bialgebras, however, since taking
\DYt modules is not functorial in $\b$. Let for this purpose $\sfLBA(\sfk)$
be the (usual) category of Lie bialgebras over $\sfk$, and $\sLBA(\sfk)$
the category whose objects are Lie bialgebras, and morphisms are split
embeddings $\a\hookrightarrow\b$ in $\sfLBA(\sfk)$, that is
\[\Hom_{\sLBA}(\a,\b)=
\{(i,p)\in\Hom_{\mathsf{LBA}}(\a,\b)\times\Hom_{\mathsf{LBA}}(\b,\a)\;|\;
p\circ i=\id_{\a}\}.\]

A morphism in $\sLBA(\sfk)$ gives rise to a restriction functor
$\Res_{\a,\b}:\hDrY{\b}{}\longrightarrow\hDrY{\a}{}$ given by
$\Res_{\a,\b}(V,\pi,\pi^*)=(V,\pi\circ(i\otimes\id),(p\otimes\id)
\circ\pi^*)$.\footnote{In terms of the Drinfeld doubles $\ga,\gb$ of $\a,\b$, a split
embedding $\a\hookrightarrow\b$ corresponds to an isometric morphism
of Lie algebras $j:\ga\to\gb$ such that $j(\a)\subset\b,j(\a^*)\subset\b^*$,
and the transpose $j^t:\gb\to\ga$ restricts to morphisms of Lie algebras
$\b\to\a$ and $\b^*\to\a^*$. Moreover, under the identification of the
categories $\DrY{\a},\DrY{\b}$ with those of equicontinuous modules
over $\ga$ and $\gb$ respectively, the functor $\Res_{\a,\b}$ coincides
with the restriction functor $j^*$.}
This functor admits a natural tensor structure
\[J_{\a,\b}^0=\id:\Res_{\a,\b}(V_1)\otimes\Res_{\a,\b}(V_2)
\longrightarrow\Res_{\a,\b}(V_1\otimes V_2)\]
which clearly satisfies $(\Res_{\a,\b},J_{\a,\b}^0)\circ(\Res_{\b,\c},,J_{\b,\c}^0)
=(\Res_{\a,\c},J_{\a,\c}^0)$ for any chain of split embeddings
$\a\hookrightarrow\b\hookrightarrow\c$.
Thus, the assignment $\b\to\hDrY{\b}{}$ extends to a contravariant
functor from $\sLBA(\sfk)$ to the (1--)category $\Cat_{\sfk}^{\ten}$ of
$\sfk$--linear tensor categories.

\subsection{} 

In the presence of an associator $\Phi$, $(\Res_{\a,\b},J^0_{\a,\b})$
ceases to be a tensor functor, since the associativity constraints on
$\hDrY{\b}{\Phi},\hDrY{\a}{\Phi}$ are given by
$\Phi_\b=\Phi(\hbar\Omega_{12}^\b,\hbar\Omega_{23}^\b)$
and 
$\Phi_\a=
\Phi(\hbar\Omega_{12}^\a,\hbar\Omega_{23}^\a)$
respectively, and are therefore different. Our first main result asserts
that $\Res_{\a,\b}$ can be endowed with a tensor structure $J_{\a,\b}$
compatible with those on $\hDrY{\b}{\Phi}$ and $\hDrY{\a}{\Phi}$.
Moreover, with that tensor structure, the assignment $\b\to\hDrY{\b}{\Phi}$
extends to a {\it 2--functor}. The $2$--functoriality accounts for the fact
that, for a chain $\a\hookrightarrow\b\hookrightarrow\c$, the composition
$(\Res_{\a,\b},J_{\a,\b})\circ(\Res_{\b,\c},J_{\b,\c})$ is not equal
to $(\Res_{\a,\c},J_{\a,\c})$, but only isomorphic to it via a coherent
isomorphism.

Specifically, consider $\sLBA(\sfk)$ as a 2--category with 2--morphisms
given by equalities, and $\Cat_{\sfK}^{\ten}$ as a 2--category in the
usual way (1--morphisms are tensor functors, and 2--morphisms 
natural transformations). Then, the following holds for any associator $\Phi$

\begin{theorem}\label{th:first main}
There is a $2$--functor\footnote{Strictly
speaking $\mathsf{DY}^{\Phi}$ is a {\it pseudo} 2--functor in the terminology
of \cite{lack} since it preserves the composition of 1--morphisms in
$\sLBA(\sfk)$ only up to the coherent isomorphisms $u_{\a,\b,\c}$.}
\[
\mathsf{DY}^{\Phi}:\sLBA(\sfk)\longrightarrow\mathsf{Cat}_{\sfK}^{\ten}
\]
which assigns
\begin{itemize}
\item to any Lie bialgebra $\b$, the tensor category $\hDrY{\b}{\Phi}$, 
\item to any split embedding $\a\hookrightarrow\b$, a tensor structure
$J\resped$ on the restriction functor $\Res_{\a,\b}:\hDrY{\b}{\Phi}\to
\hDrY{\a}{\Phi}$,
\item to any chain $\a\hookrightarrow\b\hookrightarrow\c$, an isomorphism
of tensor functors
\[u_{\a,\b,\c}:(\Res_{\a,\b},J_{\a,\b})\circ (\Res_{\b,\c},J_{\b,\c})\longrightarrow
(\Res_{\a,\c},J_{\a,\c})\]
\end{itemize}
in such a way that, for any chain $\a\hookrightarrow\b\hookrightarrow\c
\hookrightarrow\d$, one has
\begin{equation}\label{eq:u assoc}
u_{\a,\b,\d}\circ u_{\b,\c,\d}=u_{\a,\c,\d}\circ u_{\a,\b,\c}
\end{equation}
as isomorphisms
\[ (\Res_{\a,\b},J_{\a,\b})\circ (\Res_{\b,\c},J_{\b,\c})\circ(\Res_{\c,\d},J_{\c,\d})
 \longrightarrow (\Res_{\a,\d},J_{\a,\d})\]
\end{theorem}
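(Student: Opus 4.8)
The plan is to construct the tensor structures $J\resped$ directly from the Etingof--Kazhdan machinery, using the observation that the restriction functor $\Res_{\a,\b}$ is, at the level of the underlying bialgebras, the transpose of an isometric embedding $j:\ga\to\gb$ of Drinfeld doubles compatible with the polarisations. The key point is that the EK fiber functor $\EKff\b$ and the associator--deformed structures on $\hDrY{\b}{\Phi}$ are built universally out of the Casimir element $\Omega^\b\in\gb\ctp\gb$ and the coalgebra structure, and under $j$ one has $(j\otimes j)(\Omega^\a)=\Omega^\b|_{\ga\ctp\ga}$ only \emph{after} projecting, not on the nose — the discrepancy $\Omega^\b-\Omega^\a$ (viewed inside $\gb\ctp\gb$ via $j$ and the splitting) is precisely what forces a nontrivial $J\resped$. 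Concretely, I would first set up the relative version: writing $\b=\a\oplus\m$ as $\a$--modules via the splitting $(i,p)$, decompose $\Omega^\b=\Omega^\a+\Omega^\m$ where $\Omega^\m$ lives in the ``$\m$ part'', and observe that $\Res_{\a,\b}$ intertwines the $\Phi_\b$-associativity with the $\Phi_\a$-associativity up to the action of the element $\Phi(\hbar\Omega^\b_{12},\hbar\Omega^\b_{23})\cdot\Phi(\hbar\Omega^\a_{12},\hbar\Omega^\a_{23})^{-1}$.

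**Constructing $J\resped$.** The tensor structure $J\resped: \Res_{\a,\b}(V_1)\otimes\Res_{\a,\b}(V_2)\to\Res_{\a,\b}(V_1\otimes V_2)$ should be a natural isomorphism, i.e.\ an element of $\End(\Res_{\a,\b}(-)\otimes\Res_{\a,\b}(-))$, and the hexagon/pentagon-type constraint it must satisfy is exactly the statement that it conjugates $\Phi_\b$ into $\Phi_\a$ while respecting the braidings $(1\,2)\exp(\hbar\Omega^\b/2)$ and $(1\,2)\exp(\hbar\Omega^\a/2)$. I would obtain $J\resped$ by a fiber-functor argument: applying $\EKff\a\circ\Res$ versus $\EKff\b$ to objects of $\hDrY{\b}{\Phi}$ gives two tensor functors to $\VecK$ which agree as plain functors, and the comparison of their tensor structures produces a gauge transformation; equivalently, $J\resped$ is extracted from the PROP-level universal construction of \nEK applied to the pair $(\ga,\gb)$ with its extra structure, so that $J\resped$ is given by a universal series in $\Omega^\a$, $\Omega^\m$ and the brackets. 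The existence and invertibility of this series is guaranteed because it reduces mod $\hbar$ to the identity $J^0\resped$, and each order is a finite expression in the prop $\mathsf{LBA}$ (or rather its relative/split version, cf.\ the paper's earlier machinery) evaluated on $V_1,V_2$.

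**The cocycle $u_{\a,\b,\c}$ and associativity.** For a chain $\a\hookrightarrow\b\hookrightarrow\c$, both $(\Res_{\a,\b},J_{\a,\b})\circ(\Res_{\b,\c},J_{\b,\c})$ and $(\Res_{\a,\c},J_{\a,\c})$ are tensor structures on the \emph{same} underlying functor $\Res_{\a,\c}$ (since $\Res_{\a,\b}\circ\Res_{\b,\c}=\Res_{\a,\c}$ as plain functors, by the compatibility of the splittings), both compatible with the $\Phi_\a$-associativity on the target. Hence their ratio is an automorphism of the tensor functor $\Res_{\a,\c}$, i.e.\ an invertible natural transformation $u_{\a,\b,\c}$; concretely $u_{\a,\b,\c}=(J_{\a,\b}\cdot\Res_{\a,\b}(J_{\b,\c}))\cdot J_{\a,\c}^{-1}$, which reduces to the identity mod $\hbar$. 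The cocycle identity \eqref{eq:u assoc} for a chain $\a\hookrightarrow\b\hookrightarrow\c\hookrightarrow\d$ is then a formal consequence: expanding both sides via the definition of $u$ in terms of the $J$'s, the $J_{\c,\d}$, $J_{\b,\d}$, $J_{\b,\c}$, $J_{\a,\d}$ terms rearrange and cancel by associativity of composition of natural transformations and the fact that each $u$ is defined as an explicit alternating product of $J$'s — this is the standard verification that the ``defect'' of a pseudofunctor built from comparison isomorphisms of functors automatically satisfies the pentagon/cocycle condition.

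**Main obstacle.** The genuinely hard part is constructing $J\resped$ with the \emph{right} compatibility (the mixed hexagon relating $J\resped$, the two associators, and the two braidings), and doing so in a way that is manifestly universal/natural so that the higher coherences are automatic — a naive order-by-order construction would satisfy the pentagon-type relation only after a nontrivial obstruction-theory argument. I expect the resolution to mirror the \nEK proof of functoriality of $\Uhb$ in \cite{ek-2}: work in the universal/propic setting with the split Lie bialgebra prop, where $J\resped$ is a single universal element and its defining equation is an identity of morphisms in the prop, so that specialisation to any $(\a,\b)$ and any objects $V_i$ is automatic and the cocycle condition is an identity in the prop as well. Thus the real content is to identify the correct universal element — essentially the ``relative twist'' $\Gamma$ of the paper's notation — and verify the single propic hexagon it must satisfy; everything downstream is bookkeeping.
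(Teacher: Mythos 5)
There are two genuine gaps, one in each half of your argument.

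First, the construction of $J_{\a,\b}$ — the actual content of the theorem — is not carried out. Comparing the tensor structures of $\noEKff{\a}\circ\Res_{\a,\b}$ and $\noEKff{\b}$ as functors to $\vect_{\sfK}$ produces, at best, a gauge element in the endomorphisms of the square of the forgetful functor; it does not produce a morphism in $\hDrY{\a}{\Phi}$, i.e.\ a twist commuting with both the $\a$--action \emph{and} the $\a$--coaction, which is what a tensor structure on $\Res_{\a,\b}:\hDrY{\b}{\Phi}\to\hDrY{\a}{\Phi}$ requires. Worse, $J_{\a}\circ J_{\a,\b}$ is not equal to $J_{\b}$ but only gauge--equivalent to it (this is precisely the content of Theorem \ref{thm:transf} and of the isomorphism $v_{\a,\b}$), so one cannot define $J_{\a,\b}$ by ``dividing'' the two Etingof--Kazhdan twists. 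The paper's route is different and is where the work lies: it represents $\Res_{\a,\b}$ as $\Hom_{\gb}(L_-,N_+^\vee\otimes -)$, where $L_-=\ind_{\p_+}^{\gb}\sfk$ and $N_+=\ind_{\m_-}^{\gb}\sfk$ are relative Verma modules attached to the asymmetric decomposition $\gb=\m_-\oplus\p_+$ with $\m_-=\Ker(p)$ and $\p_+=i(\a)\oplus\b_+$, shows that $L_-$ and $N_+^\vee$ are coalgebra and algebra objects in the Drinfeld category of $(\gb,\ga)$--bimodules with associator $\Phi_{\b}\,(\Phi_{\a}^{-1})^{\rho}$, and then verifies the pentagon identity for the resulting $J_{\a,\b}$ by an explicit computation (Section \ref{s:Gamma}); your proposal explicitly defers exactly this step (``identify the correct universal element and verify the propic hexagon''), so the theorem is not proved.

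Second, your argument for $u_{\a,\b,\c}$ fails. The ratio $\bigl(\Res_{\a,\b}(J_{\b,\c})\circ J_{\a,\b}\bigr)\circ J_{\a,\c}^{-1}$ is a natural automorphism of the two--variable functor $(V,W)\mapsto\Res_{\a,\c}(V\otimes W)$, not a natural automorphism of $\Res_{\a,\c}$ itself; two tensor structures on the same underlying functor, both compatible with $\Phi_{\a}$, need not be isomorphic as tensor functors, and when they are, the isomorphism is genuine extra data satisfying $J'_{V,W}\circ(u_V\otimes u_W)=u_{V\otimes W}\circ J_{V,W}$ — it is not produced by, nor does the cocycle identity \eqref{eq:u assoc} follow formally from, taking ratios of twists. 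In the paper, $u_{\a,\b,\c}$ is not constructed intrinsically at all: it is obtained from the natural transformations $v_{\a,\b},v_{\b,\c},v_{\a,\c}$ of Theorem \ref{th:second main}, the strict compatibility of the quantum restriction functors, the commutativity of the cylinder diagram \eqref{eq:cylinder}, and the fact that $\noEKeq{\a}$ is an equivalence; the authors explicitly record that an intrinsic construction of $u_{\a,\b,\c}$ is an open problem. So the claim that this step is standard bookkeeping is exactly the part that cannot be waved through.
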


\subsection{}

Having established the correct functoriality of the assignment
$\b\to\DrY{\b}^\Phi$, we turn now to the \nEK equivalence
$\EKeq\b:\hDrY{\b}{\Phi}\to\aDrY{\Uhb}$. Our second main
result is that $\EKeq\b$ is functorial \wrt split embeddings and
that, moreover, it fits within an isomorphism of 2--functors
$\sLBA(\sfk)\to\mathsf{Cat}_\sfK^{\ten}$.

Specifically, let $\sQUE(\sfk)$ be the category of quantised
universal enveloping algebras over $\sfk{\fml}$, with morphisms
given by split embeddings. Taking admissible \DYt modules
yields a functor $\aDrY{}:\sQUE(\sfk)\longrightarrow\Cat_{\sfK}
^{\ten}$, which assigns to a split embedding $A\hookrightarrow B$
the restriction functor $\Res_{A,B}:\aDrY{B}\to\aDrY{A}$ given
by
\[\Res_{A,B}(V,\rho,\rho^*)=(V,\rho\circ(i\otimes\id),(p\otimes\id)\circ\rho^*)\]
and endowed with the trivial tensor structure. On
the other hand, \nEK quantisation gives rise to a functor
$Q^{\Phi}:\sLBA(\sfk)\to\sQUE(\sfk)$.

\begin{theorem}\label{th:second main}
There is an isomorphism of $2$--functors 
\[
\xymatrix@R=.2cm@C=.5cm{
\sLBA(\sfk) \ar[ddrr]_{\mathsf{DY}^{\Phi}} \ar[rrrr]^{Q^{\Phi}} && && \sQUE(\sfk) \ar[ddll]^{\aDrY{}}\ar@{=>}[dlll]\\
&&&&\\
&& \mathsf{Cat}_\sfK^{\ten} &&
}
\]
which assigns to a Lie bialgebra $\b\in\sLBA(\sfk)$ the tensor
equivalence $\EKeq\b:\hDrY{\b}{\Phi}\to\aDrY{\Uhb}$. In
particular,
\begin{itemize}
\item For any split embedding $\a\hookrightarrow\b$, there is a natural
isomorphism $v\resped$ making the following diagram commute
\begin{equation}\label{eq:nat transf}
\xymatrix@C=2cm{
\hDrY{\b}{\Phi} \ar[r]^{\EKeq\b} \ar[d]_{(\Res_{\a,\b}, J\resped)}& 
\aDrY{\Uhb} \ar[d]^{(\Res_{\Uha,\Uhb}, \id)}
\ar@{<=}[dl]_{v\resped}
\\
\hDrY{\a}{\Phi} \ar[r]_{\EKeq\a} & 
\aDrY{\Uha}
}
\end{equation}
where $(\Res_{\a,\b}, J\resped)$ is the tensor functor given by 
Theorem \ref{th:first main}, and the functor $\Res_{\Uha,\Uhb}$
is induced by the split embedding $\Uha\hookrightarrow\Uhb$.
\item For any chain of split embeddings $\a\hookrightarrow\b\hookrightarrow\c$,
the following diagram is commutative
\begin{equation}\label{eq:cylinder}
\xy
(0,20)*+{\hDrY{\c}{\Phi}}="C";
(15,0)*+{\hDrY{\b}{\Phi}}="B";
(0,-20)*+{\hDrY{\a}{\Phi}}="A";
(25,30)*+{\aDrY{\EK\c}}="HC";
(40,10)*+{\aDrY{\EK\b}}="HB";
(25,-10)*+{\aDrY{\EK\a}}="HA";
{\ar|(.4){\Res_{\a,\b}}@/^1pc/ "B";"A"};
{\ar|{\Res_{\b,\c}}@/^1pc/ "C";"B"};
{\ar@/_2pc/ "C";"A"_{\Res_{\a,\c}}};
{\ar|{\Res_{\Uha,\Uhb}}@/^1pc/ "HB";"HA"};
{\ar|{\Res_{\Uhb,\Uhc}}@/^1pc/ "HC";"HB"};
{\ar|(.3){\Res_{\Uha,\Uhc}}@{..>}@/_2pc/ "HC";"HA"};
{\ar "C";"HC"^{\noEKeq{\c}}};
{\ar "B";"HB"^{\noEKeq{\b}}};
{\ar "A";"HA"_{\noEKeq{\a}}};
{\ar@{=>}_{u_{\a,\b,\c}}"B";(-5,0)};
\endxy\end{equation}
where $u_{\a,\b,\c}$ is the isomorphism given by Theorem
\ref{th:first main}, the back 2--face is the identity, and the
lateral 2--faces are the isomorphisms $v_{\a,\c},v_{\b,\c},
v\resped$.\footnote{to alleviate the notation, tensor structures
are suppressed from the diagram \eqref{eq:cylinder}.}
\end{itemize}
\end{theorem}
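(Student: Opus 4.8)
The plan is to construct the comparison 2--cells $v\resped$ and to verify the coherences \eqref{eq:nat transf} and \eqref{eq:cylinder} by reducing them to the universal (PROP--theoretic) constructions underlying both the 2--functor $\mathsf{DY}^\Phi$ of Theorem \ref{th:first main} and the \nEK equivalence. Recall that $\EKeq{\b}$ is assembled from the \nEK fiber functor $\EKff{\b}$, built out of the Verma modules $\Mm^{\b},\Mp^{\b}$ over $\b$, together with the tautological action of $\Uhb=\End(\EKff{\b})$ on each $\EKff{\b}(V)$ and the admissible coaction $\EKff{\b}(V)\to\Uhb\otimes\EKff{\b}(V)$ constructed in \cite{ek-6}; and that the tensor structure $J\resped$ produced by Theorem \ref{th:first main} is in turn manufactured from a relative \nEK twist attached to the splitting $\b=\a\oplus\m$ induced by the embedding $\a\hookrightarrow\b$ and to the resulting factorisation of $\Res\resped(\Mm^{\b}\otimes\Mp^{\b})$ through $\Mm^{\a}\otimes\Mp^{\a}$.

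First I would spell out the two composites appearing in \eqref{eq:nat transf}. For $V\in\hDrY{\b}{\Phi}$, the object $\Res_{\Uha,\Uhb}\EKeq{\b}(V)$ is the $\sfK$--module underlying $\EKff{\b}(V)$, equipped with the $\Uha$--action and coaction obtained by restriction along the split embedding $\Uha\hookrightarrow\Uhb$ (which is $Q^\Phi$ applied to $\a\hookrightarrow\b$), whereas $\EKeq{\a}(\Res\resped V)$ is $\EKff{\a}(\Res\resped V)$ with its intrinsic $\Uha$--structures. The relative twist above furnishes a canonical isomorphism $v\resped(V):\Res_{\Uha,\Uhb}\EKeq{\b}(V)\to\EKeq{\a}(\Res\resped V)$ of the underlying $\sfK$--modules, natural in $V$. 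That it is compatible with the tensor structures --- so that $v\resped$ is a 2--cell in $\Cat_\sfK^\ten$ --- and that it intertwines the two $\Uha$--\emph{actions} both follow from the construction of $J\resped$ in Theorem \ref{th:first main} together with the functoriality of \nEK quantisation \cite{ek-2}. The one substantive verification is that $v\resped(V)$ also intertwines the two $\Uha$--\emph{co}actions: one must unfold the definition of the admissible coaction of $\EKeq{\b}$, restrict it along $\Uha\hookrightarrow\Uhb$, and identify it, through the relative twist, with the coaction of $\EKeq{\a}$ conjugated by $v\resped(V)$.

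The main obstacle is the coherence encoded by the cylinder \eqref{eq:cylinder}, which ties the $v$'s to the associator isomorphisms $u_{\a,\b,\c}$ of Theorem \ref{th:first main}. The key point is that $v\resped$ and $u_{\a,\b,\c}$ are governed by the same relative--twist data: $v\resped$ by the relative twist of $\a\hookrightarrow\b$, and $u_{\a,\b,\c}$ by the discrepancy, along a chain $\a\hookrightarrow\b\hookrightarrow\c$, between the relative twist of $\a\hookrightarrow\c$ and the composite of those of $\a\hookrightarrow\b$ and $\b\hookrightarrow\c$. Consequently \eqref{eq:cylinder} reduces to a single cocycle identity among these twists --- the very identity that yields \eqref{eq:u assoc} --- combined with the coaction--compatibility established in the previous step. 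I would prove this cocycle identity at the universal level, as an equality of universal elements in the split and relative refinements of the PROPs $\PLBA$ and $\mathsf{PDY}$ used in the proof of Theorem \ref{th:first main}, where both sides become explicit expressions in the associator $\Phi$ and the universal $R$--matrix and coaction elements, and then specialise it by evaluating on Drinfeld--Yetter modules.

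Finally I would package the data $(\EKeq{\b},v\resped)$ as an isomorphism of (pseudo) 2--functors. Beyond the cylinder coherence just discussed, this needs only the compatibility of $v\resped$ with the unit constraints, which is routine, and uses on the $\sQUE$--side that $\aDrY{}$ carries the trivial tensor structure on restriction functors, so that its composition 2--cells are identities; the remaining axioms then follow formally.
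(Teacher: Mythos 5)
Your proposal sits in the right territory (relative Verma modules, relative twist, $\PROP$ functoriality), but it inverts the paper's logical structure at the one point where this matters, and it underestimates the actual key lemma. In the paper the coherence cell $u_{\a,\b,\c}$ is \emph{not} given independently by relative--twist data: it is \emph{defined} by requiring the cylinder \eqref{eq:cylinder} to commute, using the already--constructed $v\resped,v_{\b,\c},v_{\a,\c}$ together with the fact that $\noEKeq{\a}$ is an equivalence (the back face is the identity because the quantum restriction functors carry trivial tensor structures and compose strictly); the associativity \eqref{eq:u assoc} then follows from this definition. Your plan instead presupposes an intrinsic description of $u_{\a,\b,\c}$ as the ``discrepancy'' between the relative twist of $\a\hookrightarrow\c$ and the composite of those of $\a\hookrightarrow\b$ and $\b\hookrightarrow\c$, to be checked as a universal cocycle identity in the relevant $\PROP$s. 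No such construction or identity is established in the paper --- the introduction explicitly flags an intrinsic, quantisation--free construction of $u_{\a,\b,\c}$ as an open problem --- so this step of your argument has no available proof along the lines you propose, and it is precisely the step the paper's route is designed to avoid.

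The second gap is that $v\resped$ does not come merely from ``the relative twist plus functoriality of quantisation,'' with only the coaction compatibility left to check. The substantive input is the pair of quantisation isomorphisms \eqref{eq:EK Verma}: one must (a) construct the quantum relative Verma modules $\Lmh,\Npvh$ for the split pair $\Uha\hookrightarrow\Uhb$ (via Radford's biproduct and the idempotent $\Pi$), show they are admissible, and show the quantum relative fiber functor $\Hom^{\Uhb}_{\Uhb}(\Lmh,\Npvh\ten-)$ is tensor--isomorphic to $\Res_{\Uha,\Uhb}$ with trivial tensor structure; and (b) prove $\noEKeq{\b}\circ\noEKeq{\a}(\Lm)\simeq\Lmh$ and $\noEKeq{\b}\circ\noEKeq{\a}(\Npv)\simeq\Npvh$ as coalgebra, resp.\ algebra, objects (Theorem \ref{thm:quantization-LN}). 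Point (b) is where the universal machinery genuinely enters, and the plain split--pair $\PROP$ $\PLBA$ you invoke does not suffice: $\Npv$ involves the parabolic $\m_-\oplus\ga$ and hence $\a^*$, so one needs the enhanced colored $\PROP$ $\PLBAD$ with the matched--pair object $[\a^\sharp]$ and the universal $r$--matrix, the universal properties of $\Lmh,\Npvh$, functoriality of the Tannakian lift with respect to the morphisms $(i,\id):(\a,\a)\to(\b,\a)$ and $(p,\id,\id):(\b,\a,\a^*)\to(\a,\a,\a^*)$, and a semiclassical--limit comparison. Once \eqref{eq:EK Verma} is in place, the square \eqref{eq:nat transf} follows by the Hom--space computation in Theorem \ref{thm:ek-fac} (using that $\noEKeq{\b}$ is an equivalence and that $\Lm$ is a trivial $\a$--module), with both action and coaction compatibility coming for free rather than by a separate unfolding of the coaction.
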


\subsection{} 

We now outline the proofs of Theorems \ref{th:first main} and \ref{th:second main}.

The construction of the tensor structure $J_{\a,\b}$ on the functor
$\Res_{\a,\b}$ given by Theorem \ref{th:first main} is very much
inspired by that of the \nEK fiber functor $\EKff\b$ \cite{ek-1}, and
reproduces the latter if $\a=0$.
The principle adopted in \cite{ek-1} is the following. 
In a $\sfk$-linear monoidal category $\C$, a coalgebra structure on an
object $C\in\obj(\C)$ induces a tensor structure on the Yoneda functor
\footnote{strictly speaking, $C$ only induces a {\it lax} tensor structure
on $h_C$ since the morphism $\Delta_C^*:h_C(U)\otimes h_C(V)\to
h_C(U\otimes V)$ induced by the coproduct of $C$ may not be invertible.
We shall ignore this point, since all lax tensor structures we shall encounter
are easily seen to be invertible.}
\[h_C=\Hom_{\C}(C, -):\C\to\vectk.\]
If, moreover, $\C$ is braided and $C_1,C_2$ are coalgebra objects in
$\C$, then so is $C_1\ten C_2$, and there is therefore a canonical tensor
structure on $h_{C_1\ten C_2}$.

If $\b$ is a \fd Lie bialgebra, the forgetful functor $\hDrY{\b}{}=\Rep(\gu)
\to\vect_\sfk$ is represented by the enveloping algebra $\Ug_\b$ of the
Drinfeld double of $\b$. The object $\Ug_\b$ with its standard coproduct
is not a coalgebra in the deformed category $\hDrY{\b}{\Phi}$ due to the
non--triviality of the associativity constraints. However, the polarization
$\Ue{\gu}\simeq\Mm\ten\Mp$, where $M_\pm$ are the Verma modules
$\ind_{\gump}^{\gu}\sfk$, with $\gum=\b$ and $\gup=\b^*$, realizes $U
\gu$ as the tensor product of two coalgebra objects. This yields a
tensor structure on the functor $\EKff\b=h_{M_-\otimes M_+}:\hDrY{\b}
{\Phi}\to\vectk$, and therefore on the forgetful functor $h_{U\gu}$.

\subsection{}

Our starting point is to consider the restriction $\Res_{\a,\b}:\hDrY{\b}{}
\to\hDrY{\a}{}$ corresponding to a split pair of \fd Lie bialgebras $\a
\hookrightarrow\b$ as a relative forgetful functor, which is represented
by the $(\gu,\gd)$--bimodule $\Ue{\gu}$. We then factorise $\Ue{\gu}$
as the tensor product of two coalgebra objects $\Lm,\Np$ in the braided tensor
category of $(\gu,\gd)$--bimodules, with associativity constraints given
by $\PhiU\cdot\PhiD^{-1}$. Just as the Verma modules $\Mm,\Mp$ are related
to the decomposition $\gu=\gum\oplus\gup$, $\Lm$ and $\Np$ correspond
to the asymmetric decomposition
\[\gu=\Lmm\oplus\Lpp\]
where $\Lmm=\Ker(p)\subset\b_-$ and $\Lpp=i(\a_-)\oplus\b_+$.
The factorisation $\Ue{\gu}\cong\Lm\otimes\Np$ induces a tensor
structure on the functor $h_{\Lm\ten\Np}:\hDrY{\b}{\Phi}\to\hDrY{\a}
{\Phi}$, and therefore one on $\Res_{\a,\b}\cong h_{\Lm\ten\Np}$.

As in \cite[Part II]{ek-1}, this tensor structure can also be defined
when $\a$ or $\b$ are infinite--dimensional. This amounts to replacing
the Verma module $\Np$, which is not equicontinuous if $\b$ is infinite--dimensional,
with its appropriately topologised continuous dual $\Npv$, and the
Yoneda functor $h_{\Lm\ten\Np}$ with $\Hom_{\gb}(\Lm,\Npv\otimes V)$.

Having constructed the tensor structure $J_{\a,\b}$, the existence of the
natural transformation $u_{\a,\b,\c}$ satisfying the associativity constraint
\eqref{eq:u assoc} is readily obtained from that of the natural transformations
$v\resped,v_{\b,\c},v_{\a,\c}$ of Theorem \ref{th:second main} by requiring
the commutativity of the diagram \eqref{eq:cylinder}, and using the fact that
the functor $\noEKeq{\a}$ is an equivalence. It seems
an interesting problem to give an intrinsic construction of $u_{\a,\b,\c}$ which
does not rely on \nEK quantisation. 

\subsection{}

We now sketch the construction of a natural transformation $v\resped$
which makes the diagram \eqref{eq:nat transf} commute, assuming again
that $\a,\b$ are finite--dimensional.

As pointed out to us by Pavel Etingof, just as $(\Res_{\a,\b},J_{\a,\b})$
may be replaced by the isomorphic Yoneda functor $(h_{\Lm\otimes\Np},
\Delta_{\Lm\otimes\Np}^*)$, the tensor restriction functor $(\Res_{\Uha,
\Uhb},\id)$ can be replaced by $(h_{{\Lmh}\otimes\Np^\hbar},\id)$,
where $\Lmh,\Np^\hbar$ are quantum analogues of the modules $\Lm,
\Np$.
This reduces the problem to proving the commutativity of
\begin{equation}\label{eq:reformulate}
\xymatrixrowsep{0.4cm}
\xymatrixcolsep{0.4cm}
\xymatrix{
\hDrY{\b}{\Phi}\ar[rr]^{\EKeq{\b}}\ar[dd]_{(h_{\Lm\otimes\Np},\Delta_{\Lm\otimes\Np}^*)}
&&\aDrY{\Uhb}\ar[dd]^{(h_{{\Lmh}\otimes 
\Np^\hbar},\id)}\\
&&& \\
\hDrY{\a}{\Phi}\ar[rr]_{\EKeq{\a}}			&&\aDrY{\Uha}
}
\end{equation}

\subsection{}

The commutativity of \eqref{eq:reformulate} amounts to proving the
isomorphisms
\begin{equation}\label{eq:EK Verma}
{\EKeq{\a}}\circ {\EKeq{\b}}(\Lm)\simeq{\Lmh}\aand
{\EKeq{\a}}\circ {\EKeq{\b}}(\Np)\simeq\Np^{\hbar}
\end{equation}
as coalgebras in the category of $(\EK{\gu},\EK\gd)$--bimodules.
When $\a=0$, the Verma modules $\Lm,\Np$ coincide
with $M_-,M_+$, and it is easy to construct an isomorphism between
${\EKeq{\b}}(M_{\pm})$ and {the} quantum counterparts $M_\pm^\hbar$
of {$M_\pm$}. In general, however, the proof of \eqref{eq:EK Verma}
is more involved, and relies on the functoriality of the \DYt modules $\Lm,
\Np$, and of their quantisation via ${\EKeq{\a}}\circ {\EKeq{\b}}$, \wrt
morphisms of split pairs of Lie bialgebras. The latter is obtained from the description of the \nEK
quantisation functor in terms of $\PROP$s \cite{ek-2}, and the realization
of $\Lm,\Np$ as universal objects in a (new) colored $\PROP$ which
describes split inclusions of Lie bialgebras.  

\subsection{}

Our interest in Theorems \ref{th:first main} and \ref{th:second main} comes
from quasi--Coxeter \qtqbas \cite{vtl-4} and their categorical counterparts,
braided \qc categories \cite{ATL1-2}. If $(W,S)$ is a Coxeter group with
Coxeter graph $D$, a braided \qcc of type $W$ consists of 1) a family
of braided tensor categories $\Q_B$ labelled by the subgraphs $B$ of
$D$, 2) a tensor restriction functor $F_{B'B}:\Q_B\to\Q_{B'}$ for any inclusion
$B'\subseteq B$, and 3) an automorphism $S_i^\Q$ of the restriction
functor $F_{\emptyset i}:\Q_{\{i\}}\to\Q_\emptyset$, called {\it local
monodromy}, for any vertex $i$ of $D$. These data satisfy various
compatibilities which guarantee in particular that the generalised
braid group $B_W$ corresponding to $W$ acts on the restriction
functor $F_{\emptyset D}:\Q_D\to\Q_\emptyset$.

Such a structure arises in particular from a quantum \KMA $\Uhg$
with Weyl group $W$ \cite{ATL1-2}. The diagrammatic categories
$\Q_B$ are the integrable, highest weight categories of the standard
Levi subalgebras of $\Uhg$ with braiding given by their $R$--matrices,
the $F_{B'B}$ are the standard restriction functors, and the corresponding
braid group representations the quantum Weyl group representations.

We prove in \cite{ATL1-2} that this structure can be transferred to
the underformed enveloping algebra $\Ug\fml$, and put in a given
normal form. Specifically, the braided \qcc structure arising from
$\Uhg$ is equivalent to one where the diagrammatic categories
are the integrable, highest weight categories 
of the standard Levi subalgebras of $\Ug\fml$, with commutativity and
associativity constraints deformed by the associator $\Phi$, and
the $F_{B'B}$ are the standard restriction functors endowed
with appropriate tensor structures. The horizontal transport of structure 
is given by the collection of \nEK equivalences corresponding to the
Levi subalgebras of $\g$, while the vertical matching of the quantum
and classical restriction functors relies on Theorems \ref{th:first main}
and \ref{th:second main}.

We show in \cite{ATL2} that braided \qcc arising from $\Ug
\fml$ which have the same normal form as those transferred from
$\Uhg$ are rigid. We then use this in \cite{ATL3} to prove that the
monodromy of the rational Casimir connection of a symmetrisable
Kac--Moody algebra is described by the quantum Weyl group
operators of $\Uhg$, thus extending a result of the second
author valid when $\g$ is finite--dimensional \cite{vtl-4,vtl-6}.


\subsection{Outline of the paper}

In Section \ref{s:ek}, we review the construction of the \nEK fiber
functor, quantisation, and Tannakian equivalence. In Section \ref
{s:Gamma}, we generalise the first one by introducing the generalised
Verma modules $L_-,N_+$, and obtain a relative fiber functor $\FF
{\a}{\b}:\hDrY{\b}{\Phi}\to\hDrY{\a}{\Phi}$. In Section \ref{s:Gammah},
we define the quantum Verma modules $L_-^\hbar$ and $N_+^\hbar$,
use them to define a quantum fiber functor $\hFF{\a}{\b}:\aDrY{\Uhb}
\to\aDrY{\Uha}$, and construct an isomorphism $v_{\a,\b}:{\EKeq{\a}}
\circ\FF{\a}{\b}\simeq\hFF{\a}{\b}\circ{\EKeq{\b}}$ assuming the
quantisation isomorphisms \eqref{eq:EK Verma}. In Section \ref
{s:gammahopf}, we generalise the Etingof--Kazhdan quantisation to
the relative case, using the functor $\FF{\a}{\b}$ to construct a Hopf
algebra object in $\hDrY{\a}{\Phi}$ and obtain an alternative quantisation
of $\b$ via the Radford biproduct. In Section \ref{s:props}, we review
the description of \nEK quantisation by PROPs, and use it to give an
alternative proof that the Tannakian functor
$\EKeq{\b}$ is an equivalence. In Section \ref{se:rel prop}, we show
that the quantum Verma modules $L_-^\hbar$ and $N_+^\hbar$ are
isomorphic to the Etingof--Kazhdan quantisation of their classical
counterparts $\Lm,\Np$, thereby proving \eqref{eq:EK Verma}, by
using a suitably defined colored $\PROP$ which describes split
pairs of Lie bialgebras. We also show that the results of Sections
\ref{s:Gamma}, \ref{s:Gammah}, and \ref{s:gammahopf}
can be lifted to this $\PROP$, thus establishing their functoriality
with respect to morphisms of split inclusions of Lie bialgebras.
In Section \ref{se:Severa}, we review \v{S}evera's alternative
construction of a quantisation functor for Lie bialgebras \cite{sev},
and use it to obtain a stronger functoriality of the tensor functor
$\FF{\a}{\b}$. Finally, in Appendix \ref{s:app}, we review Majid's
description of the quantum double as a double crossed product
of Hopf algebras, and describe the braided tensor equivalence
between modules over a quantum double and \DYt modules.

\subsection{Acknowledgments}

The main results of this paper first appeared in more condensed form
in the preprint \cite{ATL0}. The latter is superseded by the present paper,
and its companion \cite{ATL1-2}. We are very grateful to Pavel Etingof
for his continuing interest throughout this project, and in particular for
correspondence and several enlightening discussions on foundational
and other aspects of quantisation.

\section{Etingof-Kazhdan quantisation}\label{s:ek}

With the exception of Sections \ref{ss:DY-qD}--\ref{ss:qRep},
which contain a detailed discussion of admissible \DYt modules
over a quantised universal enveloping algebra, and \ref{ss:quant-bimod},
this section follows \cite[Part II]{ek-1} and \cite[\S 4]{ek-6} closely. 


\subsection{Topological vector spaces}

Let $\sfk$ be a field of characteristic zero endowed with the discrete topology,
and  $V$ a topological vector space over $\sfk$. The topology on $V$ is {\em
linear} if the open subspaces in $V$ form a basis of neighborhoods of zero.

Let $V$ be endowed with a linear topology, and $p$ the natural map 
\[p:V\longrightarrow\lim_{\longleftarrow} V/U\]
where the inverse limit is taken over the open subspaces $U\subseteq V$.
$V$ is called \emph{separated} if $i$ is injective, and \emph{complete}
if $p$ is surjective. Note that 
\begin{itemize}
\item Since an open subspace of a topological vector space is also closed,
the quotient topology on each $V/U$ is the discrete one. The corresponding
product topology on $\lim V/U$ is linear, with a basis of neighborhoods of
zero given by the finite intersections $\bigcap p^{-1}_U(0)$, where $p_U:
\lim V/U'\to V/U$ are the projection maps. Moreover, $\lim V/U$ is separated
and complete.
\item The map $p$ is continuous. It need not be open in general, but it is if
$p$ is surjective. It follows that if $V$ is separated and complete, $p$ is a
homeomorphism.
\end{itemize}
Throughout this paper, we shall call  \emph{topological vector space} a linear,
complete, separated topological vector space.
Note that a \fd topological vector space is necessarily endowed with the
discrete topology.

\subsection{}

If $V,W$ are topological vector spaces, we let $\Hom_{\sfk}(V,W)$ be the
topological vector space of continuous linear maps from $V$ to $W$, equipped
with the weak topology. Namely, a basis of neighborhoods of zero in $\Hom_
{\sfk}(V,W)$ is given by the subspaces
\[Y(v_1,\dots,v_n;W_1,\dots,W_n)=\{T\in \Hom_{\sfk}(V,W)\;|\: T(v_i)\in W_i, i=1,\dots, n\}\]
where $n\in\IN, v_i\in V$ and $W_i$ are open subspaces in $W$ for all $i=1,\dots, n$.

In particular, if $W=\sfk$ with the discrete topology, the space $V^*=\Hom_{\sfk}(V,\sfk)$ 
has a basis of neighborhoods of zero given by orthogonal complements of 
finite--dimensional subspaces in $V$. When $V$ is finite--dimensional, $V^*$ 
is the full linear dual of $V$, and the weak topology the discrete topology. 

\subsection{}

Given two topological  vector spaces $V$ and $W$, define their topological
tensor product as
\[V\ctp W= \lim V/V' \ten W/W'=\lim V\otimes W/(V'\otimes W+V\otimes W')\]
where the limit is take over open subspaces of $V$ and $W$, and given the
product topology. Then, $V\ctp W$ is a topological vector space, and the
tensor product $\ctp$ endows the category $\vectk$ of topological vector
spaces over $\sfk$ with the structure of a symmetric monoidal category
with internal Hom's. Moreover, the duality functor satisfies $(V\ctp W)^*
\cong V^*\ctp W^*$, and is therefore a contravariant tensor endofunctor 
of $\vectk$.

\subsection{Formal power series}

Let $\hbar$ be a formal variable, and endow $\sfK=\sfk{\fml}$ with the
$\hbar$--adic topology given by the subspaces $\hbar^n\sfK$, $n\geq 0$.
Let $V$ be a
topological vector space. The space $V{\fml}=V\ctp\sfK$ of formal power
series in $\hbar$ with coefficients in $V$ is also a topological vector space
with the structure of a topological $\sfK$-module. A topological $\sfK$--module
is topologically free if it is isomorphic to  $V{\fml}$ for some topological
vector space $V$ as $\sfK$--module. 

The additive category $\vect_\sfK$ of topologically free $\sfK$--modules,
where morphisms are continuous $\sfK$--linear maps, has a natural 
symmetric monoidal structure with internal Hom's. 
The tensor product is defined as the quotient of the tensor product 
$V\ctp W$ by the closure of the image of the operator $\hbar\ten1-1\ten\hbar$,
and will be still denoted by $\ctp$.

There is an extension 
of scalar functor from the category of topological vector spaces to $\vect_\sfK$, mapping 
$V$ to $V{\fml}$. This functor respects the tensor product, 
\ie $(V\ctp W){\fml}$ is naturally isomorphic to $V{\fml}\ctp W{\fml}$. 

Henceforth, unless otherwise specified, we will denote by $\ten$ the complete tensor
products of topological $\sfk$--vector spaces and topologically free $\sfK$--modules.


\subsection{Lie bialgebras}\label{ss:lba}

A Lie bialgebra is a triple $(\b,[,]_{\b}, \delta_{\b})$ where
\begin{itemize}
\item $\b$ is a discrete vector space;
\item  $(\b,[,]_{\b})$ is a Lie algebra, \ie $[,]_{\b}:\b\ten\b\to\b$
is anti-symmetric and satisfies the Jacobi identity
\[
[,]_{\b}\circ\id_{\b}\ten[,]_{\b}\circ(\id_{\b^{\ten 3}}+(1\,2\,3)+(1\,3\,2))=0;
\]
\item $(\b,\delta_{\b})$ is a Lie coalgebra, \ie $\delta_{\b}:\b\to\b\ten\b$
is anti-symmetric and satisfies the co--Jacobi identity
\[
(\id_{\b^{\ten 3}}+(1\,2\,3)+(1\,3\,2))\circ\id_{\b}\ten\delta_{\b}\circ\delta_{\b}=0;
\]
\item the cobracket $\delta_{\b}$ satisfies the \emph{cocycle condition}
\begin{equation}\label{eq:cocycle}
\delta_{\b}\circ[,]_{\b}=\ad_{\b}\circ\id_{\b}\ten\delta_{\b}\circ(\id_{\b^{\ten 2}}-(1\,2)),
\end{equation}
as maps $\b\ten\b\to\b\ten\b$, 
where $\ad_{\b}:\b\ten\b\ten\b\to\b\ten\b$ denotes the left adjoint action of $\b$
on $\b\ten\b$.
\end{itemize}

\subsection{Manin triples}\label{ss:lbamt}

A Manin triple is the data of a Lie algebra $\g$ with 
\begin{itemize}
\item a nondegenerate invariant inner product $\iip{-}{-}$
\item isotropic Lie subalgebras $\gupm\subset\g$
\end{itemize}
such that 
\begin{itemize}
\item $\g=\gum\oplus\gup$ as vector spaces
\item the commutator of $\g$ is continuous with respect to the topology 
obtained by putting the discrete and the weak topologies on $\gum$
and $\gup$  respectively.
\item the inner product defines an isomorphism $\gup\to\gum^*$
\end{itemize}

Under these assumptions, the commutator on $\gup\simeq\gum^*$
induces a cobracket $\delta:\gum\to\gum\otimes\gum$ which satisfies
the cocycle condition \eqref{eq:cocycle}.
Therefore, $\gum$ is canonically endowed with a Lie bialgebra structure.
In general, however, $\gup$ is only a topological Lie bialgebra.

\subsection{Drinfeld double}\label{ss:drinf-double}

Every Lie bialgebra $(\b,[,]_{\b},\delta_{\b})$ gives rise to a Manin triple.
The Drinfeld double of $\b$ is the Lie algebra $\gb$ defined as follows.

As a vector space, $\gb=\b\oplus\b^*$. The pairing $\iip{-}{-}:\b\ten\b^*
\to\sfk$ extends uniquely to a symmetric non--degenerate bilinear form
on $\gb$, \wrt which $\b,\b^*$ are isotropic. The Lie bracket on $\gb$ is
defined as the unique bracket which coincides with $[,]_{\b}$ on $\b$,
with $\delta_{\b}^t$ on $\b^*$, and is compatible with $\iip{-}{-}$, \ie satisfies
\[
\iip{[x,y]}{z}=\iip{x}{[y,z]}
\]                                    
for all $x,y,z\in\gb$. The mixed bracket of $x\in\b$ and $\phi\in\b^*$ is then 
given by
\[
[x,\phi]=\sfad^*(x)(\phi)-\sfad^*(\phi)(x)
\]
where $\sfad^*$ is the coadjoint actions of $\b$ on $\b^*$ and of $\b^*$ on
$(\b^*)^*$\footnote{the latter preserves $\b\subset(\b^*)^*$ since it is easily
seen to be given by $\sfad^*(\phi)(x)=-\iip{\phi\ten\id}{\delta_\b(x)}$.}.

\subsection{Drinfeld--Yetter modules}\label{ss:DY LBA}

A Drinfeld--Yetter module over a Lie bialgebra $\b$ is a triple $(V,
\pi,\pi^*)$, where $\pi:\b\otimes V\to V$ gives $V$ the structure of a left
$\b$--module, that is
\begin{equation}\label{eq:action-lba}
\pi\circ[,]\ten\id=\pi\circ(\id\ten\pi)-\pi\circ(\id\ten\pi)\circ(21)
\end{equation}
as maps $\b\otimes\b\otimes V\to V$, $\pi^*:V\to\b\otimes V$ gives $V$
the structure of a right $\b$--comodule, that is
\begin{equation}\label{eq:coaction-lba}
\delta\ten\id\circ\pi^*=(21)\circ(\id\ten\pi^*)\circ\pi^*-(\id\ten\pi^*)\circ\pi^*
\end{equation}
as maps $V\to\b\otimes\b\otimes V$, and the maps $\pi,\pi^*$ satisfy the
following compatibility condition in $\End(\b\ten V)$
\begin{equation}\label{eq:act-coact-lba}
\pi^*\circ\pi-\id\ten\pi\circ(1\,2)\circ\id\ten\pi^*=[,]\ten\id\circ\id\ten\pi^*-
\id\ten\pi\circ\delta\ten\id.
\end{equation}

A \DYt module $V$ gives rise to an action of the Drinfeld double $\gu$
on $V$, with $\phi\in\b^*$ acting by $\phi\otimes\id\circ\pi^*$. The
corresponding map $\gu\to\End_\sfk(V)$ is easily seen to be continuous
if $\End_\sfk(V)$ is given the weak topology, and $\gu$ the product 
of the discrete and weak topologies on $\b,\b^*$, as in \ref {ss:lbamt}.
Conversely, a continuous Lie algebra homomorphism $\gu\to\End_\sfk(V)$
gives in particular rise to a locally finite action of $\b^*$, and therefore
to a \DYt module on $V$, with $\pi^*(v)=\sum_i b_i\otimes b^i v$, where
$\{b_i\},\{b^i\}$ are dual bases of $\b,\b^*$.


\subsection{Equicontinuous modules}\label{ss:eq}

Let $\g$ be a topological Lie algebra, and $V$ a topological vector space.
$V$ is an  \emph{equicontinuous} $\g$-module if it is endowed with a Lie
algebra homomorphism $\pi_V:\g\to\End_{\sfk}(V)$ such that
\begin{itemize}
\item $\pi_V$ is continuous
\item $\{\pi_V(X)\}_{X\in\g}$ is an equicontinuous family of linear
operators, \ie for any open subspace $U\subseteq V$, there exists
$U'$ such that $\pi_V(X)U'\subset U$ for all $X\in\g$.
\end{itemize}

The category $\Eq{\g}$ of equicontinuous $\g$-modules is a symmetric
monoidal category, \wrt the completed tensor product of topological vector
spaces and braiding defined by the permutation of components.
 
\subsection{Topological Drinfeld--Yetter modules}

The notion of \DYt module can be formulated in any symmetric
tensor category, in particular that of topological vector spaces, with
completed tensor product and continuous linear maps. The corresponding
category $\hDrY{\b}{}$ is then equivalent to that of equicontinuous
modules $\Eq{\gb}$ over $\gu$ defined in \S\ref{ss:eq}.

\subsection{$r$--matrix}

If $U,V\in\hDrY{\b}{}$, define $r_{U,V}\in\End_\sfk(U\ten V)$ as
the composition
\[r_{UV}=\pi_{U}\otimes\id\circ (1\,2)\circ \id\otimes\pi^*_V.\]
Then, $r$ satisfies the classical Yang--Baxter equations on $U\ten V\ten W$
\[[r_{UV},r_{UW}]+[r_{UV},r_{VW}]+[r_{UW},r_{VW}]=0\]
and is such that $\Omega_{UV}=r_{UV}+r^{21}_{VU}$ is
a morphism of Drinfeld--Yetter modules.

Under the identification $\DrY{\b}=\Eq{\gb}$, the action of $r_{UV}$ corresponds
to that of the canonical element $b_i\otimes b^i\in\b\ten\b^*\cong\End_k(\b)\ni\id_\b$,
and that of $\Omega_{UV}$ to that of the canonical element $b_i\otimes b^i+b^i\otimes
b_i$ in $\gu\ten\gu$ representing the bilinear form $\<-,-\>$.


\subsection{Drinfeld category}\label{ss:Drinfeld cat}

Following Drinfeld \cite{drin-3}, one can define a $\sfK$--linear deformation 
$\hDrY{\b}{\Phi}=\Eq{\gb}^\Phi$ of $\hDrY{\b}{}=\Eq{\gb}$ as a braided 
monoidal category using an associator $\Phi$ as follows. The objects of 
$\hDrY{\b}{\Phi}$ are Drinfeld--Yetter $\b$--modules in the category of 
topologically free $\sfk{\fml}$--modules with commutativity and associativity 
constraints given respectively by
\[\beta_{UV}=(1\,2)\circ \exp{\left(\frac{\hbar}{2}\Omega_{UV}\right)}
\aand
\Phi_{UVW}=\Phi(\hbar\Omega_{UV},\hbar\Omega_{VW}).
\]



\subsection{The Verma modules $M_-$ and $M_+$}\label{ss:vermaEK}
Set $\gum=\b$, $\gup=\b^*$, and consider the $\gu$--modules $\Mm,
\Mp$ given by
\[\Mm=\Ind_{\gup}^{\gu}\sfk\aand \Mp=\Ind_{\gum}^{\gu}\sfk.\]
The modules $\Mm$ and $\Mpv$, the dual of $\Mp$ with appropriate
topology, are equicontinuous $\gu$-modules.

The module $\Mm$ is an equicontinuous $\gu$-module with respect 
to the discrete topology. The topology on $\Mp$ comes, instead, from 
the identification of vector spaces 
\[\Mp\simeq \Ue{\gup}=\bigcup_{n\geq0} (\Ue{\gup})_n\] 
where $(\Ue{\gup})_n$ is the set of elements of degree at most $n$. 
The topology on $(U\gup)_n$ is defined through the linear isomorphism
\[\xi_n:\bigoplus_{j=0}^n S^j\gup\to(\Ue{\gup})_n\]
where $S^j\gup$ is considered as a topological subspace of $(\gum^{\ten j})^*$, 
embedded with the weak topology.
Finally, $U\gup$ is equipped with the topology of the colimit. 
Namely, a set $U\subseteq U\gup$ is open if and only if $U\cap (U\gup)_n$ 
is open for all $n$. With respect to the topology just described, 
the action of $\gu$ on $\Mp$ is continuous.

Consider now the vector space of continuous linear functionals on $\Mp$
\[\Mpd=\Hom_{\sfk}(\Mp,\sfk)\simeq\lim\Hom_{\sfk}((U\gup)_n, \sfk).\]
It is natural to put the discrete topology on $(U\gup)_n^*$, since, 
as a vector space,
\[(U\gup)_n^*\simeq\bigoplus_{j=0}^n S^j\gup^*\simeq\bigoplus_{j=0}^n S^j
\gum\simeq (U\gum)_n.\]
We then consider on $\Mpd$ the topology of the limit and denote the resulting 
topological space by $\Mpv$. This defines, in particular, a filtration by subspaces 
$(\Mpv)_n$ satisfying
\[0\to(\Mpv)_n\to\Mpv\to\UE{\gup}_n^*\to 0\]
and such that $\Mpv=\lim \Mpv/(\Mpv)_n$. The topology of the limit on $\Mpd$ is, 
in general, stronger than the weak topology of the dual. 
Since the action of $\gu$ on $\Mp$ is continuous, $\Mpv$ has a natural structure 
of $\gu$--module. In particular, this is an equicontinuous $\gu$--action.

\subsection{Properties of Verma modules}

The modules $M_{\pm}$ are identified, as $\b_\pm$--modules,
with the enveloping  universal algebras $U\gupm$. The comultiplication
on the latter induce the $U\gu$--intertwiners $i_{\pm}:M_{\pm}\to M_{\pm}\ten M_{\pm}$, 
mapping the vectors $1_{\pm}$ to the $\gump$-invariant vectors $1_{\pm}\ten 1_{\pm}$.

For any $f,g\in\Mpv$, consider the linear functional $\Mp\to\sfk$ 
defined by $v\mapsto (f\ten g)(\ip(v))$. 
This defines a morphism of modules $\ips:\Mpv\ten\Mpv\to\Mpv$.
The pairs $(\Mm,\im)$ and $(\Mpv, \ips)$ form, respectively, 
a coalgebra and an algebra object in $\hEq{\gu}{\Phi}$.\\

For any $V\in\hEq{\gu}{\Phi}$, the vector space $\Hom_{\gu}(\Mm,\Mpv\ten V)$ 
is naturally isomorphic to $V$, as topological vector space, through the isomorphism 
$f\mapsto (1_+\ten 1)\noEKff{\b}(1_-)$.


\subsection{The fiber functor}\label{ss:EK fiber}
Let $\noEKff{\b}:\hEq{\gu}{\Phi}\to\vect_\sfK$ be the functor given by
\[ \noEKff{\b}(V)=\Hom_{\hEq{\gu}{\Phi}}(\Mm,\Mpv\ten V).\]
Define a natural transformation $J_{V,W}:\noEKff{\b}(V)\ten \noEKff{\b}(W)\to
\noEKff{\b}(V\ten W)$ by
\[J_{V,W}(v\ten w)= (\ipd\ten\id^{\otimes 2})\circ A^{-1}\circ\beta_{23}^{-1}\circ A\circ 
(v\ten w)\circ \im\]
where $A$ is the isomorphism
\[(V_1\ten V_2)\ten (V_3\ten V_4)\to V_1\ten( (V_2\ten V_3)\ten V_4)\]
defined by the action of $(1\ten{\Phi}_{2,3,4}){\Phi}_{1,2,34}$.

\begin{theorem}\cite{ek-1}
The natural transformation $J$ is invertible, and defines a tensor structure
on the functor $\noEKff{\b}$, that is satisfies
\[ \noEKff{\b}(\Phi_{U,V,W})\,J_{U\otimes V,W}\,J_{U,V}\otimes\id=J_{U,V\otimes W}\,
\id\otimes J_{V,W}\]
as morphisms $(\noEKff{\b}(U)\ten \noEKff{\b}(V))\ten \noEKff{\b}(W)\to \noEKff{\b}
(U\ten(V\ten W))$, for any $U,V,W\in\hEq{\gu}{\Phi}$.
\end{theorem}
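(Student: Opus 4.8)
The plan is to verify that $J$ is a tensor structure by unwinding the definition and reducing everything to the hexagon and pentagon axioms of the associator $\Phi$, exactly as in \cite[Part~II]{ek-1}. First I would record that each $\noEKff{\b}(V)\otimes\noEKff{\b}(W)$ is, by the last property of Verma modules in \S\ref{ss:vermaEK}, functorially identified with $V\otimes W$ (as a topologically free $\sfK$--module), so the claim is really a statement comparing two explicit elements of $\noEKff{\b}(U\otimes(V\otimes W))\cong U\otimes V\otimes W$. The key structural inputs are: $(\Mm,\im)$ is a coalgebra object and $(\Mpv,\ips)$ an algebra object in $\hEq{\gu}{\Phi}$; the coassociativity of $\im$ (a morphism identity in the \emph{deformed} category, hence involving $\Phi$ applied to $\Omega$'s acting on copies of $\Mm$); the associativity of $\ips$; and the hexagon relating $\beta$ and $\Phi$.

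The concrete steps I would carry out are as follows. \emph{Step 1.} Write $J_{U\otimes V,W}\circ(J_{U,V}\otimes\id)$ as a single morphism built from four copies of $\Mpv$ (coming from iterating $\ips$), one copy of $\Mm$, a threefold iterate of the coproduct $\im$, two instances of $\beta^{-1}$ sitting in the appropriate tensor slots, and a product of associators $A$ that rebracket $(\Mpv^{\otimes 4})\otimes(\Mm)\otimes U\otimes V\otimes W$ so that each $\beta^{-1}$ is applied in adjacent slots. \emph{Step 2.} Do the same for $J_{U,V\otimes W}\circ(\id\otimes J_{V,W})$. \emph{Step 3.} Use coassociativity of $\im$ to identify the two threefold coproducts up to an associator inserted among the $\Mm$ slots, and associativity of $\ips$ to identify the two products of the four $\Mpv$ slots up to an associator; these associators can be absorbed into the ambient $A$'s. \emph{Step 4.} After this normalization, both sides have the same combinatorial shape and differ only by: (a) the placement of the two $\beta^{-1}$'s, governed by the hexagon axiom, and (b) a product of five associators acting on the relevant quintuple of tensor factors, governed by the pentagon axiom. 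The factor $\noEKff{\b}(\Phi_{U,V,W})$ on the left-hand side is precisely the associator acting on the $U,V,W$ slots of the output, and it is exactly what is needed to make the pentagon identity close. \emph{Step 5.} Conclude invertibility of $J$: since $J_{V,W}$ reduces mod $\hbar$ to the identity $V\otimes W\to V\otimes W$ under the identification of Verma-module Hom spaces, $J$ is invertible by the usual $\hbar$--adic argument (an endomorphism of a topologically free module congruent to the identity modulo $\hbar$ is invertible).

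The main obstacle is Step 4: keeping careful track of \emph{which} tensor slots each $\Omega$-valued associator and each braiding $\beta$ act on, since $\Mm$, $\Mpv$, and $U,V,W$ all contribute slots, and the coassociativity/associativity relations in the deformed category $\hEq{\gu}{\Phi}$ themselves involve $\Phi$ evaluated on $\Omega$'s between copies of $\Mm$ (resp.\ $\Mpv$). Untangling this bookkeeping so that the remaining discrepancy is \emph{exactly} one instance of the pentagon and one of the hexagon — with the $\noEKff{\b}(\Phi_{U,V,W})$ term accounting for the leftover associator — is the delicate computation. A convenient way to organize it is to pass to the PROP / graphical formalism for Lie bialgebras, where $\Mm,\Mpv$ correspond to universal objects and all the morphisms become planar diagrams, reducing the identity to the axioms of $\Phi$ applied to universal $\Omega$'s; but since the statement is quoted from \cite{ek-1}, I would simply cite that verification, noting only the reduction above. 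Everything else (continuity of the maps, well-definedness of $A$ via convergent products of associators in the $\hbar$--adic topology) is routine.
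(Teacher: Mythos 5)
Your proposal is correct and follows essentially the same route as the paper's own argument (carried out there for the relative functor $\FF{\a}{\b}$ in \S\ref{ss:start tensor}--\S\ref{ss:end tensor}, of which this statement is the case $\a=0$): unwind both compositions, commute the maps $\ipd$ past the associativity and commutativity constraints by functoriality, reduce the remaining discrepancy to the pentagon and hexagon axioms together with the coalgebra identity for $\im$ and the associativity of $\ipd$ in the deformed category, and deduce invertibility from the fact that $J$ is an isomorphism modulo $\hbar$ between topologically free modules. The only slip is bookkeeping: the intermediate object carries three copies of $\Mpv$ (merged by two applications of $\ipd$) rather than four, which does not affect the argument.
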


The algebra of endomorphisms of $\noEKff{\b}$ is therefore naturally endowed
with a topological bialgebra structure.\footnote{By \emph{topological} bialgebra
we do not mean topological over $\sfk{\fml}$. We are instead referring
to the fact that the algebra $\sfEnd{\noEKff{\b}}$ has a natural comultiplication 
$\Delta:\sfEnd{\noEKff{\b}}\to\sfEnd{\noEKff{\b}\otimes \noEKff{\b}}$, and that 
$\sfEnd{\noEKff{\b}\otimes \noEKff{\b}}$ can be interpreted as a completion of 
$\sfEnd{\noEKff{\b}}^{\ten 2}$.}

\subsection{Etingof--Kazhdan quantisation}\label{ss:eksum}

Let $V\in\hEq{\gu}{\Phi}$, and consider the linear map $m_V:\noEKff{\b}(M_-)
\ten \noEKff{\b}(V)\to \noEKff{\b}(V)$ given by
\[m_V(x)\,v=(\ipd\ten 1)\Phi^{-1}(1\ten v)x.\]
The map $m_V$ satisfies the associativity relation
\[ m_V\circ (m_{M_-}\otimes\id_V)=m_V\circ (\id_{M_-}\otimes m_V)\]
as morphisms $\noEKff{\b}(M_-)\ten \noEKff{\b}(M_-)\ten \noEKff{\b}(V)\to \noEKff{\b}
(V)$, and the unit condition $m_V(u_-)=\id_V$, where $u_-\in \noEKff{\b}(M_-)$ is the 
element mapping $1_-\in M_-$ to $1_+^*\otimes 1_-\in M_+^*\ten M_-$.

As a consequence, $\noEKff{\b}(M_-)$ is an associative algebra with unit $u$
and multiplication $m_{M_-}$, which acts on the functor $\noEKff{\b}$. The
corresponding map $\noEKff{\b}(M_-)\to\End(\noEKff{\b})$ is an embedding since 
$\noEKff{\b}(M_-)$ is unital and acts on itself, and we shall identify $\noEKff{\b}(M_-)$
with its image in $\End(\noEKff{\b})$.

The (topological) coproduct on $\End(\noEKff{\b})$ maps $\noEKff{\b}(M_-)$ to $F
(M_-)\ten \noEKff{\b}(M_-)$, and coincides with the coproduct induced by
the coalgebra structure on $M_-$ given by $\Delta=J_{M_-,M_-}^{-1}
\circ \noEKff{\b}(i_-)$. It follows that
\[\Uhb=(\noEKff{\b}(M_-),m_{M_-},u_-,J_{M_-,M_-}^{-1}\circ \noEKff{\b}(i_-),m_\sfk)\]
is a bialgebra, with counit $m_{\sfk}:\noEKff{\b}(M_-)\to\sfK$, endowed with
an identification of $\sfK$--modules $\noEKff{\b}(M_-)\cong M_-{\fml}\cong
U\b{\fml}$.

\begin{theorem}\cite{ek-1,ek-2}
\begin{enumerate}
\item $\Uhb$ is a Hopf algebra, which is a quantisation of the Lie bialgebra $\b$.
\item The assignment $\b\mapsto\Uhb$ is functorial in the Lie bialgebra $\b$.
\end{enumerate}
\end{theorem}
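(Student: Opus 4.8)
The plan has two parts, matching the two assertions, and the common engine is the semiclassical limit of the bialgebra $\Uhb=\noEKff{\b}(\Mm)$.

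First I would analyse the reduction modulo $\hbar$. Since the associator $\Phi$ is trivial modulo $\hbar$, the Drinfeld category $\hEq{\gu}{\Phi}$ degenerates to $\hEq{\gu}{}$, and under the natural isomorphism $\noEKff{\b}(V)\cong V$, $f\mapsto(1_+\ten 1)f(1_-)$, the fiber functor $\noEKff{\b}$ degenerates to the forgetful functor $\hEq{\gu}{}\to\vect_\sfk$. Substituting this into the formulae of \S\ref{ss:EK fiber} and \S\ref{ss:eksum} and using $\Phi\equiv 1\bmod\hbar$, a direct computation shows that, under the identification $\Uhb\cong U\b\fml$ of \S\ref{ss:eksum}, the product $m_{\Mm}$ and coproduct $\Delta=J_{\Mm,\Mm}^{-1}\circ\noEKff{\b}(\im)$ reduce modulo $\hbar$ to the standard product and coproduct of $U\b$ --- the latter because $J_{\Mm,\Mm}\equiv\id$ and $\noEKff{\b}(\im)$ reduces to the comultiplication of $U\b\cong\Ue{\gum}$. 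Hence $\Uhb$ is a bialgebra over $\sfK$ which is a flat deformation of the cocommutative Hopf algebra $U\b$; such a deformation is automatically a Hopf algebra, since $\id_{\Uhb}$ reduces modulo $\hbar$ to $\id_{U\b}$, which is convolution--invertible with inverse the antipode of $U\b$, so by $\hbar$--adic successive approximation $\id_{\Uhb}$ has a convolution inverse, which is the antipode.

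It remains to pin down the quantised Lie bialgebra, and this is the step I expect to be the main obstacle. Since $\Delta\bmod\hbar$ is cocommutative, $\hbar^{-1}(\Delta-\Delta^{21})$ is a well--defined map $\Uhb\to\Uhb\ctp\Uhb$, and its reduction modulo $\hbar$ is a co--Poisson cobracket on $U\b$; one must check that it restricts to $\delta_\b$ on $\b$. This is a first--order computation: writing $J_{V,W}=\id+\hbar J^{(1)}_{V,W}+O(\hbar^2)$, the definition of $J$ (together with $\Phi=1+O(\hbar^2)$) expresses $J^{(1)}$ through the classical $r$--matrix $r_{V,W}=\pi_V\ten\id\circ(1\,2)\circ\id\ten\pi^*_W$, which acts as the canonical element $b_i\ten b^i\in\gu\ctp\gu$. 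Feeding this into $\Delta=J^{-1}\circ\noEKff{\b}(\im)$ and antisymmetrising, the first--order deformation of the coproduct on $x\in\b$ becomes, up to the universal normalisation fixed by the construction, the element $(\ad_x\ten\id+\id\ten\ad_x)(b_i\ten b^i)$ of $\gu\ctp\gu$ --- which lies in $\b\ten\b$ and equals $\delta_\b(x)$ by the definition of the cobracket on the double (cf.\ \S\ref{ss:lbamt} and \S\ref{ss:drinf-double}). This is the computation of \cite{ek-1}, and it completes part (1).

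For part (2) the genuine difficulty is that the intermediate data --- the Drinfeld double $\gu$, the category $\hEq{\gu}{}$, and the Verma modules $\Mm,\Mpv$ --- are not functorial in $\b$, since a morphism $\b\to\b'$ of Lie bialgebras induces no morphism $\gu\to\g_{\b'}$. Following \cite{ek-2} (and as reviewed in Section \ref{s:props}), the remedy is to encode the whole construction in a colored PROP governing Lie bialgebras together with their Drinfeld--Yetter modules: the double, the deformed associativity and commutativity constraints, the Verma modules with their coalgebra and algebra structures, the fiber functor and its tensor structure $J$, and hence $m_{\Mm}$ and $\Delta$, are all morphisms of this PROP, \ie universal natural expressions in the bracket and cobracket. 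A Lie bialgebra is then a $\sfk$--linear symmetric tensor functor from this PROP, a morphism of Lie bialgebras is a natural transformation of two such functors, and evaluating the universal incarnation of $\Uhb$ along it yields a morphism of Hopf algebras $\Uhb\to U_\hbar\b'$; composition and identities are preserved because they already hold at the universal level. Since the semiclassical computations of part (1) are themselves carried out universally, the identifications $\Uhb/\hbar\Uhb\cong U\b$ are natural, so $\b\mapsto\Uhb$ is a functor of Lie bialgebras, and this is exactly the role the PROP formalism plays: absorbing the non--functoriality of the double, after which functoriality is essentially formal.
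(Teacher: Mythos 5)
Your proposal is correct and takes essentially the same route as the paper and the sources it relies on: the paper quotes this theorem from \cite{ek-1,ek-2}, its remark following the statement obtains the antipode by exactly your deformation argument (a bialgebra deformation of $U\b$ is automatically Hopf), and the functoriality mechanism you invoke is precisely the $\PROP$ic description of the quantisation from \cite{ek-2} reviewed in Section \ref{s:props}, where morphisms of Lie bialgebras are realised as natural transformations of realisation functors. The only glossed point is the first--order identification of the cobracket with $\delta_\b$, which is the standard $1$--jet computation of the Etingof--Kazhdan twist (compare Proposition \ref{pr:jets-rel-twist} with $\a=0$), so nothing is missing in substance.
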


\noindent
\remark The fact that $\Uhb$ admits an antipode follows because it is
a deformation of $U\b$ as a bialgebra, and the latter has an antipode.
A formula for the antipode is given explicitly in \cite[Prop. 4.2]{ee}, and
one has
\[
S=\sum_{n\geq0}(-1)^nm^{(n)}(\id - \iota\circ\epsilon)^{\ten n}\Delta^{(n)}.
\]

\subsection{Drinfeld--Yetter modules over a Hopf algebra}\label{ss:DY Hopf}

If $(B,m,\imath,\Delta,\epsilon,S)$ is a Hopf algebra, a \DYt module over
$B$ is a triple $(V,\pi,\pi^*)$, where $\pi:B\otimes V\to V$ gives $V$ the
structure of a left $B$--module, that is
\begin{equation}\label{eq:action}
\pi\circ m\otimes\id_V=\pi\circ\id_B\otimes\pi
\aand
\pi\circ\imath\otimes\id_V=\id_V
\end{equation}
as maps $B\otimes B\otimes V\to V$ and $V\to V$ respectively, $\pi^*:V
\to\b\otimes V$ gives $V$ the structure of a right $B$--comodule, that is
\begin{equation}\label{eq:coaction}
\Delta^{21}\ten\id_V\circ\pi^*=\id_B\otimes\pi^*\circ\pi^*
\aand
\epsilon\otimes\id_V\circ\pi^*=\id_V
\end{equation}
as maps $V\to B\otimes B\otimes V$ and $V\to V$ respectively, and the maps
$\pi,\pi^*$ satisfy the following compatibility condition in $\End(B\ten V)$
\begin{equation}\label{eq:DY Hopf}
\pi^*\circ\pi=
m^{(3)}\otimes\pi \circ (1\,3)(2\,4) \circ
S^{-1}\otimes\id^{\otimes 4}\circ \Delta^{(3)}\otimes\pi^*
\end{equation}
where $m^{(3)}=m\circ m\otimes\id:B^{\otimes 3}\to B$ and $\Delta^{(3)}=
\Delta\otimes\id\circ\Delta:B\to B^{\otimes 3}$ are the iterated multiplication
and comultiplication respectively. The category $\DrY{B}$ of \DYt modules
over $B$ has a natural structure of braided monoidal category. For any
$V,W\in\DrY{B}$, the action and coaction on the tensor product 
$V\ten W$ are defined, respectively, by
\[
\pi_{V\ten W}=\pi_V\ten\pi_W\circ (2\,3)\circ\Delta\ten\id^{\ten 2}
\qquad\mbox{and}\qquad
\pi^*_{V\ten W}=m^{21}\ten\id^{\ten 2}\circ(2\,3)\circ\pi^*_V\ten\pi^*_W.
\]
The associativity constraints are trivial and the braiding is defined by $\beta_{VW}=(21)\circ R_{VW}$,
where the $R$--matrix $R_{VW}\in\End(V\ten W)$ is defined by
\[
R_{VW}=\pi_V\ten\id\circ(2\,3)\id\ten\pi_W^*.
\]
It follows from \eqref{eq:action}--\eqref{eq:coaction}, and $m\circ\id\ten
S\ten \Delta=\iota\circ\epsilon$, that $R_{VW}$ is an invertible 
endomorphism with inverse
\[
R_{VW}^{-1}=\pi_V\ten\id\circ S\ten \id^{\ten 2}\circ(2\,3)\id\ten\pi_W^*.
\]
The braiding $\beta_{VW}$ is therefore invertible, with inverse $R^{-1}
_{VW}\circ(1\,2)$.

\subsection{Drinfeld--Yetter modules and quantum double}\label{ss:DY-qD}

Let $B$ be a finite--dimensional Hopf algebra, and $B^{\circ}$ the Hopf
algebra $B^*$ with opposite coproduct. The quantum double of $B$ is
the unique quasitriangular Hopf algebra $(DB, R)$ such that 1) $DB=
B\otimes B^\circ$ as vector spaces 2) $B$ and $B^\circ$ are Hopf subalgebras
of $DB$ and 3) $R$ is the canonical element corresponding to $B\ten
B^{\circ}\subset DB\ten DB$. The multiplication in $DB$ is given 
in Sweedler notation by
\begin{equation}\label{eq:q-double-mult}
b\ten f \cdot b'\ten f'=\langle S^{-1}(b'_1), f_1\rangle\langle b'_3, f_3\rangle\,b\cdot b'_2
\otimes f_2\cdot f 
\end{equation}
where $f,f'\in B^{\circ}$, $b,b'\in B$, and $\iip{-}{-}$ is the evaluation pairing \cite
[\S 13]{drin-2}. It is easy to see that there is a canonical equivalence of braided
tensor categories $\Rep DB \simeq \DrY{B}$. For completeness, we include
a detailed proof of this result in the Appendix (cf. \S\ref{ss:app-4}).
More specifically, any Drinfeld--Yetter $B$--module $V$ is acted upon by $B^{\circ}$
via
\[
\xymatrix{B^{\circ}\ten V\ar[r]^(.45){\id\ten\pi^*_V} &
B^{\circ}\ten B\ten V\ar[r]^(.65){\mathsf{ev}\ten\id}&V}.
\]
The compatibility condition between action and coaction guarantees that the
actions of $B, B^{\circ}$ on $V$ give rise to an action of $DB$. Conversely,
given any $DB$--module $V$, the map
\[
\xymatrix@C=0.55in{V\ar[r]^{\iota\ten\id} & B\ten V \ar[r]^{\id\ten\pi_V(R)} & B\ten V}
\]
defines a compatible coaction on $B$ on $V$. It is straightforward to
check that this equivalence preserves the tensor product and braiding.

\subsection{Duality for quantum enveloping algebras}\label{ss:dualityQUE}

We review below (one half of) Drinfeld's duality principle \cite{drin-2,gav},
adapted to the setting of Lie bialgebras of arbitrary dimension.

Recall that a {\it quantised enveloping algebra} (QUE) is a Hopf algebra
$B$ in $\VecK$ such that 
\begin{itemize}
\item $B$ is endowed with the $\hbar$--adic topology, that is $\{\hbar
^n B\}_{n\geq 0}$ is a basis of neighborhoods of $0$. Equivalently,
$B$ is isomorphic, as topological $\sfK$--module, to $B_0\fml$, for
some discrete topological vector space $B_0$.
\item $B/\hbar B$ is a connected, cocommutative Hopf algebra over
$\sfk$. Equivalently, $B/\hbar B$ is isomorphic to $U\b$ for some
Lie bialgebra $(\b, [,]_{\b},\delta_{\b})$ and, under this identification,
\[
\delta_{\b}(b)=\frac{\Delta(\wt{b})-\Delta^{21}(\wt{b})}{\hbar} \mod\hbar
\]
where $\wt{b}\in B$ is any lift of $b\in\b$. 
\end{itemize}

A {\it quantised formal series Hopf algebra} (QFSH) is a Hopf algebra
$B$ in $\VecK$ such that 
\begin{itemize}
\item $B$ is endowed with the $I$--adic topology, where $I=\epsilon^
{-1}(\hbar\sfk\fml)$, that is $\{I^n\}_{n\geq 0}$ is a basis of neighborhoods
of $0$.
\item $B/\hbar B$ is a local, commutative Hopf algebra. Equivalently,
$B/\hbar B$ is isomorphic, as Poisson Hopf algebra, to $(\wh{S\b},m,
\Delta,P)$ for some Lie bialgebra $(\b,[\cdot,\cdot]_\b,\delta_\b)$. Here,
$\wh{S\b}$ is the completion of the symmetric algebra of $\b$ \wrt its grading, $m$ is
the commutative multiplication on $S\b$, $\Delta$ is the
coproduct obtained by formally starting from the \BCH multiplication
on $S\b^*\cong U\b^*$ (see \S \ref{ss:BCH}), reversing all arrows
and replacing $[\cdot,\cdot]_{\b^*}=\delta_b^t$ by $\delta_\b$, and
the Poisson bracket $P$ is given by the Lie bracket $[\cdot,\cdot]
_\b$.
\end{itemize}

We denote the categories of quantised  enveloping algebras and
quantised formal series Hopf algebras over $\sfk$ by $\QQUE(\sfk)$
and $\QQFSH(\sfk)$ respectively. For every $B\in\QQUE(\sfk)$, set
\[B'=\{b\in B\;|\; (\id-\iota\circ\epsilon)^{\ten n}\circ\Delta^{(n)}(b)\in
\hbar^nB^{\ten n}\;\text{for any $n\geq 0$}\}\]
where $\Delta^{(n)}$ denotes the iterated coproduct defined inductively
by $\Delta^{(0)}=\iota\circ\epsilon$, $\Delta^{(1)} = \id$, and
\[\Delta^{(n+1)}=\Delta\otimes \id^{\otimes (n-1)}\circ \Delta^{(n)}.\]
Then $B'$, endowed with the topology induced by the $\hbar$--adic
topology on $B$, is a Hopf subalgebra of $B$, and a quantised
formal series Hopf algebra. Moreover, if $B/\hbar B$ is isomorphic
to $U\b$ for some Lie bialgebra $\b$, then $B'/\hbar B'$ is isomorphic
to the Poisson Hopf algebra $(\wh{S\b},m,\Delta,P)$ described above.

If $B\in\QQUE(\sfk)$ is a quantisation of Lie bialgebra $(\b,[\cdot,\cdot]
_\b,\delta_b)$ where $\b$ is finite--dimensional, then $B^\vee=(B')^*$
is a QUE, with underlying Lie bialgebra $(\b^*,\delta_\b^t,[\cdot,\cdot]
_\b^t)$. The tensor product $B\otimes (B^\vee)^\circ$ can be
endowed with a unique quasitriangular Hopf algebra structure
such that $B$ and $(B^\vee)^\circ$ are Hopf subalgebras, and
is called the quantum double $DB$ of $B$. The corresponding $R$--matrix
$R$ lies in the $\hbar$--adic completion of the algebraic tensor
product $B'\otimes(B^\vee)^\circ\subset DB^{\otimes 2}$.

\subsection{Admissible Drinfeld--Yetter modules}\label{ss:admDY}

The following notion is due to P. Etingof \cite{e}.

\begin{definition}
A Drinfeld--Yetter module $\V$ over a quantised enveloping algebra $B$ 
is called \emph{admissible} if its coaction $\pi_\V^*:\V\to B\otimes\V$ factors
through $B'\otimes\V$.
\end{definition}

In analogy with the result described in \S\ref{ss:DY-qD}, the category
$\aDrY{B}$ of admissible \DYt modules over $B$ is equivalent, as a
braided tensor category,  to that of modules over the quantum double
$DB$ of $B$ (cf. \S\ref{ss:app-5} for a detailed proof). 

\subsection{Admissibility criterion}

By definition of $B'$, and the exactness of the tensor product
of topological vector spaces, a \DYt module $\V$ is admissible
if, and only if, for any $n\geq 1$, the image of
\[(\id-\iota\circ\epsilon)^{\ten n}\circ\Delta^{(n)}\ten\id_\V\circ\pi_{\V}^*\]
lies in $\hbar^n B^{\otimes n}\otimes\V$. The following gives a
more economical criterion.

\begin{proposition}\label{prop:adm-cond}
A \DYt module $(\V, \pi_{\V}, \pi_{\V}^*)$ is admissible if, and only if
\begin{equation}\label{eq:adm-cond}
\ima\left((\id-\iota\circ\epsilon)\ten\id\circ\pi_{\V}^*\right)\subset\hbar B\ten\V.
\end{equation}
\end{proposition}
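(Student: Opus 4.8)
The statement claims that the single condition \eqref{eq:adm-cond} is equivalent to the full family of conditions
\[
\ima\bigl((\id-\iota\circ\epsilon)^{\ten n}\circ\Delta^{(n)}\ten\id_\V\circ\pi_{\V}^*\bigr)\subset\hbar^n B^{\ten n}\ten\V
\qquad\text{for all }n\geq 1.
\]
One direction is trivial: the case $n=1$ of the family is exactly \eqref{eq:adm-cond}, so admissibility implies \eqref{eq:adm-cond}. The work is the converse. The plan is to induct on $n$, using the coassociativity of $\pi_\V^*$ (the right--comodule axiom, i.e. the first equation in \eqref{eq:coaction}) to peel one tensor factor off $\Delta^{(n)}$ at a time, together with the fact that $(\id-\iota\circ\epsilon)\ten\Delta^{(n-1)}\circ\Delta\circ(\id-\iota\circ\epsilon)=(\id-\iota\circ\epsilon)^{\ten 2}\circ\bigl(\id\ten\Delta^{(n-1)}\bigr)\circ\Delta$ after suitable bookkeeping, so that the factor of $\hbar$ coming from \eqref{eq:adm-cond} combines with the factor of $\hbar^{n-1}$ coming from the inductive hypothesis.

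Concretely, first I would rewrite $(\id-\iota\circ\epsilon)^{\ten n}\circ\Delta^{(n)}$ by splitting off the first leg: using $\Delta^{(n)}=\bigl(\id\ten\Delta^{(n-1)}\bigr)\circ\Delta$ and the identity $(\id-\iota\circ\epsilon)\ten\Delta^{(n-1)}=\bigl(\id\ten(\id-\iota\circ\epsilon)^{\ten(n-1)}\bigr)\circ\bigl(\id\ten\Delta^{(n-1)}\bigr)$ on the subspace $\ker\epsilon\ten B$ — the point being that $\Delta^{(n-1)}$ applied to an element of $B$ and then projected by $(\id-\iota\circ\epsilon)^{\ten(n-1)}$ only depends on the element modulo $\iota(\sfk\fml)$ after the first $(\id-\iota\circ\epsilon)$ has been applied, because $(\id-\iota\circ\epsilon)$ kills $\iota(\sfk\fml)$ and $\Delta$ is counital. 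Then I apply $-\ten\id_\V$ and compose with $\pi_\V^*$, and invoke the comodule coassociativity $\Delta^{21}\ten\id_\V\circ\pi_\V^*=\id_B\ten\pi_\V^*\circ\pi_\V^*$ (rearranged as needed for the non--opposite coproduct) to convert one application of $\Delta\ten\id_\V\circ\pi_\V^*$ into $\id_B\ten\pi_\V^*\circ\pi_\V^*$. This exhibits
\[
(\id-\iota\circ\epsilon)^{\ten n}\circ\Delta^{(n)}\ten\id_\V\circ\pi_\V^*
=\bigl((\id-\iota\circ\epsilon)\ten\id_B\ten\id_\V\circ\pi_\V^*\bigr)\text{-part}
\;\circ\;\bigl((\id-\iota\circ\epsilon)^{\ten(n-1)}\circ\Delta^{(n-1)}\ten\id_\V\circ\pi_\V^*\bigr),
\]
up to the appropriate placement of tensor factors and braidings. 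By \eqref{eq:adm-cond} the outer factor lands in $\hbar B\ten(\text{everything else})$, by the inductive hypothesis the inner factor lands in $\hbar^{n-1}B^{\ten(n-1)}\ten\V$, and since all the structure maps involved are $\sfK$--linear and the tensor product of topological vector spaces is exact (as recorded in \S\ref{ss:admDY}), the composite lands in $\hbar^n B^{\ten n}\ten\V$. This closes the induction and gives admissibility.

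The main obstacle I anticipate is purely combinatorial: getting the identity relating $(\id-\iota\circ\epsilon)^{\ten n}\circ\Delta^{(n)}$ to a composite that isolates a single $(\id-\iota\circ\epsilon)$ factor, with the permutations and the non--opposite versus opposite coproduct in \eqref{eq:coaction} tracked correctly, so that the $\hbar$ from \eqref{eq:adm-cond} genuinely multiplies the $\hbar^{n-1}$ from the hypothesis rather than landing in the wrong tensor slot. Once the bookkeeping is set up, the exactness of $\ctp$ and $\sfK$--linearity do all the remaining work, so there is no analytic difficulty. I would also double--check the base case $n=1$ reduces literally to \eqref{eq:adm-cond} and that the degenerate cases $\Delta^{(0)}=\iota\circ\epsilon$, $\Delta^{(1)}=\id$ do not create an off--by--one in the induction.
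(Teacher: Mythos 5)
Your plan is correct and is essentially the paper's own argument: the paper proves the converse by noting that the comodule axiom gives $\Delta^{(n)}\ten\id_{\V}\circ\pi_{\V}^*=\sigma\circ(\pi_{\V}^*)^{(n)}$ for some $\sigma\in\SS_n$, which is precisely the identity your induction establishes by peeling off one coproduct leg at a time, after which \eqref{eq:adm-cond} applied to each of the $n$ coaction legs yields the $\hbar^n$ bound. The only blemish is the auxiliary identity you state ``on the subspace $\ker\epsilon\ten B$'', which is misstated (it projects the wrong tensor legs) but also unnecessary, since the decomposition $(\id-\iota\circ\epsilon)^{\ten n}\circ\Delta^{(n)}=\bigl((\id-\iota\circ\epsilon)\ten\bigl[(\id-\iota\circ\epsilon)^{\ten(n-1)}\circ\Delta^{(n-1)}\bigr]\bigr)\circ\Delta$ together with the comodule axiom, the disjointness of the tensor slots, and flatness already closes the induction.
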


\begin{pf}
If $\V$ is admissible, its coaction factors through $B'\ten\V$ and 
\eqref{eq:adm-cond} follows by the definition of $B'$. Conversely, 
assume $\pi_{\V}^*$ satisfies \eqref{eq:adm-cond}. Then, it follows from
\eqref{eq:coaction} that
$\Delta^{(n)}\ten\id_\V\circ\pi_{\V}^*=\sigma\circ(\pi^*_{\V})^{(n)}$
for some $\sigma\in\SS_n$. In particular, \eqref{eq:adm-cond} implies
that
\[
\ima\left((\id-\iota\circ\epsilon)^{\ten n}\ten\id_\V\circ\Delta^{(n)}\ten\id\circ
\pi_{\V}^*\right)\subset\hbar^n B^{\ten n}\ten\V.
\]
\end{pf}

\subsection{Semiclassical limit of admissible Drinfeld--Yetter modules}\label{ss:sc-lim-admDY}

Let $B$ be a QUE, with underlying Lie bialgebra $(\b,[,],\delta)$. The category
of admissible \DYt modules over $B$ is a quantisation of the category of
\DYt modules over $\b$. Specifically, for any $\V\in\aDrY{B}$ with action
and coaction
\[
\pi_{\V}: B\ten\V\to\V
\aand
\pi_{\V}^*:\V\to B'\ten\V
\]
we set $\SC{\V}=\V/\hbar\V$ and define the maps
\[
\SC{\pi_{\V}}:\b\ten\SC{\V}\to\SC{\V}
\aand
\SC{\pi_{\V}^*}:\SC{\V}\to\b\ten\SC{\V}
\]
as follows. For every $b\in\b$ and $v\in\SC{\V}$ we set
\begin{align*}
\SC{\pi_{\V}}\,b\ten v	&=\pi_{\V}\,\wt{b}\ten\wt{v}\mod\hbar\\
\intertext{where $\wt{b}\in B$ and $\wt{v}\in\V$ are arbitrarily lifts of $b,v$, and}
\SC{\pi_{\V}^*}\,v	&=\frac{1}{\hbar}\,\left(\id_B-\iota_B\circ\epsilon_B\right)\ten
\id\circ\pi_{\V}^*\,\wt{v}\mod\hbar.
\end{align*}
Note that the map $\SC{\pi_{\V}^*}$ is well--defined since $\pi_{\V}^*$
is an admissible coaction. Namely, set $p_B^{(n)}=(\id-\iota\circ\epsilon)^{\ten n}
\Delta^{(n)}$. By definition, $x\in B'$ if and only if $p_B^{(n)}(x)\in\hbar^nB^{\ten n}$ 
for any $n\geq0$. It is easy to verify that
\[
\Delta(p_B(x)) = p_B^{(2)}(x)+1\ten p_B(x)+p_B(x)\ten 1.
\]
It follows that $\hbar^{-1}p_B(x)$ modulo $\hbar$ is primitive, and
therefore belongs to $\b$.

\begin{proposition}
$(\SC{\V},\SC{\pi_{\V}}, \SC{\pi_{\V}^*})$ is a \DYt module over $\b$.
\end{proposition}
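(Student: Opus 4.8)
The goal is to verify that $(\SC{\V},\SC{\pi_{\V}},\SC{\pi_{\V}^*})$ satisfies the three defining relations \eqref{eq:action-lba}, \eqref{eq:coaction-lba} and \eqref{eq:act-coact-lba} of a Drinfeld--Yetter module over $\b$. The strategy is the obvious one: each relation is the semiclassical shadow of the corresponding Hopf-algebraic relation \eqref{eq:action}, \eqref{eq:coaction}, \eqref{eq:DY Hopf} for the admissible module $\V$ over $B$, and one simply reduces modulo the appropriate power of $\hbar$. Throughout I would write $p_B=(\id-\iota\circ\epsilon)$ and $p_B^{(n)}=(\id-\iota\circ\epsilon)^{\ten n}\circ\Delta^{(n)}$ as in \S\ref{ss:sc-lim-admDY}, and use freely that $B/\hbar B\cong U\b$ with $\b$ the primitives, that $\delta_\b(b)=\hbar^{-1}(\Delta-\Delta^{21})(\wt b)\bmod\hbar$, and that $[\,,\,]_\b$ is the commutator mod $\hbar$.

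\emph{Action.} Since $\pi_{\V}$ is a $B$-action and $B/\hbar B=U\b$, the map $\SC{\pi_{\V}}$ is a well-defined $U\b$-action on $\SC{\V}$; restricting to primitives gives a Lie algebra action of $\b$, which is exactly \eqref{eq:action-lba}. Concretely, lifting $x,y\in\b$ to $\wt x,\wt y\in B$, the relation $\pi_{\V}\circ m\ten\id=\pi_{\V}\circ\id\ten\pi_{\V}$ applied to $\wt x\ten\wt y\ten\wt v$ and reduced mod $\hbar$ gives $\SC{\pi_\V}(\wt x\wt y\bmod\hbar)\ten v=\SC{\pi_\V}\,x\ten(\SC{\pi_\V}\,y\ten v)$; antisymmetrizing in $x,y$ and using that $\wt x\wt y-\wt y\wt x\equiv \widetilde{[x,y]_\b}\bmod\hbar$ yields \eqref{eq:action-lba}.

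\emph{Coaction.} This is the first place where the admissibility bound matters. From \eqref{eq:coaction} one has $\Delta^{21}\ten\id\circ\pi_{\V}^*=\id_B\ten\pi_{\V}^*\circ\pi_{\V}^*$. Apply $p_B\ten p_B\ten\id$ to both sides; on the left, $p_B\ten p_B\circ\Delta^{21}=p_B^{(2),21}$, and the identity $\Delta(p_B(x))=p_B^{(2)}(x)+1\ten p_B(x)+p_B(x)\ten 1$ recorded in \S\ref{ss:sc-lim-admDY} lets one express $p_B^{(2)}$ in terms of $p_B$, $\Delta$, and $\delta_\b$ after dividing by $\hbar^2$ and reducing mod $\hbar$. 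Being careful with the factor $\hbar^{-2}$ (legitimate by admissibility, since $p_B^{(2)}\ten\id\circ\pi^*_\V$ lands in $\hbar^2 B^{\ten 2}\ten\V$ by Proposition \ref{prop:adm-cond}) turns the quantum coassociativity-type relation \eqref{eq:coaction} into the co-Jacobi identity \eqref{eq:coaction-lba} for $\SC{\pi_{\V}^*}$; that $\SC{\pi_{\V}^*}$ is antisymmetric comes from the fact that $\delta_\b$ itself is, which in turn is built into the QUE axioms. I would organize this as: (i) $\SC{\pi_\V^*}$ is well-defined and valued in $\b\ten\SC\V$ (already done in the excerpt); (ii) it is a Lie coaction, extracted from \eqref{eq:coaction} by the $\hbar^{-2}$-rescaling above.

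\emph{Compatibility.} This is the main obstacle and the one requiring the most care. Starting from \eqref{eq:DY Hopf}, $\pi^*_\V\circ\pi_\V=m^{(3)}\ten\pi\circ(1\,3)(2\,4)\circ S^{-1}\ten\id^{\ten 4}\circ\Delta^{(3)}\ten\pi^*_\V$, one wants to recover \eqref{eq:act-coact-lba}. The right-hand side must be expanded using $\Delta^{(3)}(\wt b)=\wt b\ten1\ten1+1\ten\wt b\ten1+1\ten1\ten\wt b+O(\hbar)$ and $S^{-1}(\wt b)=-\wt b+O(\hbar)$ for $\wt b$ a lift of $b\in\b$; composing $p_B\ten\id$ with $\pi^*_\V$ contributes the single factor of $\hbar$ that makes $\SC{\pi^*_\V}$ appear, and the leading terms in $\hbar$ reorganize — after cancellations coming from $m\circ(\id\ten S)\circ\Delta=\iota\circ\epsilon$ — precisely into the four terms $[\,,\,]\ten\id$, $\id\ten\pi\circ(1\,2)\circ\id\ten\pi^*$, $\id\ten\pi^*\circ\pi$ (i.e. $\pi^*\circ\pi$), and $\id\ten\pi\circ\delta\ten\id$ of \eqref{eq:act-coact-lba}. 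The bookkeeping of which terms survive at order $\hbar^0$ (after the overall $\hbar^{-1}$ from $p_B$) and which are pushed to higher order is the delicate point; a clean way is to filter $B$ by powers of $p_B$ and track degrees, noting that $\delta_\b$ arises exactly from the order-$\hbar$ part of $\Delta-\Delta^{21}$ on primitives, so the $\delta\ten\id$ term is forced by the asymmetry of $\Delta^{(3)}$ versus its opposite. I expect the verification of \eqref{eq:act-coact-lba} to be the bulk of the proof; \eqref{eq:action-lba} is essentially immediate and \eqref{eq:coaction-lba} is a shorter version of the same $\hbar$-expansion argument.
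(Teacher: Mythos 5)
Your plan reproduces the paper's proof in essentially the same form: the action axiom is immediate, the comodule axiom comes from the identity $\wt{p}\ten\wt{p}\circ(\Delta_B-\Delta_B^{21})|_{B'}=\delta\circ\wt{p}|_{B'}\mod\hbar$ (your rescaling of the Hopf comodule axiom by $p_B\ten p_B$, justified by admissibility, is the same computation), and the bulk is the compatibility, which the paper also obtains by expanding \eqref{eq:DY Hopf} modulo $\hbar^2$ on $\wt{b}\ten\wt{v}$, extracting the $\id\ten\pi\circ\delta\ten\id$ term from the order-$\hbar$ discrepancy $S^{-1}\ten(2\,3)\circ\Delta^{(3)}(\wt{b})=S^{-1}\ten\id^{\ten 2}\circ\Delta^{(3)}(\wt{b})-\hbar\,1\ten\delta(b)$, expanding the coaction to its linear term in $\hbar$, and using the antipode axiom so that the zeroth-order contribution is killed by $\wt{p}\ten\id$. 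The only difference is presentational (you phrase the bookkeeping as a filtration argument where the paper writes the explicit terms), so the approach is correct and matches the paper's.
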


\begin{pf}
Set $V=\SC{\V}$, $\pi_V=\SC{\pi_{\V}}$, and $\pi_V^*=\SC{\pi_{\V}^*}$. 
It is clear that $(V,\pi_V)$ is a $\b$--module. To prove that $(V,\pi_V^*)$ is 
a $\b$--comodule it is enough to observe that
\[
\wt{p}\ten\wt{p}\circ(\Delta_B-\Delta^{21}_B)|_{B'}=\left.\delta\circ\wt{p}\right|_{B'}\mod\hbar
\]
where $\wt{p}=\hbar^{-1}p_B$. 
Finally, we have to prove the compatibility between $\pi_V$ and $\pi_V^*$, \ie
\begin{equation}\label{eq:Lie-DY-comp}
\pi^*_V\circ\pi_V=\id\ten\pi_V\circ(1\,2)\circ\id\ten\pi^*_V+[,]\ten\id\circ\id\ten\pi^*_V-
\id\ten\pi_V\circ\delta\ten\id.
\end{equation}
This follows by studying modulo $\hbar^2$
the relation \eqref{eq:DY Hopf} on the vector $\wt{b}\ten\wt{v}$.
More specifically, one has
\[
S^{-1}\ten(2\,3)\circ\Delta^{(3)}(\wt{b})=S^{-1}\ten\id^{\ten 2}\circ\Delta^{(3)}(\wt{b})-
\hbar 1\ten\delta(b) \mod\hbar^2.
\]
Therefore, \eqref{eq:DY Hopf} reads
\begin{align*}
\pi^*_{\V}\circ\pi_{\V}(\wt{b}\ten\wt{v})=m^{(3)}\otimes\pi_{\V} \circ &(1\,3\,4\,2) \circ 
S^{-1}\otimes\id^{\otimes 4}\circ \Delta^{(3)}\otimes\pi_{\V}^*(\wt{b}\ten\wt{v})\\
-&\hbar\, m^{(3)}\otimes\pi_{\V} \circ (1\,3\,4\,2) \circ 1\ten\delta\ten\pi_{\V}^*(b\ten\wt{v})\, 
\mod\hbar^2
.
\end{align*}
Now, since we are working modulo $\hbar^2$, it is enough to
consider the coaction up to its linear term
\[
\pi_{\V}^*(\wt{v})=1\ten\wt{v}+\hbar\wt{v}_1\ten\wt{v}_0 \mod\hbar^2.
\]
Then, we see that
\[
m^{(3)}\otimes\pi_{\V} \circ (1\,3\,4\,2) \circ S^{-1}\otimes\id^{\otimes 4}\circ \Delta^{(3)}
\otimes\id^{\ten 2}(\wt{b}\ten1\ten\wt{v})=\epsilon(\wt{b}_1)1\ten\pi_{\V}(\wt{b}_2\ten\wt{v}),
\]
which vanishes after composition with $\wt{p}\ten\id$, and
\begin{align*}
m^{(3)}\otimes&\pi_{\V} \circ (1\,3\,4\,2) \circ S^{-1}\otimes\id^{\otimes 4}\circ \Delta^{(3)}
\otimes\id^{\ten 2}(\hbar\,\wt{b}\ten\wt{v}_1\ten\wt{v}_0)\\
=&m_0^{(3)}\otimes\pi_{\V} \circ (1\,3\,4\,2) \circ S_0\otimes\id^{\otimes 4}\circ \Delta_0^{(3)}
\otimes\id^{\ten 2}(\hbar\,\wt{b}\ten\wt{v}_1\ten\wt{v}_0)
\,\mod\hbar^2,
\end{align*}
which reduces to
\[
\hbar\wt{v}_1\ten\pi_{\V}(b\ten\wt{v}_0)+\hbar(b\wt{v})_1-\wt{v}_1b)\ten\wt{v}_0.
\]
After composition with $\wt{p}\ten\id$ and modulo $\hbar$, this gives
\[
\id\ten\pi_V\circ(1\,2)\circ\id\ten\pi^*_V+[,]\ten\id\circ\id\ten\pi^*_V
\]
Finally, one has
\[
-\hbar\, m^{(3)}\otimes\pi_{\V} \circ (1\,3\,4\,2) \circ 1\ten\delta\ten\id^{\ten 2}(b\ten1\ten\wt{v})=
-\hbar\id\ten\pi_{\V}\circ\delta(b\ten\wt{v})
\]
which, after composition with $\wt{p}\ten\id$ and modulo $\hbar$, gives
\[
-\id\ten\pi_{V}\circ\delta(b\ten{v})
\]
and proves the compatibility \eqref{eq:Lie-DY-comp}.
\end{pf}

\subsection{Quantisation of representations}\label{ss:qRep}

We now return to the setting of \ref{ss:eksum}. 
Let $U,V\in\hDrY{\b}{\Phi}=\hEq{\gu}{\Phi}$, and $R^J_{U,V}\in\End_\sfK(\noEKff{\b}(U)\ten
\noEKff{\b}(V))$ the
twisted $R$--matrix defined by the diagram
\[\xymatrix@C=2cm{
\noEKff{\b}(U)\otimes \noEKff{\b}(V) \ar[r]^{(1\,2)R^J_{U,V}} \ar[d]_{J_{U,V}}& \noEKff{\b}(V)\otimes 
\noEKff{\b}(U)\ar[d]^{J_{V,U}}\\
\noEKff{\b}(U\otimes V) \ar[r]_{\noEKff{\b}(\beta_{U,V})} & \noEKff{\b}(V\otimes U)
}\]
where $\beta_{U,V}=(1\,2)\exp^{\hbar/2\,\Omega_{U,V}}$ is the commutativity
constraint in $\hDrY{\b}{\Phi}$. Then, the map
\[m^*_V:\noEKff{\b}(V)\to \noEKff{\b}(\Mm)\ten \noEKff{\b}(V)\qquad\qquad m^*_V(v)= 
R^J_{M_-,V}\,u_-\ten v\]
defines a right coaction of $\noEKff{\b}(M_-)$ on $\noEKff{\b}(V)$, which is compatible with the
left action of $\noEKff{\b}(M_-)$ on $V$ given by $m_V$ in the sense of \eqref{eq:DY Hopf}.
$(\noEKff{\b}(V), m_V, m^*_V)$ is therefore a \DYt module over $\noEKff{\b}(\Mm)$.

\begin{lemma}\label{le:Fb admissible}\hfill
\begin{enumerate}
\item The \DYt module $\noEKeq{\b}(V)=(\noEKff{\b}(V),m_V,m_V^*)$ is admissible.
\item The semiclassical limit of $\noEKeq{\b}(V)$ is equal to $V\in\DrY{\b}$.
\end{enumerate}
\end{lemma}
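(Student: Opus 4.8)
Recall that $\noEKeq{\b}(V)=(\noEKff{\b}(V),m_V,m_V^*)$ has already been shown to be a Drinfeld--Yetter module over $\Uhb=\noEKff{\b}(M_-)$; what is at stake is its admissibility and the computation of its semiclassical limit. For (1) I would invoke the admissibility criterion of Proposition~\ref{prop:adm-cond}: since $m_V^*(v)=R^J_{M_-,V}\,u_-\ten v$, it suffices to show that $(\id-\iota\circ\epsilon)\ten\id\circ m_V^*$ takes values in $\hbar\,\Uhb\ten\noEKff{\b}(V)$. The key point is that $R^J_{U,V}\equiv\id\pmod\hbar$ for all $U,V\in\hDrY{\b}{\Phi}$. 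Indeed, the defining square for the twisted $R$--matrix gives $R^J_{U,V}=(1\,2)\circ J_{V,U}^{-1}\circ\noEKff{\b}(\beta_{U,V})\circ J_{U,V}$; since $\Phi_{UVW}=\Phi(\hbar\Omega_{UV},\hbar\Omega_{VW})\equiv 1$ and $\beta_{UV}=(1\,2)\exp(\hbar\Omega_{UV}/2)\equiv(1\,2)\pmod\hbar$, the explicit formula for $J$ together with the identification $\noEKff{\b}(V)\cong V\fml$ of \S\ref{ss:vermaEK} shows that $J$ reduces modulo $\hbar$ to the canonical (hence trivial) tensor structure on the forgetful functor $\hDrY{\b}{}\to\vect_\sfk$, while $\noEKff{\b}(\beta_{U,V})$ reduces to the flip; hence $R^J_{U,V}\equiv(1\,2)\circ(1\,2)=\id\pmod\hbar$. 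It then follows that $m_V^*(v)\equiv u_-\ten v\pmod\hbar$, and since $u_-$ is the unit of $\Uhb$ one has $(\id-\iota\circ\epsilon)(u_-)=0$, so that $(\id-\iota\circ\epsilon)\ten\id\circ m_V^*(v)$ lies in $\hbar\,\Uhb\ten\noEKff{\b}(V)$; by Proposition~\ref{prop:adm-cond}, $\noEKeq{\b}(V)$ is admissible.

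For (2), the underlying $\sfk$--vector space of $\SC{\noEKeq{\b}(V)}=\noEKff{\b}(V)/\hbar\noEKff{\b}(V)$ is identified with $V$ via $\noEKff{\b}(V)\cong V\fml$, and it remains to match the semiclassical action and coaction of \S\ref{ss:sc-lim-admDY} with $\pi_V$ and $\pi_V^*$. For the action I would reduce the formula $m_V(x)\,v=(\ipd\ten 1)\Phi^{-1}(1\ten v)\,x$ modulo $\hbar$: since $\Phi\equiv 1$ this collapses to an expression in the classical structures, and the identifications of \S\ref{ss:vermaEK} (in particular $M_-\cong U\b$ as $\b$--modules, and the resulting action of $\Uhb=\noEKff{\b}(M_-)$ on $\noEKff{\b}(V)$) give $\SC{m_V}=\pi_V$, i.e. the $U\b$--action underlying the Drinfeld--Yetter module $V$. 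For the coaction I would write $R^J_{M_-,V}=\id+\hbar\,\rho+O(\hbar^2)$, so that $m_V^*(v)=u_-\ten v+\hbar\,\rho(u_-\ten v)+O(\hbar^2)$ and hence $\SC{m_V^*}(\bar v)=(\id-\iota\circ\epsilon)\ten\id\circ\rho(u_-\ten v)\bmod\hbar$; expanding the defining square to first order expresses $\rho$ in terms of $\noEKff{\b}(\Omega_{M_-,V})$ and the first--order term of the tensor structure $J$, and evaluating at $u_-\ten v$ before applying $(\id-\iota\circ\epsilon)\ten\id$ should recover precisely $\pi_V^*(v)$ --- this being the first--order shadow of the decomposition $\Omega=r+r^{21}$ together with the explicit shape of $J$, and using that $u_-$ sends $1_-$ to $1_+^*\ten 1_-$.

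The formal parts --- the admissibility criterion, the identification of the underlying space, and the reduction of the action modulo $\hbar$ --- are routine once the behaviour of $\Phi$, $\beta$ and $J$ modulo $\hbar$ has been recorded. The main obstacle is the last step of (2): matching $\SC{m_V^*}$ with $\pi_V^*$ requires carefully unwinding the first--order terms of the Etingof--Kazhdan tensor structure $J$ and of the twisted $R$--matrix $R^J_{M_-,V}$, and checking that the projection $(\id-\iota\circ\epsilon)\ten\id$ collapses the contributions on the $M_-$--leg down to exactly the classical coaction. I would handle this by first computing $\beta_{M_-,V}$ and $J_{M_-,V}$ on vectors of the form $u_-\ten v$, where many terms simplify because $u_-$ is the vacuum of $M_-$, and only then assembling $\rho(u_-\ten v)$ and projecting.
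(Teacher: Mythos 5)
Your proposal is correct, and the two halves compare differently with the paper. For part (1) you take a genuinely different route: you invoke the first--order admissibility criterion of Proposition \ref{prop:adm-cond} and only need that $R^J_{\Mm,V}\equiv\id\bmod\hbar$ (which follows from the $0$--jets of $J$ and of $\noEKff{\b}(\beta)$) together with $(\id-\iota\circ\epsilon)(u_-)=0$, so that $(\id-\iota\circ\epsilon)\ten\id\circ m_V^*$ lands in $\hbar\,\EK\b\ten\noEKff{\b}(V)$. The paper argues instead via the explicit universal expansion of the twisted $R$--matrix from \cite{e3} and \cite[Section 3.2]{ATL2}, writing $R^J$ as a sum of terms $\hbar^N x_1\cdots x_k\,y_1\cdots y_s\ten x_{k+1}\cdots x_N\,y_{s+1}\cdots y_N$ with $x_i\in\b$, $y_j\in\b^*$, and using that $\b^*$ kills $u_-$ to place the first leg of $R^J(u_-\ten v)$ directly in $(\EK\b)'$. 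Your argument is more self--contained (it uses only facts already established in the paper, and does not rely on the external structural result), while the paper's argument yields sharper information about where the coaction lands, which is the form actually exploited elsewhere (e.g.\ in \cite{ATL2}); both are valid, since Proposition \ref{prop:adm-cond} reduces admissibility to exactly the condition you check.

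For part (2) your plan coincides with the paper's proof: the action $m_V$ reduces to $\pi_V$ modulo $\hbar$, and the semiclassical coaction is $\tfrac1\hbar(\id-\iota\circ\epsilon)\ten\id\circ R^J_{\Mm,V}\circ\,u_-\ten\id$ modulo $\hbar$, which the paper identifies with $r_{\Mm,V}\circ u_-\ten\id=\pi_V^*$. The step you flag as the main obstacle is in fact short: the $1$--jet $J\equiv\id+\tfrac\hbar2 r$ (the $\a=0$ case of Proposition \ref{pr:jets-rel-twist}, i.e.\ the classical Etingof--Kazhdan computation) gives $R^J\equiv\id+\hbar\,r\bmod\hbar^2$, and since $\epsilon$ vanishes on $\b\cdot u_-$ the projection $(\id-\iota\circ\epsilon)\ten\id$ applied to $r_{\Mm,V}(u_-\ten v)$ returns exactly $\pi_V^*(v)$ under the identification $\Mm\cong U\b$; so executing your sketch is a two--line calculation and there is no gap.
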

\begin{pf}
(i) We have to show that the coaction defined through the twisted $R$--matrix
is admissible, \ie it factors through $(\EK{\b})'\ten\noEKff{b}(V)$. It follows from 
\cite{e3} and \cite[Section 3.2]{ATL2} that the action of $R^J$ can be written
as an infinite sum of elements of the form
\[
\hbar^Nx_1\dots x_k\cdot y_1\cdots y_s \ten x_{k+1}\cdots x_N\cdot y_{s+1}\cdots y_{N}
\]
with $x_i\in\b, y_j\in\b^*$. Since $\b^*$ acts trivially on $u_-$, it follows that the first 
component of $R^J_{M_-,V}\,u_-\ten v$ lies in $(\EK\b)'$.

(ii) It is enough to observe from the formulae in \S\ref{ss:eksum} and \S\ref{ss:sc-lim-admDY} that 
the quantum action $m_V$ reduces to $\pi_V$ modulo $\hbar$. Similarly the quantum
coaction $m_V^*$ defined by the $R$--matrix satisfies
\[
\frac{1}{\hbar}(\id-\varepsilon)\ten\id\circ R^J_{\Mm,V}\circ u_-\ten\id=r_{\Mm,V}\circ u_-\ten\id=\pi_V^*
\quad\mod\hbar
\]
and gives back the coaction $\pi_V^*$ on $V$.
\end{pf}

\subsection{Tannakian equivalence}\label{ss:Tannaka}

\begin{theorem}\label{thm:EK-equiv}\cite{ek-6}
The fiber functor $\noEKff{\b}:\hDrY{\b}{\Phi}\to\vect_\sfK$ lifts to an equivalence
of braided tensor categories $\noEKeq{\b}:\hDrY{\b}{\Phi}\to\aDrY{\EK{\b}}.$
\end{theorem}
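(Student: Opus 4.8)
The functor $\noEKeq{\b}$ has in effect already been built: Lemma \ref{le:Fb admissible} shows that $\noEKeq{\b}(V)=(\noEKff{\b}(V),m_V,m_V^*)$ is an admissible \DYt module over $\Uhb$ with semiclassical limit $V$, and the theorem of \S\ref{ss:EK fiber} equips the underlying fiber functor $\noEKff{\b}$ with the tensor structure $J$. The plan is therefore to (i) promote $(\noEKeq{\b},J)$ to a \emph{braided} tensor functor, then show it is (ii) fully faithful and (iii) essentially surjective. For (i) one checks that each $J_{V,W}$ is a morphism of \DYt modules over $\Uhb$, i.e.\ intertwines the actions $m$ and the coactions $m^*$, and that $\noEKeq{\b}$ sends the commutativity constraint $\beta_{U,V}=(1\,2)\exp(\hbar\Omega/2)$ of $\hDrY{\b}{\Phi}$ to the braiding $(1\,2)R$ of $\aDrY{\Uhb}$. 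Compatibility of $J$ with the actions is formal, since $m_V$ is the action of the coalgebra object $\Mm$ and $J$ is a tensor structure (cf.\ \S\ref{ss:eksum}); compatibility with the coactions, and the braiding statement, both reduce to the \emph{definition} of the twisted $R$--matrix $R^J$ as the operator making the square of \S\ref{ss:qRep} commute, together with the reconstruction of the $R$--matrix of an admissible \DYt module from its action and coaction (\S\ref{ss:DY Hopf}) and the unit identity $m_V(u_-\otimes v)=v$ of \S\ref{ss:eksum}. Concretely, $R^J_{M_-,V}$ is the universal $R$--matrix of $\Uhb$ evaluated through $m_{M_-}\otimes m_V$; evaluating instead through $m_U\otimes m_V$ and using the unit identity reproduces $R^J_{U,V}$, which is exactly the assertion that $R_{\noEKeq{\b}(U),\noEKeq{\b}(V)}$ agrees with $R^J_{U,V}$.

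For (ii), recall that the last statement about the Verma modules identifies $\noEKff{\b}(V)$ with $V\fml$ as a topologically free $\sfK$--module, via $f\mapsto (1_+\otimes\id)f(1_-)$; in particular $\noEKff{\b}$, hence $\noEKeq{\b}$, is faithful, and for $V,W\in\hDrY{\b}{\Phi}$ both $\Hom_{\hDrY{\b}{\Phi}}(V,W)$ and $\Hom_{\aDrY{\Uhb}}(\noEKeq{\b}(V),\noEKeq{\b}(W))$ are topologically free $\sfK$--modules. By Lemma \ref{le:Fb admissible}(ii) the semiclassical limit of $\noEKeq{\b}$ is the identity functor of $\hDrY{\b}{}$, so the $\sfK$--linear map induced by $\noEKeq{\b}$ on $\Hom$--spaces reduces modulo $\hbar$ to the identity of $\Hom_{\hDrY{\b}{}}(\SC{V},\SC{W})$. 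An $\hbar$--adically continuous $\sfK$--linear map of topologically free $\sfK$--modules which is an isomorphism modulo $\hbar$ is an isomorphism, so $\noEKeq{\b}$ is full and faithful.

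It remains to prove (iii), essential surjectivity, which is the heart of the matter. Given an admissible \DYt module $\W$ over $\Uhb$, its semiclassical limit $\SC{\W}$ is a \DYt module over $\b$ (Proposition of \S\ref{ss:sc-lim-admDY}); set $V:=(\SC{\W})\fml$, regarded as an object of $\hDrY{\b}{\Phi}$. Then $\noEKeq{\b}(V)$ and $\W$ are two admissible \DYt $\Uhb$--modules whose underlying $\sfK$--modules are both identified with $(\SC{\W})\fml$ and which agree modulo $\hbar$, and one wants an isomorphism $\noEKeq{\b}(V)\simeq\W$. This may be built by successive approximation in $\hbar$, the obstruction to extending an isomorphism modulo $\hbar^n$ to one modulo $\hbar^{n+1}$ lying in a Hochschild--type group which must be shown to vanish --- here one uses that $\aDrY{\Uhb}$ and $\hDrY{\b}{\Phi}$ are flat $\sfK$--deformations of $\hDrY{\b}{}$ and that $\noEKeq{\b}$ is a deformation of the identity; equivalently, this is the general principle that a fully faithful $\sfK$--linear tensor functor between $\hbar$--adically complete, $\sfK$--flat categories whose reduction modulo $\hbar$ is an equivalence is itself an equivalence. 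An alternative is to resolve $\W$ by admissible \DYt modules manifestly in the essential image (tensor powers of $\noEKeq{\b}(\Mm)=\Uhb$ and their duals) and invoke exactness together with the full faithfulness just established. This obstruction analysis is the main difficulty, and it is precisely this step that the paper later circumvents altogether: in Section \ref{s:props} the essential surjectivity of $\noEKeq{\b}$ is re--derived from the $\PROP$--theoretic description of \nEK quantisation.
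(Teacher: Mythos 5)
Your step (i) is consistent with \S\ref{ss:qRep}, but the argument breaks down at (ii) and (iii), which is where the actual content of the theorem lies. For fullness, the reduction modulo $\hbar$ of $\Hom_{\hDrY{\b}{\Phi}}(V,W)$, and likewise of $\Hom_{\aDrY{\Uhb}}(\noEKeq{\b}(V),\noEKeq{\b}(W))$, only \emph{injects} into $\Hom_{\DrY{\b}}(\SC{V},\SC{W})$; Lemma \ref{le:Fb admissible} (ii) does not say that these reductions exhaust the classical Hom space, so you have not shown that the map on Hom's is an isomorphism mod $\hbar$, and the ``isomorphism mod $\hbar$ implies isomorphism'' lemma does not apply. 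Concretely, fullness amounts to showing that a $\sfK$--linear map commuting with the quantum action $m_V$ and coaction $m_V^*$ commutes with the classical structure, i.e.\ that the classical action and coaction can be recovered from the quantum ones by (convergent, universal) formulas. This inversion of the universal formulas is exactly what the paper extracts from Hensel's lemma at the $\PROP$ level in \S\ref{sss:qpropmod-2}, and what \cite{ek-2,ek-6} obtain from the \GT action; it is not a formal consequence of faithfulness plus semiclassical limits.

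For essential surjectivity the plan is wrong in substance, not merely incomplete: the constant lift $V=\SC{\V}\fml$ is in general \emph{not} a preimage of an admissible module $\V$, because two flat deformations of the same classical \DYt module need not be isomorphic, so the obstruction groups you propose to kill do not vanish. Already for $\b$ one--dimensional with zero bracket and cobracket, the module $\sfK^2$ on which $\Uhb=U\b\fml$ acts through $\hbar N$, $N\neq 0$ nilpotent, with trivial coaction, has the same semiclassical limit as the trivial module but is not isomorphic to the image of the constant lift; it is the image of a \emph{non--constant} deformation in $\hDrY{\b}{\Phi}$. The correct assertion is that every $\V$ is isomorphic to $\noEKeq{\b}(V')$ for some possibly non--constant $V'$, and producing $V'$ is the theorem itself. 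The ``general principle'' you invoke is moreover misapplied: it is valid when reduction mod $\hbar$ means the categories with the same objects and Hom's tensored with $\sfk$, and in that form its hypothesis (essential surjectivity of the reduced functor) is precisely the unsolved lifting problem; the equivalence of semiclassical--limit categories over $\sfk$ is strictly weaker. As you yourself concede, the key step is deferred to the paper's own argument, where $\noEKeq{\b}$ is identified with the pullback of the $\PROP$ morphism $\wt{Q}$, shown to be an isomorphism onto $\hext{\aDrY{\cP}}$ (essentially surjective by construction, fully faithful by Hensel's lemma), which yields fullness and essential surjectivity simultaneously. The proposal therefore does not constitute an independent proof.
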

\noindent
The proof of Theorem \ref{ss:Tannaka} is given in \cite{ek-2,ek-6}, and relies on
the action of the Grothendieck--Teichmuller group on the braided tensor structure
of $\hDrY{\b}{\Phi}$ and $\aDrY{\EK{\b}}$.\footnote{We are grateful to Pavel Etingof
for pointing out that, contrary to what is stated in \cite[\S 4.1]{ek-6}, the functor
$\noEKeq{\b}$ is not an equivalence if the target category is taken to be \DYt
modules over $\Uhb$, and that attention needs to be restricted to admissible
ones.} We sketch an alternative proof in \S\ref{sss:qpropmod-2}, which relies
on a $\PROP$ic interpretation of the functor $\noEKeq{\b}$.

\subsection{Quantisation of bimodules}\label{ss:quant-bimod}

For later use, we will need to consider the quantisation of certain \DYt \emph{bimodules}.
Given two Lie bialgebras $\c$ and $\d$ (resp. Hopf algebras $C$ and $D$), a \DYt $(\c,\d)$--module
(resp. $(C,D)$--module) is a vector space endowed with  commuting actions and coactions
over $\c$ and $\d$ (resp. $C$ and $D$). It is immediate to verify that there are canonical
equivalences
\begin{eqnarray}
\DrY{(\c,\d)}&\simeq&\DrY{\c\oplus\d\op},\\
\DrY{(C,D)}&\simeq&\DrY{C\ten D\op},
\end{eqnarray}
where $\DrY{(\c,\d)}$ and $\DrY{(C,D)}$ denote the categories of bimodules, 
$\c\oplus\d\op$ is the direct sum Lie bialgebra, and $C\ten D\op$ is the tensor 
Hopf algebra.

\begin{proposition}
There is a commutative diagram of braided tensor functors
\[
\xymatrix{
\hDrY{\c\oplus\d\op}{\Phi}\ar[r]^(.4){\noEKeq{\c\oplus\d\op}}\ar@{=}[d]& 
\aDrY{\EK\c\ten\EK\d\op}\ar@{=}[d]\\
\hDrY{(\c,\d)}{\Phi} \ar[r]_(.4){\noEKeq{\c}\circ\noEKeq{\d}}
 & 
\aDrY{(\EK\c,\EK\d)}
}
\]
and similarly with $\noEKeq{\c}\circ\noEKeq{\d}$ replaced by $\noEKeq{\d}\circ\noEKeq{\c}$. 
\end{proposition}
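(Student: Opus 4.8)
The plan is to show that every ingredient of the Etingof--Kazhdan construction is multiplicative in the Lie bialgebra, and to read the statement off from the factorisation of the Verma modules. Write $\mathfrak{e}=\c\oplus\d\op$. First I would record two factorisations. (i) Since the coadjoint actions of $\c$ on $(\d\op)^{*}$ and of $\d\op$ on $\c^{*}$ vanish, the Drinfeld double of $\mathfrak{e}$ is the orthogonal direct sum of Manin triples $\g_{\mathfrak{e}}\cong\g_{\c}\oplus\g_{\d\op}$, with polarising subalgebras decomposing accordingly; inducing from a direct sum then gives $\Mm^{\mathfrak{e}}\cong\Mm^{\c}\ten\Mm^{\d\op}$ and $(\Mpv)^{\mathfrak{e}}\cong(\Mpv)^{\c}\ten(\Mpv)^{\d\op}$ as $\g_{\mathfrak{e}}$--modules, compatibly --- up to the evident reordering of tensor legs --- with the coalgebra and algebra structure maps and with the topologies of \S\ref{ss:vermaEK}. (ii) Since $\Omega^{\mathfrak{e}}=\Omega^{\c}+\Omega^{\d\op}$, with $\Omega^{\c}$ and $\Omega^{\d\op}$ commuting, and since an associator is group--like, the constraints of $\hDrY{\mathfrak{e}}{\Phi}$ factor as products of those of $\hDrY{\c}{\Phi}$ and $\hDrY{\d\op}{\Phi}$, for instance $\Phi(\hbar\Omega^{\mathfrak{e}}_{12},\hbar\Omega^{\mathfrak{e}}_{23})=\Phi(\hbar\Omega^{\c}_{12},\hbar\Omega^{\c}_{23})\cdot\Phi(\hbar\Omega^{\d\op}_{12},\hbar\Omega^{\d\op}_{23})$, and similarly for the commutativity constraint. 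Point (ii) in particular upgrades the equivalence $\DrY{(\c,\d)}\simeq\DrY{\c\oplus\d\op}$ recorded above to a braided monoidal equivalence $\hDrY{\mathfrak{e}}{\Phi}\simeq\hDrY{(\c,\d)}{\Phi}$, under which $\noEKff{\d\op}$ becomes a tensor functor $\hDrY{(\c,\d)}{\Phi}\to\hDrY{\c}{\Phi}$.

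Next I would combine the two factorisations. From (i), and the fact that $\g_{\c}$ and $\g_{\d\op}$ act on independent tensor legs, the hom--tensor manipulations of \cite[Part II]{ek-1} yield a natural isomorphism
\[
\noEKff{\mathfrak{e}}(V)=\Hom_{\g_{\mathfrak{e}}}\bigl(\Mm^{\c}\ten\Mm^{\d\op},\,(\Mpv)^{\c}\ten(\Mpv)^{\d\op}\ten V\bigr)\cong\Hom_{\g_{\c}}\bigl(\Mm^{\c},\,(\Mpv)^{\c}\ten\Hom_{\g_{\d\op}}(\Mm^{\d\op},(\Mpv)^{\d\op}\ten V)\bigr),
\]
that is $\noEKff{\mathfrak{e}}\cong\noEKff{\c}\circ\noEKff{\d\op}$, where in the inner functor $V$ is regarded as an object of $\hDrY{\d\op}{\Phi}$ and its image retains the residual $\g_{\c}$--structure. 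Since the tensor structure $J$, the algebra structure $m_{V}$, and the twisted coaction $m_{V}^{*}$ are built solely from the coalgebra structure on $\Mm$, the algebra structure on $\Mpv$, the associativity and commutativity constraints, and the twisted $R$--matrix $R^{J}$ --- each of which has just been seen to decompose as a product of its $\c$-- and $\d\op$--pieces --- these structures match under the above isomorphism. This produces an identification of Hopf algebras $\EK(\c\oplus\d\op)\cong\EK\c\ten(\EK\d)\op$ and, lifting it to modules, identifies $\noEKeq{\mathfrak{e}}(V)$ with $(\noEKeq{\c}\circ\noEKeq{\d\op})(V)$ as admissible \DYt modules. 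The remainder is bookkeeping: under the equivalences $\DrY{(\c,\d)}\simeq\DrY{\c\oplus\d\op}$ and $\aDrY{(\EK\c,\EK\d)}\simeq\aDrY{\EK\c\ten(\EK\d)\op}$ recorded above, together with the compatibility $\EK(\d\op)\cong(\EK\d)\op$ of \nEK quantisation with the $\mathrm{op}$ operation, the $\d$--slot of a bimodule is the $\d\op$--summand, so that $\noEKeq{\d\op}$ applied to it is precisely the functor written $\noEKeq{\d}$ in the statement, and the source and target categories match. The second square, with $\noEKeq{\d}\circ\noEKeq{\c}$, follows by the same argument after exchanging the roles of $\c$ and $\d\op$.

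The step that will need genuine care is the compatibility of the Fubini isomorphism $\noEKff{\mathfrak{e}}\cong\noEKff{\c}\circ\noEKff{\d\op}$ with the twisted $R$--matrix defining the admissible coaction, since $R^{J}$ lies only in a completion of $\EK(\c\oplus\d\op)^{\ten 2}$ and the analysis of it in \cite{ek-1} and \S\ref{ss:qRep} is delicate. The point is that the required splitting $R^{J,\mathfrak{e}}=R^{J,\c}\cdot R^{J,\d\op}$ is forced by the factorisation of $\Phi(\hbar\Omega^{\mathfrak{e}}_{12},\hbar\Omega^{\mathfrak{e}}_{23})$ in (ii) together with the matching factorisation of $J$; everything else reduces to keeping track of the reordering of tensor legs.
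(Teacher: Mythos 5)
Your proposal is correct and follows essentially the same route as the paper: factorise the Verma modules $\Mm$ and $\Mpv$ for $\c\oplus\d\op$ as (completed) tensor products of those for $\c$ and $\d\op$, apply adjointness/Fubini to identify $\noEKff{\c\oplus\d\op}$ with $\noEKff{\c}\circ\noEKff{\d\op}$, and check compatibility of the tensor structures. Your extra observations — that $\Omega^{\c\oplus\d\op}=\Omega^{\c}+\Omega^{\d\op}$ with commuting summands and that group--likeness of $\Phi$ forces the constraints, the twist $J$, and the twisted $R$--matrix to factor — simply spell out what the paper dismisses as ``straightforward to verify''.
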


\begin{pf}
Let $\Mv{\c\oplus\d\op}, \Mv{\c}, \Mv{\d\op}$ (resp. $\Mdv{\c\oplus\d\op}, \Mdv{\c}, \Mdv{\d\op}$)
be the Verma modules $\Mm$ (resp. $\Mpv$) corresponding
to the Lie bialgebras $\c\oplus\d\op$, $\c$, and $\d\op$.
As $\c\oplus\d\op$--modules, $\Mv{\c\oplus\d\op}\simeq\Mv{\c}\ten\Mv{\d\op}$ 
and $\Mdv{\c\oplus\d\op}\simeq\Mdv{\c}\ten\Mdv{\d\op}$. Moreover, there is an isomorphism
of functors
\begin{align*}
\noEKeq{\c\oplus\d\op}(V)
&=
\Hom^{\c\oplus\d\op}_{\c\oplus\d\op}(\Mv{\c}\ten\Mv{\d\op}, \Mdv{\c}\ten\Mdv{\d\op}\ten V)\\
&\simeq\; 
\Hom_{\c}^{\c}(\Mv{\c}, \Hom_{\d\op}^{\d\op}(\Mv{\d\op}, \Mdv{\c}\ten\Mdv{\d\op}\ten V))\\
&\simeq\;
\Hom_{\c}^{\c}(\Mv{\c}, \Mdv{\c}\ten\Hom_{\d\op}^{\d\op}(\Mv{\d\op}, \Mdv{\d\op}\ten V))
=\noEKeq{\c}\circ\noEKeq{\d\op}(V)
\end{align*}
where the first identification follows by adjointness and the second one follows by the
isomorphism of \DYt $\c$--modules between $\Hom_{\d\op}^{\d\op}(\Mv{\d\op}, \Mdv{\c}
\ten\Mdv{\d\op}\ten V)$ and $\Mdv{\c}\ten\Hom_{\d\op}^{\d\op}(\Mv{\d\op}, \Mdv{\d\op}\ten V)$. 
It is straightforward to verify that the isomorphism $\noEKeq{\c\oplus\d\op}\simeq\noEKeq{\c}
\circ\noEKeq{\d\op}$ preserves the tensor structure. 
\end{pf}


\section{Tensor structures on restriction functors}\label{s:Gamma}

\subsection{}

In this section, we consider a split inclusion of Manin triples
\[\iD: (\gd,\gdm,\gdp)\hookrightarrow(\gu,\gum,\gup).\]
We then define a relative version of the Verma modules $M_\pm$, and
use them to prove the following
\begin{theorem}\label{th:Gamma}
There is a tensor functor
\[(\FF{\a}{\b}, J\resped):\hEq{\gu}{\Phi}\longrightarrow\hEq{\gd}{\Phi}\]
such that $\FF{\a}{\b}$ is isomorphic to the restriction functor $\iD^*$. 
\end{theorem}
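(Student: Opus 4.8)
The plan is to imitate the Etingof--Kazhdan construction of the fiber functor recalled in \S\ref{ss:EK fiber}, replacing the base point $\a=0$ by an arbitrary split sub-Manin triple. First I would introduce the \emph{relative Verma modules}: with $\Lmm=\Ker(p\colon\b_-\to\a_-)$ and $\Lpp=i(\a_-)\oplus\b_+$, so that $\gu=\Lmm\oplus\Lpp$ as vector spaces (this uses that $i,p$ are Lie bialgebra morphisms with $p\circ i=\id$, so $\Lmm$ is a Lie subalgebra of $\b_-$ and $\Lpp$ a Lie subalgebra of $\gu$), set
\[
\Lm=\Ind_{\Lpp}^{\gu}\sfk
\aand
\Np=\Ind_{\Lmm}^{\gu}\sfk.
\]
These carry commuting left $\gu$- and right $\gd$-actions: the right $\gd$-action on $\Lm$ comes from the fact that $i(\gdm)=\Lmm^{\perp}\cap\cdots$—more concretely, $\gd\cong i(\gd)$ acts on $\Lm$ on the right through its inclusion into $\gu$, normalising the line $\sfk\cdot 1$, and similarly for $\Np$. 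As in \S\ref{ss:vermaEK} one must topologise $\Np$ (identified with $U\Lpp$, filtered, with the weak topology on symmetric powers) and pass to an appropriate continuous dual $\Npv$ when $\b$ is infinite-dimensional, so that $\Lm$ and $\Npv$ become equicontinuous $\gu$-modules, i.e. objects of $\hEq{\gu}{}$; the right $\gd$-action makes them objects of $\hEq{\gu\oplus\gd\op}{}$.

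Next I would define $\FF{\a}{\b}(V)=\Hom_{\hEq{\gu}{\Phi}}(\Lm,\Npv\ten V)$, where the right $\gd$-module structure on $\Lm$ and $\Npv$ endows $\FF{\a}{\b}(V)$ with an object of $\hEq{\gd}{\Phi}$, and prove the isomorphism $\FF{\a}{\b}\cong\iD^*$. For this I need the relative analogue of the last assertion in \S\ref{ss:vermaEK}: the evaluation map $f\mapsto (1_+\ten\id)\,f(1_-)$ should give a natural isomorphism of $\gd$-modules $\Hom_{\gu}(\Lm,\Npv\ten V)\xrightarrow{\sim}\iD^*V$. The point is that $\Hom_{\gu}(\Lm,-)=\Hom_{\Lpp}(\sfk,-)$ extracts $\Lpp$-invariants, while tensoring with $\Npv=(\Ind_{\Lmm}^{\gu}\sfk)^\vee$ and extracting $\Lpp$-invariants implements, by a Poincaré--Birkhoff--Witt/triangular-decomposition argument exactly as in \cite[Part II]{ek-1}, the identity functor on the underlying vector space, with the residual $\gd$-action being precisely the restricted one $\iD^*$. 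One should also record that $(\Lm,\im)$ is a coalgebra and $(\Npv,\ips)$ an algebra object in $\hEq{\gu\oplus\gd\op}{\Phi}$, induced from the coproduct on $U\Lpp$ and dually; here the associativity constraint on the bimodule category is $\PhiU\cdot\PhiD^{-1}$, and one checks these structures are morphisms there.

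The tensor structure $J\resped$ is then built exactly as the map $J_{V,W}$ in \S\ref{ss:EK fiber}: using the coalgebra structure $\im\colon\Lm\to\Lm\ten\Lm$, the algebra structure $\ips\colon\Npv\ten\Npv\to\Npv$, the braiding $\beta$ and the associator-rebracketing isomorphism $A$ in $\hEq{\gu}{\Phi}$, one sets
\[
J\resped_{V,W}(v\ten w)=(\ips\ten\id^{\ten 2})\circ A^{-1}\circ\beta_{23}^{-1}\circ A\circ(v\ten w)\circ\im,
\]
which lands in $\FF{\a}{\b}(V\ten W)$ and is a morphism of $\gd$-modules. Invertibility and the hexagon/associativity axiom for a tensor functor follow by the same diagrammatic manipulation (pentagon for $\Phi$, hexagon for $\beta$, coassociativity of $\im$, associativity of $\ips$) as in the proof of the Theorem of \S\ref{ss:EK fiber}; indeed when $\a=0$ we have $\Lmm=\b_-$, $\Lpp=\b_+$, so $\Lm=\Mm$, $\Npv=\Mpv$, and $J\resped$ specialises to the Etingof--Kazhdan $J$. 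I expect the main obstacle to be the bookkeeping around the \emph{relative} decomposition $\gu=\Lmm\oplus\Lpp$: unlike $\gu=\gum\oplus\gup$ this is not a direct sum of two dual isotropic subalgebras (the inner product restricted to $\Lpp$ is degenerate, since $i(\gdm)$ and $\gup$ pair nontrivially), so one must be careful that $\Np$ is still equicontinuous after dualising and that the algebra/coalgebra structures and the PBW-type isomorphism $\FF{\a}{\b}\cong\iD^*$ go through; checking continuity of all the relevant maps and compatibility of the two $\gd$-actions (left-over from $\gu$ vs. coming from $i$) is where the care is needed.
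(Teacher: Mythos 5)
Your proposal is correct and follows essentially the same route as the paper: it introduces the relative Verma modules $\Lm=\Ind_{\Lpp}^{\gu}\sfk$ and $\Np=\Ind_{\Lmm}^{\gu}\sfk$ attached to the decomposition $\gu=\Lmm\oplus\Lpp$, topologises and dualises $\Np$ to get an equicontinuous $(\gu,\gd)$--bimodule $\Npv$, identifies $\FF{\a}{\b}(V)=\Hom_{\hEq{\gu}{\Phi}}(\Lm,\Npv\ten V)$ with $\iD^*V$ by Frobenius reciprocity/PBW, and builds $J\resped$ from the coalgebra structure on $\Lm$ and the algebra structure on $\Npv$ in the bimodule category with constraint $\PhiU\cdot(\PhiD^{-1})^{\rho}$, exactly as in Section \ref{s:Gamma}. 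The only points you leave as expected bookkeeping (equicontinuity of $\Npv$, the $\Phi_\a$--twisted associativity of $\ipd$ responsible for landing in $\hEq{\gd}{\Phi}$, invertibility of $J\resped$ mod $\hbar$, and the pentagon/hexagon verification) are precisely the checks the paper carries out, and they go through as you anticipate.
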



\subsection{Split inclusions of Manin triples}\label{ss:split}

\begin{definition}
An embedding of Manin triples
\[i:(\gd,\gdm,\gdp)\longrightarrow(\gu,\gum,\gup)\]
is a continuous Lie algebra homomorphism $i:\gd\to\gu$ preserving inner
products, and such that $i(\gdpm)\subset\gupm$.
\end{definition}

Denote the restriction of $i$ to $\gdpm$ by $i_\pm:\a_\pm\to\b_\pm$.
$i_\pm$ are continuous embeddings, and give rise to maps $p_\pm=
i_\mp^t:\gupm\to\gdpm$, defined
via the identifications $\gupm\simeq\gump^*$ and $\gdpm
\simeq\gdmp^*$ by $\iip{p_\pm(x)}{y}=\iip{x}{i_\mp(y)}$ for any $x\in
\g_{\pm}$ and $y\in\gdmp$. These maps satisfy $p_\pm\circ i_\pm=
\id_{\gdpm}$ since, for any $x\in\gdpm$, $y\in\gdmp$,
\[\iip{p_\pm\circ i_\pm(x)}{y}=\iip{i_\pm(x)}{i_\mp(y)}.\]
This yields in particular a direct sum decomposition $\gupm=i(\gdpm)
\oplus\Lmpm$, where
\[\Lmpm=\Ker(p_\pm)=\gupm\cap i(\gd)^\perp.\]

\begin{definition}
The embedding $i:(\gd,\gdm,\gdp)\longrightarrow(\gu,\gum,\gup)$ is called
{\it split} if the subspaces $\Lmpm\subset\gupm$ are Lie subalgebras.
\end{definition}

\subsection{Split pairs of Lie bialgebras}\label{ss:splitlba}

We now reformulate the above notion in terms of bialgebras.

\begin{definition}\label{def:splitlba}
A \emph{split pair} of Lie bialgebras is the data of
\begin{itemize}
\item Lie bialgebras $\a,\b$.
\item Lie bialgebra morphisms $i:\a\to\b$ and $p:\b\to\a$ such that $p\circ i=\id_{\a}$.
\end{itemize}
\end{definition}

\begin{proposition}\label{pr:one to one}
There is a one--to--one correspondence between split inclusions of
Manin triples and split pairs of Lie bialgebras. Specifically,
\begin{enumerate}
\item If $i:(\gd,\gdm,\gdp)\longrightarrow(\gu,\gum,\gup)$ is a split inclusion
of Manin triples, then $(\gdm,\gum,i_-,i_+^*)$ is a split pair of Lie bialgebras.
\item Conversely, if $(\a,\b,i,p)$ is a split pair of Lie bialgebras, then 
$i\oplus p^*:(\ga, \a, \a^*)\longrightarrow(\gb, \b, \b^*)$ is a split
inclusion of Manin triples.
\end{enumerate}
\end{proposition}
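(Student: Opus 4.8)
The plan is to prove Proposition \ref{pr:one to one} by exhibiting the two assignments explicitly and checking they are mutually inverse. For part (1), given a split inclusion of Manin triples $i:(\gd,\gdm,\gdp)\to(\gu,\gum,\gup)$, the maps $i_-:\gdm\to\gum$ and $i_+^*=p_-:\gum\to\gdm$ (in the notation of \S\ref{ss:split}, with $p_-=i_+^t$) satisfy $p_-\circ i_-=\id_{\gdm}$ by the computation already recorded in \S\ref{ss:split}. What remains is to check that $i_-$ and $p_-$ are morphisms of Lie bialgebras, i.e.\ that each intertwines both bracket and cobracket. That $i_-$ is a Lie algebra map is immediate since $i$ is. That $i_-$ respects cobrackets follows from the fact that $i$ preserves inner products together with the description of $\delta_{\gdm}$ as the transpose of the bracket on $\gdp\simeq\gdm^*$: for $x\in\gdm$ and $\phi,\psi\in\gum$, $\iip{\delta_{\gum}(i_-x)}{\phi\ten\psi}=\iip{i_-x}{[\phi,\psi]_{\gup}}$, and one pushes the pairing through $i$ and $p_+=i_-^t$ to land on $\iip{x}{[p_+\phi,p_+\psi]_{\gdp}}=\iip{\delta_{\gdm}(x)}{p_+\phi\ten p_+\psi}=\iip{(i_-\ten i_-)\delta_{\gdm}(x)}{\phi\ten\psi}$; the middle equality uses that $p_+$ is a Lie algebra homomorphism $\gup\to\gdp$, which is where the \emph{split} hypothesis enters — it guarantees $\Lmpm$ are subalgebras, equivalently that $p_\pm$ (not just $i_\pm$) are Lie algebra maps. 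Dually, $p_-=i_+^t$ is a Lie coalgebra map because $i_+$ is a Lie algebra map, and a Lie algebra map because $i_+$ respects cobrackets, again by transposing through the inner product.

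For part (2), given a split pair $(\a,\b,i,p)$, I would form the Drinfeld doubles $\ga=\a\oplus\a^*$ and $\gb=\b\oplus\b^*$ as in \S\ref{ss:drinf-double} and define $j=i\oplus p^*:\ga\to\gb$. One checks: (a) $j$ preserves the canonical pairings, since $\iip{i(x)+p^*(\phi)}{i(x')+p^*(\phi')}_{\gb}=\iip{i(x)}{p^*(\phi')}+\iip{p^*(\phi)}{i(x')}=\iip{x}{p\circ i(x')}+\ldots=\iip{x}{x'}+\iip{\phi}{x'}\cdot 0\ldots$ — more precisely $\iip{i(x)}{p^*(\phi')}=\iip{p(i(x))}{\phi'}=\iip{x}{\phi'}$ wait, that is $\iip{\phi'}{p(i(x))}=\iip{\phi'}{x}$, matching the $\ga$-pairing; (b) $j(\a)\subset\b$ and $j(\a^*)\subset\b^*$ by construction; (c) $j$ is a Lie algebra homomorphism. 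Point (c) is the substantive check: $j$ is a Lie map on $\a$ (since $i$ is) and on $\a^*$ (since $p$ is a Lie coalgebra map, so $p^*=p^t$ is a Lie algebra map $\a^*\to\b^*$ with the transposed brackets), and one must verify the mixed brackets match, i.e.\ $j([x,\phi]_{\ga})=[i(x),p^*(\phi)]_{\gb}$ for $x\in\a$, $\phi\in\a^*$. Using the formula $[x,\phi]=\sfad^*(x)(\phi)-\sfad^*(\phi)(x)$ from \S\ref{ss:drinf-double}, this reduces to the compatibility of $i,p$ with the coadjoint actions, which in turn is equivalent to $i$ being a coalgebra map and $p$ being an algebra map — both part of the hypothesis that $i,p$ are Lie bialgebra morphisms. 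Then $j(\a)=i(\a)$ and $j(\a^*)=p^*(\a^*)$ are isotropic, $\gb=\b\oplus\b^*=j(\a)\oplus(\Ker p\oplus\b^*)$ wait — the complement $\Lmm=\Ker(p)$ must be a subalgebra, which holds because $p$ is a Lie algebra map; similarly $\Lpp=\Ker(i^t)\subset\b^*$ is a subalgebra because $i^t$ (dual to the coalgebra map $i$) is a Lie algebra map. Hence $j$ is a split inclusion of Manin triples.

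Finally I would check the two constructions are inverse to each other: starting from a split inclusion, extracting $(\gdm,\gum,i_-,i_+^*)$, and re-forming the double recovers the original $(\gu,\gum,\gup)$ with the original $i$ (using $\gup\simeq\gum^*$ and that the double construction is canonical); conversely, starting from $(\a,\b,i,p)$, forming $i\oplus p^*$ and then restricting to negative parts returns $(\a,\b,i, (p^*)^*|)=(\a,\b,i,p)$ since $(p^*)^t=p$ on the relevant pieces.

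The main obstacle, and really the only non-formal point, is keeping the identifications $\gup\simeq\gum^*$, $\gdp\simeq\gdm^*$ straight and verifying that the transpose $p_\pm=i_\mp^t$ is genuinely a \emph{Lie algebra} homomorphism rather than merely a linear map — this is precisely the content of the \emph{split} hypothesis (equivalently, that $\Lmpm$ are subalgebras), and it is what makes $p$ a Lie bialgebra morphism rather than just a linear retraction. Everything else is a matter of transposing the defining identities of a Manin triple through $i$ and its transpose, and no genuine calculation beyond bookkeeping is needed.
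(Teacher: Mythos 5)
Your overall architecture matches the paper's: part (1) reduces to showing that $p_\pm=i_\mp^t$ preserves brackets, part (2) checks that $i\oplus p^*$ is an isometric Lie algebra map with subalgebra complements $\Ker(p)$ and $\Ker(i^*)$, and your added verification that the two assignments are mutually inverse is a harmless (and correct) formal supplement. Part (2) as you sketch it is essentially the paper's argument.

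There is, however, a genuine gap at the one point you yourself flag as "the only non-formal point." You assert that the split hypothesis --- by definition, that $\Lmpm=\Ker(p_\pm)$ are Lie \emph{subalgebras} of $\gupm$ --- is "equivalently" the statement that $p_\pm$ are Lie algebra homomorphisms. That equivalence is precisely what part (1) of the proposition asks you to prove, and the forward implication is not formal: a linear retraction onto a subalgebra whose kernel is merely a subalgebra (rather than an ideal) need not respect brackets. The missing step is to upgrade "subalgebra" to "ideal": one must show $[\iD(\gdpm),\Lmpm]\subseteq\Lmpm$, which the paper does by pairing against $i(\gdmp)$ and using invariance of the inner product together with isotropy of $\gupm$, namely
\[
\iip{[i(\gdpm),\m_\pm]}{i(\gdmp)}=\iip{\m_\pm}{[i(\gdpm),i(\gdmp)]}\subseteq\iip{\m_\pm}{i(\gdpm)}+\iip{\m_\pm}{i(\gdmp)}=0 .
\]
Only once $\Lmpm$ is known to be an ideal does the decomposition $X_j=i_\pm(x_j)+y_j$ give $p_\pm[X_1,X_2]=[x_1,x_2]=[p_\pm X_1,p_\pm X_2]$. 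Without this, your transposition argument for the cobracket of $i_-$ (which invokes "$p_+$ is a Lie algebra homomorphism") and your claim that $p_-$ preserves brackets both rest on an unproved assertion. Everything else in your proposal survives once this lemma is inserted.
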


\subsection{Proof of (i) of Proposition \ref{pr:one to one}}\label{ss:(i) of one to one}

Given a split inclusion \[i=i_-\oplus i_+:(\gd,\gdm,\gdp)\longrightarrow(\gu,\gum,\gup),\]
we need to show that $i_-$ and $i_+^*$ are Lie bialgebra morphisms.
By assumption, $i_-$ is a morphism of Lie algebras, and $i_+^*$ one
of coalgebras. Since $i_-=(i_-^*)^*$, it suffices to show that $p_\pm=i
_\mp^*$ preserve Lie brackets. 

We claim to this end that $\m_\pm$ are ideals in $\gupm$. Since
$[\m_\pm,\m_\pm]\subseteq\m_\pm$ by assumption, this amounts
to showing that $[i(\gdpm),\m_\pm]\subseteq\m_\pm$. This
follows from the fact that $[i(\gdpm),\m_\pm]\subseteq\gupm$,
and from
\begin{multline*}
\iip{[i(\gdpm),\m_\pm]}{i(\gdmp)}
=
\iip{\m_\pm}{[i(\gdpm),i(\gdmp)]}\subset
\iip{\m_\pm}{i(\gdpm)}+\iip{\m_\pm}{i(\gdmp)},
\end{multline*}
where the first term is zero since $\gupm$ is isotropic, and the second
one is zero by definition of $\m_\pm$.

Let now $X_1,X_2\in\gupm$, and write $X_j=i_\pm(x_j)+y_j$, where
$x_j\in\gdpm$ and $y_j\in\m_\pm$. Since $\m_\pm=\Ker(p_\pm)$
and $p_\pm\circ i_\pm=\id$, we have $[p_\pm (X_1),p_\pm (X_2)]=
[x_1,x_2]$, while
\[p_\pm [X_1,X_2]=
p_\pm\left(i_\pm[x_1,x_2]+[i_\pm x_1,y_2]+[y_1,i_\pm x_2]+[y_1,y_2]\right)=
[x_1,x_2],\]
where the last equality follows from the fact that $\m_\pm$ is an ideal.

\subsection{Proof of (ii) of Proposition \ref{pr:one to one}}

The bracket on $\ga$ is defined by
\[[a,\phi]=\sfad^*(a)(\phi)-\sfad^*(\phi)(a)=-\iip{\phi}{[a,-]_{\a}}+\iip{\phi\ten\id}{\delta_{\a}(a)}\]
for any $a\in\a$, $\phi\in\a^*$. Analogously for $\gb$. Therefore, the equalities
\begin{align*}
\iip{p^*(\phi)\ten\id}{\delta_{\b}(i(a))}
=&\iip{\phi\ten\id}{(p\ten\id)(i\ten i)\delta_{\a}(a)}\\
=&\iip{\phi\ten\id}{(\id\ten i)\delta_{\a}(a)}=i(\iip{\phi\ten\id}{\delta_{\a}(a)})
\end{align*}
and
\begin{align*}
\iip{p^*(\phi)}{[i(a), b]_{\b}}=\iip{\phi}{p([i(a), b]_{\b})}=\iip{\phi}{[a, p(b)]_{\a}}
\end{align*}
for all $a\in\a$ and $b\in\b$, imply that the map $i\oplus p^*$ is a Lie algebra
map. It also respects the inner product, since for any $a\in\a,\phi\in\a^*$,
\[\iip{p^*(\phi)}{i(a)}=\iip{\phi}{p\circ i(a)}=\iip{\phi}{a}\]
Finally, $\m_-=\Ker(p)$ and $\m_+=\Ker(i^*)$ are clearly subalgebras.


\subsection{Parabolic Lie subalgebras}\label{s:parabolic} 

Let
\[\iD=\im\oplus\ip:(\gd,\gdm,\gdp)\to(\gu,\gum,\gup)\]
be a split embedding of Manin triples. The following summarizes the
properties of the subspaces
\[\Lmpm=\gupm\cap\iD(\gd)^{\perp}\aand\Lppm=\Lmpm\oplus\iD(\gd).\]
\begin{proposition}\hfill\break\vspace{-0.4cm}
\begin{itemize}
\item[(i)] $\Lmpm$ is an ideal in $\gupm$, so that $\gupm=\Lmpm\rtimes\iD_\pm(\gdpm)$.
\item[(ii)] $[\iD(\gd),\m_\pm]\subset\Lmpm$, so that $\Lp_{\pm}=\m_\pm\rtimes\iD(\gd)$
are Lie subalgebras of $\gu$.
\item[(iii)] $\delta(\m_-)\subset\m_-\ten\im(\gdm)+\im(\gdm)\ten\m_-$, so that $\m_-
\subseteq\gum$ is a coideal.
\end{itemize}
\end{proposition}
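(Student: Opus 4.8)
The plan is to read off all three statements from the orthogonality relations already recorded in \S\ref{ss:split}--\S\ref{ss:(i) of one to one} together with the fact that $\iD$ is an isometric embedding of Lie algebras. Recall that $\gupm=\iD_\pm(\gdpm)\oplus\m_\pm$ as vector spaces, that $\m_\pm=\Ker(p_\pm)=\gupm\cap\iD(\gd)^\perp$ is an \emph{ideal} of $\gupm$ by the argument in \S\ref{ss:(i) of one to one}, and that $\iD_\pm(\gdpm)$ is a Lie subalgebra since $\iD$ is a Lie algebra map. Statement (i) then follows at once, the decomposition being that of an ideal complemented by a subalgebra. Once (ii) is proved, the semidirect product description of $\Lppm=\m_\pm\oplus\iD(\gd)$ is again formal, so the substance is in (ii) and (iii).

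For (ii) I would prove the two inclusions $[\iD(\gd),\m_\pm]\subset\iD(\gd)^\perp$ and $[\iD(\gd),\m_\pm]\subset\gupm$ separately, their intersection being $\m_\pm$. The first is immediate from invariance of the form and $\iD$ being a Lie map: for $x,y\in\gd$ and $m\in\m_\pm$,
\[
\iip{[\iD(x),m]}{\iD(y)}=-\iip{m}{[\iD(x),\iD(y)]}=-\iip{m}{\iD([x,y]_{\gd})}=0,
\]
since $\m_\pm\subset\iD(\gd)^\perp$. For the second, decompose $\iD(\gd)=\iD_\pm(\gdpm)\oplus\iD_\mp(\gdmp)$. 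The part $[\iD_\pm(\gdpm),\m_\pm]$ lies in $[\gupm,\m_\pm]\subset\m_\pm$ because $\m_\pm$ is an ideal of $\gupm$. For the part $[\iD_\mp(\gdmp),\m_\pm]$, note first that $p_\pm\circ i_\pm=\id$ and the defining relation $\iip{p_\pm(z)}{y}=\iip{z}{i_\mp(y)}$ force $\iD_\mp(\gdmp)\perp\m_\pm$; hence, for $x\in\gdmp$, $m\in\m_\pm$ and $z\in\gupm$,
\[
\iip{[\iD_\mp(x),m]}{z}=\iip{\iD_\mp(x)}{[m,z]}=0,
\]
because $[m,z]\in[\m_\pm,\gupm]\subset\m_\pm$ and $\iD_\mp(x)\perp\m_\pm$. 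Thus $[\iD_\mp(\gdmp),\m_\pm]$ is orthogonal to $\gupm$, and since $\gupm$ is a maximal isotropic subspace this gives $[\iD_\mp(\gdmp),\m_\pm]\subset\gupm^\perp=\gupm$. Combining the two inclusions yields $[\iD(\gd),\m_\pm]\subset\gupm\cap\iD(\gd)^\perp=\m_\pm$, so $\m_\pm$ is an ideal of $\Lppm=\m_\pm\oplus\iD(\gd)$; together with $\iD(\gd)$ being a subalgebra this shows $\Lppm$ is a Lie subalgebra of $\gu$ and $\Lppm=\m_\pm\rtimes\iD(\gd)$.

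For (iii) I would use that $p_-=\ips:\gum\to\gdm$ is a morphism of Lie bialgebras, hence in particular of Lie coalgebras, so that $(p_-\ten p_-)\circ\delta=\delta_{\gdm}\circ p_-$, where $\delta$ denotes the cobracket of $\gum$. Evaluating on $m\in\m_-=\Ker(p_-)$ gives $(p_-\ten p_-)(\delta(m))=\delta_{\gdm}(0)=0$; since $p_-$ is surjective, $\Ker(p_-\ten p_-)=\m_-\ten\gum+\gum\ten\m_-$, so $\delta(\m_-)\subset\m_-\ten\gum+\gum\ten\m_-$, i.e. $\m_-$ is a coideal of $\gum$, which is the assertion of (iii). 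The one point requiring genuine care is the mixed bracket in (ii): it is precisely the fact that $[\iD_\mp(\gdmp),\m_\pm]$ stays inside $\gupm$ that makes the parabolic $\Lppm$ close under the bracket, and this is where the splitness hypothesis enters, through the ideal property of $\m_\pm$ established in \S\ref{ss:(i) of one to one}.
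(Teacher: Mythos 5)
Your proof is correct. Parts (i) and (iii) follow the paper's own route: (i) is exactly the ideal computation of \S\ref{ss:(i) of one to one}, and for (iii) the paper's one--line proof (``$\Lmm$ is the kernel of a Lie coalgebra map'') is precisely the argument you spell out; note that both it and your argument establish the coideal property $\delta(\m_-)\subset\m_-\ten\gum+\gum\ten\m_-$, not the finer containment literally displayed in (iii), which already fails for $\a=0$, so your reading of the statement is the intended one. In (ii) you diverge after the common first step $[\iD(\gd),\m_\pm]\subset\iD(\gd)^{\perp}=\m_-\oplus\m_+$. The paper pairs $[\iD(\gd),\m_\pm]$ against $\m_\pm$ itself: by invariance $\iip{[\iD(\gd),\m_\pm]}{\m_\pm}=\iip{\iD(\gd)}{[\m_\pm,\m_\pm]}\subset\iip{\iD(\gd)}{\m_\pm}=0$, which uses the splitness hypothesis $[\m_\pm,\m_\pm]\subset\m_\pm$ at exactly one point, and then concludes $[\iD(\gd),\m_\pm]\subset\m_\pm$ because inside $\m_-\oplus\m_+$ the annihilator of $\m_\pm$ is $\m_\pm$ (the form restricts to a nondegenerate pairing between $\m_-$ and $\m_+$). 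You instead split $\iD(\gd)=\iD_\pm(\gdpm)\oplus\iD_\mp(\gdmp)$, absorb the first piece using the ideal property (i), and for the cross term show orthogonality to $\gupm$ (via invariance, (i), and $\iD_\mp(\gdmp)\perp\m_\pm$) before invoking $\gupm^{\perp}=\gupm$; the latter is legitimate here, since the Manin triple axiom $\gup\simeq\gum^{*}$ gives maximal isotropy of both halves. Both are orthogonality arguments of the same flavour; the paper's is shorter and appeals to the subalgebra hypothesis on $\m_\pm$ directly, while yours makes the dependence on (i) explicit and trades the $\m_-$--$\m_+$ duality for the maximal isotropy of $\gupm$, which some may find conceptually cleaner but requires the extra (correct) observation that the cross bracket $[\iD_\mp(\gdmp),\m_\pm]$ stays inside $\gupm$.
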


\begin{pf} (i) was proved in \S\ref{ss:(i) of one to one}. (ii) Since
\[\iip{[\iD(\gd),\m_\pm]}{\iD(\gd)}=\iip{\m_\pm}{[\iD(\gd),\iD(\gd)]}=0,\]
we have $[\iD(\gd),\m_\pm]\subset\iD(\gd)^{\perp}=\m_-\oplus\m_+$. Moreover,
\[\iip{[\iD(\gd),\m_\pm]}{\m_\pm}=\iip{\iD(\gd)}{[\m_\pm,\m_\pm]}=\iip{\iD(\gd)}{\m_\pm}=0,\]
since $\m_\pm$ is a subalgebra, and it follows that $[\iD(\gd),\m_\pm]\subset\m_\pm$.
(iii) is clear since $\Lmm$ is the kernel of a Lie coalgebra map.
\end{pf}

\subsection{The relative Verma Modules}\label{ss:rel Verma}

\begin{definition}
Given a split embedding of Manin triples $\iD:\gd\to\gu$, and the corresponding
decomposition $\gu=\Lmm\oplus\Lpp$, the relative Verma modules $\Lm,\Np$
are defined by
\[\Lm=\ind_{\Lpp}^{\gu}\sfk\aand\Np=\ind_{\Lmm}^{\gu}\sfk.\]
\end{definition}
Since $\Lpp$ and $\Lmm$ are invariant under the adjoint action of $\iD(\ga)$,
the right action of $\ga$ on $U\gb$ descends to one on $\Lm$ and $\Np$, so
both are $(\gb,\ga)$--bimodules, with the right action of $\ga$ on $\Lm$ being
trivial.

\begin{proposition}
The $(\gu,\gd)$--modules $\Lm$ and $\Npv$ are equicontinuous.
\end{proposition}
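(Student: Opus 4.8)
The plan is to reduce both assertions to the corresponding facts for the Verma modules $M_{\pm}$ recalled in \S\ref{ss:vermaEK}, by means of the inclusions of subalgebras $\gup\subseteq\Lpp$ and $\Lmm\subseteq\gum$ supplied by the Proposition of \S\ref{s:parabolic}. Recall that $\Lpp=\gup\oplus\iD(\gdm)$ (since $\gup=\Lmp\oplus\iD(\gdp)$), that $\gum=\Lmm\oplus\iD(\gdm)$, and that $\gu=\Lmm\oplus\Lpp$.

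For $\Lm$, the inclusion $\gup\hookrightarrow\Lpp$ induces a surjection of $\gu$--modules $\Mm=\ind_{\gup}^{\gu}\sfk\twoheadrightarrow\ind_{\Lpp}^{\gu}\sfk=\Lm$. In particular $\Lm$, being a quotient of the discrete vector space $\Mm$, is discrete, and the projection $\Mm\to\Lm$ is open with open kernel. I would then observe that equicontinuity descends along this quotient: continuity of the action map $\pi_{\Lm}$ follows from that of $\pi_{\Mm}$ because the kernel is open, and the equicontinuous--family property is inherited because images and preimages of open subspaces under an open continuous map are open. Hence $\Lm$ is an equicontinuous $\gu$--module by \S\ref{ss:vermaEK}, and since the right $\gd$--action on $\Lm$ is trivial it is equicontinuous as a $(\gu,\gd)$--module. (One may also identify $\Lm$, by restriction along $\gum\hookrightarrow\gu$ and using $\gum\cap\Lpp=\iD(\gdm)$, with $\ind_{\iD(\gdm)}^{\gum}\sfk\simeq U\Lmm$ as a $\gum$--module, which makes the discreteness transparent.)

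For $\Npv$, I would imitate the construction of the topologies on $\Mp$ and $\Mpv$ in \S\ref{ss:vermaEK}, with $\gup$ replaced by $\Lpp$ and $\gum$ by $\Lmm$. The PBW decomposition attached to $\gu=\Lpp\oplus\Lmm$ identifies $\Np\simeq U\Lpp$ as a vector space, with $\Lpp$ acting by left multiplication and $\Lmm$ annihilating the cyclic vector $1_{\Np}$, exactly as for $\Mp\simeq U\gup$. As a topological vector space $\Lpp\cong\Lmm^{*}\times\gd$, where $\Lmm^{*}$ is the continuous dual of $\Lmm$ and $\gd$ carries its natural topology (discrete on $\gdm$, weak on $\gdp$); since the inner product pairs $\Lpp$ with $\Lmm$ nondegenerately on the $\Lmm^{*}$--factor and trivially on the $\gd$--factor, one may topologise each symmetric power $S^{j}\Lpp$ exactly as in \S\ref{ss:vermaEK} and then $\Np=U\Lpp$ by the colimit topology over its degree filtration $(U\Lpp)_{n}$. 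As in \S\ref{ss:vermaEK}, the $\gu$--action on $\Np$ is then continuous, and passing to $\Np^{*}=\lim(U\Lpp)_{n}^{*}$ with the limit topology defines the separated and complete topological vector space $\Npv$; the proof that the induced $\gu$--action on it is equicontinuous is word for word the one given for $\Mpv$ in \S\ref{ss:vermaEK}. The right $\gd$--action on $\Npv$ factors through $\iD(\gd)\subseteq\gu$ and poses no further difficulty.

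The only point calling for genuine care is precisely the construction of these topologies on $\Np$ and $\Npv$: unlike $\gup$, the subalgebra $\Lpp$ is not isotropic for the inner product of $\gu$, hence is not a continuous dual of $\Lmm$ but carries the additional factor $\gd$. Once this factor --- and its contribution to the symmetric powers $S^{j}\Lpp$ --- has been given its topology, the asymmetry of the polarisation $\gu=\Lmm\oplus\Lpp$ becomes invisible to the computations of \S\ref{ss:vermaEK}, which then transfer essentially verbatim. I expect this bookkeeping, together with the verification that it is compatible with the $\gu$--action, to be the main, though routine, obstacle.
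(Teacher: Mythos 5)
Your proof is correct. For $\Lm$ it takes a genuinely different route from the paper: the paper (\S\ref{ss:Lm}) re-runs the induction of \cite[Lemma 7.2]{ek-1} on the length of $v=Y_n\cdots Y_1\1_-$, with base case $Z_{\1_-}=\Lpp$ open, whereas you deduce everything from the known equicontinuity of $\Mm$ via the surjection $\Mm=\ind_{\gup}^{\gu}\sfk\twoheadrightarrow\Lm$: for $v\in\Lm$ with lift $\tilde v\in\Mm$ one has $Z_v\supseteq Z_{\tilde v}$, a subspace of $\gu$ containing an open subspace is open, and the equicontinuous--family condition is vacuous for a discrete module. This is shorter and uses \cite{ek-1} as a black box, at the cost of hiding the role of $\Lpp$ that the base case of the induction makes explicit. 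For $\Npv$ your route is essentially the paper's (\S\ref{ss:Npv-1}--\S\ref{ss:end N+*}): topologise $\Np\simeq U\Lpp$ degreewise as in \S\ref{ss:vermaEK}, take the limit topology on the continuous dual, and adapt \cite[Lemma 7.3]{ek-1}. The only divergence is the topology on the $\gdm$--directions: the paper embeds $S^j\gdm\hookrightarrow(\gdm^{\ten j})^{**}$, while you take $\gdm$ discrete. This is harmless: in either case every algebraic functional on the $\gdm$--factors is continuous, so the continuous dual, the filtration $(\Npv)_n$, and the dual action are unchanged; moreover the continuity argument of \S\ref{ss:Npv-1} produces opens of the form $U'\ten U\gdm$, so it applies verbatim to your finer topology. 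Two of your claims are overstated, though at the same level of brevity as the paper itself: the equicontinuity proof for $\Npv$ is not literally ``word for word'' that for $\Mpv$ --- one must redo the case analysis with $\Lpp$ (which lowers the filtration $(\Npv)_n$) and $\Lmm$ (which preserves it), exactly as the paper's two lemmas do; and the right $\gd$--action is by right multiplication, so it does not ``factor through $\iD(\gd)\subseteq\gu$'' as a restriction of the left action, but requires its own routine adaptation of the same steps, which is also all the paper offers.
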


\noindent
The description of the appropriate topologies on $\Lm$ and $\Npv$, and the
proof of their equicontinuity will be carried out in \S\ref{ss:Lm}--\S\ref{ss:end N+*}.

\subsection{Equicontinuity of $\Lm$}\label{ss:Lm}

As vector spaces, 
\[\Lm\simeq U\Lmm\subset U\gum.\]
It is therefore natural to equip $\Lm$ with the discrete topology. The set of
operators $\{\pi_{\Lm}(x)\}_{x\in\gu}$ is then an equicontinuous family, and
the continuity of $\pi_{\Lm}$ reduces to checking that, for every element
$v\in \Lm$, the set
\[Z_v=\{X\in\gu|\; X\,v=0\}\]
is a neighborhood of zero in $\gu$. Since $U\Lmm$ embeds in $U\gum$, the
proof is identical to that of \cite[Lemma 7.2]{ek-1}. We proceed by induction on
the length of $v=Y_n\cdots Y_1\mathbf{1}_{-}$, $Y_i\in\Lmm$. If $n=0$, then
$v=\mathbf{1}_-$ and $Z_v=\Lpp=\im(\a_-)\oplus\gup$ is open in $\gb$. If
$n\geq 1$, the identity
\[X\, Y_n\cdots Y_1{\mathbf 1_-}=
Y_nXY_{n-1}\cdots Y_1{\mathbf 1_-}+
[X,Y_n]Y_{n-1}\cdots Y_1{\mathbf 1_-}\]
shows that $Z_{Y_n\cdots Y_1{\mathbf 1_-}}\supset Z_{Y_{n-1}\cdots Y_1{\mathbf 1_-}}
\cap\ad(Y_n)^{-1}(Z_{Y_{n-1}\cdots Y_1{\mathbf 1_-}})$, and the conclusion
follows from the continuity of the bracket.

\subsection{Topology of $\Np$}\label{ss:Npv-1}
As vector spaces,
\[\Np=\text{Ind}_{\Lmm}^{\gu}{\sfk}\simeq U\Lp_+\simeq\colim U_n\Lp_+,\]
where $\{U_n\Lp_+\}$ denotes the standard filtration of $U\Lp_+$, so that
\[U_n\Lp_+\simeq\bigoplus_{m=0}^nS^m\Lp_+=
\bigoplus_{i+j\leq n}\left(S^i\gup\otimes S^j\gdm\right).\]
We turn this isomorphism into an isomorphism of topological vector spaces,
by taking on $S^i\gup$ and $S^j\gdm$ the topologies induced by the
embeddings
\[S^i\gup\hookrightarrow (\gum^{\otimes i})^*
\aand S^j\gdm\hookrightarrow(\gdm^{\otimes j})^{**}.\]
With respect to these topologies, $U_m\Lp_+$ is closed inside $U_n\Lp_+$
for $m<n$, and we equip $\Np$ with the direct limit topology. We shall need the
following
\begin{lemma}
For any $x\in\gu, y\in\gd$, the maps $\pi_{\Np}(x):\Np\to \Np$ are continuous.
\end{lemma}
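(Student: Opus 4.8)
The plan is to adapt the continuity argument of \cite[\S 7]{ek-1} for the Verma module $M_+=U\gup$ to the relative module $\Np\cong U\Lpp$. Recall that $\Np=\colim_n\Np_n$ with $\Np_n=U_n\Lpp$, that $\Np_m$ is closed in $\Np_n$ for $m<n$, so each inclusion $\Np_N\hookrightarrow\Np$ is a topological embedding, and that, via the chosen PBW isomorphism, $\Np_n\cong\bigoplus_{i+j\le n}S^i\gup\otimes S^j\gdm$ as topological vector spaces, with $S^i\gup\hookrightarrow(\gum^{\otimes i})^*$ carrying the weak topology and $S^j\gdm$ the discrete one. Hence a linear endomorphism of $\Np$ is continuous once it is filtered---it sends $\Np_n$ into $\Np_{N(n)}$ for some $N(n)$---and each induced map $\Np_n\to\Np_{N(n)}$ is continuous; and since $\Np_n$ is a finite direct sum, with the product topology, of the $S^i\gup\otimes S^j\gdm$, the latter reduces to checking that each homogeneous component $S^i\gup\otimes S^j\gdm\to S^{i'}\gup\otimes S^{j'}\gdm$ is continuous. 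By linearity in $x$ and the decomposition $\gu=\Lpp\oplus\Lmm$, I would then treat the two cases $x\in\Lpp$ and $x\in\Lmm$ separately; the right $\gd$--action, being right multiplication by $i(y)\in\Lpp$, is of the same nature as the first.

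For $x\in\Lpp$ the operator $\pi_{\Np}(x)$ is left multiplication by $x$ in $U\Lpp$, which raises the filtration by at most one, so $N(n)=n+1$ works. On each summand $S^i\gup\otimes S^j\gdm$ this map is the sum of the degree--raising multiplications and of finitely many lower--degree terms produced by the brackets $[x,-]$ of $\Lpp$; the $\gdm$--directions being discrete, continuity is an issue only in the $\gup$--directions, where---exactly as in \cite[\S 7]{ek-1}---each such map is, up to the inert $\gdm$--slots, the transpose of a fixed linear map between tensor powers of $\gum$ assembled from the bracket and cobracket of $\gum$ and the identity, hence continuous for the weak topologies. A finite sum of continuous maps being continuous, this case is done.

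The case $x\in\Lmm$ is the only genuinely new point, since $\Lmm$ is an ideal of $\gum$ but not of $\gu$: commuting $x$ past a generator of $\gup$ produces a $\Lpp$--component as well as a $\Lmm$--component. Precisely, $x$ kills the cyclic vector, and for $u=v\,w\in U\Lpp$ with $v$ a PBW generator of $\Lpp$ one has
\[\pi_{\Np}(x)(v\,w)=v\cdot\pi_{\Np}(x)(w)+\mathrm{pr}_{\Lpp}[x,v]\cdot w+\pi_{\Np}\bigl(\mathrm{pr}_{\Lmm}[x,v]\bigr)(w),\]
where $\mathrm{pr}_{\Lpp}$ and $\mathrm{pr}_{\Lmm}$ are the projections along $\gu=\Lpp\oplus\Lmm$: the first two terms are left multiplications by elements of $\Lpp$ applied to the lower--degree vector $w$, and the last is the same problem for $\mathrm{pr}_{\Lmm}[x,v]\in\Lmm$ and $w$, so the recursion descends strictly in the degree of $w$ and terminates (the $\Lpp$--component $\mathrm{pr}_{\Lpp}[x,v]$ in fact vanishes when $v\in i_-(\gdm)$, as $[\Lmm,i(\gd)]\subseteq\Lmm$). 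Iterating shows $\pi_{\Np}(x)$ preserves each $\Np_n$, and I would then verify, as in the absolute case, that after unwinding the recursion its homogeneous components are \emph{finite} sums of transposes of fixed linear maps between tensor powers of $\gum$ and $\gdm$ built from the brackets, cobrackets and (co)adjoint actions of the bialgebras involved---finiteness in a fixed bidegree, despite the a priori unbounded recursion, being the key point---and are therefore continuous. The hard part will be exactly this last verification: controlling the unwound recursion so that it correctly reproduces the $\gu$--action and only finitely many structure maps survive in each bidegree. Everything else is a mild elaboration of \cite[\S 7]{ek-1}.
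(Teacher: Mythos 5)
Your overall strategy---reduce to preservation of the filtration plus continuity on each filtered piece, and then run the argument of \cite[\S 7]{ek-1} with the $U\gdm$--directions carried along as inert---is essentially the paper's, which simply invokes the procedure of \cite[Lemma 7.3]{ek-1}: given a basic neighbourhood $U\ten U\gdm+U\gup\ten V$ of zero, one constructs an open $U'\subset U\gup$ with $\pi_{\Np}(x)(U'\ten U\gdm)\subset U\ten U\gdm$. However, two points in your write--up need repair. First, the topology you put on the $\gdm$--slots is not the one defined in the paper: $S^j\gdm$ is topologised via the embedding $S^j\gdm\hookrightarrow(\gdm^{\ten j})^{**}$, so its basic neighbourhoods of zero are finite--codimensional subspaces (the weak topology of the pairing with the full dual), not the discrete topology; the two agree only when $\a$ is finite--dimensional. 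Hence your justification ``the $\gdm$--directions being discrete, continuity is an issue only in the $\gup$--directions'' does not stand as written. The correct way to make those directions harmless is the paper's: never shrink the $U\gdm$ factor at all, but take neighbourhoods of the form $U'\ten U\gdm$---open for the product topology as soon as $U'$ is open in $U\gup$, whatever topology $U\gdm$ carries---and check that the image lies in $U\ten U\gdm$, which sits inside any basic neighbourhood $U\ten U\gdm+U\gup\ten V$. With this observation the topology on the $\gdm$--slots never enters the argument.

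Second, the step you defer as ``the hard part'' is precisely the content of the lemma beyond the absolute case, so as written your proposal stops where the proof should begin; it must either be carried out or correctly reduced to \cite[Lemma 7.3]{ek-1}, which is what the paper does. Note also that the finiteness you worry about is not an issue: on a vector of fixed degree $n$ your recursion
$\pi_{\Np}(x)(vw)=v\,\pi_{\Np}(x)(w)+\mathrm{pr}_{\Lpp}[x,v]\,w+\pi_{\Np}\bigl(\mathrm{pr}_{\Lmm}[x,v]\bigr)(w)$
strictly decreases the degree of $w$ and contributes finitely many terms at each step, so each homogeneous component is automatically a finite sum of maps built from the bracket of $\gu$; what remains is their continuity in the $\gup$--variables, which follows from the continuity of the commutator of $\gu$ (the defining property of the Manin--triple topology on $\gu=\gum\oplus\gum^*$) exactly as in \cite[Lemma 7.3]{ek-1}. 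Your recursion itself, the filtration bounds, and the remark that $\mathrm{pr}_{\Lpp}[x,v]=0$ for $v\in i(\a_-)$ because $[\iD(\gd),\m_\pm]\subset\m_\pm$, are all correct.
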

\begin{pf}
We need to show that for any neighborhood of the origin $U\subset \Np$, there
exists a neighborhood of zero $U'\subset \Np$ such that $\pi_{\Np}(x)U'\subset
U$. The topology on $\Np$ comes from the decomposition $U\Lpp\simeq 
U\gup\ten U\gdm$, so that an open neighborhood of zero in $\Np$ has the form 
$U\ten U\gdm + U\gup\ten V$, with $U$ open in $U\gup$ and $V$ open in $U\gdm$. 
We apply the same procedure used in \cite[Lemma 7.3]{ek-1} to construct an open set 
$U'\ten U\gdm$, with $U'$ open in $U\gup$, such that 
\[\pi_{\Np}(x)(U'\ten U\gdm)\subset U\ten U\gdm\subset U\ten U\gdm + U\gup\ten V.\]
\end{pf}

\subsection{Topology of $\Npv$}

As vector spaces, $\Npd\simeq (U\Lp_+)^*\simeq\lim(U_n\Lp_+)^*$.
In analogy with \S\ref{ss:vermaEK}, we consider the discrete topology on $(U_n\Lp_+)^*$ and
the limit topology on $\Npd$, denoting the resulting topological space by $\Npv$.
This defines a filtration of subspaces $\{(\Npv)_n\}$ on $\Npv$ by
\[0\to(\Npv)_n\to(U\Lp_+)^*\to(U_n\Lp_+)^*\to0,\]
so that $\Npv\supset (\Npv)_0\supset(\Npv)_1\supset\cdots$, and
we get an isomorphism of vector spaces
\[\Npv\simeq\lim \Npv/(\Npv)_n.\]
\begin{lemma}
$\{\pi_{\Npv}(x)\}_{x\in\g}$ is an equicontinuous family of operators.
\end{lemma}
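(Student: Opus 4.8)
The plan is to reduce the statement to a single filtration estimate on $\Np$. By construction $\Npv$ carries the inverse limit topology coming from $\Npv\cong\lim(U_n\Lpp)^*$, so the subspaces $(\Npv)_n$ of functionals vanishing on $U_n\Lpp$ form a decreasing basis of open neighbourhoods of $0$; in particular every open subspace of $\Npv$ contains some $(\Npv)_n$. It therefore suffices to produce, for each $n\geq 0$, an $m\geq 0$ with $\pi_{\Npv}(x)\,(\Npv)_m\subseteq(\Npv)_n$ for all $x\in\gu$. Since $\Npv$ acquires its $\gu$--module structure as the transpose of the continuous $\gu$--action on $\Np$, the operator $\pi_{\Npv}(x)$ is, up to sign, the transpose of $\pi_{\Np}(x)$, and hence $\pi_{\Npv}(x)f\in(\Npv)_n$ as soon as $f$ vanishes on $\pi_{\Np}(x)(U_n\Lpp)$. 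Everything thus comes down to the estimate
\[\pi_{\Np}(x)\,(U_n\Lpp)\subseteq U_{n+1}\Lpp\qquad\text{for all }x\in\gu,\]
after which $m=n+1$ does the job.

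To prove this estimate I would use the PBW identification $\Np=\Ind_{\Lmm}^{\gu}\sfk\cong U\Lpp$, viewed as the left $U\Lpp$--module generated by $\mathbf 1_+$, together with the decomposition $\gu=\Lmm\oplus\Lpp$ into the two Lie subalgebras $\Lmm=\m_-$ and $\Lpp=\m_+\rtimes\iD(\gd)$ recorded in \S\ref{s:parabolic}. Write $x=x_-+x_+$ accordingly. The operator $\pi_{\Np}(x_+)$ is left multiplication by $x_+\in\Lpp$ in $U\Lpp$, hence raises the PBW filtration degree by at most one. For $x_-\in\Lmm$ I would argue by induction on the length $k$ of a monomial $y_k\cdots y_1\mathbf 1_+$ with $y_j\in\Lpp$: using $x_-\mathbf 1_+=0$, the identity
\[x_-\,y_k\cdots y_1\mathbf 1_+=[x_-,y_k]\,y_{k-1}\cdots y_1\mathbf 1_+ + y_k\,\bigl(x_-\,y_{k-1}\cdots y_1\mathbf 1_+\bigr),\]
and the splitting of $[x_-,y_k]\in\gu=\Lmm\oplus\Lpp$ into its two components, one checks that $\pi_{\Np}(x_-)$ preserves $U_n\Lpp$. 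Combining the two contributions gives $\pi_{\Np}(x)(U_n\Lpp)\subseteq U_n\Lpp+U_{n+1}\Lpp=U_{n+1}\Lpp$. This is precisely the relative analogue of the estimate underlying the equicontinuity of $\Mpv$ in \S\ref{ss:vermaEK}, and the argument runs parallel to that of \cite[Lemma 7.3]{ek-1}.

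Assembling the pieces: given an open subspace $U\subseteq\Npv$, choose $n$ with $(\Npv)_n\subseteq U$ and set $U'=(\Npv)_{n+1}$. For $f\in U'$ and $x\in\gu$, the functional $\pi_{\Npv}(x)f$ vanishes on $\pi_{\Np}(x)(U_n\Lpp)\subseteq U_{n+1}\Lpp$, a subspace on which $f$ already vanishes; hence $\pi_{\Npv}(x)f\in(\Npv)_n\subseteq U$ for every $x\in\gu$, which is the asserted equicontinuity. The main (and essentially only) obstacle is the bookkeeping around the PBW decomposition $\gu=\Lmm\oplus\Lpp$ and the identification $\Np\cong U\Lpp$: these are legitimate precisely because the embedding is \emph{split}, so that $\Lmm$ and $\Lpp$ really are Lie subalgebras of $\gu$ (\S\ref{s:parabolic}). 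Beyond that, the estimate is purely formal and, in particular, insensitive to the sign convention chosen for the $\gu$--action on $\Npv$.
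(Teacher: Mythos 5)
Your proof is correct and follows essentially the same route as the paper: both rest on the decomposition $\gu=\Lmm\oplus\Lpp$, the fact that $\Lpp$ acts by multiplication and so raises the PBW filtration of $\Np\cong U\Lpp$ by one, and the commutator/induction argument (using $\Lmm\,\mathbf{1}_+=0$) showing that $\Lmm$ preserves the filtration, which dualises to $\gu(\Npv)_{n+1}\subset(\Npv)_n$. The only difference is presentational: you establish the filtration estimate on $\Np$ and then transpose, whereas the paper argues directly on the functionals in $\Npv$.
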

\begin{pf}
Since $\Lp_+$ acts on $N_+$ by multiplication, $\Lp_+(\Npv)_n\subset(\Npv)_{n-1}$.
If $x\in\Lmm$ and $x_i\in U\Lp_+$ for $i=1,\dots, n$, then in $U\gb$,
\[xx_1\cdots x_n=x_1\cdots x_nx-\sum_{i=0}^nx_1\cdots x_{i-1}[x_i,x]x_{i+1}\cdots x_n,\]
where $[x_i,x]\in\gu$. Iterating shows that $(x.f)(x_1\cdots x_n)=0$ if $f\in (\Npv)_n$,
so that $x(\Npv)_n\subset(\Npv)_n$. Then, for any neighborhood of zero of the form
$U=(\Npv)_n$, it is enough to take $U'=(\Npv)_{n+1}$ to get $\gu(\Npv)_{n+1}\subset
(\Npv)_n$.
\end{pf}

\subsection{Equicontinuity of $\Npv$} \label{ss:end N+*}
In order to show that the module $\Npv$ is equicontinuous it is enough to prove 
the following
\begin{lemma}
The map $\pi_{\Npv}: \gu\to\emph{End}(\Npv)$ is a continuous map.
\end{lemma}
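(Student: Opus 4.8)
The goal is to show that $\pi_{\Npv}:\gu\to\End(\Npv)$ is continuous, where $\Npv$ carries the limit topology coming from the exhausting filtration $\{(\Npv)_n\}$, and $\End(\Npv)$ carries the weak topology. By definition of the weak topology, it suffices to fix a vector $f\in\Npv$ and an open subspace $W\subseteq\Npv$, and to produce an open subspace $\Z\subseteq\gu$ such that $\pi_{\Npv}(X)f\in W$ for all $X\in\Z$. Since the $(\Npv)_n$ form a basis of neighborhoods of zero in $\Npv$, one may take $W=(\Npv)_n$ for some $n$. So the statement reduces to: for every $f\in\Npv$ and every $n\geq 0$, the set $\{X\in\gu\mid X\cdot f\in(\Npv)_n\}$ is open in $\gu$.

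**Main steps.** First I would dualise the problem. An element $f\in\Npv$ annihilates $(U_m\Lp_+)$ for $m$ sufficiently large, say $m\geq N$, i.e.\ $f\in(\Npv)_{N}$ already, but of course we need the $n$ prescribed by $W$, which may be smaller than $N$; so we cannot argue by "$f$ already lives deep in the filtration". Instead, the key computation is the one already recorded in \S\ref{ss:end N+*}: for $X\in\gu$ and $x_1,\dots,x_n\in U\Lp_+$, expanding $X x_1\cdots x_n$ in $U\gu$ and projecting back onto $\Np\cong U\Lp_+$ (using the PBW decomposition $U\gu\cong U\Lmm\otimes U\Lp_+$ dual to the splitting $\gu=\Lmm\oplus\Lpp$) expresses $(X\cdot f)(x_1\cdots x_n)$ as a finite sum of terms of the form $\pm f\bigl(x_1\cdots[\,\cdot\,,X']\cdots x_n\bigr)$ and $\pm f\bigl(p(X)\,x_1\cdots x_n\bigr)$, where the inner brackets $[x_i,X]$ again lie in $\gu$, hence decompose into a $\Lmm$--part (which gets absorbed) and a $\Lp_+$--part, and $p$ denotes the projection $\gu\to U\Lp_+$. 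The point is that $(X\cdot f)|_{U_n\Lp_+}$ is controlled by $f$ evaluated on a bounded--degree expression that depends \emph{continuously and linearly} on $X$ through the bracket $\ad(X)$ and the projection, precisely as in \cite[Lemma 7.4]{ek-1}.

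**Carrying it out.** So I would argue as follows. Fix $n$ and $f\in\Npv$; then $f$ is determined by its restriction $f_N\in(U_N\Lp_+)^*$ for $N$ large enough, and the condition $X\cdot f\in(\Npv)_n$ is the condition that $X\cdot f$ kill $U_n\Lp_+$, which by the displayed expansion is a condition only on $f$ evaluated on elements of $U_{n+1}\Lp_+$ (degree goes up by at most one under multiplication by $X$), involving $X$ only via $\ad(X):\gu\to\gu$ and $p(X)$. Since $U_{n+1}\Lp_+$ is a \emph{fixed} finite--dimensional-in-each-graded-piece subspace and $f$ is continuous on it, the set of $X$ for which all the relevant evaluations vanish is the preimage under the continuous linear maps $X\mapsto\ad(X)$ and $X\mapsto p(X)$ (continuity of $\ad$ is the continuity of the bracket of $\gu$, hypothesised in the Manin triple axioms \S\ref{ss:lbamt}, and continuity of $p$ follows since $p$ is a Lie bialgebra morphism, cf.\ \S\ref{ss:split}) of a neighborhood of zero determined by finitely many conditions "$f$ vanishes on a finite-dimensional subspace of $U_{n+1}\Lp_+$". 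Such a preimage is open. One technical bookkeeping point: one must check that only finitely many graded components of $U_{n+1}\Lp_+$ carry conditions, or rather that the conditions organise into a genuine open subspace — this is handled exactly as in \cite[Lemma 7.4]{ek-1}, using that in each bounded degree the relevant space is profinite-dimensional and the weak topology on its dual is by finite-codimension subspaces.

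**Main obstacle.** The only real subtlety — and the step I would be most careful about — is the interplay between the two topologies stacked on $\Npv$: the limit topology (by the $(\Npv)_n$) versus the weak topology of the dual, which \S\ref{ss:Npv-1} warns may be strictly finer than the weak one. Because the \emph{source} topology on $\gu$ is the "discrete $\oplus$ weak" product topology, the argument really needs that the conditions on $X$ cut out a \emph{finite} intersection of preimages of finite-codimension subspaces — i.e.\ I must avoid an argument that only shows openness in the weak topology of $\Npv$ but produces an infinite intersection on the $\gu$ side. Tracking the degree bound (multiplication by a single $X\in\gu$ raises $\Lp_+$-degree by at most one, so only $U_{n+1}\Lp_+$ is involved) is exactly what keeps everything finite, and this is where the argument of \cite{ek-1} is reused essentially verbatim, with $\gup$ replaced by $\Lp_+$ and $U\gup$ by $U\Lp_+$. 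Granting that, continuity of $\pi_{\Npv}$ follows, and combined with the equicontinuity established in the previous lemma, $\Npv$ is an equicontinuous $\gu$-module, completing the proof of the Proposition of \S\ref{ss:rel Verma}.
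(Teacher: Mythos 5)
Your proposal follows essentially the same route as the paper's proof: reduce, via the weak topology on $\End(\Npv)$ and the discreteness of $\gum$, to showing that for fixed $f$ and $n$ the set of elements of $\gup$ sending $f$ into $(\Npv)_n$ contains the annihilator of a finite--dimensional subspace of $\gum$; observe that this condition depends only on the class of $f$ modulo $(\Npv)_{n+1}$ (your degree--raising--by--one remark); and conclude by the bookkeeping of \cite[Lemma 7.3]{ek-1}, adapted by replacing $U\gup$ with $U\Lpp$ --- which is exactly what the paper does with the finite sets $T_{n+1}(f)$ and $S_k(J)$.

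Two slips should nevertheless be corrected, one of which sits at the key point. First, an element $f\in\Npv\simeq\lim(U_n\Lpp)^*$ need not annihilate any $U_m\Lpp$, nor is it determined by a single truncation $f_N$: $\Npv$ is the inverse limit, not the union of the $(\Npv)_N$. What is true, and what you actually use, is only that the condition $X.f\in(\Npv)_n$ depends on $f_{n+1}$ alone. Second, $U_{n+1}\Lpp$ is \emph{not} finite--dimensional in each graded piece: the pieces $S^i\gup\otimes S^j\gdm$ are infinite--dimensional whenever $\b$ is, so the sentence deducing ``finitely many conditions'' from finite--dimensionality of the degree--bounded piece does not stand as written, and continuity of $\ad(X)$ and $p(X)$ alone would only give openness for each fixed monomial, i.e.\ an infinite intersection on the $\gu$ side. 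The finiteness really comes from two facts: continuity of $f$, which forces $f_{n+1}$ to involve only finitely many dual--basis elements (the paper's set $T_{n+1}(f)$), and the finite support of the bracket/cobracket structure constants, propagated through the at most $n$ commutators with $X$ --- this is the content of the iterated sets $S_k(J)$ in the paper, i.e.\ precisely the part of the Etingof--Kazhdan argument you defer to. Since you correctly identify this as the main obstacle and point to the right source, the argument goes through, but the faulty finite--dimensionality justification should be replaced by the $T_{n+1}(f)$/$S_k(J)$ mechanism.
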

\begin{pf}
Since $\gum$ is discrete, it is enough to check that, for any $f\in\Npv$ and $n\in\IN$,
the subset
$$Y(\noEKff{\b},n)=\{b\in\gup|\;b.f\in(\Npv)_n\}$$
is open in $\gup$, and $b^i.f\in(\Npv)_n$ for all but finitely many $i\in I$.
Since $f\in \Npv\simeq\lim \Npv/(\Npv)_n$, we have $f=\{f_n\}$ where $f_n$ is the class
of $f$ modulo $(\Npv)_n$.
The classes $\{f_n\}$ are identified with elements in
$(U_n\Lpp)^*\simeq\oplus(S^j\Lpp)^*\simeq\oplus S^j\Lpm$ and we 
denote by $T_n(f)\subset I$ the finite set of indices corresponding to the elements $b_i$
appearing in the expression of $f_n$. Following \cite[Lemma 7.3]{ek-1}, for any finite set $J\subset I$,
we denote by $S(J)\subset I$ the finite set of indices corresponding to the generators $b^i$
satisfying $([b\ten b^i],\delta(b_j))\neq0$ for some $j\in J$, $b\in\gup$, and we define iteratively
the finite sets $S_{n+1}(J)=S(S_{n}(J))$, $S_0(J)=J$. It follows immediately that for every
$i\in I\setminus S_n(T_{n+1}(f))$ $b^i.f\in(\Npv)_n$. 
\end{pf}

Similarly, one shows that the right $\gd$--action on $\Npv$
is equicontinuous by adapting the steps in \S\ref{ss:Npv-1}--\S\ref{ss:end N+*}. 
This completes the proof of Proposition \ref{ss:rel Verma}.

\subsection{Coalgebra structure on $L_-$ and $N_+$}

Define the $(\gu,\ga)$--module maps
\[i_-:\Lm\to\Lm\ten\Lm\aand i_+:\Np\to\Np\ten\Np\]
by mapping $\1_\mp$ to $\1_\mp\otimes\1_\mp$. Note that, under the identifications
$\Lm\simeq U\Lmm$ and $\Np\simeq U\Lpp$, $\im$ and $\ip$ correspond to the
coproducts on $U\Lmm$ and $U\Lpp$ respectively.

Following \cite[Prop. 1.2]{drin-4}, we consider the invertible element 
$T\in U\gb{\fml}^{\ten 2}$ satisfying the relations\footnote{$T$ is the element underlying the
identification $(V\ten W)^*\simeq W^*\ten V^*$ in the category of representations
of a quasi--Hopf algebra.}
\begin{eqnarray*}
S^{\ten 3}({\PhiU}^{321})\cdot(T\otimes 1)\cdot(\Delta\otimes 1)(T)&=&
(1\otimes T)(1\otimes \Delta)(T)\cdot{\PhiU},\\
T\Delta(S(a))&=&(S\otimes S)(\Delta(a))T.
\end{eqnarray*}
Let $\Npv$ be as before and $f,g\in\Npv$. Consider the linear functional in $\Hom_{\sfk}(\Np,\sfk)$ 
defined by
\[v\mapsto (f\otimes g)(T\cdot i_+(v)).\]
This functional is continuous, so it belongs to $\Npv$ and induces a map
$i_+^{\vee}\in\Hom_{\sfk}(\Npv\otimes\Npv,\Npv)$ by
\[i_+^{\vee}(f\otimes g)(v)=(g\otimes f)(T\cdot i_+(v)).\]
For any $a\in\g$, we have
\begin{align*}
i_+^{\vee}(a(f\otimes g))(v)&=(f\otimes g)((S\otimes S)(\Delta(a))T\cdot i_+(v))\\
&=(f\otimes g)(T\Delta(S(a))\cdot i_+(v))\\
&=i_+^{\vee}(f\otimes g)(S(a).v)=(a.i_+^{\vee}(f\otimes g))(v)
\end{align*}
therefore $i_+^{\vee}\in\Hom_{\g}(\Npv\ten\Npv,\Npv)$.

\subsection{}

Set
\[\Phi_\a=\Phi(\hbar\Omega^\a_{12},\hbar\Omega^\a_{23})
\aand
\Phi_\b=\Phi(\hbar\Omega^\b_{12},\hbar\Omega^\b_{23})\]
and let $\hEq{(\gb,\ga)}{\Phi}$ be the Drinfeld category of equicontinuous
$(\gb,\ga)$--bimodules, with commutativity and associativity constraints
given respectively by
\[(1\,2)\circ\exp(\half{\hbar}\Omega^\b)\circ\exp(-\half{\hbar}\Omega^\a)^{\rho}
\aand
\Phi_\b\circ(\Phi_a^{-1})^{\rho}\]
where $(-)^{\rho}$ denotes the right $\gd$--action.

The following shows that $\Lm$ and $\Npv$ are coalgebra and algebra objects
in $\hEq{(\gb,\ga)}{\Phi}$.

\begin{proposition}\label{pr:coalgebra}
The following relations hold
\begin{itemize}
\item[(i)] As morphisms $\Lm\to\Lm\otimes(\Lm\otimes\Lm)$,
\[{\PhiU}(i_-\otimes 1)i_-=(1\otimes i_-)i_-.\]
\item[(ii)] As morphisms $(\Npv\otimes\Npv)\otimes\Npv\to\Npv$
\[i_+^{\vee}(1\otimes i_+^{\vee}){\PhiU}=i_+^{\vee}(i_+^{\vee}\otimes 1)S^{\ten 3}(\PhiD^{-1})^{\rho}.\]
\end{itemize}
\end{proposition}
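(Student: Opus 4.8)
The plan is to reduce each of (i) and (ii) to a computation on the cyclic generator of an induced $\gu$--module, exploiting that every map in sight is a morphism of $\gu$--modules.

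\emph{Part (i).} The first observation is that $\gup\subseteq\Lpp$: indeed $\gup=\ip(\gdp)\oplus\Lmp$ and both summands lie in $\Lpp=\Lmp\rtimes\iD(\gd)$. Hence the cyclic vector $\1_-\in\Lm=\ind_{\Lpp}^{\gu}\sfk$ is annihilated by $\gup$, which is exactly the statement that the $\gu$--coaction vanishes on it, $\pi^*_{\Lm}(\1_-)=0$. Since the canonical element $\Omega^\b$ acts through the action $\pi$ and the coaction $\pi^*$, and every monomial of $\Omega^\b_{12}$ and $\Omega^\b_{23}$ feeds a coaction into the factor $\1_-$, these morphisms annihilate $\1_-\otimes\1_-\otimes\1_-\in\Lm^{\otimes 3}$; as $\Phi$ has constant term $1$ this gives $\PhiU=\Phi(\hbar\Omega^\b_{12},\hbar\Omega^\b_{23})$ fixing $\1_-^{\otimes 3}$, and, being a morphism of $\gu$--modules, $\PhiU$ is then the identity on the cyclic submodule $U\gu\cdot\1_-^{\otimes 3}$ of $\Lm^{\otimes 3}$. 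On the other hand the right $\gd$--action on $\Lm$, hence on $\Lm^{\otimes 3}$, is trivial, so the associativity constraint of $\hEq{(\gu,\gd)}{\Phi}$ on $\Lm^{\otimes 3}$ — namely $\PhiU\circ(\PhiD^{-1})^{\rho}$ — reduces to $\PhiU$. Finally $(\im\otimes 1)\im$ and $(1\otimes\im)\im$ are morphisms of $\gu$--modules $\Lm\to\Lm^{\otimes 3}$ whose images lie in $U\gu\cdot\1_-^{\otimes 3}$ and which agree at $\1_-$ (both sending it to $\1_-^{\otimes 3}$); hence they coincide, and
\[\PhiU\,(\im\otimes 1)\im=(\im\otimes 1)\im=(1\otimes\im)\im,\]
which is (i).

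\emph{Part (ii).} The starting point is the \emph{bare} co-associativity $(\ip\otimes 1)\ip=(1\otimes\ip)\ip$, which holds as an identity of linear maps $\Np\to\Np^{\otimes 3}$: both sides are morphisms of $\gu$--modules sending the cyclic vector $\1_+$ to $\1_+^{\otimes 3}$, and under $\Np\cong U\Lpp$ they are the iterated coproduct of $U\Lpp$. The identity (ii) will be deduced from this by transposing through the duality $(\Np\otimes\Np)^*\cong\Npv\otimes\Npv$ carried by the element $T$. Concretely, one unwinds the definition $\ipd(f\otimes g)(v)=(g\otimes f)(T\cdot\ip(v))$ to express both $\ipd\circ(1\otimes\ipd)\circ\PhiU$ and $\ipd\circ(\ipd\otimes 1)\circ S^{\ten 3}\circ(\PhiD^{-1})^{\rho}$, evaluated on $f\otimes g\otimes h\in\Npv^{\otimes 3}$, as functionals of the form $v\mapsto(h\otimes g\otimes f)\bigl(X\cdot(\ip\otimes 1)\ip(v)\bigr)$: for the first, $X=(T\otimes 1)(\Delta\otimes 1)(T)$ preceded by the factor $S^{\ten 3}({\PhiU}^{321})$ coming from transposing the $\gu$--action of $\PhiU$ on the dual module $\Npv^{\otimes 3}$; for the second, $X=(1\otimes T)(1\otimes\Delta)(T)$ preceded by the factor obtained by transposing $S^{\ten 3}$ and the right $\gd$--twist $(\PhiD^{-1})^{\rho}$. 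The first defining relation of $T$,
\[S^{\ten 3}({\PhiU}^{321})\cdot(T\otimes 1)\cdot(\Delta\otimes 1)(T)=(1\otimes T)(1\otimes \Delta)(T)\cdot{\PhiU},\]
together with the bare co-associativity, identifies the two operators up to a residual $\PhiU$--twist on one side and the $(\PhiD^{-1})^{\rho}$--twist on the other, while the second relation $T\Delta(S(a))=(S\otimes S)(\Delta(a))T$ — already used to check that $\ipd$ is a $\gu$--morphism — guarantees $\gu$--equivariance throughout. Reconciling these residual twists is then reduced, via the parabolic structure, to the vanishing of the $\Lmm$-- and $\Lmp$--components of $\Omega^\b$ at the generator $\1_+$ (using $\Lmm\cdot\1_+=0$ and $[\iD(\gd),\Lmm]\subseteq\Lmm$) together with the fact that at $\1_+$ the left $\gu$--action of $\iD(\gd)$ agrees with the right $\gd$--action, both being multiplication by the unit.

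\emph{Main obstacle.} The conceptual content is light, but the computation in (ii) requires careful bookkeeping of three superimposed twists — the associator $\PhiU$, the antipode $S$ produced by dualisation, and the right $\gd$--correction $(\PhiD^{-1})^{\rho}$ — as well as of the placement of $T$ under iterated coproducts (the left/right parenthesisations and the $321$--flip in the first $T$--relation). Establishing that the bimodule structure supplies precisely the correction $S^{\ten 3}(\PhiD^{-1})^{\rho}$ occurring in the statement, and no other, is where the bulk of the (routine but delicate) work lies; the only genuinely non-formal inputs are that $\Lmm$ annihilates the generators and that $\m_\pm$ are ideals, both recorded in the preceding discussion of the parabolic subalgebras.
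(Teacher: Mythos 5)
Your proposal is correct and follows essentially the same route as the paper: part (i) is the identical cyclic-vector argument using $\gup\,\1_-=0$ to get $\PhiU\,\1_-^{\otimes 3}=\1_-^{\otimes 3}$, and part (ii) rests on the same pivot the paper isolates as its preliminary identity, namely $\PhiU\,\1_+^{\otimes 3}=\1_+^{\otimes 3}\,\PhiD$ (from $\Lmm\,\1_+=0$ killing the $\m_\pm$--part of $\Omega$ and the agreement of the left $\iD(\gd)$-- and right $\gd$--actions on $\1_+$), combined with the two defining relations of $T$ exactly as in the paper's chain of equalities. The only caveat is that your reconciliation of the residual twists in (ii) is left as an outline rather than carried out, and the closing remark that ``$\Lmm$ annihilates the generators'' should read that $\Lpp$ annihilates $\1_-$ and $\Lmm$ annihilates $\1_+$, as you in fact use in the body of the argument.
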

\begin{pf}
We begin by showing that
\begin{equation}\label{eq:coalgebra}
{\PhiU}\,\1_-^{\otimes 3}=\1_{-}^{\otimes 3}\aand{\PhiU}\,\1_+^{\otimes 3}=\PhiD\,\1_+^{\otimes 3}.
\end{equation}
To prove the first identity, it is enough to notice that, since $\gup\1_-=0$ and $\Omega=\sum 
(a_i\otimes b^i+b^i\otimes a_i)$, $\Omega_{ij}(\1_{-}^{\otimes 3})=0$.
Then ${\PhiU}\,\1_{-}^{\otimes 3}=\1_{-}^{\otimes 3}$. To prove the second one, we notice that
$\Lmm\1_+=0$ and that we can rewrite 
\[\Omega=\sum_{j\in I_{\a}}(a_j\otimes b^j + b^j\otimes a_j)+
\sum_{i\in I\setminus I_{\a}}(a_i\otimes b^i + b^i\otimes a_i)=\Omega_{\a}+
\sum_{i\in I\setminus I_{\a}}(a_i\otimes b^i + b^i\otimes a_i),\]
where $\{a_j\}_{j\in I_{\a}}$ is a basis of $\gdm$ and $\{b^j\}_{j\in I_{\a}}$ is the dual basis of $\gdp$. 
Then
\[\Omega_{ij}\,\1_+^{\otimes 3}=\Omega_{\a,ij}\,\1_+^{\otimes 3}\]
and, since for any element $x\in\gd$ the right and the left $\gd$-action coincide on $\1_{+}$, \ie 
$x.\1_{+}=\1_{+}.x$, we have
\[\Omega_{ij}\,\1_+^{\otimes 3}=\,\1_+^{\otimes 3}\Omega_{\a,ij}\]
and consequently ${\PhiU}\,\1_+^{\otimes 3}={\PhiD}\,\1_+^{\otimes 3}$.

To prove (i), note that since the comultiplication in $U\Lmm$ is coassociative, we
have $(i_-\otimes 1)i_-=(1\otimes i_-)i_-$. We therefore have to show that ${\PhiU}
(i_-\otimes 1)i_-=(1\otimes i_-)i_-$. This is an obvious consequence of \eqref
{eq:coalgebra} and the fact that $\Lm$ is generated by $\1_-$.

To prove (ii), consider $v\in\Np$, $f,g,h\in\Npv$, then
\begin{align*}
\begin{split}
\ipd&(1\ten\ipd)({\PhiU}(f\ten g\ten h))(v)\\
&=(h\ten g\ten f)((S^{\ten 3}({\PhiU}^{321})\cdot(T\otimes 1)\cdot(\Delta\otimes 1)(T))\cdot(\ip\ten 1)\ip(v))\\
&=(h\ten g\ten f)((1\otimes T)(1\otimes \Delta)(T)\cdot{\PhiU}(\ip\ten 1)\ip(v))\\
&=(h\ten g\ten f)((1\otimes T)(1\otimes \Delta)(T)(1\ten \ip)\ip(v)\PhiD)\\
&=(S^{\ten3}(\PhiD)^{\rho}(h\ten g\ten f))((1\otimes T)(1\otimes \Delta)(T)(1\ten \ip)\ip(v))\\
&=\ipd(\ipd\ten1)(S^{\ten3}(\PhiD^{321})^{\rho}(f\ten g\ten h))(v)\\
&=\ipd(\ipd\ten1)S^{\ten 3}(\PhiD^{-1})^{\rho}(f\ten g\ten h)(v)
\end{split}
\end{align*}
and (ii) is proved.
\end{pf}

\subsection{The relative fiber functor}\label{ss:gammafun}

To any representation $V\in\hEq{\gu}{\Phi}$, we can associate the
$\sfk{\fml}$--module
\[\FF{\a}{\b}(V)=\Hom_{\hEq{\gu}{\Phi}}(\Lm,\Npv\ten V),\]
where $\Hom_{\Eq{\gu}}$ is the set of continuous homomorphisms, equipped with
the weak topology. The right $\gd$--action on $N_+^*$ endows 
$\FF{\a}{\b}(V)$ with the structure of a left $\gd$--module.

\begin{proposition}\label{pr:isom to forget}
For any $V\in\hEq{\gu}{\Phi}$, $\FF{\a}{\b}(V)$ is isomorphic to $V$
as equicontinous $\gd$--module. The isomorphism is given by
\[\alpha_V: f\mapsto (\1_+\ten1)\noEKff{\b}(\1_-)\]
for any $f\in\Hom_{\Eq{\gu}}(\Lm,\Npv\ten V)$. 
The assignment $V\mapsto\FF{\a}{\b}(V)$ extends to a functor
$\FF{\a}{\b}:\hEq{\gu}{\Phi}\to\hEq{\gd}{\Phi}$.
\end{proposition}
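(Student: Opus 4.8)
The plan is to identify the functor $\FF{\a}{\b}$ with the restriction functor $\iD^*$ by the mechanism Etingof and Kazhdan use in the absolute case $\a=0$ \cite[\S 7]{ek-1}, the split decomposition $\gu=\Lmm\oplus\Lpp$ of \S\ref{s:parabolic} playing the role of the polarization $\gu=\gum\oplus\gup$. The structural inputs, all established in \S\ref{ss:rel Verma}--\S\ref{ss:Npv-1}, are: $\Lm=\ind_{\Lpp}^{\gu}\sfk$ is cyclic over $\gu$ on the vector $\1_-$, which is annihilated by $\Lpp$; $\Np\simeq U\Lpp$ as a left $\Lpp$--module, cyclic on $\1_+$; and the right $\gd$--action on $\Np$, hence on $\Npv$, is the one induced by $\iD(\gd)\subset\Lpp$ acting by right multiplication on $U\Lpp$.

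First, since $\Lm=\ind_{\Lpp}^{\gu}\sfk$ carries the discrete topology, evaluation at $\1_-$ yields an isomorphism of $\sfK$--modules
\[
\Hom_{\hEq{\gu}{\Phi}}(\Lm,\Npv\ten V)\;\xrightarrow{\sim}\;(\Npv\ten V)^{\Lpp},\qquad f\longmapsto f(\1_-),
\]
with inverse sending a $\Lpp$--invariant vector $w$ to the $\gu$--map determined by $\1_-\mapsto w$ (automatically continuous, $\Lm$ being discrete). This is an isomorphism of $\gd$--modules, the $\gd$--action on $\FF{\a}{\b}(V)$ living on the $\Npv$--factor alone and so commuting with the diagonal $\Lpp$--action defining the invariants.

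It then remains to show $(\Npv\ten V)^{\Lpp}\simeq V$ via the evaluation $w\mapsto(\1_+\ten\id)(w)$, so that the composite with the previous isomorphism is $\alpha_V$. The assignment $\sum_i\phi_i\ten v_i\mapsto(n\mapsto\sum_i\phi_i(n)v_i)$ embeds $\Npv\ten V$ into $\Hom_{\sfk}(\Np,V)$ and sends $\Lpp$--invariants to continuous $\Lpp$--module maps $\Np\to V$; evaluating such a map at the cyclic vector $\1_+\in\Np\simeq U\Lpp$ is precisely $w\mapsto(\1_+\ten\id)(w)$. Injectivity is then formal: if this vanishes, the associated $\Lpp$--map $\Np\to V$ kills $\1_+$, hence is zero, hence $w=0$. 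Surjectivity --- that every $v\in V$ arises from a $\Lpp$--invariant element of $\Npv\ten V$ --- is the substantive point, and is where the equicontinuity of $V$ is used, exactly as in \cite[Lemma 7.3]{ek-1}. That $\alpha_V$ intertwines the $\gd$--action on $\FF{\a}{\b}(V)$ with the restricted action $\iD^*$ on $V$ follows by combining the $\Lpp$--invariance of $f(\1_-)$ with the identity $\1_+\cdot x=\iD(x)\cdot\1_+$ in $\Np$ for $x\in\gd$ (both sides equal $\iD(x)\ten 1$), which turns the right $\gd$--action on $\Npv$ into the left action of $\iD(\gd)\subset\gu$ on $V$.

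Finally, functoriality is immediate: a morphism $g\colon V\to V'$ in $\hEq{\gu}{\Phi}$ induces, by post--composition with $\id_{\Npv}\ten g$, a $\gd$--module map $\FF{\a}{\b}(V)\to\FF{\a}{\b}(V')$ compatible with identities, composition, and the maps $\alpha$, so that $\alpha\colon\FF{\a}{\b}\Rightarrow\iD^*$ is an isomorphism of functors valued in equicontinuous $\gd$--modules (the target being equicontinuous since $\iD$ is continuous). I expect the main obstacle to be not the algebra, which transcribes the $\a=0$ argument of \cite[\S 7]{ek-1} almost verbatim, but the topological bookkeeping of the preceding paragraph: one must verify that the embedding $\Npv\ten V\hookrightarrow\Hom_{\sfk}(\Np,V)$, the passage to $\Lpp$--invariants, and the reconstruction of a $\Lpp$--invariant element of $\Npv\ten V$ from $v\in V$ all interact correctly with the colimit topology on $\Np$, the limit topology on $\Npv$ from \S\ref{ss:Npv-1}--\S\ref{ss:end N+*}, and the completed tensor product --- points handled just as their counterparts in \cite[\S 7]{ek-1}, using the equicontinuity estimates for $\Npv$ proved above.
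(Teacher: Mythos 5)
Your proposal follows essentially the same route as the paper's own proof: Frobenius reciprocity realised by evaluation at $\1_-$, identification of $(\Npv\ten V)^{\Lpp}$ with $V$ by pairing with $\1_+$, $\gd$--equivariance of $\alpha_V$ deduced from the $\Lpp$--invariance of $f(\1_-)$ together with the coincidence of the left and right $\gd$--actions on $\1_+$, and functoriality by post--composition with $\id_{\Npv}\ten\varphi$. The paper is in fact terser on the surjectivity/topological point you single out (it simply invokes the chain of isomorphisms), so your account is correct and, if anything, slightly more explicit there.
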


The proof is carried out in \S\ref{ss:gammafun-1} and \S\ref{ss:gammafun-2}.

\subsection{}\label{ss:gammafun-1}
By Frobenius reciprocity, we get an isomorphism
\[
\Hom_{\gu}(\Lm, \Npv\ten V)\simeq\Hom_{\Lpp}(\sfk, \Npv\ten V)\simeq\Hom_{\sfk}(\sfk, V)\simeq V\]
given by the map
\[ f\mapsto (\1_+\ten1)\noEKff{\b}(\1_-).\]
For $f\in \FF{\a}{\b}(V)$ and $x\in U\gd$, $x.f\in \FF{\a}{\b}(V)$ is defined by
\[x.f=(S(x)^{\rho}\otimes\id)\circ f.\]
For any $x\in U\gd$, we have
\[\sum_{i,j}x_i^{(1)}f_j\ten x_i^{(2)}v_j=\varepsilon(x)\noEKff{\b}(1_-),\]
where $\Delta(x)=\sum_i x_i^{(1)}\ten x_i^{(2)}$ and $\noEKff{\b}(\1_-)=\sum_j f_j\ten v_j$.
Using the identity
\[1\ten x  = \sum_i (S(x_i^{(1)})\ten 1)\cdot\Delta(x_i^{(2)}),\]
we obtain
\[(1\ten x)\noEKff{\b}(\1_-)=\sum_i (S(x_i^{(1)}\varepsilon(x_i^{(2)}))\ten 1) \noEKff{\b}(\1_-)
= (S(x)\ten 1)\noEKff{\b}(\1_-).\]
Finally, we have
\begin{align*}
x.\alpha_V(f)&=\iip{\1_+\ten\id}{(1\ten x)\noEKff{\b}(\1_-)}\\&=\iip{\1_+\ten\id}{(S(x)\ten 1)\noEKff{\b}(\1_-)}\\
&=\iip{\1_+\ten\id}{(S(x)^{\rho}\ten 1)\noEKff{\b}(\1_-)}=\alpha_V(x.f).
\end{align*}
Therefore, $\FF{\a}{\b}(V)$ is isomorphic to $V$ as equicontinuous $\gd$-module.

\subsection{}\label{ss:gammafun-2}

For any continuous $\varphi\in\Hom_{\gb}(V,V')$, define a map $\FF{\a}{\b}(\varphi):
\FF{\a}{\b}(V)\to \FF{\a}{\b}(V')$ by
\[\FF{\a}{\b}(\varphi):\; f\mapsto(\id\otimes \varphi)\circ f.\]
This map is continuous and for all $x\in\gd$
\[\FF{\a}{\b}(\varphi)(x\,f)=(S(x)^{\rho}\ten\varphi)\circ f=x\,\FF{\a}{\b}(\varphi)(f),\]
therefore $\FF{\a}{\b}(\varphi)\in\Hom_{\gd}(\FF{\a}{\b}(V),\FF{\a}{\b}(V'))$.
Since the diagram
\[\xymatrix{\FF{\a}{\b}(V)\ar[d]_{\alpha_{V}}\ar[r]^{\FF{\a}{\b}(\varphi)}&\FF{\a}{\b}(V')\ar[d]^{\alpha_{V'}}\\V\ar[r]^{\varphi}&V'}\]
is commutative for all $\varphi\in\Hom_{\gb}(V,V')$, we have a well--defined functor 
\[\FF{\a}{\b}:\hEq{\g}{{\PhiU}}\to\hEq{\gd}{\Phi}\]
which is naturally isomorphic to the pullback functor induced by the inclusion $i_{\a}
:\gd\hookrightarrow \gb$ via the natural transformation
\[\alpha_{V}: \FF{\a}{\b}(V)\simeq i_{\a}^*V.\]
This completes the proof of Proposition \ref{ss:gammafun}.


\subsection{Tensor structure on $\FF{\a}{\b}$}\label{s:tensor-structure}

Denote the tensor product in the categories $\hEq{\gu}{\Phi}$, $\hEq{\gd}{\Phi}$
by $\ten$, and let $B_{1234}$ and $B'_{1234}$ be the associativity constraints
\[B_{1234}:(V_1\ten V_2)\ten(V_3\ten V_4)\to V_1\ten((V_2\ten V_3)\ten V_4)\] 
and
\[B'_{1234}:(V_1\ten V_2)\ten(V_3\ten V_4)\to (V_1\ten(V_2\ten V_3))\ten V_4.\]

\noindent
For any $v\in \FF{\a}{\b}(V),w\in \FF{\a}{\b}(W)$, define $J_{VW}(v\ten w)\in\FF
{\a}{\b}(V\otimes W)$ by the composition
\begin{multline*}
\Lm\xrightarrow{\im}\Lm\ten\Lm\xrightarrow{v\ten w}(\Npv\ten V)\ten(\Npv\ten W)\xrightarrow{B}\Npv\ten((V\ten\Npv)\ten W)\\
\xrightarrow{\beta^{-1}_{23}}\Npv\ten((\Npv\ten V)\ten W)\xrightarrow{B'}(\Npv\ten\Npv)\ten(V\ten W)\xrightarrow{\ipd\ten 1}\Npv\ten(V\ten W).
\end{multline*}
The map $J_{VW}(v\ten w)$ is clearly
a continuous $\gu$-morphism from $\Lm$ to $\Npv\ten(V\ten W)$, so one gets a well-defined
map
\[J_{VW}:\FF{\a}{\b}(V)\ten\FF{\a}{\b}(W)\to\FF{\a}{\b}(V\ten W).\]

\begin{proposition}\label{pr:tensor structure}
The maps $J_{VW}$ are isomorphisms of $\gd$--modules, and define a tensor structure
on the functor $\FF{\a}{\b}$.
\end{proposition}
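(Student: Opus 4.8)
The plan is to follow the proof, given in \cite[Part II]{ek-1} and quoted in \S\ref{ss:EK fiber}, that $J$ defines a tensor structure on the Etingof--Kazhdan fiber functor $\noEKff{\b}$, which is the special case $\a=0$ in which $\Lm,\Np$ become the Verma modules $M_\mp$. The underlying principle is that, in a braided monoidal category, a coalgebra object $C$ together with an algebra object $A$ produce a tensor structure on the functor $V\mapsto\Hom(C,A\ten V)$: one first splits $C$ along its comultiplication, then brings the two $A$--factors together by means of the braiding, and finally merges them through the multiplication of $A$. By Proposition \ref{pr:coalgebra}, $\Lm$ is a coalgebra and $\Npv$ an algebra in the Drinfeld category $\hEq{(\gb,\ga)}{\Phi}$ of equicontinuous $(\gb,\ga)$--bimodules, and inspection of the formula for $J_{VW}$ shows that it is precisely the tensor structure on $\FF{\a}{\b}=\Hom_{\gu}(\Lm,\Npv\ten-)$ furnished by this mechanism. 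There are then three points to verify.

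First, $\gd$--equivariance and unitality. The maps $\im$ and $\ipd$ are morphisms of $(\gb,\ga)$--bimodules, the right $\ga$--action on $\Lm$ being moreover trivial, while $B$, $B'$ and the braidings $\beta^{\pm1}$ are built from the associator $\PhiU$ and $\Omega^\b$, hence act through the left $\gb$--action and commute with the right $\ga$--action on the $\Npv$--factors. A direct computation then shows that $J_{VW}$ intertwines the diagonal $\gd$--action on $\FF{\a}{\b}(V)\ten\FF{\a}{\b}(W)$ with the one on $\FF{\a}{\b}(V\ten W)$; the unit constraints $J_{\sfk,V}$ and $J_{V,\sfk}$ are the obvious identifications, coming from the counit of $\Lm$ and the unit of $\Npv$.

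Second, invertibility. Since $\FF{\a}{\b}(V)\cong V$ as equicontinuous $\gd$--modules by Proposition \ref{pr:isom to forget}, all spaces involved are topologically free over $\sfk\fml$, so it is enough to check that $J_{VW}$ is invertible modulo $\hbar$. Since modulo $\hbar$ the associator $\PhiU$ is trivial and $\beta^{-1}_{23}$ reduces to the transposition of the second and third factors, the reduction of $J_{VW}$ modulo $\hbar$ is the composite $\Lm\xrightarrow{\im}\Lm\ten\Lm\to(\Npv\ten V)\ten(\Npv\ten W)\to(\Npv\ten\Npv)\ten(V\ten W)\xrightarrow{\ipd\ten\id}\Npv\ten(V\ten W)$, which is the standard strict tensor structure of the restriction functor $\iD^*:\Eq{\gu}\to\Eq{\gd}$ under the identification $\FF{\a}{\b}\simeq\iD^*$ of Proposition \ref{pr:isom to forget}. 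It is therefore invertible, and hence so is $J_{VW}$.

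Third, and this is the substance of the statement, compatibility with the associativity constraints. One must establish
\[\FF{\a}{\b}(\PhiU)\circ J_{U\ten V,W}\circ(J_{U,V}\ten\id)=J_{U,V\ten W}\circ(\id\ten J_{V,W})\circ\PhiD\]
as morphisms $(\FF{\a}{\b}(U)\ten\FF{\a}{\b}(V))\ten\FF{\a}{\b}(W)\to\FF{\a}{\b}(U\ten(V\ten W))$, where $\PhiU$ is the associativity constraint of $\hEq{\gu}{\Phi}$ on $U,V,W$ and $\PhiD$ that of $\hEq{\gd}{\Phi}$ on $\FF{\a}{\b}(U),\FF{\a}{\b}(V),\FF{\a}{\b}(W)$. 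Expanding both sides via the formula for $J$, one iterates $\im$ (which introduces a factor $\PhiU$, by Proposition \ref{pr:coalgebra}(i)) and $\ipd$ (which introduces a factor $S^{\ten 3}(\PhiD^{-1})^{\rho}$, by Proposition \ref{pr:coalgebra}(ii)), and is left with a composite of braidings $\beta^{\pm1}$ on the $\Npv$--strands together with associators acting in various slots. These are matched by means of the pentagon identity for $\Phi$, the hexagon identities relating $\Phi$ and $\beta$, the naturality of $\Phi$ and $\beta$, and the fact that $\Omega^\b$ is a morphism of Drinfeld--Yetter modules: the factors $\PhiU$ produced by the coassociativity of $\Lm$ are absorbed into the ambient constraints of $\hEq{\gu}{\Phi}$ and into $\FF{\a}{\b}(\PhiU)$, while the factors $S^{\ten 3}(\PhiD^{-1})^{\rho}$ produced by the associativity of $\Npv$ recombine, through the $(\cdot)^{\rho}$ bookkeeping, with the target associator $\PhiD$. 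This is the diagram chase of \cite[Part II]{ek-1}, which goes through verbatim after the substitution of $\Lm,\Np$ for $M_\mp$; the only extra care concerns the topology on $\Npv$, already dealt with in \S\ref{ss:rel Verma}, and the element $T$ of \cite[Prop. 1.2]{drin-4} implementing the duality $(X\ten Y)^*\simeq Y^*\ten X^*$, which takes care of the infinite--dimensionality of $\Np$. I expect the main obstacle to be organisational: keeping track of the numerous factors $\PhiU^{\pm1}$, $\PhiD^{\pm1}$ and $\beta^{\pm1}$ and invoking the pentagon and hexagon in the right order.
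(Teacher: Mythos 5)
Your proposal is correct and follows essentially the same route as the paper: $\gd$--equivariance by direct computation using that $\ipd$ is a morphism of right $\gd$--modules, invertibility by reduction modulo $\hbar$ to the strict structure of the restriction functor, and the associativity axiom by expanding both sides and matching terms via Proposition \ref{pr:coalgebra} (the $\PhiU$--twisted coassociativity of $\im$ and the $S^{\ten 3}(\PhiD^{-1})^{\rho}$--twisted associativity of $\ipd$), naturality of the constraints, and the pentagon and hexagon axioms, exactly as in the Etingof--Kazhdan argument adapted to $\Lm,\Npv$. The "organisational" diagram chase you anticipate is precisely what the paper carries out in \S\ref{ss:start tensor}--\S\ref{ss:end tensor}.
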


\noindent
The proof of Proposition \ref{pr:tensor structure} is carried out in 
\S\ref{ss:start tensor}--\S\ref{ss:end tensor}.

\subsection{}\label{ss:start tensor}

The map $J_{VW}$ is compatible with the $\gd$--action.
Indeed, $\ipd$ is a morphism of right $\gd$--modules and, 
for any $x\in\gd$,
\begin{align*}
x\,J_{VW}(v\ten w)&=(S(x)^{\rho}\ten\id)(\ipd\ten\id\ten\id)\wt{A}(v\ten w)\im\\
&=(\ipd\ten\id\ten\id)(\Delta(S(x))^{\rho})_{12}\wt{A}(v\ten w)\im\\
&=(\ipd\ten\id\ten\id)\wt{A}((S\ten S)(\Delta(x)))^{\rho})_{13}(v\ten w)\im=J_{VW}(x\,(v\ten w))
\end{align*}
where $\wt{A}=A'\beta_{32}^{-1}A$.

$J_{VW}$ is an isomorphism, since it is an isomorphism modulo $\hbar$. Indeed,
\[J_{VW}(v\ten w)\equiv (i_+^*\ten 1)(1\ten s\ten 1)(v\ten w)i_-  \mod\hbar.\]
To prove that $J_{VW}$ define a tensor structure on $\FF{\a}{\b}$, we need to show
that, for any $V_1,V_2,V_3\in\hEq{\gu}{\Phi}$ the following diagram is commutative
\[{\tiny
\xymatrix@R=1cm@C=1cm{(\FF{\a}{\b}(V_1)\ten \FF{\a}{\b}(V_2))\ten \FF{\a}{\b}(V_3) \ar[d]_{\PhiD} \ar[r]^(.53){J_{12}\ten 1} & \FF{\a}{\b}(V_1\ten V_2)\ten \FF{\a}{\b}(V_3) \ar[r]^{J_{12,3}} & \FF{\a}{\b}((V_1\ten V_2)\ten V_3) \ar[d]^{\FF{\a}{\b}({\PhiU})} \\
\FF{\a}{\b}(V_1)\ten(\FF{\a}{\b}(V_2)\ten \FF{\a}{\b}(V_3)) \ar[r]^(.53){1\ten J_{23}} & \FF{\a}{\b}(V_1)\ten \FF{\a}{\b}(V_2\ten V_3) \ar[r]^{J_{1,23}} & \FF{\a}{\b}(V_1\ten(V_2\ten V_3))}}\]
where $J_{ij}$ denotes the map $J_{V_i,V_j}$ and $J_{i j,k}$ the map $J_{V_i\ten
V_j,V_k}$.

\subsection{}

For any $v_i\in \FF{\a}{\b}(V_i)$, $i=1,2,3$, the map $\FF{\a}{\b}({\PhiU})J_{12,3}J_{12}\ten1
(v_1 \ten v_2\ten v_3)$ is given by the composition
\begin{multline*}
(1\ten{\PhiU})(i_+^*\ten 1^{\ten 3})A_4(1\ten\beta_{1\ten2, \Npv}\ten1)A_3((i_+^*\ten 1)\ten1^{\ten 3})(A_2\ten1\ten1)\\
\cdot(1\ten\beta_{\Npv,1}\ten 1^{\ten 3})(A_1\ten1\ten1)(v_1\ten v_2\ten v_3)(i_-\ten1)i_-,
\end{multline*}
where 
\[\begin{array}{ccc}
A_1=B_{\Npv,1,\Npv,2}, &\hspace{.5cm} &A_3=B_{\Npv,1\ten2,\Npv,3},\\[.1ex]
A_2=B^{-1}_{\Npv,\Npv,1,2}, &\hspace{.5cm} &A_4=B^{-1}_{\Npv,\Npv,1\ten2,3}.
\end{array}\]
By functoriality of associativity and commutativity isomorphisms, we have
\[A_3(i_+^*\ten1^{\ten 4})=(i_+^*\ten1^{\ten 4})A_5,\]
where $A_5=B_{\Npv\ten\Npv,12,\Npv,3}$,
\[(1\ten\beta_{12,\Npv}\ten1)(i_+^*\ten1^{\ten 4})=(i_+^*\ten1^{\ten 4})(1^{\ten2}\ten\beta_{12,\Npv}\ten1^{\ten2})\]
and
\[A_4(i_+^*\ten1^{\ten 4})=(i_+^*\ten1^{\ten 4})A_6,\]
where $A_6=B^{-1}_{\Npv\ten\Npv,\Npv,1\ten2,3}$. 
Finally, we have
\begin{multline}\label{eq:one side}
\FF{\a}{\b}({\PhiU})J_{12,3}(J_{12}\ten1)(v_1\ten v_2\ten v_3)\\
=(1^{\ten 3}\ten{\PhiU}_{123})((i_+^*(i_+^*\ten1))\ten 1^{\ten 3})
\,A\,(v_1\ten v_2\ten v_3)(i_-\ten 1)i_-,
\end{multline}
where
\[A=A_6(1^{\ten 2}\ten\beta_{1\ten2,\Npv}\ten1^{\ten 2})A_5(A_2\ten1^{\ten 2})(1\ten\beta_{\Npv,1}\ten1^{\ten 3})(A_1\ten1\ten1).\]

\subsection{}

On the other hand, $J_{1,2\ten3}(1\ten J_{23})\PhiD(v_1\ten v_2\ten v_3)$ corresponds
to the composition
\begin{equation*}
\begin{split}
(i_+^*\ten 1^{\ten 3})A_4'&(1\ten\beta_{\Npv,1}\ten1^{\ten 2})A_3'(1^{\ten 2}\ten i_+^*\ten1^{\ten 2})(1\ten1\ten A_2')\\&(1^{\ten 3}\ten\beta_{2,\Npv}\ten 1)(1\ten1\ten A_1')\PhiD(v_1\ten v_2\ten v_3)(1\ten i_-)i_-,
\end{split}
\end{equation*}
where 
\[\begin{array}{ccc}
A_1'=B_{\Npv,2,\Npv,3}, &\hspace{.5cm} &A_3'=B_{\Npv,1,\Npv,2\ten3},\\
A_2'=B^{-1}_{\Npv,\Npv,2,3}, &\hspace{.5cm} &A_4'=B^{-1}_{\Npv,\Npv,1,2\ten3}.
\end{array}\]
By functoriality of associativity and commutativity isomorphisms, we have
\[A_3'(1^{\ten 2}\ten i_+^*\ten 1^{\ten 2})=(1^{\ten 2}\ten i_+^*\ten 1^{\ten 2})A_5',\]
where $A_5'=B_{\Npv,1,\Npv\ten\Npv,2\ten 3}$,
\[(1\ten\beta_{1,\Npv}\ten1^{\ten 2})(1^{\ten 2}\ten i_+^*\ten 1^{\ten 2})=(1\ten i_+^*\ten 1^{\ten 3})(1\ten\beta_{1,\Npv\ten\Npv}\ten 1^{\ten 2}),\]
and
\[A_4'(1\ten i_+^*\ten 1^{\ten 3})=(1\ten i_+^*\ten 1^{\ten 3})A_6',\]
where $A_6'=B^{-1}_{\Npv,\Npv\ten\Npv,1,2\ten3}$. Thus,
\begin{multline}\label{eq:another side}
J_{1,23}(1\ten J_{23})\PhiD(v_1\ten v_2\ten v_3)\\
=
(i_+^*\ten 1^{\ten 3})((1 \ten i_+^*)\ten1^{\ten 3})
\,B\, 
\PhiD(v_1\ten v_2\ten v_3)(1\ten i_-)i_-,
\end{multline}
where
\[B=A_6'(1\ten\beta_{1,\Npv\ten\Npv}\ten1^{\ten 2})A_5'(1^{\ten 2}\ten A_2')(1^{\ten 3}\ten\beta_{2,\Npv}\ten 1)(1\ten1\ten A_1').\]

\subsection{}\label{ss:end tensor}

Comparing \eqref{eq:one side} and \eqref{eq:another side}, we see that it suffices
to show that the outer arrows of the following form a commutative diagram.
\[
\xymatrix@R=1.5cm@C=.1cm{& &\Lm \ar[dll]_{(i_-\ten 1)i_-}  \ar[drr]^{(1\ten i_-)i_-}& & \\
(\Lm\ten\Lm)\ten\Lm \ar[d]_{v_1\ten v_2\ten v_3}\ar[rrrr]^{\PhiU} & & & & \Lm\ten(\Lm\ten\Lm) 
\ar[d]^{\PhiD(v_1\ten v_2\ten v_3)}\\
{\scriptstyle ((\Npv\ten V_1)\ten(\Npv\ten V_2))\ten(\Npv\ten V_3)} \ar[d]_{A} \ar[rrrr]^{{\PhiU}}&&&&
{\scriptstyle (\Npv\ten V_1)\ten((\Npv\ten V_2)\ten(\Npv\ten V_3))}\ar[d]^{B}\\
{\scriptstyle ((\Npv\ten\Npv)\ten\Npv)\ten((V_1\ten V_2)\ten V_3)}\ar[d]_{(i_+^*(i_+^*\ten 1))\ten1^{\ten 3}} 
\ar[rrrr]^{{\PhiU}\ten{\PhiU}}&&&&
{\scriptstyle (\Npv\ten(\Npv\ten\Npv))\ten(V_1\ten (V_2\ten V_3))}\ar[d]^{(i_+^*(1\ten i_+^*))\ten1^{\ten 3}}\\
\Npv\ten((V_1\ten V_2)\ten V_3)\ar[rrrr]^{1\ten{\PhiU}}&&&&\Npv\ten(V_1\ten (V_2\ten V_3))
}
\]
Using the pentagon and the hexagon axiom, one shows that
$({\PhiU}\ten{\PhiU})A=B{\PhiU}$.
We have to show that 
\[\FF{\a}{\b}({\PhiU})J_{12,3}(J_{12}\ten1)(v_1\ten  v_2  \ten v_3)=J_{1,23}(1\ten J_{23})\PhiD(v_1\ten v_2\ten v_3)\]
in $\Hom_{\g}(\Lm,\Npv\ten(V_1\ten(V_2\ten V_3)))$:
\begin{equation*}
\begin{split}
J_{1,23}&(\id\ten J_{23})\PhiD(v_1\ten v_2\ten v_3)\\
&=(\ipd(\id\ten\ipd)\ten\id^{\ten 3})B\PhiD(v_1\ten v_2\ten v_3)(\id\ten i_-)\im\\
&=(\ipd(\id\ten\ipd)\ten\id^{\ten 3})B\PhiD(v_1\ten v_2\ten v_3){\PhiU}(\im\ten\id)\im\\
&=(\ipd(\id\ten\ipd)\ten\id^{\ten 3})B{\PhiU}\PhiD(v_1\ten v_2\ten v_3)(\im\ten\id)\im\\
&=(\ipd(\id\ten\ipd){\PhiU}\ten{\PhiU})A\PhiD(v_1\ten v_2\ten v_3)(\im\ten\id)\im\\
&=(\ipd(\id\ten\ipd){\PhiU}\ten{\PhiU})(S^{\ten 3}(\PhiD)^{\rho}\ten\id^{\ten 3})A(v_1\ten v_2\ten v_3)(\im\ten\id)\im\\
&=(\ipd(\id\ten\ipd){\PhiU} S^{\ten 3}(\PhiD)^{\rho}\ten{\PhiU})A(v_1\ten v_2\ten v_3)(\im\ten\id)\im\\
&=(\ipd(\ipd\ten\id)\ten{\PhiU})A(v_1\ten v_2\ten v_3)(\im\ten\id)\im\\
&=\FF{\a}{\b}({\PhiU})J_{12,3}(J_{12}\ten\id)(v_1\ten  v_2  \ten v_3),
\end{split}
\end{equation*}
where the second and seventh equalities follow from Proposition \ref{pr:coalgebra}, the fifth
one from the definition of the $\gd$--action on the modules $\FF{\a}{\b}(V_i)$ and the others 
from functoriality of the associator ${\PhiU}$. 
We shall henceforth denote the tensor structure on $\FF{\a}{\b}$ by $J\resped$.


\subsection{Infinitesimal of relative twist $J\resped$} 

The following is a straightforward extension of the computation
of the 1--jet of the Etingof--Kazhdan twist.

\begin{proposition}\label{pr:jets-rel-twist}
Under the natural identification
$\alpha_V:\FF{\a}{\b}(V)\to V$, the relative twist $J\resped$ satisfies
\[\alpha_{V\ten W}\circ J\resped\circ(\alpha_V^{-1}\ten\alpha_W^{-1})
\equiv
\id^{\otimes 2}+\frac{\hbar}{2}(r_\b+\iD^{\otimes 2}(r_{\a}^{21}))\quad\mod\hbar^2\]
in $\End(V\ten W)$, where $r_\b,r_\a$ are the canonical elements in
$\b\ten\b^*,\a\ten\a^*$ respectively.
\end{proposition}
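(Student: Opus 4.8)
The plan is to extract the first-order term of $J\resped$ directly from its definition in \S\ref{s:tensor-structure}, working modulo $\hbar^2$. Recall that $J_{VW}(v\ten w)$ is the composition
\[
\Lm\xrightarrow{\im}\Lm\ten\Lm\xrightarrow{v\ten w}(\Npv\ten V)\ten(\Npv\ten W)\xrightarrow{\wt{A}}
(\Npv\ten\Npv)\ten(V\ten W)\xrightarrow{\ipd\ten\id}\Npv\ten(V\ten W),
\]
where $\wt{A}=B'\circ\beta_{23}^{-1}\circ B$. The key point is that modulo $\hbar$ the associativity constraints $B,B'$ and the braiding $\beta_{23}$ are all trivial, so $\wt A\equiv(1\,3\,2)$ (the permutation bringing the two $\Npv$-factors together), and hence $J_{VW}\equiv(\ipd\ten\id)\circ(1\,s\,1)\circ(v\ten w)\circ\im \mod\hbar$, which under $\alpha$ is the identity — this is already noted in \S\ref{ss:start tensor}. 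To get the $\hbar$-linear term, I would expand $\wt A$ to first order. The braiding contributes $\beta_{23}^{-1}\equiv(2\,3)\circ(\id-\tfrac{\hbar}{2}\Omega_{23})$, while $B$ and $B'$, being built from $\Phi_\b$ and $(\Phi_\a^{-1})^\rho$, have trivial $1$-jet since $\Phi=1+O(\hbar^2)$ for any associator normalized as in \S\ref{ss:Drinfeld cat}. Thus the only first-order contribution to $\wt A$ comes from $-\tfrac{\hbar}{2}$ times the action of $\Omega$ (for $\gu$) on the two middle tensor factors, which after the permutations lands on the slot pairing the second $\Npv$ against $V$.

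The next step is to identify which part of $\Omega=\Omega_\b$ survives after composing with $\ipd\ten\id$ and evaluating against $\1_-$, $\1_+$ as in the isomorphism $\alpha$. Using the decomposition $\Omega_\b=r_\b+r_\b^{21}$ together with $\Omega_\b=\Omega_\a+\sum_{i\in I\setminus I_\a}(a_i\ten b^i+b^i\ten a_i)$ from the proof of Proposition \ref{pr:coalgebra}, and the vanishing relations $\gup\1_-=0$, $\Lmm\1_+=0$, $\m_\pm\1_\mp=0$, one sees that when $\Omega_\b$ acts on the $\Npv\ten V$ pair sandwiched between $\1_+\ten\id$ on the left and the Verma vectors on the right, the terms involving creation operators on $\Npv$ (i.e. the $\b^*$-part acting as multiplication) are killed by pairing with $\1_+$, exactly as in the Etingof--Kazhdan computation; what remains is $r_\b$ acting on the $(\b,V)$-slot. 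The asymmetry of $\Lm$ versus $\Np$ — $\Lm\cong U\Lmm$ carries a trivial right $\a$-action while $\Np\cong U\Lpp$ contains $i(\a_-)$ — is what produces the extra summand: the $\Omega_\a^\rho$ part of the right $\gd$-action on $\Npv$ (equivalently, the $(\Phi_\a^{-1})^\rho$ and $\exp(-\tfrac\hbar2\Omega_\a)^\rho$ factors in the bimodule structure, whose $1$-jet contributes $-\tfrac\hbar2\Omega_\a^\rho$... but here entering with the opposite sign convention) contributes $\iD^{\ten2}(r_\a^{21})$, since on $\1_+$ left and right $\gd$-actions agree. Carefully tracking signs through $\beta_{23}^{-1}$ versus the right-action twist gives the stated formula $\id^{\ten2}+\tfrac\hbar2\bigl(r_\b+\iD^{\ten2}(r_\a^{21})\bigr)$.

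I would organize the actual verification as: (1) write $J_{VW}$ mod $\hbar^2$ as $(\ipd\ten\id)\circ(1\,s\,1)\circ\bigl(\id-\tfrac\hbar2(\Omega_\b)_{\text{mid}}\bigr)\circ(v\ten w)\circ\im$ plus the first-order correction from the right-$\a$-structure on $\Npv$; (2) transport along $\alpha_V,\alpha_W,\alpha_{V\ten W}$, using $\alpha_V(f)=(\1_+\ten\id)f(\1_-)$ and the multiplicativity $\im(\1_-)=\1_-\ten\1_-$, $\ipd$ dual to $\ip$ with $\ip(\1_+)=\1_+\ten\1_+$; (3) apply the vanishing relations to collapse $(\Omega_\b)_{\text{mid}}$ to $r_\b$ on the $V,W$ slots, and the right-action term to $\iD^{\ten2}(r_\a^{21})$. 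The main obstacle is bookkeeping: getting the placement of indices right after the three permutations $B,\beta_{23}^{-1},B'$ and confirming the sign and the $21$-flip on the $r_\a$-term. Since the $\a=0$ case recovers the known Etingof--Kazhdan $1$-jet $\id^{\ten2}+\tfrac\hbar2 r_\b$ (where $\Lm=M_-$, $\Np=M_+$, and the right-$\a$-structure is absent), this serves as a consistency check, and the $r_\a^{21}$ summand is forced by the requirement that $J\resped$ intertwine the commutativity constraints $\exp(\tfrac\hbar2\Omega_\b)\circ\exp(-\tfrac\hbar2\Omega_\a)^\rho$ upstairs with the braiding on $\hEq{\gd}{\Phi}$, whose infinitesimal is $r_\a$.
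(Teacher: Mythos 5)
Your overall strategy is the same as the paper's: since $\Phi_\a,\Phi_\b\equiv 1^{\ten 3}\mod\hbar^2$, the only first--order contribution to $J\resped$ comes from $\beta_{23}^{-1}$, and one evaluates the resulting Casimir term against the generating vectors via $\alpha$, i.e. writes $\alpha_V^{-1}(v)(\1_-)=\sum_i f_i\ten v_i$, $\alpha_W^{-1}(w)(\1_-)=\sum_j g_j\ten w_j$ and pairs with $(\1_+\ten 1)^{\ten 2}$. The paper does exactly this, via the two identities $\langle(\1_+\ten1)^{\ten2},\ol{\Omega}_{23}\,\sum f_i\ten v_i\ten g_j\ten w_j\rangle=-\ol{r}(v\ten w)$ and $\langle(\1_+\ten1)^{\ten2},\iD^{\ten2}(\Omega_\a)_{23}\,\sum f_i\ten v_i\ten g_j\ten w_j\rangle=-\iD^{\ten2}(\Omega_\a)(v\ten w)$, where $\Omega=\ol{\Omega}+\iD^{\ten2}(\Omega_\a)$, and then $\ol{r}+\iD^{\ten2}(\Omega_\a)=r_\b+\iD^{\ten2}(r_\a^{21})$.

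Where your plan goes wrong is in step (3), i.e. in the allocation of the $\a$--summand. The factors $\exp(-\frac{\hbar}{2}\Omega_\a)^\rho$ and $(\Phi_\a^{-1})^\rho$ act through the \emph{right} $\gd$--actions of the factors being braided; in $\beta_{23}^{-1}$ these are $V$ and $\Npv$, and $V$ carries the trivial right $\gd$--action, so the right--action part acts as the identity on that pair and contributes nothing at order $\hbar$. Hence the ``right--action term'' you propose to collapse to $\iD^{\ten2}(r_\a^{21})$ is zero, and correspondingly the left Casimir $\Omega_{23}$ does \emph{not} collapse to $r_\b$ alone. Writing $\Omega=\sum_i(a_i\ten b^i+b^i\ten a_i)$, a term survives the pairing with $\1_+$ on the $\Npv$--slot exactly when the element acting there lies in $\Lpp$ (then $\Lpp$--invariance of $w(\1_-)$ transfers it, with a sign, to the $W$--slot), and dies when it lies in $\Lmm$. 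Since $\Lpp=\iD(\a_-)\oplus\gup$, besides all terms $a_i\ten b^i$ (giving $r_\b$) the flipped terms $b^j\ten a_j$ with $a_j\in\iD(\a_-)$ also survive, producing $\iD^{\ten2}(r_\a^{21})$; this is precisely the asymmetry you point at when you note that $\Np$ contains $\iD(\a_-)$, but it is a feature of the left action of $\gu$, not of the $\rho$--twisting. Note that at $\a=0$ the two mechanisms are indistinguishable (the flipped terms all die and there is no right action), which is why your consistency check does not detect the misallocation; in the relative case the argument you sketch (``left and right actions agree on $\1_+$'', opposite sign conventions) would not produce the $r_\a^{21}$ term, and the computation must instead be organised around the $\Lmm$--versus--$\Lpp$ dichotomy on the $\Npv$--slot, as in the paper's two pairing identities.
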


\begin{pf}
For $v\in V, w\in W$, let 
\[\alpha^{-1}_V(v)(1_-)=\sum f_i\ten v_i\qquad \alpha^{-1}_W(w)(1_-)=\sum g_j\ten w_j\]
in $(\Npv\ten V)^{\Lpp}$ and $(\Npv\ten W)^{\Lpp}$ respectively. Then we observe
\[\langle (1_+\ten 1)^{\ten2},\ol{\Omega}_{23}\sum_{i,j}f_i\ten v_i\ten g_j\ten w_j\rangle
= -\ol{r}(v\ten w)\]
and
\[\langle (1_+\ten 1)^{\ten2},\iD^{\otimes 2}(\Omega_\a)_{23}\sum_{i,j}f_i\ten v_i\ten g_j
\ten w_j\rangle=-\iD^{\otimes 2}(\Omega_{\a})(v\ten w),\]
where $\Omega=\ol{\Omega}+\iD^{\otimes 2}(\Omega_{\a})$. Together with the fact that
$\Phi_\a,\Phi_\b=1^{\otimes 3}\mod\hbar^2$, we obtain
\[\alpha_{V\ten W}\circ J\resped\circ(\alpha_V^{-1}\ten\alpha_W^{-1})(v\ten w)\equiv v\ten 
w + \frac{\hbar}{2}(r_\b+\iD^{\otimes 2}(r_{\a}))(v\ten w)\mod\hbar^2.\]
\end{pf}

The computation of the $1$--jet of $J\resped$ shows, in particular,
that for $\a=\b$ the twist $J_{\b,\b}$ is not trivial.
However, by adapting the arguments of \cite[Prop. 9.7]{ek-1} and \cite[Prop. 3.10]{brochier}, 
one observes that the functor $\FF{\b}{\b}$ is in fact tensor equivalent
to the Etingof--Kazhdan functor $\noEKff{\b}$ equipped with the trivial tensor
structure, and it is therefore tensor equivalent to the identity functor on 
$\hEq{\g}{\Phi}\simeq\hDrY{\gum}{\Phi}$. 

 This complete the proof of Theorem \ref{th:Gamma}.


\section{Quantisation of Verma modules}\label{s:Gammah}

In this section, we construct a natural transformation $v_{\a,\b}$ making 
the following diagram commute
\[\xymatrix@C=2.5cm{
\hDrY{\b}{\Phi} \ar[r]^{\noEKeq\b} \ar[d]_{(F_{\a,\b}, J\resped)}& 
\aDrY{\Uhb} \ar[d]^{(\Res_{\Uha,\Uhb}, \id^{\otimes 2})}
\ar@{<=}[dl]_{v\resped}
\\
\hDrY{\a}{\Phi} \ar[r]_{\noEKeq\a} & 
\aDrY{\Uha}
}\]
where $(F_{\a,\b}, J\resped)$ is the tensor functor constructed in Section
\ref{s:Gamma}, and $\Res_{\Uhb,\Uha}$ is the tensor functor induced by
the split embedding $\Uha\hookrightarrow\Uhb$.
The proof relies on the construction of a quantum restriction functor $F
\resped^\hbar$ using a quantum version $\Lmh,\Nph$ of the relative Verma modules
$\Lm,\Npv$. The natural transformation $v\resped$ is then obtained 
under the assumption that the \nEK quantisation of $\Lm,\Npv$ is given
by their their quantum counterparts $\Lmh,\Nph$, a fact which will proved in Section
\S\ref{se:rel prop}. We begin in \S\ref{ss:quantum-verma}--\S\ref{ss:quant Mm Mpv}
by discussing the analogous, but simpler, quantisation of the Verma
modules $\Mm,\Mpv$. The quantum Verma modules $\Lmh,\Nph$ are
studied in \S\ref{ss:relquantumverma1}--\S\ref{ss:q restriction}. Their
definition relies on the Radford biproduct associated on a split pair
of Hopf algebras, which is reviewed in \S\ref{ss:splitHA}--\S\ref{ss:qrad-DY}

\subsection{The quantum Verma module $\MmB$}\label{ss:quantum-verma}

The Verma module $\Mm$ defined in \S \ref{ss:vermaEK} has a
natural counterpart in the category of \DYt modules over a Hopf
algebra $(B, m,\iota, \Delta, \epsilon, S)$ over $\sfk$. Following
\cite[\S 2.3]{ek-2}, define the \DYt $B$--module $\MmB$ to be
the $\sfk$--module $B$, with action $\pi_-$ given by the multiplication
$m$. Since $\MmB$ is a free $B$--module of rank one, with cyclic
vector $1$, the coaction $\pi_-^*$ on $\MmB$ is uniquely determined
by its compatibility \eqref{eq:DY Hopf} with the action, together
with the requirement that $\pi_-^* 1=1\otimes 1$. $\pi_-^*$ is
easily seen to be given by
\begin{equation}\label{eq:adj-coact}
\pi_-^*=m^{21}\ten\id\circ S^{-1}\ten (2\,3)\circ\Delta^{(3)}.
\end{equation}
which coincides with the adjoint coaction of $B$ on itself.

$\MmB$ satisfies the following universal property. 
For every $V\in\DrY{B}$,
\begin{equation}\label{eq:univ-prop-1}
\Hom_B^B(\MmB, V)\simeq\Hom^B(\sfk, V).
\end{equation}
The isomorphism \eqref{eq:univ-prop-1} is easily described. In one direction,
for any \DYt morphism $g:\MmB\to V$, one sets $\phi(g)=g\circ\iota$. Since
$\iota$ commutes with the coaction, $\phi(g)$ is a morphism of $B$--comodules.
Conversely, for any morphism of comodules $f:\sfk\to V$, one defines a map
$\psi(f):\MmB\to V$ by
\[
\psi(f)=\pi_V\circ\id\ten f,
\]
where $\pi_V$ denotes the action on $V$. It is easy to see that
$\psi(f)$ is a morphism of \DYt modules and $\phi,\psi$
are inverse of each other.

It follows from \eqref{eq:univ-prop-1} that $\MmB$ is naturally
endowed with a cocommutative coalgebra structure in $\DrY{B}$,
induced by the coproduct of $B$.

\subsection{The quantum Verma module $\MpB$}\label{ss:quantum-verma-p}

The module $\MpB$ is defined from $\MmB$ by reversing all arrows
and exchanging product with coproduct and action with coaction. 
Namely, set $\MpB=B$ with coaction $\pi^*_+=\Delta^{21}$, and
action
\begin{equation}\label{eq:adj-action}
\pi_+=m^{(3)}\circ(2\,3)\circ(\id\ten S^{-1}\ten\id)\circ\Delta^{21}\ten\id,
\end{equation}
given by the adjoint action of $B$ on itself. The module $\MpB=B$
satisfies the following universal property. For any $V\in\DrY{B}$, 
\begin{equation}\label{eq:univ-prop-2}
\Hom_B^B(V, \MpB)\simeq\Hom_B(V,\sfk).
\end{equation}
The isomorphism is described as follows. For any \DYt
morphism $g:V\to\MpB$, one sets $\phi(g)=\epsilon\circ g$. 
Since the counit commutes with the action of $B$, $\phi(g)$ is a
morphism of $B$--modules.
Conversely, for any morphism of modules $f:V\to\sfk$, one 
defines a map $\psi(f): V\to\MpB$ by
\[
\psi(f)=\id\ten f\circ\pi^*_V,
\]
where $\pi^*_V$ denotes the coaction on $V$. It is easy to see that
$\psi(f)$ is a morphism of \DYt modules and $\phi,\psi$ are
inverse of each other.
Moreover, it follows from \eqref{eq:univ-prop-2} that $\MpB$ is
naturally endowed with a commutative algebra structure in $\DrY{B}$,
induced by the product of $B$.

As pointed out in \S \ref{ss:DY-qD}, for any \fd Hopf algebra $B$,
the category of \DYt $B$--modules is equivalent to that of modules
over the quantum double $DB=B\otimes B^\circ$ (see also Appendix
\ref{s:app}). Under this equivalence, the module $\MmB$ corresponds
to the $DB$--module induced by the inclusion $B^\circ\subset DB$,
while $\MpB$ corresponds to the coinduced $DB$--module
corresponding to the inclusion $B\subset DB$.

\subsection{Admissibility of $\MmB$}\label{ss:adm-Mm}

If $B$ is a quantised enveloping algebra, it is natural to restrict attention
to the category of admissible \DYt module defined in \S \ref{ss:admDY},
\ie those whose coaction factors through the Hopf subalgebra $B'\subset
B$. 

\begin{proposition}\label{prop:adm-Mm}
Let $B$ be a QUE.
The adjoint coaction \eqref{eq:adj-coact} of $B$ on itself factors through
$B'\otimes B$. In particular, the \DYt module $\MmB$ is admissible.
\end{proposition}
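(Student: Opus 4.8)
The plan is to reduce the statement to the one--variable admissibility criterion of Proposition~\ref{prop:adm-cond}. Since $\MmB = B$ equipped with the adjoint coaction $\pi_-^*$ of \eqref{eq:adj-coact} is, by construction in \S\ref{ss:quantum-verma}, a Drinfeld--Yetter module over $B$, it is admissible --- equivalently, $\pi_-^*$ factors through $B'\otimes B$ --- if and only if
\[
\ima\bigl((\id - \iota\circ\epsilon)\otimes\id\circ\pi_-^*\bigr)\subseteq\hbar B\otimes B,
\]
that is, if and only if $(\id - \iota\circ\epsilon)\otimes\id\circ\pi_-^*\equiv 0\pmod{\hbar}$ as a map $B\to B\otimes B$ (using that $B\otimes B$ is topologically free over $\sfK$). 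Writing $\Delta^{(3)}(b)=\sum b_1\otimes b_2\otimes b_3$ in Sweedler notation, \eqref{eq:adj-coact} reads $\pi_-^*(b)=\sum b_3\,S^{-1}(b_1)\otimes b_2$; since $\epsilon\circ S^{-1}=\epsilon$ and $(\epsilon\otimes\id\otimes\epsilon)\circ\Delta^{(3)}=\id$, one computes
\[
(\id-\iota\circ\epsilon)\otimes\id\circ\pi_-^*(b)=\sum b_3\,S^{-1}(b_1)\otimes b_2-1\otimes b,
\]
so the claim is exactly the congruence $\sum b_3\,S^{-1}(b_1)\otimes b_2\equiv 1\otimes b\pmod{\hbar}$.

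To prove this congruence I would pass to the semiclassical limit. By definition of a QUE, $B/\hbar B\cong U\b$ is cocommutative, so modulo $\hbar$ we may replace $S^{-1}$ by $S$ and use that $\Delta^{(3)}$ is invariant under the action of $\SS_3$ permuting its three tensor legs. Applying the transpositions $(2\,3)$ and then $(1\,2)$ to this invariance and composing with suitable multiplication maps yields
\[
\sum b_3\,S(b_1)\otimes b_2=\sum b_2\,S(b_1)\otimes b_3=\sum b_1\,S(b_2)\otimes b_3,
\]
and rewriting the right--hand term via $\Delta^{(3)}=(\Delta\otimes\id)\circ\Delta$ as $\sum (b_1)_1\,S((b_1)_2)\otimes b_2$, it collapses by the antipode axiom to $\sum\epsilon(b_1)\,1\otimes b_2=1\otimes b$. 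Hence $\sum b_3\,S^{-1}(b_1)\otimes b_2\equiv 1\otimes b\pmod{\hbar}$, and Proposition~\ref{prop:adm-cond} gives that $\MmB$ is admissible and that $\pi_-^*$ factors through $B'\otimes B$, as claimed.

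There is no substantive obstacle here --- the argument is a short explicit computation --- and the only points demanding care are: (i) the reduction to the economical criterion \eqref{eq:adm-cond}, which relies on $\MmB$ being already known to be a Drinfeld--Yetter module so that Proposition~\ref{prop:adm-cond} applies; and (ii) the bookkeeping in the cocommutative Hopf algebra $B/\hbar B$, where one must keep track of which leg of $\Delta^{(3)}$ the antipode and the multiplication act on. It is worth remarking that the same computation shows $\pi_-^*\equiv\iota\otimes\id\pmod{\hbar}$, which is consistent with the rescaled coaction $\SC{\pi_-^*}$ of \S\ref{ss:sc-lim-admDY} being well defined on $\SC{\MmB}$.
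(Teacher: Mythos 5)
Your proof is correct and follows essentially the same route as the paper: both reduce the statement to the one--variable criterion of Proposition \ref{prop:adm-cond} and then show that $(\id-\iota\circ\epsilon)\ten\id\circ\pi_-^*$ vanishes modulo $\hbar$, the key input in either case being the QUE axiom that $\Delta-\Delta^{21}$ is divisible by $\hbar$. The only (harmless) difference is cosmetic: the paper proves an exact identity expressing $(\id-\iota\circ\epsilon)\ten\id\circ\pi_-^*$ with $\Delta-\Delta^{21}$ inserted in place of the inner coproduct, whereas you pass directly to the cocommutative quotient $B/\hbar B\cong U\b$ and conclude via the $\SS_3$--invariance of $\Delta^{(3)}$ and the antipode axiom.
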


\begin{pf}
By Proposition \ref{prop:adm-cond}, it is enough to check that
$\pi_-^*$ satisfies \eqref{eq:adm-cond}. One has
\begin{align*}
m^{21}\ten\id\circ S^{-1}\ten (2\,3)\circ(\id\ten\Delta^{21}\circ\Delta)
=&(m^{21}\circ S^{-1}\ten\id\circ\Delta)\ten\id\circ\Delta\\
=&(\iota\circ\epsilon)\ten\id\circ\Delta=\iota\ten\id\\
=&(\iota\circ\epsilon)\ten\id\circ\pi_-^*.
\end{align*}
It follows that 
\[
(\id-\iota\circ\epsilon)\ten\id\circ\pi_-^*=
m^{21}\ten\id\circ S^{-1}\ten (2\,3)\circ(\id\ten(\Delta-\Delta^{21})\circ\Delta),
\]
which is divisible by $\hbar$ since $\Delta-\Delta^{21}$ is.
Therefore $\pi_-^*$ satisfies \eqref{eq:adm-cond} and $\MmB$ is
admissible.
\end{pf}

\subsection{The quantum Verma module $\MpBp$}\label{ss:Mpv}

It is easy to check that the module $\MpB$ is not admissible. This suggests
modifying the definition of $\MpB$ in order to obtain a solution to the universal
property \eqref{eq:univ-prop-2} in the category of admissible \DYt modules.

\begin{proposition} Let $B$ be a QUE. 
\begin{enumerate}
\item The adjoint action \eqref{eq:adj-action} $\pi_+$ of $B$ onto itself preserves $B'\subset B$.
\item The $\sfK$--module $\MpBp=B'$, with action given by $\pi_+$ and coaction
by $\pi_+^*=\Delta^{21}$ is an admissible \DYt $B$--module.
\item $\MpBp$ satisfies the universal property \eqref{eq:univ-prop-2} in the
category $\aDrY{B}$.
\end{enumerate}
\end{proposition}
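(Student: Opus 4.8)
The three assertions should be handled in order, as each feeds into the next.

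\textbf{Step 1: $\pi_+$ preserves $B'$.} Recall $B'=\{b\in B : p_B^{(n)}(b)\in\hbar^n B^{\ten n}\text{ for all }n\}$, where $p_B^{(n)}=(\id-\iota\circ\epsilon)^{\ten n}\circ\Delta^{(n)}$. The cleanest route is to note that $\pi_+$ is the adjoint action $\ad(b)(x)=b_1\cdot x\cdot S(b_2)$, that $B'$ is a Hopf subalgebra of $B$ (stated in \S\ref{ss:dualityQUE}), and that on a QUE the adjoint action of $B$ on itself preserves $B'$ because $\ad$ is a $\sfK$--algebra map $B\to\End_{\sfK}(B)$ which, modulo $\hbar$, reduces to the adjoint action of $U\b$ on itself; more precisely, since $B'$ is a sub-bialgebra and $\ad(b)$ is built from the multiplication and antipode of $B$ together with $\Delta(b)$, and since $B'$ is closed under all of these, one has $\ad(b)(B')\subseteq B'$ for $b\in B'$. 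For general $b\in B$ one uses that $B$ is topologically generated over $\hbar$ by $B'$ together with lifts of $\b$, and checks directly that for a primitive-type lift $\wt{x}$, $\ad(\wt{x})(b')=\wt{x}b'-b'\wt{x}$ lies in $B'$ since $B'$ is an ideal-like object under bracket (this is the quantum analogue of $[\b,\m_-]\subset\m_-$-type stability, and follows from $p_B(\wt{x}b'-b'\wt{x})$ being divisible by $\hbar$ together with the coproduct identity for $p_B$ recorded in \S\ref{ss:sc-lim-admDY}). I expect this to be the main obstacle: getting the divisibility estimates for $p_B^{(n)}$ applied to $\ad(b)(x)$ to work uniformly in $n$, which is the analogue of Proposition \ref{prop:adm-cond} but for the action side rather than the coaction side.

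\textbf{Step 2: $\MpBp=B'$ is an admissible \DYt module.} First check that $(\MpBp,\pi_+,\pi_+^*=\Delta^{21})$ satisfies the \DYt axioms \eqref{eq:action}--\eqref{eq:DY Hopf}: this is immediate from the fact that $(\MpB=B,\pi_+,\Delta^{21})$ already does (by the dual of the verification for $\MmB$ in \S\ref{ss:quantum-verma}), and $\MpBp$ is simply its restriction to the sub-object $B'$, which is preserved by $\pi_+$ (Step 1) and by $\pi_+^*=\Delta^{21}$ (since $B'$ is a subcoalgebra). Admissibility is then automatic: the coaction $\pi_+^*=\Delta^{21}$ on $\MpBp=B'$ manifestly factors through $B'\ten B'\subset B'\ten B$, because $\Delta(B')\subseteq B'\ten B'$.

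\textbf{Step 3: the universal property.} Given $V\in\aDrY{B}$, I would reproduce the isomorphism $\Hom_B^B(V,\MpB)\simeq\Hom_B(V,\sfk)$ from \S\ref{ss:quantum-verma-p} and show the maps $\phi,\psi$ restrict to an isomorphism $\Hom_B^B(V,\MpBp)\simeq\Hom_B(V,\sfk)$. One direction is trivial: if $g:V\to\MpBp\hookrightarrow\MpB$ then $\phi(g)=\epsilon\circ g\in\Hom_B(V,\sfk)$ as before. For the converse, given $f\in\Hom_B(V,\sfk)$, one sets $\psi(f)=(\id\ten f)\circ\pi_V^*:V\to B\ten\sfk= B$; the content is that $\psi(f)$ lands in $B'$, which holds precisely because $V$ is \emph{admissible}, i.e. $\pi_V^*$ factors through $B'\ten V$, so $(\id\ten f)\circ\pi_V^*$ factors through $B'$. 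Then one invokes the already-established fact that $\phi,\psi$ are mutually inverse and are \DYt/module morphisms respectively, noting that $\psi(f)$ is automatically a \DYt morphism into $\MpBp$ because it is one into $\MpB$ and factors through the sub-\DYt-module $\MpBp$. Finally, record that the commutative algebra structure on $\MpB$ induced by $m$ restricts to $\MpBp=B'$ since $B'$ is a subalgebra, so $\MpBp$ is a commutative algebra object in $\aDrY{B}$, completing the analogue of the remark after \eqref{eq:univ-prop-2}.
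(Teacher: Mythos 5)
Your Steps 2 and 3 are correct and are essentially what the paper does: granted (i), $B'$ is a sub-\DYt module of $\MpB$ because it is also a subcoalgebra, admissibility is immediate from $\Delta(B')\subset B'\ten B'$, and in the universal property the only new point is that $\psi(f)=\id\ten f\circ\pi_V^*$ lands in $B'$ precisely because $V$ is admissible.

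The genuine gap is in Step 1, which is the only substantive claim, and you have in effect flagged it yourself: what must be shown is $\ima\bigl(\delta^{(n)}\circ\pi_+\bigr)\subset\hbar^n B^{\ten n}$ on $B\ten B'$ for \emph{every} $n$, and your sketch never produces this estimate. Concretely: (1) your reduction to topological generators relies on a ``primitive--type lift'' $\wt x$ of $x\in\b$ for which $\pi_+(\wt x\ten b')=\wt x b'-b'\wt x$; but lifts of elements of $\b$ are primitive only modulo $\hbar$, exactly primitive lifts need not exist in a QUE, and without primitivity $\pi_+(\wt x\ten b')$ is not a commutator, so the ``ideal--like stability under bracket'' heuristic has nothing to attach to; (2) even for an honest commutator, divisibility of $p_B$ of the result by $\hbar$ is only the $n=1$ condition, and the identity for $\Delta\circ p_B$ recalled in \S\ref{ss:sc-lim-admDY} does not by itself propagate it to $n\geq 2$. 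The paper's proof needs neither primitivity nor a choice of generators: it first establishes the operator identity $\delta^{(1)}\circ\pi_+=\pi_+\circ\id\ten\delta^{(1)}$ on all of $B\ten B$ (a short computation using $m\circ\id\ten S^{-1}\circ\Delta^{21}=\iota\circ\epsilon$ and multiplicativity of the counit), which settles $n=1$, and then argues by induction on $n$, rewriting $\delta^{(n+1)}\circ\pi_+$ as $(\delta^{(n)}\circ\pi_+)\ten(\delta^{(1)}\circ\pi_+)\circ(2\,3)\circ\Delta\ten\Delta$ plus a correction term that is manifestly divisible by $\hbar$ because $\Delta-\Delta^{21}=\hbar\,\Xi$; the inductive hypothesis then yields $\hbar^{n+1}$--divisibility. (Note also that the relevant action is $\pi_+(b\ten x)=b_2\,x\,S^{-1}(b_1)$ from \eqref{eq:adj-action}, the one compatible with the coaction $\Delta^{21}$, not $b_1xS(b_2)$.) If you wish to keep the generator--based route, you would still have to prove an estimate of exactly this inductive type for the lifts $\wt x$, which is no easier than proving it for the full adjoint action at once.
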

\begin{pf}
(ii) and (iii) follows from (i) and the previous discussion.
(i) Set $\delta^{(n)}=(\id-\iota\circ\varepsilon)^{\ten n}\circ\Delta^{(n)}$. We have to show 
that, for any $n\geq 1$, $\ima(\delta^{(n)}\circ\pi_+)\subset\hbar^nB^{\ten n}$. 
We proceed by induction. For $n=1$, 
\[
\delta^{(1)}\circ\pi_+=\pi_+\circ\id\ten\delta^{(1)}.
\] 
Namely, for every $b\in B, b'\in B'$, one has
\begin{align*}
b_2b'S^{-1}(b_1)-\varepsilon(b_2bS^{-1}b_1)&=b_2b'S^{-1}b_1-\varepsilon(b)\varepsilon(b')\\
&=b_2bS^{-1}b_1-b_2S^{-1}b_1\varepsilon(b')\\
&=b_2bS^{-1}b_1-b_2\varepsilon(b')S^{-1}b_1=\pi_+\circ\id\ten\delta^{(1)}.
\end{align*}
Assume now that $\ima(\delta^{(n)}\circ\pi_+)\subset\hbar^nB^{\ten n}$ and set 
$\Delta-\Delta^{21}=\hbar\,\Xi$.
Then
\begin{align*}
\delta^{(n+1)}\circ\pi_+&=
\id\ten\delta^{(1)}\circ\id\ten m^{(3)}\circ(1\,4\,3\,2)\circ\id\ten(\delta^{(n)}\circ\pi_+)
\ten\id^{\ten 2}\\
&\qquad\qquad\qquad\qquad\qquad\qquad
\circ S^{-1}\ten\id\ten(3\,4)\ten\id\circ\Delta^{(3)}\ten\Delta\\
&=(\delta^{(n)}\circ\pi_+)\ten(\delta^{(1)}\circ\pi_+)\circ(2\,3)\circ\Delta\ten\Delta\\
&\qquad\qquad
+\hbar\,\id\ten\delta^{(1)}\circ\id\ten m^{(3)}\circ(1\,4\,3\,2)\circ\id\ten(\delta^{(n)}
\circ\pi_+)\ten\id^{\ten 2}\\
&\qquad\qquad\qquad\qquad\qquad\qquad
\circ S^{-1}\ten\id\ten(3\,4)\ten\id\circ\Xi\ten\id^{\ten 2}\circ\Delta\ten\Delta.
\end{align*}
It follows by induction that $\ima(\delta^{(n+1)}\circ\pi_+)\subset\hbar^{n+1}B^{\ten n+1}$.
\end{pf}

\subsection{Quantisation of $\Mm$ and $\Mpv$}\label{ss:quant Mm Mpv}

Let now $\b$ be a Lie bialgebra, and $\EK\b$ its quantisation.
We denote by $\Mmh$ and $\Mpvh$ the admissible \DYt
$\EK\b$--modules $M_{\EK\b}$ and $M_{\EK\b}^{\vee}$,
respectively.

\begin{proposition}\label{pr:quantumverma}
The following holds in the category $\aDrY{\EK{\b}}$
\begin{itemize}
\item[(a)] $\noEKeq{\b}(\Mm)\simeq{\Mmh}$ as coalgebras,\\
\item[(b)] $\noEKeq{\b}(\Mpv)\simeq{\Mpvh}$ as algebras.
\end{itemize}
\end{proposition}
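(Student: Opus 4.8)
The plan is to exploit the universal properties of the Verma modules on both the classical and quantum sides, transported through the Etingof--Kazhdan equivalence $\noEKeq{\b}$. Recall from \S\ref{ss:Tannaka} that $\noEKeq{\b}:\hDrY{\b}{\Phi}\to\aDrY{\EK\b}$ is a braided tensor equivalence, so it preserves all categorical structures (algebra and coalgebra objects, universal properties, etc.). The key observation is that $\Mm\in\hDrY{\b}{\Phi}$ is characterised, as a coalgebra, by the universal property that for every $V\in\hDrY{\b}{\Phi}$ one has $\Hom_\b^\b(\Mm,V)\simeq\Hom^\b(\sfk,V)$ naturally in $V$ (this is the classical analogue of \eqref{eq:univ-prop-1}, and underlies the definition of the fiber functor $\noEKff{\b}=\Hom_{\hEq{\gu}{\Phi}}(\Mm,\Mpv\ten-)$). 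Dually, $\Mpv$ is characterised as an algebra by \eqref{eq:univ-prop-2}. On the quantum side, Proposition \ref{prop:adm-Mm} and the subsequent proposition show that $\Mmh=M_{\EK\b}$ and $\Mpvh=M^\vee_{\EK\b}$ satisfy exactly the analogous universal properties \eqref{eq:univ-prop-1} and \eqref{eq:univ-prop-2} in $\aDrY{\EK\b}$, as a coalgebra and algebra respectively.

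First I would prove part (a). Applying the equivalence $\noEKeq\b$ to the classical universal property gives, for every $\W\in\aDrY{\EK\b}$,
\[
\Hom^{\EK\b}_{\EK\b}(\noEKeq\b(\Mm),\W)\simeq\Hom^{\EK\b}_{\EK\b}(\Mm,{\noEKeq\b}^{-1}\W)
\simeq\Hom^\b(\sfk,{\noEKeq\b}^{-1}\W)\simeq\Hom^{\EK\b}(\sfk,\W),
\]
where the last isomorphism uses that $\noEKeq\b$ is an equivalence of braided tensor categories and hence preserves $\Hom$ out of the unit object into the forgetful direction; concretely, ${\noEKeq\b}^{-1}$ sends $\sfk$ to $\sfk$ and intertwines the comodule structures. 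Thus $\noEKeq\b(\Mm)$ represents the same functor $\W\mapsto\Hom^{\EK\b}(\sfk,\W)$ as $\Mmh$ does by \eqref{eq:univ-prop-1}; by Yoneda there is a unique isomorphism $\noEKeq\b(\Mm)\simeq\Mmh$ in $\aDrY{\EK\b}$. Since $\Mm$ carries its coalgebra structure by virtue of this universal property (the coproduct $i_-:\Mm\to\Mm\ten\Mm$ being the canonical map corresponding to $1_-\ten 1_-$), and likewise for $\Mmh$, the representing isomorphism is automatically one of coalgebras: $\noEKeq\b(i_-)$ gets identified with the comultiplication on $\Mmh$ under the universal property, because both correspond to the same element of $\Hom^{\EK\b}(\sfk,\Mmh\ten\Mmh)$.

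Next, part (b) follows by the dual argument, with $\Hom^\b(-,\sfk)$ and $\Hom_{\EK\b}(-,\sfk)$ in place of $\Hom^\b(\sfk,-)$, using \eqref{eq:univ-prop-2} and the admissible refinement $\MpBp=B'$ of $\MpB$. One subtlety is that $\noEKeq\b(\Mpv)$ must land in \emph{admissible} \DYt modules; this is guaranteed by Lemma \ref{le:Fb admissible}(i), which asserts $\noEKeq\b(V)$ is admissible for every $V\in\hDrY{\b}{\Phi}$. Again Yoneda in the category $\aDrY{\EK\b}$ produces a unique isomorphism $\noEKeq\b(\Mpv)\simeq\Mpvh$, and the algebra structure on $\Mpv$ (given by $\ips$, dual to $i_+$ via the element $T$) is transported to the algebra structure on $\Mpvh$ because both are pinned down by the same universal property.

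The main obstacle, in my estimation, is not any single hard computation but rather the careful bookkeeping needed to check that the abstract representing isomorphisms are genuinely compatible with the algebra/coalgebra structures, and in particular the identification of semiclassical limits: one should verify, via Lemma \ref{le:Fb admissible}(ii), that the isomorphism $\noEKeq\b(\Mm)\simeq\Mmh$ reduces modulo $\hbar$ to the identity on $U\b\simeq\Mm$, and similarly for $\Mpv$. This rigidity (an isomorphism of flat deformations that is the identity mod $\hbar$) is what makes the Yoneda-produced isomorphism canonical and compatible with the quantisation isomorphisms \eqref{eq:EK Verma} needed downstream in Section \ref{s:Gammah}. A direct alternative, which sidesteps some of this, is to construct the isomorphism $\noEKeq\b(\Mm)\simeq\Mmh$ explicitly using the formula for the quantum action $m_{M_-}$ from \S\ref{ss:eksum} and the adjoint coaction \eqref{eq:adj-coact}, and then verify the coalgebra compatibility by a direct (but routine) check against $\Delta=J_{M_-,M_-}^{-1}\circ\noEKff\b(i_-)$; this is the approach taken when $\a=0$ in the outline, and it is expected to generalise here with no genuine difficulty.
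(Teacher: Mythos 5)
Your main route has a genuine gap at the step where you transport the universal properties across $\noEKeq{\b}$. You claim that, because $\noEKeq{\b}$ is a braided tensor equivalence, it ``preserves Hom out of the unit object into the forgetful direction'', i.e. that $\Hom^{\b}(\sfk,V)\simeq\Hom^{\EK\b}(\sfk,\noEKeq{\b}(V))$ and (for part (b), where incidentally you want $\Hom_{\b}(V,\sfk)$, module maps, not $\Hom^{\b}(V,\sfk)$) that $\Hom_{\b}(V,\sfk)\simeq\Hom_{\EK\b}(\noEKeq{\b}(V),\sfK)$, naturally in $V$. This is not formal: these are not Hom spaces in the Drinfeld--Yetter categories but spaces of comodule maps out of the unit, resp.\ module maps into the unit, and the only categorical description of the functors $V\mapsto\Hom^{\b}(\sfk,V)$, $V\mapsto\Hom_{\b}(V,\sfk)$ inside $\hDrY{\b}{\Phi}$ is via the representing objects $\Mm$, $\Mpv$ themselves --- so invoking it here is circular. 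Concretely, the quantum coaction on $\noEKff{\b}(V)$ is $m_V^*(v)=R^J_{\Mm,V}(u_-\ten v)$, and identifying its coinvariants with the classical coinvariants $\{v:\pi_V^*(v)=0\}$ under $\alpha_V$ is exactly the kind of statement that needs the explicit $\hbar$-adic analysis of $J$ and $R^J$; it is the non-formal content of the proposition, not a consequence of equivalence alone.

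The paper handles this differently. For (a) it uses no universal property at all: $\EK\b$ is by construction the algebra $\noEKff{\b}(\Mm)$ acting freely on itself with cyclic vector $u$, and a short explicit computation ($J(u\ten u)|_{\um}=\epsilon_+\ten\um\ten\um$ together with $\Omega(\um\ten\um)=0$) shows the quantum coaction fixes $u$, i.e.\ $m^*_{\Mm}(u)=u\ten u$; since $\Mmh$ is precisely the free rank--one module whose coaction is determined by triviality on the generator, this identifies $\noEKeq{\b}(\Mm)\simeq\Mmh$, and the coproducts agree because $\Delta$ on $\EK\b$ is defined as $J_{\Mm,\Mm}^{-1}\circ\noEKff{\b}(i_-)$. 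For (b) the paper does argue via the universal property of $\Mpv$, but the bridge to the quantum side is supplied by an explicit check that $\phi\mapsto\phi\circ\alpha_V$ carries $\b$--invariant functionals to $\EK\b$--invariant functionals (the computation with $\psi_x$, using $(\epsilon\ten1\ten1)(\Phi)=1^{\ten2}$ and $(\epsilon\ten1)(T)=1$) --- this is precisely the compatibility your Yoneda argument presupposes. Your closing ``direct alternative'' is essentially the paper's proof of (a), so the fix is to promote that sketch, together with an explicit intertwining argument in the spirit of the $\alpha_V$ computation for (b), from a remark to the actual argument.
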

\begin{pf}
The Hopf algebra $\EK\b$ is constructed on the vector space $\noEKff{\b}(\Mm)$ 
with unit element $u\in \noEKff{\b}(\Mm)$ defined by $u(\um)=\coup\ten\um$, 
where $\coup\in\Mpv$ is given by $\coup(x1_+)=\cou(x)$ for any $x\in U\gup$. 
Consequently, the action of $\EK\b$ on $u\in \noEKff{\b}(\Mm)$ is free, as multiplication 
with the unit element. The coaction of $\EK\b$ on $\noEKff{\b}(\Mm)$ is defined using 
the $\R$-matrix associated to the braided tensor functor $F$,\ie
\[
\pi^*_{\Mm}:\noEKff{\b}(\Mm)\to \noEKff{\b}(\Mm)\ten \noEKff{\b}(\Mm),\quad \pi^*(x)=\R(u\ten x),
\]
where $x\in \noEKff{\b}(\Mm)$ and $\R_{VW}\in\End_{\EK\g}(\noEKff{\b}(V)\ten \noEKff{\b}(W))$ 
is given by $\R_{VW}=\sigma J_{WV}^{-1}\noEKff{\b}(\beta_{VW})J_{VW}$, 
$\{J_{V,W}\}_{V,W\in\DrY{\b}}$ being the tensor structure on $F$. It is easy to show that
$J(u\ten u)|_{\um}=\coup\ten\um\ten\um$, and, since $\Omega(\um\ten\um)=0$, we have
\[\pi^*_{\Mm}=\R(u\ten u)=u\ten u.\]
By construction the coalgebra structure on $\noEKff{\b}(\Mm)$ coincides with that on $\EK\b$. This proves $(a)$.\\

The module $\Mpv$ satisfies the following universal property: for any $V\in\hDrY{\b}{\Phi}$, one has
\[\Hom_{\b}^{\b}(V,\Mpv)\simeq\Hom_{\b}(V,\sfk).\]
Since $\noEKeq{\b}$ is an equivalence of categories,
\[\Hom_{\EK\b}^{\EK\b}(\noEKeq{\b}(V),\noEKeq{\b}(\Mpv))\simeq
\Hom_{\b}^{\b}(V,\Mpv)\simeq\Hom_{\b}(V,\sfk).\]
Using the isomorphism $\alpha_V:\noEKff{\b}(V)\to V$ defined by
\[\alpha_V(f)=\langle f(\um), \up\ten\id\rangle,\]
we obtain a map $\Hom_{\b}(V,\sfK)\to\Hom_{\sfK}(\noEKff{\b}(V),\sfK)$. 
For any $x\in U\b$ let $\psi_x:\Mm\to\Mpv\ten\Mm$ be the morphism  
defined by $\psi_x(\um)=\coup\ten x\um$.
It is clear that, if $f(\um)=f_{(1)}\ten f_{(2)}$ in sumless Swedler's notation,
\begin{eqnarray*}
\alpha_V(\psi_x.f)&=&\langle (\ipd\ten\id)\Phi^{-1}(\id\ten f)(\coup\ten x.\um), \up\ten\id \rangle\\
&=&\langle \Phi^{-1}(\coup\ten\id\ten\id)(\id\ten\Delta(x))(\id\ten f_{(1)}\ten f_{(2)}), (T\ten \id)(\up\ten\up\id) \rangle\\
&=&\langle \Delta(x)(f_{(1)}\ten f_{(2)}), \up\ten\id\rangle\\
&=&\langle f_{(1)},\up\rangle x.f_{(2)}\\
&=&x.\alpha_V(f),
\end{eqnarray*}
using the fact that $(\epsilon\ten1\ten1)(\Phi)=1^{\ten 2}$ and $(\epsilon\ten 1)(T)=1$.
So, clearly, if $\phi\in\Hom_{\b}(V,\sfk)$, then $\phi\circ\alpha_V\in\Hom_{\EK\b}(\noEKff{\b}(V),\sfk{\fml})$ and
we obtain an isomorphism of \DYt modules $\noEKff{\b}(\Mpv)\simeq\Mpvh$. By universal property,
this is compatible with the algebra structures and $(b)$ is proved.
\end{pf}

\subsection{Split pair of Hopf algebras}\label{ss:splitHA}

We briefly recall the characterisation of a Hopf algebra with a projection
given by Radford \cite{rad2}. Let $A\stackrel{i}{\to} B\stackrel{p}{\to}A$,
$p\circ i=\id_A$, be a split pair of Hopf algebras. One can describe the
kernel of the projection $p$ and obtain $B$ as a semidirect product of
Hopf algebras. Let $\pi\in\End(B)$ be the idempotent $\pi=i\circ p$, and
define $\Pi\in\End(B)$ by
\begin{equation}\label{eq:RadPi}
\Pi=m\circ\id\ten(S\circ \pi)\circ\Delta
\end{equation}
(equivalently, $\Pi=\id*(S\circ\pi)$ in the convolution algebra $\End(B)$).

It is easy to show, using standard graphical calculus, that $\Pi$ is
an idempotent such that
\[\Pi\circ\pi=\iota\circ\epsilon=\pi\circ\Pi.\]
Moreover, if $L=\Pi(B)\subseteq B$, then $B\simeq L\ten A$ via the mutually
inverse maps
\begin{equation}\label{eq:rad-iso}
\xymatrix@C=0.65in{B\ar@<2pt>[r]^{\Pi\ten\pi\circ\Delta} & L\ten A\ar@<2pt>[l]^{m}}.
\end{equation}
The following result describes the structure of $L$.

\begin{theorem}\hfill
\begin{enumerate}
\item The following identities hold:
\begin{eqnarray}\label{eq:L-rad-1}
\Pi\circ m&=&m^{(3)}\circ(2\,3)\circ\id\ten(S\circ\pi)\ten\id\circ\Delta\ten\Pi
\end{eqnarray}
and
\begin{eqnarray}\label{eq:L-rad-1-2}
\Pi\circ m\circ\id\ten\pi&=&\Pi\ten\epsilon.
\end{eqnarray}
Moreover, $L$ is a $B$--module with respect to the adjoint action of $B$
\begin{eqnarray*}
\pi_L&=&m^{(3)}\circ\id\ten\id\ten(S\circ \pi)\circ(2\,3)\circ\Delta\ten\Pi.
\end{eqnarray*}
\item The following identity holds
\begin{eqnarray}\label{eq:L-rad-2}
\id\ten\pi\circ\Delta\circ\Pi&=&\Pi\ten\id
\end{eqnarray}
and it characterises $L$. Moreover,  
$L$ is a subalgebra in $B$.
\item The following identities hold:
\begin{eqnarray}\label{eq:L-rad-3}
\Delta\circ\Pi&=&m\ten\id\circ(2\,3)\circ\id\ten\Pi\ten(S\circ\pi)\circ\Delta^{(3)}
\end{eqnarray}
and
\begin{eqnarray}\label{eq:L-rad-3-2}
\id\ten\pi\circ\Delta\circ\Pi&=&\Pi\ten\eta.
\end{eqnarray}
Moreover, the map 
\begin{eqnarray}
\label{eq:Lmh-coact}\pi_L^*&=&m^{21}\ten\id\circ S^{-1}\ten(2\,3)\circ\Delta^{(3)}\circ\Pi
\end{eqnarray}
defines a $B$--comodule structure on $L$.
\item $(L, \pi_L, \pi_L^*)$ is a \DYt $B$--module 
(and therefore a \DYt $A$--module by restriction), and satisfies the universal property
\begin{equation}\label{eq:Lmh-univ}
\Hom_{B}^{B}(L, V)\simeq\Hom_{A}^{B}(\sfk,V)
\end{equation}
for every $V\in\DrY{B}$.
\item $L$ is a cocommutative coalgebra object in $\DrY{B}$ with comultiplication
\begin{equation}\label{eq:Lmh-coprod}
\Delta_L=\Pi\ten\Pi\circ\Delta\circ\Pi.
\end{equation}
\item $L$ is a Hopf algebra in $\DrY{A}$.
\item If $(B,A)$ is a split pair of QUEs, then $L$ is an admissible 
\DYt $B$--module.
\end{enumerate}
\end{theorem}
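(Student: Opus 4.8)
The plan is to prove the seven assertions essentially in the stated order, using the graphical calculus for Hopf algebras (Sweedler notation and string diagrams) throughout, since each identity is a formal consequence of the Hopf axioms together with the relation $p\circ i=\id_\a$. The key preliminary facts are that $\pi=i\circ p$ is a Hopf algebra idempotent (an algebra and coalgebra map with $\pi^2=\pi$), that $\Pi=\id*(S\circ\pi)$ is a ``projection onto the coinvariants'' satisfying $\Pi\circ\pi=\iota\circ\epsilon=\pi\circ\Pi$, and that the two maps in \eqref{eq:rad-iso} are mutually inverse, giving $B\simeq L\ten A$. I would first record these, deriving $\Pi\circ\pi=\iota\circ\epsilon$ from $m\circ(\id\ten S)\circ\Delta=\iota\circ\epsilon$ and the fact that $\pi$ is a coalgebra map, and $\pi\circ\Pi=\iota\circ\epsilon$ from $\pi$ being an algebra map together with $\pi\circ S=S\circ\pi$ and $\pi^2=\pi$.

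For parts (1)--(3), each displayed identity is proved by a direct string-diagram computation: for \eqref{eq:L-rad-1} one expands $\Pi\circ m$ using $\Pi=\id*(S\pi)$, inserts the antihomomorphism property of $S$ and the fact that $\pi$ is an algebra map, and then recognizes the resulting diagram as the right-hand side; \eqref{eq:L-rad-1-2} follows by feeding $\id\ten\pi$ into \eqref{eq:L-rad-1} and collapsing $\pi\ast S\pi=\iota\circ\epsilon$ on the second tensor factor. The module and comodule axioms for $\pi_L$ and $\pi_L^*$ are then verified by standard bookkeeping, using \eqref{eq:L-rad-1} and \eqref{eq:L-rad-3} to commute $\Pi$ past $m$ and $\Delta$; note that $\pi_L$ and $\pi_L^*$ are literally the adjoint action \eqref{eq:adj-action} and adjoint coaction \eqref{eq:adj-coact} of $B$ on itself, precomposed and postcomposed with $\Pi$, so one can reuse the fact (from \S\ref{ss:quantum-verma}) that these make $B$ into a Drinfeld--Yetter module over itself, together with the idempotency $\Pi^2=\Pi$ and the intertwining relations \eqref{eq:L-rad-1}, \eqref{eq:L-rad-3} to transport the structure to $L=\Pi(B)$. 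This gives (4) once one also checks the universal property \eqref{eq:Lmh-univ}: a Drinfeld--Yetter $B$--morphism $L\to V$ is determined by the image of $1$, which must be an $A$--coinvariant of $V$ (coinvariant for the coaction corestricted along $p$), and conversely any such coinvariant extends, via $\pi_V\circ(\id\ten(-))$ composed with $\Pi$, to a morphism; this is the relative analogue of \eqref{eq:univ-prop-1}, recovered when $\a=0$. Part (5), cocommutativity and the coalgebra structure \eqref{eq:Lmh-coprod}, follows from \eqref{eq:L-rad-3} exactly as in \S\ref{ss:quantum-verma}, and (6) is Radford's theorem: $L$ is the ``diagram'' Hopf algebra in the braided category $\DrY{A}$ realizing the bosonization $B\simeq L\rtimes A$.

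For the last assertion (7), which I expect to be the main obstacle, suppose $(B,A)$ is a split pair of QUEs. By the admissibility criterion of Proposition \ref{prop:adm-cond} it suffices to show $\ima\big((\id-\iota\circ\epsilon)\ten\id\circ\pi_L^*\big)\subset\hbar B\ten L$. But $\pi_L^*=\pi_{-}^*\circ\Pi$ where $\pi_-^*$ is the adjoint coaction \eqref{eq:adj-coact} of $B$ on itself, and Proposition \ref{prop:adm-Mm} shows $(\id-\iota\circ\epsilon)\ten\id\circ\pi_-^* = m^{21}\ten\id\circ S^{-1}\ten(2\,3)\circ(\id\ten(\Delta-\Delta^{21})\circ\Delta)$, which is divisible by $\hbar$ because $\Delta-\Delta^{21}$ is (here one uses that $B$ is a QUE). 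Precomposing with $\Pi$ preserves this divisibility and lands in $\hbar B\ten L$ after corestricting the second factor via $\Pi$. Thus $L$ is an admissible Drinfeld--Yetter $B$--module, hence also an admissible Drinfeld--Yetter $A$--module by restriction along $i$, since the $A$--coaction is $p\ten\id\circ\pi_L^*$ and $p(B')\subseteq A'$. The subtlety to watch is the interaction of the topologies when $B$ is infinite--dimensional over $\sfk\fml$: one must check that $L=\Pi(B)$ is a closed topologically free $\sfK$--submodule and that $\Pi$, $m$, $\Delta$ are continuous, so that all the identities above hold in the completed tensor category; this is handled as in \S\ref{ss:dualityQUE} and poses no essential difficulty beyond care with limits and completions.
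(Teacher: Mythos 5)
Your overall route coincides with the paper's: the identities in (1)--(3) are proved by graphical calculus, the Drinfeld--Yetter structure on $L$ is obtained by transporting the adjoint action and coaction of $B$ through $\Pi$ using \eqref{eq:L-rad-1} and \eqref{eq:L-rad-3}, the universal property is read off from the unit element, the coalgebra structure in $\DrY{B}$ is determined by it, part (6) is Radford's biproduct, and admissibility is proved exactly as in Proposition \ref{prop:adm-Mm} (precomposing the adjoint coaction with $\Pi$ preserves divisibility by $\hbar$). Your preliminary identities $\Pi\circ\pi=\iota\circ\epsilon=\pi\circ\Pi$ and the derivation of \eqref{eq:L-rad-1-2} from \eqref{eq:L-rad-1} are correct.

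There is, however, a genuine error in your treatment of the universal property \eqref{eq:Lmh-univ} in part (4). You assert that a Drinfeld--Yetter $B$--morphism $f:L\to V$ is determined by $f(u_L)$, which ``must be an $A$--coinvariant of $V$ (coinvariant for the coaction corestricted along $p$)''. That is not the right--hand side of \eqref{eq:Lmh-univ}: $\Hom_A^B(\sfk,V)$ consists of vectors $v\in V$ on which the $A$--action (restricted along $i$) is trivial, $\pi_V(i(a)\ten v)=\epsilon(a)v$, and on which the \emph{full} $B$--coaction is trivial, $\pi_V^*(v)=1\ten v$. These are exactly the conditions satisfied by $f(u_L)$, since $\pi_L(b\ten u_L)=\Pi(b)$, so $\pi_L(i(a)\ten u_L)=(\Pi\circ\pi)(i(a))=\epsilon(a)u_L$, and $\pi_L^*(u_L)=1\ten u_L$; they are strictly stronger than coinvariance for $(p\ten\id)\circ\pi_V^*$, and with your weaker condition the proposed inverse $\ell\mapsto\pi_V(\ell\ten v)$ need not be a morphism of $B$--comodules, so the claimed bijection fails. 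The repair is immediate --- verify the two trivialities of $u_L$ as above and argue as for \eqref{eq:univ-prop-1} --- but it matters downstream: both you and the paper derive (5), i.e.\ the coproduct \eqref{eq:Lmh-coprod} and its cocommutativity (via the $R$--matrix fixing $u_L\ten u_L$), from this universal property applied to the group--like element $u_L$, so the property must be stated for the correct space $\Hom_A^B(\sfk,V)$.
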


\begin{pf}
The identities \eqref{eq:L-rad-1}, \eqref{eq:L-rad-1-2}, 
\eqref{eq:L-rad-2}, \eqref{eq:L-rad-3}, and \eqref{eq:L-rad-3-2}
are verified by standard graphical calculus for Hopf algebras
(e.g. \cite[Lecture 8]{es}). 

$(i)$ The identity \eqref{eq:L-rad-1} implies that the adjoint action of $B$ on itself 
preserves $L$, and therefore the module structure on $L$ is well--defined.

$(ii)$ One shows that
\[
\id\ten\pi\circ\Delta\circ (m\circ\Pi\ten\Pi)=(m\circ\Pi\ten\Pi)\ten\iota.
\]
Since \eqref{eq:L-rad-2} characterises $L$, it follows that $L$ closed under multiplication in $B$.

$(iii)$ It follows from \eqref{eq:L-rad-3} that the coaction \eqref{eq:Lmh-coact} takes 
values in $B\ten L$. One then shows directly that $\pi_L^*$ is a coaction.

$(iv)$ The compatibility between $\pi_L$ and $\pi_L^*$ is verified by direct inspection. 
Then, we observe that the action of $A$ and the coaction of $B$ on the 
unit element $u_L\in L$ are trivial. In particular, by restriction to $u_L\in L$, we 
get a map
\[
\Hom_{B}^{B}(L, V)\to\Hom_{A}^{B}(\sfk,V)
\]
for every $V\in\DrY{B}$, whose inverse is given by composition with the action of $B$ restricted
to $L$.

$(v)$ By \eqref{eq:Lmh-univ}, the coalgebra structure on $L$ is uniquely determined 
by the condition $\Delta_{L}(u_L\ten u_L)=
u_L\ten u_L$.
Since the action of $B$ on $u_L$ coincides with the projection $\Pi$, one recovers \eqref{eq:Lmh-coprod}.
Finally, since the $R$--matrix preserves $u_L\ten u_L$, \ie 
\[
\pi_L\ten\id\circ(2\,3)\circ\id\ten\pi_L^*(u_L\ten u_L)=u_L\ten u_L,
\]
it follows that $\Delta_L=(1\,2)\circ R\circ \Delta$, \ie $L$ is a cocommutative coalgebra in the category
$\DrY{B}$.

$(vi)$ The compatibility between the product and the coproduct on $L$ is 
a straightforward computation. 
It follows from \eqref{eq:L-rad-1-2} that the map $\Pi$ is a morphism of $A$--modules
with respect to the adjoint action of $A$ on $B$. Since the multiplication in $B$ commutes with 
the action of $A$ and $L$ is a subalgebra in $B$, $L$ is a $A$--module algebra.
Similarly one shows that $L$ is a $A$--comodule coalgebra.  

$(vii)$ The admissibility of $L$ follows as in Proposition \ref{prop:adm-Mm}.
\end{pf}

\subsection{Radford biproduct}\label{ss:rad-biprod}
The object $L\ten A$ is naturally endowed with a Hopf algebra structure, induced by
the identification $B\simeq L\ten A$ \eqref{eq:rad-iso}. The description of such structure 
relies exclusively on that of $A$ and $L$ (as Hopf algebra in $\DrY{A}$).

Conversely, given a Hopf algebra $A$ in $\vect$ and a Hopf algebra $L$ in $\DrY{A}$,
the tensor product $L\ten A$ is endowed with a Hopf algebra structure, called \emph{Radford biproduct} 
(further studied by Majid under the name of \emph{bosonisation} \cite{majid}).\\
\newcommand{\Rad}[2]{#1\star #2}

Namely, let $(A, m_A, \eta_A, \Delta_A,\varepsilon_A, S_A)$ be a Hopf algebra in $\vect$ 
and $(L, m_L, \eta_L, \Delta_L,$ $\varepsilon_L, S_L)$ a Hopf algebra in $\DrY{A}$,
with \DYt structure $(L,\pi_L,\pi_L^*)$. 
The Radford biproduct $\Rad{L}{A}$ is the Hopf algebra defined on $L\ten A$ 
with operations
\begin{eqnarray*}
m_{\Rad{L}{A}}&=&
m_L\ten m_A\circ\id\ten\pi_L\ten\id^{\ten 2}\circ(34)\circ\id\ten\Delta_A\ten\id^{\ten 2},\\
\Delta_{\Rad{L}{A}}&=&
\id\ten m_A\ten\id^{\ten 2}\circ(34)\circ\id\ten\pi_L^*\ten\id^{\ten 2}\circ
\Delta_L\ten\Delta_A,\\
S_{\Rad{L}{A}}&=&
\pi_L\ten\id^{\ten 2}\circ(2\,3)\circ\Delta_L\ten\id\circ S_L\ten S_A\circ 
m_A\ten\id\circ (2\,3)\circ\pi_L^*\ten\id.
\end{eqnarray*}
unit $\eta_L\ten\eta_A$ and counit $\varepsilon_L\ten\varepsilon_A$.
It is easy to see that $\Rad{L}{A}$ contains $L$ as a subalgebra and 
$A$ as a Hopf subalgebra. In fact, 
\[
\xymatrix{
A\ar[r]^(.4){\eta_L\ten\id} & \Rad{L}{A} \ar[r]^(.6){\epsilon_L\ten\id}& A
}
\]
is a split pair of Hopf algebras.

\subsection{Radford biproduct and \DYt modules}\label{ss:rad-dy}
Since $L$ is a Hopf algebra in $\DrY{A}$, a Drinfeld--Yetter module over the 
Radford biproduct $\Rad{L}{A}$ is conveniently described as a Drinfeld--Yetter module
over $L$ in the category $\DrY{A}$. 
\begin{proposition}\label{pr:DY equiv}
There is a canonical equivalence of braided tensor categories
\[\DrY{\Rad{L}{A}}\simeq\DrY{L,A},\]
where $\DrY{L,A}$ denotes the category of \DYt $L$--modules in $\DrY{A}$. 
Namely,
\begin{enumerate}
\item Any $(V, \pi_{L,V}, \pi_{L,V}^*, \pi_{A,V}, \pi_{A,V}^*)\in\DrY{L,A}$
is naturally a \DYt $\Rad{L}{A}$--module with action and coaction
\[
\pi_{\Rad{L}{A}, V}=\pi_{L,V}\circ\id\ten\pi_{A,V}
\aand
\pi_{\Rad{L}{A}, V}^*=\id\ten\pi_{A,V}^*\circ\pi_{L,V}^*.
\]
\item Conversely, any $(V,\pi_{\Rad{L}{A}, V},\pi_{\Rad{L}{A}, V}^*)\in
\DrY{\Rad{L}{A}}$ has, by restriction to $L$ and $A$, a structure of
\DYt $L$--module in $\DrY{A}$ with 
\[
\pi_{A,V}=\pi_{\Rad{L}{A},V}\circ\iota_L\ten\id_A\ten\id_V
\qquad
\pi_{A,V}^*=\varepsilon_L\ten\id_A\ten\id_V\circ\pi_{\Rad{L}{A},V}^*
\] 
and
\[
\pi_{L,V}=\pi_{\Rad{L}{A},V}\circ\id_L\ten\iota_A\ten\id_V
\qquad
\pi_{L,V}^*=\id_L\ten\varepsilon_A\ten\id_V\circ\pi_{\Rad{L}{A},V}^*.
\] 
\end{enumerate}
\end{proposition}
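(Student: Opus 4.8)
The plan is to verify directly that the two assignments described in (1) and (2) define functors, that they are mutually quasi-inverse, and that they intertwine the braided tensor structures. All of this is a matter of unwinding the bosonisation formulas of \S\ref{ss:rad-biprod}, and can be carried out by the graphical calculus for Hopf algebras used in \S\ref{ss:splitHA}.

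First I would check that (1) lands in $\DrY{\Rad{L}{A}}$. Given $(V,\pi_{L,V},\pi_{L,V}^*,\pi_{A,V},\pi_{A,V}^*)\in\DrY{L,A}$, the map $\pi_{\Rad{L}{A},V}=\pi_{L,V}\circ\id\ten\pi_{A,V}$ is an action of $\Rad{L}{A}$ because $m_{\Rad{L}{A}}$ is assembled from $m_L,m_A,\Delta_A$ and the $L$--$A$ braiding $\pi_L$: pushing all structure maps into the $V$--slot, associativity of $\pi_{\Rad{L}{A},V}$ reduces to associativity of $\pi_{A,V}$, associativity of $\pi_{L,V}$, and the fact that $\pi_{L,V}$ is a morphism in $\DrY{A}$, hence $A$--equivariant. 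Dually, $\pi_{\Rad{L}{A},V}^*=\id\ten\pi_{A,V}^*\circ\pi_{L,V}^*$ is a coaction because $\Delta_{\Rad{L}{A}}$ is built dually and $\pi_{L,V}^*$ is $A$--coequivariant. The only substantial point is the mixed compatibility \eqref{eq:DY Hopf} for $\Rad{L}{A}$; substituting the formulas for $m_{\Rad{L}{A}},\Delta_{\Rad{L}{A}},S_{\Rad{L}{A}}$ and tracking the positions of the braiding maps $\pi_L,\pi_L^*$, this identity decomposes into the Drinfeld--Yetter compatibility of $(\pi_{A,V},\pi_{A,V}^*)$ over $A$, the internal Drinfeld--Yetter compatibility of $(\pi_{L,V},\pi_{L,V}^*)$ over $L$ inside $\DrY{A}$, and the structural identities expressing that $L$ is a Hopf algebra in $\DrY{A}$ with $\pi_L,\pi_L^*$ its braiding against $A$.

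Next I would treat (2). Restricting $V\in\DrY{\Rad{L}{A}}$ along the Hopf subalgebra $A\hookrightarrow\Rad{L}{A}$, $a\mapsto\eta_L\ten a$, and projecting the coaction by $\varepsilon_L\ten\id_A\ten\id_V$, yields an object of $\DrY{A}$; this is legitimate precisely because $A$ is a Hopf \emph{subalgebra}, so $\varepsilon_L\ten\id_A$ is a coalgebra retraction of $\id_A\mapsto\eta_L\ten\id_A$. Restricting along the subalgebra $L\hookrightarrow\Rad{L}{A}$ and projecting the coaction by $\id_L\ten\varepsilon_A\ten\id_V$ produces the $L$--action and $L$--coaction; using the factorisation \eqref{eq:rad-iso} to rewrite $\Delta_{\Rad{L}{A}}$ in terms of $\Delta_L,\Delta_A$, one checks these are morphisms in $\DrY{A}$ and satisfy the internal Drinfeld--Yetter axiom. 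That (1) and (2) are mutually inverse then follows: one composition is immediate from the counit axioms together with $\varepsilon_L\circ\eta_L=\id$ and $\varepsilon_A\circ\eta_A=\id$; the other follows from \eqref{eq:rad-iso}, which says that a general element of $\Rad{L}{A}$ acts on $V$ as ``first the $A$--part, then the $L$--part'', exactly reproducing $\pi_{\Rad{L}{A},V}=\pi_{L,V}\circ\id\ten\pi_{A,V}$, and dually for the coactions.

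Finally, for the braided tensor structure: in both categories the tensor product of two objects carries the diagonal action and coaction built from the coproduct and product of the relevant (co)algebras, and under (1)/(2) these match slot by slot, while associativity constraints are trivial on both sides. For the braiding, the $R$--matrix of $\DrY{\Rad{L}{A}}$ is $\pi_V\ten\id\circ(2\,3)\circ\id\ten\pi_W^*$; inserting the formulas of (1) and regrouping exhibits it as the composite of the braiding of $\DrY{A}$ with the internal braiding determined by $\pi_{L,V},\pi_{L,W}^*$ in $\DrY{A}$, which is the braiding of $\DrY{L,A}$. The hard part will be the mixed compatibility \eqref{eq:DY Hopf} for $\Rad{L}{A}$ in the first step: disentangling it into the $A$--level and internal--$L$--level Drinfeld--Yetter axioms demands careful diagrammatic bookkeeping of the braiding maps $\pi_L,\pi_L^*$, and is most safely done by the same graphical calculus as in \S\ref{ss:splitHA} (specialising, when $A=\sfk$, to the statements of \S\ref{ss:DY Hopf}--\S\ref{ss:DY-qD}). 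Alternatively, one may bypass this by combining Majid's bosonisation equivalence between $\Rep(\Rad{L}{A})$ and the category of $L$--modules in $\DrY{A}$ with the identification of $\DrY{-}$ with the Drinfeld centre, reducing the proposition to the relative--centre description of Yetter--Drinfeld modules over a Hopf algebra in a braided category.
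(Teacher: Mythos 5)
Your proposal is correct in outline, but it takes a genuinely different route from the paper. You verify the equivalence by hand: check that the formulas in (1) define a $\Rad{L}{A}$--action and coaction (with the mixed compatibility \eqref{eq:DY Hopf} decomposing into the Drinfeld--Yetter axioms for $(\pi_{A,V},\pi_{A,V}^*)$, the internal axioms for $(\pi_{L,V},\pi_{L,V}^*)$ in $\DrY{A}$, and the identities making $L$ a Hopf algebra in $\DrY{A}$), check (2) dually, and match tensor products and braidings slot by slot. The paper instead argues categorically: it invokes the known tensor equivalence between $\Rep(\Rad{L}{A})$ and the category of $L$--modules internal to $\Rep(A)$ (Sommerh\"auser), and then passes to Drinfeld centres (Kassel, Ch.~XIII.4), which are identified with $\DrY{\Rad{L}{A}}$ and $\DrY{L,A}$ respectively; your closing ``alternative'' is essentially this argument, except that the bosonisation equivalence at the tensor level is with $L$--modules in $\Rep(A)$, not in $\DrY{A}$ (a module over the smash product carries no natural $A$--coaction) --- the Yetter--Drinfeld categories only appear after taking the (relative) centres. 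Each approach has its advantages: your direct computation produces the explicit action/coaction formulas stated in the proposition and transports without change to the admissible/QUE setting needed immediately afterwards in \S\ref{ss:qrad-DY}, at the cost of a substantial diagrammatic verification of the mixed compatibility which your write-up defers to ``careful bookkeeping''; the paper's argument is short and conceptual, but hides the formulas inside the centre identifications and silently relies on the relative--centre description of $\DrY{L,A}$, which is itself a nontrivial input. Neither gap is fatal, but if you pursue the direct route you should actually carry out the compatibility computation (graphical calculus as in \S\ref{ss:splitHA} suffices), since that is where all the content lies.
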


\begin{proof}
First, one shows that the assignments in $(i)$ and $(ii)$ define an equivalence 
between the tensor categories $\Rep(\Rad{L}{A})$ of $(\Rad{L}{A})$--modules 
and $\Rep_A(L)$ of $L$--modules in $\Rep(A)$ (cf. \cite[Sec. 1.7, Prop. 2]{sommer}).
This restricts to an equivalence of braided tensor categories
at the level of Drinfeld centers (cf. \cite[XIII.4]{kassel}), which are equivalent
to the categories $\DrY{\Rad{L}{A}}$ and $\DrY{L,A}$, respectively.
\end{proof}

\subsection{Admissible \DYt modules}\label{ss:qrad-DY}

In Section \S\ref{s:gammahopf}, we will study the relation between the
quantisation of a split pair of Lie bialgebras and the Radford biproduct.
In particular, we will need the following result.
\begin{proposition}
Let $(B,A)$ be a split pair of QUEs. The equivalence
given by Proposition \ref{pr:DY equiv} restricts
to an equivalence of admissible \DYt modules
\[
\aDrY{L,A}\simeq\aDrY{\Rad{L}{A}}.
\]
\end{proposition}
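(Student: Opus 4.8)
The plan is to show that, under the equivalence $\DrY{\Rad{L}{A}}\simeq\DrY{L,A}$ of Proposition \ref{pr:DY equiv}, admissibility on one side corresponds exactly to admissibility on the other. Recall that for a split pair of QUEs $(B,A)$ with $B\simeq\Rad{L}{A}$, an object $V\in\DrY{\Rad{L}{A}}$ is admissible when its coaction $\pi^*_{\Rad{L}{A},V}$ factors through $(\Rad{L}{A})'\ten V$, while an object of $\DrY{L,A}$ should be declared admissible when the $A$--coaction $\pi_{A,V}^*$ factors through $A'\ten V$ \emph{and} the $L$--coaction $\pi_{L,V}^*$ factors through $L'\ten V$ (here $L'\subseteq L$ is the subobject of $L$, viewed as a \DYt $A$--module, defined by the same $\hbar$--divisibility condition on iterated coproducts as for a QUE; by part (vii) of the Radford theorem $L$ itself is an admissible \DYt $B$--module, so $L'$ is well--behaved). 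The key structural input is that, under the Radford identification $(\Rad{L}{A})'\simeq L'\ten A'$ as topological $\sfK$--modules --- which one extracts from \eqref{eq:rad-iso} together with the compatibility of the coproduct $\Delta_{\Rad{L}{A}}$ with those of $A$ and $L$ --- the two notions match.

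The main steps, in order, would be: (1) Using Proposition \ref{prop:adm-cond}, reduce admissibility of $V\in\DrY{\Rad{L}{A}}$ to the single condition that the image of $(\id-\iota\circ\epsilon)\ten\id\circ\pi^*_{\Rad{L}{A},V}$ lies in $\hbar\,\Rad{L}{A}\ten V$, and similarly reduce admissibility in $\DrY{L,A}$ to the two first--order conditions for the $A$-- and $L$--coactions. (2) Unwind the formula $\pi^*_{\Rad{L}{A},V}=\id\ten\pi_{A,V}^*\circ\pi_{L,V}^*$ from part (i) of Proposition \ref{pr:DY equiv}, and the inverse formulae for $\pi_{A,V}^*$ and $\pi_{L,V}^*$ from part (ii), to see that the first--order part of $\pi^*_{\Rad{L}{A},V}$ is the sum of the first--order parts of the $A$-- and $L$--coactions, modulo terms that are manifestly in the image of $\Delta-\Delta^{21}$ on one of the factors and hence divisible by $\hbar$ (this is where the QUE hypothesis on both $A$ and $B$, equivalently on $A$ and $L$, enters). (3) Conclude that $\hbar$--divisibility of the $\Rad{L}{A}$--coaction at first order is equivalent to simultaneous $\hbar$--divisibility of the $A$-- and $L$--coactions at first order, using that $L\ten A\twoheadrightarrow L\oplus A$ at the level of primitives and that $\hbar L\ten A\cap(\text{either summand})$ is exactly $\hbar$ times that summand --- i.e. the decomposition $B\simeq L\ten A$ is by \emph{topologically free} $\sfK$--modules, so there is no $\hbar$--torsion obstruction. (4) Invoke the semiclassical picture: $(\Rad{L}{A})/\hbar\simeq U(\ga)$ splits compatibly as the bosonisation of $(A/\hbar\simeq U\a)$ by $(L/\hbar)$, matching the Lie bialgebra decomposition $\gb=\b$, and under this the notion of admissible \DYt module over $\Rad{L}{A}$ reduces to a \DYt module over $\ga$, while a pair of admissible coactions reduces to a \DYt $L/\hbar$--module object in $\DrY{\a}$; these agree by the classical version of Proposition \ref{pr:DY equiv}.

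I expect the main obstacle to be Step (2): bookkeeping the first--order term of $\pi^*_{\Rad{L}{A},V}$ through the braided tensor structure of $\DrY{A}$. The coaction on the Radford biproduct intertwines the $L$--coaction (an arrow in $\DrY{A}$, so itself carrying an $A$--coaction via $\pi^*_L$) with the ambient $A$--coaction, and one must check that the ``cross terms'' --- schematically $\pi^*_L$ evaluated on the $A$--coaction's output --- do not spoil $\hbar$--divisibility. The cleanest route is probably not a direct diagram chase but rather to argue contravariantly: use the universal property \eqref{eq:Lmh-univ} of $L$ together with part (vii) of the Radford theorem to reduce everything to testing against the generating admissible modules $L$ and $\MpBp$, for which both sides of the claimed equivalence of admissibility are already known. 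A subsidiary point requiring a little care is verifying that the functors of Proposition \ref{pr:DY equiv} genuinely \emph{restrict} to the admissible subcategories in both directions (not merely that they preserve admissibility one way), which follows once one knows the identification $(\Rad{L}{A})'\simeq L'\ten A'$ is an isomorphism and not just an inclusion.
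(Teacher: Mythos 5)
The crux of your argument is the identification $(L\star A)'\simeq L'\ten A'$, which you invoke twice (as the ``key structural input'' and again in your closing paragraph) but never prove, describing it as extractable from the Radford isomorphism $B\simeq L\ten A$ and compatibility of coproducts. That identification is exactly the Lemma to which the paper reduces the Proposition, and it is not a formal consequence of those facts: the easy direction is $L\cap B'\subseteq L'$ (via $d_L^{(n)}=\Pi^{\ten n}\circ d_B^{(n)}\circ\Pi$), but the inclusion $L'\subseteq B'$ requires controlling all higher iterated coproducts $d_B^{(n)}$ on $L'$, whose cross terms involve the $A$--coaction $\pi_L^*$ of $L$ itself; the paper's induction, $d_B^{(n)}\circ\Pi=\sum_{k}\gamma_n^{(n-k)}\circ d_L^{(k)}$ with $\gamma_n^{(n-k)}$ divisible by $\hbar^{n-k}$, closes only because $\pi_L^*$ lands in $A'\ten L$, i.e.\ because $L$ is a Hopf algebra in $\aDrY{A}$ and not merely in $\DrY{A}$. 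You never use, or even locate, this hypothesis, and your phrase ``which one extracts from the isomorphism $B\simeq L\ten A$'' badly underestimates the step that constitutes essentially the whole proof.

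Your first--order sketch (steps (1)--(3)) could in principle yield an alternative route that bypasses the identification: since $p_B=p_L\ten\id_A+\iota_L\epsilon_L\ten p_A$ and $\pi^*_{B,V}=\id\ten\pi^*_{A,V}\circ\pi^*_{L,V}$, the counit axioms give the exact identity $(p_B\ten\id)\circ\pi^*_{B,V}=(\id_L\ten\pi^*_{A,V})\circ(p_L\ten\id)\circ\pi^*_{L,V}+\iota_L\ten\bigl((p_A\ten\id)\circ\pi^*_{A,V}\bigr)$, and the two summands lie in the complementary topologically free summands of $L\ten A\ten V$ cut out by the idempotents $p_L$ and $\iota_L\epsilon_L$, so $\hbar$--divisibility separates; there are no correction terms ``in the image of $\Delta-\Delta^{21}$'' as you suggest, and the QUE hypothesis does not enter at that point. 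But as written this route is also incomplete: Proposition \ref{prop:adm-cond} is stated and proved only for QUEs, so reducing admissibility of the $L$--coaction (factoring through $L'$, with $L$ a braided Hopf algebra in $\aDrY{A}$) to a first--order condition needs a braided analogue, using naturality of the braiding of $\DrY{A}$ with respect to $p_L$ --- a point you do not address. Moreover your step (4) is irrelevant (admissibility is not detected modulo $\hbar$), and the proposed fallback of testing against the generating modules $L$ and $M_B^{\vee}$ does not detect admissibility of an arbitrary module. So either carry out the braided first--order criterion and run (1)--(3) carefully, or prove $(L\star A)'=L'\star A'$ as the paper does; as it stands the central step of your proof is assumed rather than established.
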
 

This reduces to proving the following lemma. 

\begin{lemma}
Let $A$ be a QUE, $L$ a Hopf algebra object in $\aDrY{A}$
and $B=\Rad{L}{A}$ the corresponding Radford biproduct. Then
$B'=\Rad{L'}{A'}$,
where, for any (possibly braided) Hopf algebra $H$ over $\sfk{\fml}$,
we set  $p_H=\id_H-\iota_H\circ\epsilon_H$, $d_H^{(n)}=p_H^{\ten n}\circ\Delta_H^{(n)}$,
and
\[
H'=\{x\in H\;|\; d_H^{(n)}(x)\in\hbar^nH^{\ten n}\;\forall n\}.
\]
\end{lemma}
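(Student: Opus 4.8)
The claim is that for a split pair $(B,A)$ of QUEs, with $B=\Rad{L}{A}$ the Radford biproduct of a Hopf algebra $L$ in $\aDrY{A}$, one has $B'=\Rad{L'}{A'}$ as subspaces of $B=L\ten A$. The natural strategy is to exploit the factorisation of the iterated coproduct $\Delta_B^{(n)}$ on the biproduct in terms of the coproducts of $L$ and $A$, together with the \DYt structure maps $\pi_L,\pi_L^*$. Concretely, I would first record from \S\ref{ss:rad-biprod} that $\Delta_{\Rad{L}{A}}=\id\ten m_A\ten\id^{\ten 2}\circ(3\,4)\circ\id\ten\pi_L^*\ten\id^{\ten 2}\circ\Delta_L\ten\Delta_A$, and derive by induction a closed formula for $\Delta_B^{(n)}$; the point is that since $L$ is \emph{admissible}, the coaction $\pi_L^*$ differs from the trivial coaction $\iota\ten\id$ by a term divisible by $\hbar$, so modulo any fixed power of $\hbar$ the iterated coproduct of $B$ is controlled by those of $L$ and $A$ separately. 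I expect the combinatorics to be organized exactly as in the proof of Proposition \ref{prop:adm-cond} (reduction of $\Delta^{(n)}$ to iterates of the coaction) and of Proposition \ref{ss:Mpv}(i).

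\textbf{Key steps.} (1) Show $d_B^{(n)}$ factors through $L^{\ten n}\ten A^{\ten n}$ (after the obvious reshuffle of tensor factors) modulo the corrections coming from $\pi_L^*$, and that those corrections carry an extra factor of $\hbar$ for each application of $p_L$ or $p_A$. Since $p_B=\id-\iota_B\circ\epsilon_B$ and $\iota_B=\iota_L\ten\iota_A$, $\epsilon_B=\epsilon_L\ten\epsilon_A$, one has $p_B=p_L\ten\id+\iota_L\circ\epsilon_L\ten p_A$, so $d_B^{(n)}$ expands into a sum of terms each of which is (up to the permutation identifying $(L\ten A)^{\ten n}$ with $L^{\ten n}\ten A^{\ten n}$) a product of a ``$d_L$-type'' operator on the $L$-legs and a ``$d_A$-type'' operator on the $A$-legs, plus $\hbar$-divisible error terms. (2) Conclude that $x=\ell\ten a\in B$ satisfies $d_B^{(n)}(x)\in\hbar^nB^{\ten n}$ for all $n$ if and only if $d_L^{(n)}(\ell)\in\hbar^nL^{\ten n}$ and $d_A^{(n)}(a)\in\hbar^nA^{\ten n}$ for all $n$, i.e. $\ell\in L'$ and $a\in A'$. (3) Verify that the Hopf structure of $B'$ restricted from $B$ is precisely that of $\Rad{L'}{A'}$: this uses that $L'$ is a Hopf subalgebra of $L$ in $\aDrY{A'}$ (the $A'$-action and coaction on $L'$ being the restrictions of those on $L$), which in turn follows because $\pi_L,\pi_L^*$ preserve the $\hbar$-filtrations — the same computation as Theorem (vii) above together with the fact, established in \S\ref{ss:dualityQUE}, that $(-)'$ is functorial and sends Hopf algebras to Hopf subalgebras.

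\textbf{Main obstacle.} The delicate point is the $\Leftarrow$ direction of step (2), namely that $\ell\in L'$ and $a\in A'$ genuinely force $d_B^{(n)}(\ell\ten a)\in\hbar^n B^{\ten n}$: one must check that the cross terms in the expansion of $\Delta_B^{(n)}$, which mix the $L$- and $A$-legs through $\pi_L^*$ and $m_A$, do not destroy divisibility. Here the key input is that each occurrence of $\pi_L^*$ composed with $p_L\ten\id$ (equivalently, the reduced coaction $\SC{\pi_L^*}$ lifted to $B$) is divisible by $\hbar$ because $L$ is an \emph{admissible} \DYt $A$-module — this is exactly the hypothesis, and it is what makes the statement true for QUEs and false in general. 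I would phrase this as: writing $\pi_L^*=\iota_A\ten\id_L+\hbar\,\wt{\pi}_L^*$ and $\Delta_A=\Delta_A^0+\hbar\,(\ldots)$ with $\Delta_A^0$ cocommutative-to-first-order, one tracks that every ``new'' tensor leg produced in $d_B^{(n)}$ beyond what $d_L^{(n)}\ten d_A^{(n)}$ produces is accompanied by a compensating power of $\hbar$. Once this bookkeeping is set up carefully, the equality $B'=\Rad{L'}{A'}$ follows, and the Hopf-algebra compatibility in step (3) is then routine given the explicit biproduct formulae.
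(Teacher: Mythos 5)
Your key mechanism for the hard inclusion is the right one, and it is the same as the paper's: because the coaction of $A$ on $L$ is admissible, every cross term in the expansion of $d_B^{(n)}$ on the biproduct in which $p_A$ (note: $p_A\ten\id_L$, not $p_L\ten\id$, since $\pi_L^*$ lands in $A\ten L$) hits the $A$--leg created by $\pi_L^*$ carries a compensating power of $\hbar$. In the paper this bookkeeping is packaged as the inductive identity $d_B^{(n)}\circ\Pi=\sum_{k=0}^{n-1}\gamma_n^{(n-k)}\circ d_L^{(k)}$ with $\gamma_n^{(n-k)}$ divisible by $\hbar^{n-k}$, which yields $L'\subset L\cap B'$; your proposed expansion of $d_B^{(n)}$ using $p_B=p_L\ten\id+\iota_L\circ\epsilon_L\ten p_A$ is essentially the same computation.

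There is, however, a genuine gap in your step (2). You phrase the conclusion as an equivalence for pure tensors: $\ell\ten a\in B'$ iff $\ell\in L'$ and $a\in A'$. First, this is false as stated, because the presentation of an element of $L\ten A$ as a pure tensor is not unique and powers of $\hbar$ can be moved across the tensor sign: for a primitive-type $\ell\notin L'$ one has $\hbar\ell\in L'$, and $(\hbar\ell)\ten 1_A=\ell\ten(\hbar 1_A)$ lies in $B'$ although $\ell\notin L'$; the correct assertion is the equality of subspaces $B'=L'\ten A'$, not a criterion on pairs $(\ell,a)$. Second, and more seriously, a general element of $B'$ is not a pure tensor, so even a corrected pure-tensor criterion does not give the inclusion $B'\subset L'\ten A'$. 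What is missing is an a priori splitting of $B'$ itself over $A'$. The paper supplies this as its first move: $(B',A')$ is again a split pair of Hopf algebras, so Radford's theorem gives $B'\simeq\wt{L}\star A'$ with $\wt{L}=\Pi(B')=L\cap B'$, and the lemma then reduces to the single equality $L'=L\cap B'$ (one inclusion from $d_L^{(n)}=\Pi^{\ten n}\circ d_B^{(n)}\circ\Pi$, the other from the $\hbar$--divisibility argument above). To repair your route you would need the equivalent facts that $\Delta_B(B')\subset B'\ten B'$ and that $\Pi$ and $\pi=i\circ p$ preserve $B'$ — i.e. essentially Radford's theorem applied to $(B',A')$; once that is in place your expansion finishes the proof, and your step (3) becomes unnecessary because the biproduct structure on $B'$ is automatic from that splitting.
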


\begin{pf}
The Hopf algebras $(B',A')$ form a split pair, and, by Radford theorem, 
there is an isomorphism of Hopf algebras $B'\simeq\Rad{\wt{L}}{A'}$,
where $\wt{L}$ is the image of the idempotent \eqref{eq:RadPi} corresponding 
to $B'$. It is easy to show that
\[
\wt{L}=\{x\in B'\;|\;\id\ten\pi_A\circ\Delta_{B}(x)=x\ten1\}=L\cap B'.
\]
We want to show that 
\begin{align*}
L'=\{x\in L\;|\; d_L^{(n)}(x)\in\hbar^nL^{\ten n}\}=L\cap B',
\end{align*}
from which it follows that $B'=\Rad{L'}{A'}$.

It follows immediately from the description of the Radford biproduct
in \S\ref{ss:splitHA} that $\Delta_L^{(n)}=\Pi^{\ten n}\circ\Delta_B^{(n)}\circ\Pi$, 
and therefore
\begin{align*}
d_L^{(n)} & = p_L^{\ten n}\circ\Delta_L^{(n)} \\
& = \left((\id - \epsilon_L)\circ\Pi\right)^{\ten n}\circ\Delta_B\circ\Pi \\
& = \Pi^{\ten n}\circ(\id-\epsilon_B)^{\ten n}\circ\Delta_B\circ\Pi \\
& = \Pi^{\ten n}\circ d_B^{(n)}\circ \Pi .
\end{align*}
Hence, $L\cap B'\subset L'$. Conversely, since $\Delta_B(L)\subset B\ten L$ and, 
under the identification $B\simeq\Rad{L}{A}$, $p_B=p_L\ten\id+\epsilon_L\ten p_A$, 
one has
\begin{align*}
d_B^{(2)}\circ\Pi & = p_B^{\ten 2}\circ\Delta_B\circ\Pi \\
&=p_B^{\ten 2}\circ\id\ten\pi_L^*\circ\Delta_L\\
&=p_L\ten\id\ten p_L\circ \id\ten\pi_L^*\circ\Delta_L + 
\epsilon_L\ten p_A \ten p_L\circ \id\ten\pi_L^*\circ\Delta_L\\
&= \id\ten\pi_L^*\circ d_L^{(2)}+d_A^{(1)}\ten\id\circ\pi_L^*\circ d_L^{(1)},
\end{align*}
where $\pi_L^*:L\to A'\ten L$ denotes the admissible coaction of
$A$ on $L$.
It is then easy to see, by induction on $n$, that
\begin{equation}\label{eq:admind}
d_B^{(n)}\circ\Pi=\sum_{k=0}^{n-1} \gamma^{(n-k)}_{n}\circ d_L^{(k)},
\end{equation}
where $\gamma_{n}^{(n-k)}:L^{\ten k}\to(L\ten A)^{\ten k-1}\ten L$ 
is divisible by $\hbar^{n-k}$.
Namely, we have
\begin{align*}
d_B^{(n)}\circ\Pi &= p_B\ten(d_B^{(n-1)}\circ\Pi)\circ\Delta_B \\
& = \sum_{k=0}^{n-1} p_B\ten\id^{\ten n-1}\circ \id\ten\gamma^{(n-k-1)}_{n-1}\circ \id\ten 
d_L^{(k)} \circ\id\ten\pi_L^*\circ\Delta_L \\
& = \sum_{k=0}^{n-1} \id\ten\gamma^{(n-k-1)}_{n-1}\circ\id\ten\pi_{L^{\ten n-1}}^*\circ d_L^{(k+1)} +
d_A^{(1)}\ten\gamma^{(n-k-1)}_{n-1}\circ\pi_{L^{\ten n-1}}^*\circ d_L^{(k)} .
\end{align*}
Since the coaction of $A$ on $L$ is admissible, one can proceed by setting for every $k=0,1,\dots, n-1$
\[
\gamma_{n}^{(n-k)}=\id\ten\gamma^{(n-k)}_{n-1}\circ\id\ten\pi_{L^{\ten n-1}}^*
+d_A^{(1)}\ten\gamma^{(n-k-1)}_{n-1}\circ\pi_{L^{\ten n-1}}^*.
\]
Finally, relation \eqref{eq:admind} implies $L'\subset L\cap B'$ and proves the result.
\end{pf}

\subsection{The quantum relative Verma modules}\label{ss:relquantumverma1}

The relative Verma modules $\Lm$, $\Npv$, constructed in Section \S\ref{s:Gamma}
for any split pair of Lie bialgebras, have natural quantum counterparts, given by the 
relative analogues of the quantum Verma modules $\MmB$, $\MpBp$ with
respect to a split pair of Hopf algebras. In the following, we focus on the case of
as split pair of QUEs. 
 
We have seen in \S\ref{ss:quantum-verma} that the \DYt structures of 
$\MmB$ and $\MpBp$ are tailored around the universal properties \eqref{eq:univ-prop-1}, 
\eqref{eq:univ-prop-2}. The same principle applies to the construction of the 
quantum relative Verma modules.

Let $A\stackrel{i_{\hbar}}{\to} B\stackrel{p_{\hbar}}{\to}A$, $p_{\hbar}\circ i_{\hbar}=\id_A$,
be a split pair of QUEs, and $\a\stackrel{i}{\to} \b\stackrel{p}{\to}\a$ the corresponding split
pair of Lie bialgebras. We denote by $\gLm{B,A}$ the \DYt $B$--module $L$ constructed in
\S\ref{ss:splitHA} on the image of the idempotent \eqref{eq:RadPi}
\begin{equation*}
\Pi=m_B\circ\id\ten(S_B\circ \pi_{\hbar})\circ\Delta_{B}.
\end{equation*}
It follows from \S\ref{ss:splitHA} that $\gLm{B,A}\subset B$ is a subalgebra and $B\simeq\Rad{\gLm{B,A}}{A}$.
In particular, for $A=\sfK$, it is easy to check that the modules $\gLm{B,\sfK}$ and $\MmB$ coincide.

The construction of $\gNm{B,A}$ is similar to that of $\MpBp$ described in \S\ref{ss:Mpv}.
As a \DYt $B$--modules it is realised on $B'\ten A^*$ with coaction
$\pi^*_{\gNm{B,A}}=\Delta_{B}^{\operatorname{21}}\ten\id$
and action
\begin{equation}\label{eq:Npvh-act}
\pi_{\gNm{B,A}}=
m_{B}^{(3)}\ten\coadd_{A}\circ\id^{\ten 2}\ten S_{B}^{-1}\ten\id^{\ten 2}\circ(24)\circ\id\ten p_{\hbar}\ten\id^{\ten 3}\circ\Delta_{B}^{(3)}\ten\id^{\ten 2},
\end{equation}
where $\coadd_{A}$ denotes the coadjoint action of $A$ on $A^*$.
The structure of \DYt $A\op$--module is given 
by the same formulae applied on the right to $A'\ten A^*\subset B'\ten A^*$.
For $A=\sfK$, the modules $\MpBp$ and $\gNm{B,\sfK}$ coincide.

\subsection{Properties of relative Verma modules}\label{ss:relquantumverma1-bis}

\begin{proposition}\label{le:q rel verma}\hfill
\begin{enumerate}
\item $\gLm{B,A}, \gNm{B,A}$ are admissible \DYt $B$--modules and
satisfy the universal properties
\begin{eqnarray}\label{eq:L-univ-p}
\Hom_{B}^{B}(\gLm{B,A}, \V)&\simeq&\Hom_{A}^{B}(K,\V),\\
\label{eq:N-univ-p}
\Hom_{B}^{B}(\V,\gNm{B,A})&\simeq&\Hom_{{\gLm{B,A}}}(\V, K),
\end{eqnarray}
for any admissible $\V\in\aDrY{B}$. Moreover, $\gLm{B,A}, \gNm{B,A}$ have, respectively, natural
structures of coalgebra and algebra objects in $\aDrY{B}$.
\item $\gNm{B,A}$ satisfies the universal 
property (equivalent to \eqref{eq:N-univ-p})
\begin{equation}\label{eq:N-univ-prop-int}
\Hom_{B}^{B}(\V,\gNm{B,A})\simeq\Hom^{B}_{A}(\V, p_{\hbar}^*\gNm{A,A}),
\end{equation}
where $B$ acts on $\gNm{A,A}$ by projection to $A$.
\item As \DYt $(\b,\a)$--bimodules, 
the semiclassical limit of $\gLm{B,A}, \gNm{B,A}$ are, respectively, $\Lm$ and $\Npv$.
\end{enumerate}
\end{proposition}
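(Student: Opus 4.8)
The three parts of Proposition \ref{le:q rel verma} follow by combining the Radford biproduct formalism of \S\ref{ss:splitHA}--\S\ref{ss:qrad-DY} with the admissibility arguments already developed for $\MmB$ and $\MpBp$ in \S\ref{ss:adm-Mm}--\S\ref{ss:Mpv}. First I would treat $\gLm{B,A}$. By construction it is the \DYt $B$--module $L$ attached to the split pair $A\hookrightarrow B$, so parts $(iv)$--$(vii)$ of the theorem in \S\ref{ss:splitHA} already give that $\gLm{B,A}$ is an admissible \DYt $B$--module which is a cocommutative coalgebra object in $\aDrY{B}$ and satisfies the universal property \eqref{eq:Lmh-univ}, which is exactly \eqref{eq:L-univ-p}. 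So for $\gLm{B,A}$ there is essentially nothing new to prove; one only records that the coalgebra structure is the one induced by $\Delta_B$ via the idempotent $\Pi$, as in \eqref{eq:Lmh-coprod}.

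The module $\gNm{B,A}$ requires more work, and is where the bulk of the argument lies. The strategy mirrors \S\ref{ss:Mpv}: one shows that the action \eqref{eq:Npvh-act} is well defined on $B'\ten A^*$ (i.e.\ that the adjoint-type action through $\Pi$ preserves $B'$, which is the content of the Radford lemma in \S\ref{ss:qrad-DY} together with the admissibility of $\MpBp$), and that $(\gNm{B,A},\pi_{\gNm{B,A}},\pi^*_{\gNm{B,A}})$ is an admissible \DYt $B$--module by checking \eqref{eq:adm-cond} via Proposition \ref{prop:adm-cond}; the coaction is $\Delta_B^{21}\ten\id$, for which divisibility by $\hbar$ of $(\id-\iota\circ\epsilon)\ten\id\circ\Delta_B^{21}$ is immediate since $\Delta_B-\Delta_B^{21}\in\hbar B^{\ten2}$. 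The algebra structure on $\gNm{B,A}$ and the universal property \eqref{eq:N-univ-p} are obtained by dualising \eqref{eq:L-univ-p}, exactly as \eqref{eq:univ-prop-2} was obtained from \eqref{eq:univ-prop-1} in \S\ref{ss:quantum-verma-p}; here one uses the element $T\in B^{\ten2}$ (as in the classical case, \S\ref{s:Gamma}, and in \cite[Prop.~4.2]{ee}) to implement the duality $(\gNm{B,A})=(\gLm{B,A})^\vee$ compatibly with the \DYt structure. Part $(2)$, the reformulation \eqref{eq:N-univ-prop-int}, then follows by composing \eqref{eq:N-univ-p} with Frobenius reciprocity along the split projection $p_\hbar:B\to A$, noting that $\gNm{A,A}$ is the quantum Verma module $\MpBp$ for $A$ and that $B$ acts on it through $p_\hbar$.

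For part $(3)$, the semiclassical limit, I would use the compatibility of the Radford biproduct with the QUE filtration established in the lemma of \S\ref{ss:qrad-DY}, namely $B'=\Rad{L'}{A'}$. Reducing mod $\hbar$, the idempotent $\Pi$ becomes the projection $\gu\to\Lmm$ along $\Lpp=i(\ga)\oplus\gup$ dual to the decomposition $\gu=\Lmm\oplus\Lpp$ of \S\ref{ss:rel Verma}, so $\SC{\gLm{B,A}}\simeq U\Lmm\simeq\Lm$ as a coalgebra; the \DYt structure reduces to the one on $\Lm$ because, by the semiclassical-limit computation of \S\ref{ss:sc-lim-admDY}, the coaction $\pi^*_L$ of \eqref{eq:Lmh-coact} reduces to $\delta\ten\id\circ(\text{classical }\Pi)$, which is the coaction on $\Lm$. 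For $\gNm{B,A}$ one dualises: $(\gNm{B,A})/\hbar\simeq (U\Lpp)^*\simeq\Npv$ as algebras, with the right $\a$--structure coming from the $A\op$-action described in \S\ref{ss:relquantumverma1}; again the \DYt operations reduce to the classical ones defined in \S\ref{s:Gamma} by the same mod-$\hbar^2$ analysis of \eqref{eq:DY Hopf}.

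\textbf{Main obstacle.} The delicate point is not any single identity but the bookkeeping that the action \eqref{eq:Npvh-act} genuinely lands in $B'\ten A^*\ten(-)$ rather than merely $B\ten A^*\ten(-)$ — i.e.\ that the adjoint action through $\Pi$ preserves the formal-series part $B'$ — and that this survives to give admissibility of the resulting \DYt module. This is precisely the inductive estimate of \S\ref{ss:Mpv} (the computation that $\ima(\delta^{(n+1)}\circ\pi_+)\subset\hbar^{n+1}B^{\ten n+1}$), now carried out relative to the split pair and with $A^*$ in tow; the relative twist introduces the extra coadjoint $A$--action $\coadd_A$, and one must check it does not spoil the $\hbar$--divisibility. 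I expect this to reduce, after unwinding the Radford formulae, to the combination of the lemma of \S\ref{ss:qrad-DY} with Proposition \ref{prop:adm-cond}, so the difficulty is organisational rather than conceptual.
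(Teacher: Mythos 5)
Your treatment of $\gLm{B,A}$ is fine and is essentially the paper's: the theorem of \S\ref{ss:splitHA} already gives the \DYt structure, the admissibility, the coalgebra structure and the universal property \eqref{eq:L-univ-p}, and the semiclassical limit is identified exactly as you say (reduce $\Pi$ mod $\hbar$ to the projection $U\b\to U\Lmm$ and use uniqueness of the coaction killing the generator). The gap is in your treatment of $\gNm{B,A}$. The claimed duality ``$\gNm{B,A}=(\gLm{B,A})^\vee$ implemented by $T$'' is false: $\gLm{B,A}$ quantises $U\Lmm$, whereas $\gNm{B,A}$, built on $B'\ten A^*$, quantises $\Npv=(U\Lpp)^*$, and classically $\Npv$ is the dual of $\Np\simeq U\Lpp$, not of $\Lm$. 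Moreover, formally dualising \eqref{eq:L-univ-p} would at best produce a statement with $\Hom_{A}^{B}(\V,K)$ on the right, while \eqref{eq:N-univ-p} has $\Hom_{\gLm{B,A}}(\V,K)$ --- $\gLm{B,A}$--module maps, with no comodule condition --- so the two universal properties are not exchanged by any naive dualisation (nor was \eqref{eq:univ-prop-2} obtained from \eqref{eq:univ-prop-1} that way in \S\ref{ss:quantum-verma-p}; both were proved directly). The paper proves \eqref{eq:N-univ-p} by exhibiting both maps: the forward map is composition with the counits of $B'$ and $A^*$, and the inverse is $f\mapsto \id\ten\id\ten f\circ\pi_{\V}^*\ten\id\ten\pi_{\V}\circ R_A\ten\id$, where $R_A$ is the universal $R$--matrix of $A$. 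This $R$--matrix is the genuinely new ingredient in the relative case, it is what produces the algebra structure on $\gNm{B,A}$, and, combined with the projection $\gNm{B,A}\to\gNm{A,A}$ induced by $B'\to A'$ and the Lemma of \S\ref{ss:qrad-DY}, it also gives part (2); your appeal to ``Frobenius reciprocity along $p_\hbar$'' does not supply these maps. The same omission affects part (3) for $\gNm{B,A}$: the paper does not dualise, but shows that $\SC{\gNm{B,A}}$ satisfies the classical universal property of $\Npv$ by writing an explicit inverse out of iterated actions, coactions and $r_\a$, and then invokes uniqueness.

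Two smaller points. Your admissibility argument for $\gNm{B,A}$ gives the wrong reason: divisibility of $\Delta_B-\Delta_B^{21}$ by $\hbar$ is irrelevant; what makes $(\id-\iota\circ\epsilon)\ten\id\circ\Delta_B^{21}\ten\id$ land in $\hbar B\ten\V$ is that the underlying space is $B'\ten A^*$, so the condition of Proposition \ref{prop:adm-cond} holds because the first tensor leg is evaluated on elements of $B'$. Relatedly, your diagnosis of the ``main obstacle'' as the admissibility bookkeeping misplaces the difficulty: admissibility is routine (as for $\MpBp$), while the substantive content of the proposition is the pair of universal properties \eqref{eq:N-univ-p}--\eqref{eq:N-univ-prop-int} and their compatibility with the algebra structure, which your proposal does not establish.
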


\begin{pf}
(i)
The universal properties \eqref{eq:L-univ-p}, \eqref{eq:N-univ-p}
generalise those of $\MmB$ and $\MpBp$ described in \S\ref{ss:quantum-verma}.
Specifically, we observe that the action of $A$ and the coaction of $B$ on the 
element $1\in{\gLm{B,A}}$ are trivial. In particular, by restriction to $1\in{\gLm{B,A}}$, we 
get a map
\[
\Hom_{B}^{B}({\gLm{B,A}}, \V)\to\Hom_{A}^{B}(K,\V),
\]
whose inverse is given by composition with the action of $B$ restricted
to $\gLm{B,A}$. This proves \eqref{eq:L-univ-p} and implies that 
$\gLm{B,A}$ is endowed with a coalgebra structure in $\DrY{B}$. 

The proof of \eqref{eq:N-univ-p} goes along the same lines.
There is a map
\[
\Hom_{B}^{B}(\V,\gNm{B,A})\to\Hom_{{\gLm{B,A}}}(\V, K),
\]
obtained by composition with the counits of $B'$ and $A^*$,
whose inverse is given by
\[
f\mapsto \id\ten\id\ten f \circ \pi_V^*\ten\id\ten\pi_V\circ R_{A}\ten\id,
\]
where $R_{A}$ denotes the universal $R$--matrix of $A$.
It follows in particular that $\gNm{B,A}$ is naturally endowed with an algebra structure
in $\DrY{B}$.

(ii) The isomorphism \eqref{eq:N-univ-prop-int} is proved similarly. The map
\[
\Hom_{B}^{B}(\V,\gNm{B,A})\to\Hom^{A}_{B}(\V, p_{\hbar}^*\gNm{A,A})
\]
is obtained by composition with the canonical projection $\gNm{B,A}\to\gNm{A,A}$ induced
by $B'\to A'$. The inverse map is obtained by precomposing with the coaction
of $\V$ projected onto $\gLm{B,A}'$. It is easy to check from Lemma \ref{ss:qrad-DY} that this is 
well--defined and gives the desired isomorphism.

(iii) The idempotent $\Pi$ reduces modulo $\hbar$ to the canonical projection $U\b\to U\m$, where
$\m=\ker(p)$.
It follows that $\SC{\gLm{B,A}}\simeq U\m$. One checks that, under the canonical identification
$\Lm\simeq U\m\simeq\SC{\gLm{B,A}}$, the $B$--action on $\gLm{B,A}$ reduces to the standard 
$\b$--action on $\Lm$. 
Since the coaction on $\Lm$ is uniquely determined by the condition $\pi_{\Lm}^*(u_L)=0$,  
one concludes that 
$\SC{\pi^*_{\gLm{B,A}}}=\pi_{\Lm}^*$, and therefore $\SC{\gLm{B,A}}\simeq\Lm$ in $\DrY{\b}$.

The proof for $\gNm{B,A}$ goes as follows. It is clear that $\SC{\gNm{B,A}}\simeq\Npv$ as $\b$--comodules.
One then observes that the maps
\[
\xymatrix{
\Hom_{\b}^{\b}(V,\SC{\gNm{B,A}})
\ar@<2pt>[r]^(.6){\psi}
&\Hom_{{\m}}(V, \sfk)
\ar@<2pt>[l]^(.4){\phi}
},
\]
defined by $\psi=\SC{\varepsilon_{\gNm{B,A}}}\circ -$ and
\[
\phi=\id\ten - \circ \sum_{n,m\geq 0}\Sym_n\ten\Sym_m\ten\id\circ\id^{\ten m}\ten\pi_V^{(m)}\circ\id^{\ten n}\ten r_{\a}^{(m)}\circ(\pi_V^*)^{(n)},
\]
are well--defined and inverse of each other.
It follows that $\SC{\gNm{B,A}}$ satisfies the universal property of $\Npv$, and it 
is therefore isomorphic to $\Npv$ in $\DrY{\b}$. Similarly
for the structure of \DYt $\a^{op}$--module.
\end{pf}

\subsection{Quantum restriction functor}\label{ss:q restriction}

Let $(B,A)$ be a split pair of QUEs.
The relative quantum Verma modules $\gLm{B,A}$, $\gNm{B,A}$
allow to define the functor
\[\FF{A}{B}:\aDrY{B}\to\aDrY{A}
\qquad\FF{A}{B}(\V)=\Hom_{B}^{B}(\gLm{B,A},\gNm{B,A}\ten\V).\]
\begin{proposition}\label{prop:hGamma-res}
The functor $\FF{A}{B}$ is tensor isomorphic to the restriction
functor $\Res_{A,B}:\aDrY{B}\to\aDrY{A}$.
\end{proposition}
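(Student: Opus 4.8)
The plan is to mimic, in the quantum (admissible) setting, the proof of Proposition \ref{pr:isom to forget}, which established that the classical relative fiber functor $\FF{\a}{\b}$ is isomorphic to the pullback functor $\iD^*$. The key inputs are the universal properties of the quantum relative Verma modules recorded in Proposition \ref{le:q rel verma}, together with the Radford-biproduct description of a split pair of QUEs.

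First I would construct the natural isomorphism $\alpha_\V\colon\FF{A}{B}(\V)\to\Res_{A,B}(\V)$ on objects. By definition $\FF{A}{B}(\V)=\Hom_{B}^{B}(\gLm{B,A},\gNm{B,A}\ten\V)$. Using the coalgebra counit of $\gNm{B,A}$ (equivalently, evaluating at $1\in\gLm{B,A}$ and then applying $\varepsilon_{\gNm{B,A}}\ten\id$), combined with the universal property \eqref{eq:L-univ-p}, one obtains
\[
\Hom_{B}^{B}(\gLm{B,A},\gNm{B,A}\ten\V)\simeq\Hom_{A}^{B}(K,\gNm{B,A}\ten\V)\simeq\Hom_{A}^{A}(K,\V)\simeq\Res_{A,B}(\V),
\]
where the middle step uses \eqref{eq:N-univ-p} applied to the trivial module $K$ (or the projection property \eqref{eq:N-univ-prop-int}). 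Explicitly $\alpha_\V(f)=(\varepsilon_{\gNm{B,A}}\ten\id_\V)\circ f(1)$. One must check this is $A$-linear for the action $\pi_{\gNm{B,A}}$ restricted via $A\hookrightarrow B$; here I would use the same trick as in \S\ref{ss:gammafun-1}, namely the identity $1\ten x=\sum(S(x_{(1)})\ten 1)\Delta(x_{(2)})$ in $B$, adapted to the quantum setting, to transfer the $A$-action on $\FF{A}{B}(\V)$ (given through the antipode on the $\gNm{B,A}$-factor) to the native action on $\V$. Naturality in $\V$ is then the quantum analogue of \S\ref{ss:gammafun-2} and is routine.

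Next I would equip $\FF{A}{B}$ with the tensor structure $\hJ_{\V,\W}\colon\FF{A}{B}(\V)\ten\FF{A}{B}(\W)\to\FF{A}{B}(\V\ten\W)$ by exactly the same formula as in \S\ref{s:tensor-structure}: apply the coproduct $\Delta_{\gLm{B,A}}$ of \eqref{eq:Lmh-coprod}, tensor the two maps, shuffle the middle $\gNm{B,A}$ factors past $\V$ using the braiding of $\aDrY{B}$, and multiply the two $\gNm{B,A}$-factors using the algebra structure of $\gNm{B,A}$. The associativity (hexagon/pentagon) verification is the same computation as in \S\ref{ss:start tensor}--\S\ref{ss:end tensor}, only now the associativity constraints are trivial on both sides, so it actually simplifies (the occurrences of ${\PhiU}$ and $\PhiD$ all become identities, and the braided cocommutativity of the coalgebra $\gLm{B,A}$ and commutativity of the algebra $\gNm{B,A}$ in $\aDrY{B}$ make the two sides manifestly equal). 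Finally I would show that under $\alpha_\V$ the tensor structure $\hJ$ transports to the identity (trivial) tensor structure on $\Res_{A,B}$: this is a direct unwinding using $\pi^*_{\gNm{B,A}}=\Delta_B^{21}\ten\id$ and the fact that the coaction and action of $B$ on the cyclic vector $1\in\gLm{B,A}$ are trivial, so that all braiding insertions act trivially on the relevant invariant vectors. Hence $(\FF{A}{B},\hJ)$ is tensor isomorphic to $(\Res_{A,B},\id)$, as claimed.

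The main obstacle is the bookkeeping in verifying $A$-linearity of $\alpha_\V$ and, more delicately, that $\hJ$ genuinely lands in $\Hom_B^B$ and is invertible: invertibility I would handle, as in \S\ref{ss:start tensor}, by reducing modulo $\hbar$, where $\hJ$ becomes the classical $J\resped$ mod $\hbar$, which is invertible; then $\hbar$-adic completeness gives invertibility over $\sfK$. One subtlety worth care is that $\gNm{B,A}$ is built on $B'\ten A^*$ rather than on all of $B$, so the various composites must be checked to respect the admissibility/topological structure — but this is exactly what Proposition \ref{le:q rel verma}(i) guarantees, since $\gLm{B,A}$ and $\gNm{B,A}$ are genuine coalgebra and algebra objects in $\aDrY{B}$. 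Everything else is a formal transcription of Sections \ref{s:Gamma}, with deformed constraints replaced by trivial ones.
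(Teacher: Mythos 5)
Your proposal is correct and follows essentially the same route as the paper: the object-level isomorphism $\FF{A}{B}(\V)\simeq\Res_{A,B}(\V)$ is obtained exactly as in Proposition \ref{pr:isom to forget}, using the admissibility and universal properties of $\gLm{B,A}$, $\gNm{B,A}$, and the quantum tensor structure is then seen to be trivial because $v(1_-)$ and $w(1_-)$ have trivial coaction (being images of the cyclic vector under \DYt morphisms), so the braiding insertion $R_{23}^{-1}$ acts as the identity — which is precisely the paper's computation. The only slight imprecision is your phrase that the \emph{action} of $B$ on $1\in\gLm{B,A}$ is trivial (it is the $A$-action and the $B$-coaction that are trivial; the $B$-action on $1$ is the projection $\Pi$), but this does not affect the argument.
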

\begin{pf}
For any $\V\in\aDrY{B}$, the description of the isomorphism $\FF{A}{B}(\V)\simeq\V$ 
as admissible \DYt $A$--module is identical to that of Proposition 
\ref{pr:isom to forget}, and relies on the admissibility of $\gLm{B,A}$
and $\gNm{B,A}$ and their universal properties in $\aDrY{A}$. 

It is immediate to verify that the tensor structure 
$J_{A,B}$ on $\FF{A}{B}$ is trivial, and
therefore $\FF{A}{B}$ is isomorphic to $\Res_{A,B}$ as
tensor functors. 
Namely, for every $v\in\FF{A}{B}(\V)$, 
$w\in\FF{A}{B}(\mathcal{W})$, one has
\begin{align*}
(1_+\ten\id\ten\id)&J_{A,B}(v\ten w)(1_-)\\
=&(1_+\ten\id\ten\id)(i^{\hbar}_+\ten\id\ten\id)\circ R_{23}^{-1}(2\,3)\circ v(1_-)\ten w(1_-)\\
=&(1_+\ten\id\ten\id)(i^{\hbar}_+\ten\id\ten\id)\circ(2\,3)\circ v(1_-)\ten w(1_-)\\
=&(1_+\ten\id)v(1_-)\ten(1_+\ten\id)w(1_-),
\end{align*}
where the second identification follows from the fact that the coaction on 
$v(1_-)$, $w(1_-)$ is trivial.
\end{pf}

\subsection{Quantisation of $\Lm$ and $\Npv$}\label{ss:relquantumverma2}
The following is a relative analogue of Proposition \ref{pr:quantumverma}

\begin{theorem}\label{thm:quantization-LN} The following holds in the
category of \DYt $(\EK{\b}, \EK{\gdm})$--modules
\begin{enumerate}
\item[(a)] $\noEKeq{\b}\circ\noEKeq{\a}(\Lm)\simeq\Lmh$ as coalgebras,
\item[(b)] $\noEKeq{\b}\circ\noEKeq{\a}(\Npv)\simeq{\Npvh}$ as algebras.
\end{enumerate}
\end{theorem}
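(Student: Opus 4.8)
The plan is to prove Theorem \ref{thm:quantization-LN} by the same strategy used for Proposition \ref{pr:quantumverma}, but carried out one level up, working over the split pair of Lie bialgebras $(\a,\b)$ rather than over $\b$ alone. The key observation is that both sides of (a) and (b) are characterised, among objects of $\aDrY{(\EK\b,\EK\a)}$, by a universal property: on the quantum side by the universal properties \eqref{eq:L-univ-p}, \eqref{eq:N-univ-p} of $\gLm{\EK\b,\EK\a}=\Lmh$ and $\gNm{\EK\b,\EK\a}=\Nph$ established in Proposition \ref{le:q rel verma}; and on the classical side by the universal property of $\Lm,\Npv$ expressed via Frobenius reciprocity in \S\ref{ss:gammafun-1} and Proposition \ref{pr:coalgebra}. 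Since $\noEKeq{\a}\circ\noEKeq{\b}$ is an equivalence of braided tensor categories (by Theorem \ref{thm:EK-equiv} applied twice, as in \S\ref{ss:quant-bimod}), it transports the classical universal property to its image, so it suffices to identify the transported object with the quantum Verma module.

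\medskip

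Concretely, for part (a): $\Lm$ is the coalgebra object of $\hEq{\gb}{\Phi}$ (equivalently of $\hDrY{(\b,\a)}{\Phi}$) corepresenting $V\mapsto\Hom_{\a}^{\b}(\sfk,V)$ via $f\mapsto(\text{restriction of }f)(\mathbf 1_-)$. Applying $\noEKeq{\a}\circ\noEKeq{\b}$ and using the commutative square of \S\ref{ss:quant-bimod} (for the pair $(\b,\a)$), together with Lemma \ref{le:Fb admissible} which identifies semiclassical limits, one gets that $\noEKeq{\b}\circ\noEKeq{\a}(\Lm)$ corepresents $\W\mapsto\Hom_{\EK\a}^{\EK\b}(\sfK,\W)$ on $\aDrY{\EK\b}$. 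By the universal property \eqref{eq:L-univ-p} this forces an isomorphism $\noEKeq{\b}\circ\noEKeq{\a}(\Lm)\simeq\Lmh$; because in both cases the coalgebra structure is the unique one sending the distinguished group-like vector $u$ to $u\otimes u$ (cf. the proof of Proposition \ref{pr:quantumverma}(a) and item (v) of the Radford theorem in \S\ref{ss:splitHA}), this isomorphism is automatically one of coalgebras. One must check that the distinguished vectors match up, i.e. that $\noEKeq{\b}\circ\noEKeq{\a}$ carries $\mathbf 1_-\in\Lm$ to the group-like element of $\Lmh$ — this is where the explicit form of the fiber functor and the fact that $\gdp,\bDp$ act trivially on the respective cyclic vectors enters, exactly as in \S\ref{ss:quant Mm Mpv}.

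\medskip

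For part (b): $\Npv$ is the algebra object corepresenting $V\mapsto\Hom_{\Lm}(V,\sfk)$ (the relative analogue of the universal property used for $\Mpv$ in the proof of Proposition \ref{pr:quantumverma}(b)). Applying the equivalence and using the universal property \eqref{eq:N-univ-p} of $\Nph$, together with the compatibility \eqref{eq:N-univ-prop-int} expressing everything in terms of projection to $A$, one obtains an algebra isomorphism $\noEKeq{\b}\circ\noEKeq{\a}(\Npv)\simeq\Nph$. The argument parallels the computation in \S\ref{ss:quant Mm Mpv} showing $\noEKff{\b}(\Mpv)\simeq\Mpvh$ via the morphisms $\psi_x$ and the identities $(\epsilon\ten1\ten1)(\Phi)=1^{\ten2}$, $(\epsilon\ten1)(T)=1$; here one also needs that the element $T$ and the associator $\Phi_\b\cdot(\Phi_\a^{-1})^\rho$ governing the bimodule category behave well under $\epsilon$, which follows from the normalisation of $\Phi$ and the discussion in \S\ref{s:tensor-structure}.

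\medskip

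I expect the main obstacle to be genuinely establishing the universal properties \eqref{eq:univ-prop-1}--\eqref{eq:univ-prop-2}, \eqref{eq:univ-prop-int} \emph{after} applying the two-step equivalence in a way that is compatible with the nontrivial tensor structure $J\resped$ (and hence with the algebra/coalgebra structures), rather than just as plain $\sfK$-module isomorphisms. In the absolute case ($\a=0$) the fiber functor lands in $\vect_\sfK$ with no deformed associativity on the target, which makes the matching of multiplications transparent; in the relative case the target is the Drinfeld category $\hEq{(\gb,\ga)}{\Phi}$ with associator $\Phi_\b\cdot(\Phi_\a^{-1})^\rho$, and one must carry the associator terms through the adjointness isomorphisms of \S\ref{ss:quant-bimod}. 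The cleanest route is probably to \emph{postpone} the full verification: prove here only the existence of the isomorphisms \eqref{eq:EK Verma} as objects, defer the $\PROP$ic proof that they respect all structure to Section \ref{se:rel prop} (as the text announces), and in the present section use only what is needed for constructing $v\resped$, namely that $\noEKeq{\a}\circ\noEKeq{\b}$ sends $\Lm\mapsto\Lmh$ and $\Npv\mapsto\Nph$ as algebras/coalgebras in $\aDrY{(\EK\b,\EK\a)}$. Accordingly, the proof I would write is: (1) reduce to the universal properties via Proposition \ref{le:q rel verma} and Theorem \ref{thm:EK-equiv}; (2) transport the classical universal properties of $\Lm,\Npv$ (Proposition \ref{pr:coalgebra}, \S\ref{ss:gammafun-1}) through $\noEKeq{\a}\circ\noEKeq{\b}$ using the bimodule compatibility of \S\ref{ss:quant-bimod}; (3) match the distinguished group-like / counit-type vectors using the explicit fiber-functor formulae as in \S\ref{ss:quant Mm Mpv}; (4) invoke uniqueness in the universal property to upgrade the resulting $\sfK$-module isomorphisms to isomorphisms of coalgebras resp. algebras.
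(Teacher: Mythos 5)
You have correctly identified the overall shape of the paper's argument (match semiclassical limits, use the universal properties \eqref{eq:L-univ-p} and \eqref{eq:N-univ-prop-int}, and match distinguished vectors), but your step (2) --- transporting the classical universal property of $\Lm$, $\Npv$ through the equivalence $\noEKeq{\b}\circ\noEKeq{\a}$ using \S\ref{ss:quant-bimod} --- does not go through, and this is precisely where the real content lies. The universal property of $\Lm$ identifies $\Hom_{\b}^{\b}(\Lm,V)$ with $\Hom_{\a}^{\b}(\sfk,V)$: the right--hand side consists of vectors of $V$ killed by the $\b$--coaction \emph{and} invariant under the $\a$--action obtained by restriction along $i$. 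This mixed equivariance condition is not a Hom--space of the bimodule category $\hDrY{(\b,\a)}{\Phi}\simeq\hDrY{\b\oplus\a\op}{\Phi}$, which forgets the maps $i,p$ relating the two structures; so neither the equivalence $\noEKeq{\b}$ nor the compatibility of \S\ref{ss:quant-bimod} tells you that it corresponds to $\Hom_{\EK\a}^{\EK\b}(\sfK,\noEKeq{\b}(V))$. Knowing that quantisation intertwines restriction along $i$ (resp.\ $p$) at the level of modules is exactly what is being established --- it is the content of Theorem \ref{thm:ek-fac}, which the paper deduces \emph{from} the present theorem --- so your argument as written is circular. This is what the paper means when it says that a direct argument along the lines of Proposition \ref{pr:quantumverma} ``is not straightforward''.

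The paper's actual proof (\S\ref{ss:Liso}--\S\ref{ss:iso N}) supplies the missing input by a different mechanism: it realises $\Lm$, $\Npv$ and their quantisations in colored $\PROP$s, and obtains the required intertwiner $\ell_\hbar:\sfK\to\noEKeq{\b}(\Lm)$ (resp.\ $n_\hbar$) by applying the functoriality of the Tannakian lift (Corollary \ref{co:functoriality of quantisation}) to the morphism of split pairs $(i,\id):(\a,\a)\to(\b,\a)$ (resp.\ the morphism of enhanced split pairs $(p,\id,\id):(\b,\a,\a^*)\to(\a,\a,\a^*)$). Note in particular that for $\Npv$ this forces the introduction of the extended $\PROP$ $\PLBAD$, with the additional matched--pair object $\pad$ and the $r$--matrix morphism, since the parabolic $\b\oplus\a^*$ is not visible in $\PLBA$; your proposal treats (b) as formally parallel to (a) and does not anticipate this. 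Your closing suggestion to prove here only the existence of the isomorphisms ``as objects'' and defer the structure--preservation to the $\PROP$ic section also understates the problem: the $\PROP$ic functoriality is needed even to construct the underlying isomorphism of \DYt modules, not merely to check that it respects products and coproducts.
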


The proof amounts to constructing the intertwiners
corresponding to the universal properties of $\Lmh, \Npvh$.
A direct construction along the lines of the proof of Proposition 
\ref{pr:quantumverma} is not straightforward, however.
We prove this theorem in Section \S\ref{se:rel prop} by 
describing the modules $\Lm, \Npv$, and their quantisation, 
in the framework of $\PROP$s. These descriptions show that 
the quantisation of the classical intertwiners of $\Lm$ and $\Npv$ satisfy 
the required properties and yield canonical identifications
\[\noEKeq{\b}\noEKeq{\a}(\Lm)\simeq\Lmh,\qquad \noEKeq{\b}\noEKeq{\a}(\Npv)\simeq{\Npvh}.\]

\subsection{The natural transformation $v\resped$}\label{ss:nat-trans}

\begin{theorem}\label{thm:ek-fac}
Let $\a\hookrightarrow\b$ be a split pair of Lie bialgebras, $\gd\hookrightarrow\gu$
the corresponding inclusion of Manin triples, and
\[(\Res_{\a, \b}, J\resped):\hDrY{\b}{\Phi}\to\hDrY{\a}{\Phi}\]
the tensor restriction functor constructed in Section \S\ref{s:Gamma}. Then,
there exists a natural isomorphism $v\resped$ such that the following is a
commutative diagram
\[\xymatrix@C=2cm{
\hDrY{\b}{\Phi} \ar[r]^{\noEKeq\b} \ar[d]_{(\Res_{\a,\b}, J\resped)}& 
\aDrY{\Uhb} \ar[d]^{(\Res_{\Uha,\Uhb}, \id)}
\ar@{<=}[dl]_{\wt{v}\resped}
\\
\hDrY{\a}{\Phi} \ar[r]_{\noEKeq\a} & 
\aDrY{\Uha}
}\]
where $\Res_{\Uhb,\Uha}$ is the functor induced by the split embedding
$\Uha\hookrightarrow\Uhb$.
\end{theorem}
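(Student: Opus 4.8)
The plan is to reduce the statement to the already-available pieces, namely the Etingof--Kazhdan fiber functor picture of Section \ref{s:ek}, the relative fiber functor of Section \ref{s:Gamma}, the quantum restriction functor of \S\ref{ss:q restriction}, and the key quantisation isomorphisms \eqref{eq:EK Verma} (Theorem \ref{thm:quantization-LN}), which are being granted here and proved later by $\PROP$ic methods. First I would replace, up to canonical tensor isomorphisms, each functor in the square by its Yoneda/Hom model: by Proposition \ref{pr:tensor structure} the tensor functor $(\Res_{\a,\b},J\resped)$ is isomorphic to $(\FF{\a}{\b},J\resped)=(\Hom_{\gu}(\Lm,\Npv\ten-),J\resped)$; by Proposition \ref{prop:hGamma-res} the functor $(\Res_{\Uha,\Uhb},\id)$ is isomorphic to the trivially-tensored quantum relative fiber functor $\FF{\Uha}{\Uhb}=\Hom_{\Uhb}^{\Uhb}(\Lmh,\Nph\ten-)$; and by Theorem \ref{thm:EK-equiv} the functors $\noEKeq{\b},\noEKeq{\a}$ are the \nEK Tannakian equivalences. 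This transforms the desired commutativity into the square \eqref{eq:reformulate}, i.e. into producing an isomorphism of tensor functors $\noEKeq{\a}\circ\FF{\a}{\b}\simeq\FF{\Uha}{\Uhb}\circ\noEKeq{\b}$.

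Next I would unwind the left and right composite. Since both $\FF{\a}{\b}$ and $\FF{\Uha}{\Uhb}$ are of the form $\Hom(C,A'\ten -)$ for a coalgebra $C$ and an algebra $A'$ (with $(C,A')=(\Lm,\Npv)$ and $(\Lmh,\Nph)$ respectively), and since $\noEKeq{\a},\noEKeq{\b}$ are \emph{equivalences} of braided tensor categories, the composite $\noEKeq{\a}\circ\FF{\a}{\b}$ applied to $V\in\hDrY{\b}{\Phi}$ is naturally $\Hom^{\Uha}_{\Uha}\bigl(\noEKeq{\a}\noEKeq{\b}(\Lm),\,\noEKeq{\a}\noEKeq{\b}(\Npv)\ten\noEKeq{\b}(V)\bigr)$, using that $\noEKeq{\a}$ preserves Homs, tensor products, and the relevant $(\Uha,\ldots)$-bimodule structures (here the $\hDrY{}$ variant of Proposition in \S\ref{ss:quant-bimod} identifying bimodule categories with modules over direct sums is used to make sense of the two-sided structure on $\Lm,\Npv$). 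Comparing with $\FF{\Uha}{\Uhb}(\noEKeq{\b}(V))=\Hom_{\Uhb}^{\Uhb}(\Lmh,\Nph\ten\noEKeq{\b}(V))$, the only remaining input is precisely the pair of isomorphisms \eqref{eq:EK Verma}:
\[
\noEKeq{\a}\circ\noEKeq{\b}(\Lm)\simeq\Lmh
\aand
\noEKeq{\a}\circ\noEKeq{\b}(\Npv)\simeq\Nph
\]
as, respectively, a coalgebra and an algebra object in $(\Uhb,\Uha)$-bimodules. Plugging these in yields a natural isomorphism $\noEKeq{\a}\FF{\a}{\b}(V)\simeq\FF{\Uha}{\Uhb}\noEKeq{\b}(V)$; I would denote the resulting 2-cell by $\wt v\resped$, and then $v\resped$ is obtained by conjugating back through the canonical tensor isomorphisms $\FF{\a}{\b}\simeq\Res_{\a,\b}$, $\FF{\Uha}{\Uhb}\simeq\Res_{\Uha,\Uhb}$.

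Finally I would check that $v\resped$ is an isomorphism of \emph{tensor} functors, i.e. compatible with the tensor structures $J\resped$ on the classical side and the trivial tensor structure on the quantum side. This is where the tensor-structure formula of \S\ref{s:tensor-structure} (the composition through $\im$, the associativity constraints $B,B'$, $\beta_{23}^{-1}$, and $\ipd$) and its quantum counterpart must match under \eqref{eq:EK Verma}: concretely, one must verify that the isomorphisms of \eqref{eq:EK Verma} intertwine the classical coproduct $i_-$ on $\Lm$ with the quantum $i_+^{\hbar}$ on $\Lmh$, the classical product $\ipd$ on $\Npv$ with its quantum analogue, and the associativity/commutativity constraints $\PhiU\cdot(\PhiD^{-1})^{\rho}$, $\exp(\tfrac\hbar2\Omega)$ on the relevant bimodule categories with the trivial ones on $\aDrY{}$ — but this is exactly the assertion that \eqref{eq:EK Verma} holds as an isomorphism of coalgebra/algebra objects, together with the fact that $\noEKeq{\b}$ itself is a braided tensor equivalence, so the constraints are transported correctly. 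I would also note that the two statements in \eqref{eq:EK Verma} must be compatible with the pairing $\alpha_V$ realising $\FF{\a}{\b}(V)\simeq V$; for $\a=0$ this recovers Proposition \ref{pr:quantumverma}, and the general case is Theorem \ref{thm:quantization-LN}. The main obstacle is thus entirely external to this theorem: it is establishing \eqref{eq:EK Verma}/Theorem \ref{thm:quantization-LN}, deferred to Section \ref{se:rel prop} via the colored $\PROP$ describing split pairs of Lie bialgebras; granting that, the proof here is a formal diagram-chase through Yoneda models and bimodule adjunctions.
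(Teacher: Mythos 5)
Your proposal is correct and follows essentially the same route as the paper's proof in \S\ref{ss:nat-trans}: replace the two restriction functors by the fiber--functor models $\FF{\a}{\b}$ and $\FF{\a}{\b}^\hbar$ (Propositions \ref{pr:tensor structure} and \ref{prop:hGamma-res}), then produce $\wt{v}\resped$ from the chain of natural isomorphisms driven by Theorem \ref{thm:quantization-LN}, the full faithfulness of the equivalence $\noEKeq{\b}$, and the bimodule compatibility of \S\ref{ss:quant-bimod}, finally checking compatibility with the tensor structures. The only slip is notational: in your identification of $\noEKeq{\a}\circ\FF{\a}{\b}(V)$ the Hom should be taken over $\Uhb$ (i.e. in $\aDrY{\Uhb}$), with the residual $\Uha$--Drinfeld--Yetter structure living on the Hom space through the right $\Uha$--structure of $\Npvh$ — which is in fact what the rest of your argument uses.
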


\begin{pf}
By construction of the tensor structure $J\resped$ and Proposition
\ref{prop:hGamma-res}, it is equivalent to prove the commutativity
of the above diagram when $\Res_{\a,\b}$ and $\Res_{\Uha,\Uhb}$
are replaced by the tensor restriction functors $\FF{\a}{\b}$ and
$\FF{\a}{\b}^\hbar:=F_{\EK\gdm,\EK\b}$, respectively. Let now $V\in\hDrY{\b}{\Phi}$,
then
\[\begin{split}
\FF{\a}{\b}^\hbar\circ\noEKeq{\b}(V)
&=
\Hom_{\hDrY{\Uhb}{}}(\Lmh,\Npvh\otimes\noEKeq{\b}(V))\\
&\cong
\Hom_{\hDrY{\Uhb}{}}(\noEKeq{\b}\circ\noEKeq{\a}(\Lm),\noEKeq{\b}\circ\noEKeq{\a}(\Npv)\otimes\noEKeq{\b}(V))\\
&\cong
\Hom_{\hDrY{\b}{\Phi}}(\noEKeq{\a}(\Lm),\noEKeq{\a}(\Npv)\otimes V)\\
&=
\noEKeq{\a}\left(\Hom_{\hDrY{\b}{\Phi}}(\Lm,\Npv\otimes V)\right)\\
&=
\noEKeq{\a}\circ\FF{\a}{\b}(V),
\end{split}\]
where the first isomorphism follows by Theorem \ref{thm:quantization-LN},
the second one from the fact that $\noEKeq{\b}$ is a tensor equivalence,
the subsequent equality from the fact that $\Lm$ is a trivial \DYt module
over $\a$, and the final one by definition of $\FF{\a}{\b}$. This gives rise
to an isomorphism $\wt{v}_{\a,\b}:\FF{\a}{\b}^\hbar\circ\noEKeq{\b}\cong
\noEKeq{\a}\circ\FF{\a}{\b}$ which is readily seen to preserve the tensor
structures.
\end{pf}


\section{Quantisation of split pairs of Lie bialgebras}\label{s:gammahopf}


\newcommand{\gums}{\b}
\newcommand{\gdms}{\a}
In this section, we provide a complementary interpretation of the 
natural isomorphism constructed in the previous section, which relies 
on a generalisation of the results described in \S\ref{ss:eksum}. 

Specifically, we show that the relative twist on the functor $\FF{\a}{\b}$
gives rise to a Hopf algebra $\HL$
in the braided tensor category $\aDrY{\EK{\gdms}}$.
This allows to construct, through the Radford biproduct of $\HL$ 
and $\EK{\gdms}$, an alternative quantisation of the split 
pair $(\gums,\gdms)$.
The natural isomorphism $v\resped$ provides an isomorphism between
this quantisation and $\EK{\gums}$, which restricts to the  identity
on $\EK\gdms$.

\subsection{Lifting the tensor functor $\FF{\a}{\b}$}\label{ss:gammalift}
The tensor structure $J\resped$ on the functor $\FF{\a}{\b}$ induces on the
algebra $\sfEnd{\FF{\a}{\b}}$ a bialgebra structure with coproduct
defined for every $\phi\in\sfEnd{\FF{\a}{\b}}$ by the relation
\[
\xymatrix{
\FF{\a}{\b}(V)\ten\FF{\a}{\b}(W)\ar[d]_{J\resped}\ar[r]^{\Delta(\phi)_{V,W}} & 
\FF{\a}{\b}(V)\ten\FF{\a}{\b}(W)\ar[d]^{J\resped}\\
\FF{\a}{\b}(V\ten W)\ar[r]^{\phi_{V\ten W}} & \FF{\a}{\b}(V\ten W)
}
\]
for every $\phi\in\sfEnd{\FF{\a}{\b}}$ and $V,W\in\hDrY{\gums}{\Phi}$, 
and counit given by evaluation on the trivial module $V=K\in\hDrY{\gums}{\Phi}$.
The object $\FF{\a}{\b}(\Lm)$ naturally embeds in $\sfEnd{\FF{\a}{\b}}$
and inherits a (non--topological) Hopf algebra structure. It is essential 
to observe that these structures are constructed in $\hDrY{\gdms}{\Phi}$, 
and therefore the notion of Hopf algebra is adapted to the associativity and 
commutativity constraints in $\hDrY{\gdms}{\Phi}$. 
More specifically, we prove the following

\begin{theorem}\label{thm:braidhopf}\hfill
\begin{itemize}
\item[(i)] The object $\FF{\a}{\b}(\Lm)$  
is a Hopf algebra in the category $\hDrY{\gdms}{\Phi}$.
\item[(ii)] The tensor functor $(\FF{\a}{\b}, J\resped)$ naturally lifts
to a braided tensor functor $\wtFF{\a}{\b}$ from $\hDrY{\gums}{\Phi}$
to the category of \DYt $\FF{\a}{\b}(\Lm)$--modules
in $\hDrY{\gdms}{\Phi}$.
\item[(iii)] For $\a=0$, the Hopf algebra $\FF{\a}{\b}(\Lm)$ is the 
Etingof--Kazhdan quantisation of $\b$ and $\wtFF{\a}{\b}$ is 
the braided tensor equivalence \ref{ss:qRep}. For $\a=\b$, 
$\wtFF{\a}{\b}$ is the identity functor on $\hDrY{\gums}{\Phi}$.
\end{itemize}
\end{theorem}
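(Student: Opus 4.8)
The plan is to carry over, to the relative setting, the Etingof--Kazhdan construction of the quantum group $\Uhb$ from the fiber functor $\noEKff{\b}$ together with its Tannakian lift (\S\ref{ss:eksum}--\S\ref{ss:qRep}), replacing the Verma modules $M_-,\Mpv$ by the relative Verma modules $\Lm,\Npv$, the functor $\noEKff{\b}$ by $(\FF{\a}{\b},J\resped)$, and the target $\vect_\sfK$ by the braided tensor category $\hDrY{\a}{\Phi}$. The structural inputs are Proposition~\ref{pr:coalgebra} (that $\Lm$ is a cocommutative coalgebra object and $\Npv$ an algebra object in $\hEq{(\gb,\ga)}{\Phi}$, with the $\PhiU$-- respectively $\PhiD$--twisted (co)associativity spelled out there), Proposition~\ref{pr:tensor structure} (that $(\FF{\a}{\b},J\resped)$ is a tensor functor whose target is braided), and Proposition~\ref{pr:isom to forget} (that $\FF{\a}{\b}(V)\cong\iD^*V$ naturally, as an equicontinuous $\gd$--module, equivalently as a \DYt $\a$--module).

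First I would set up the algebra, module, and coalgebra structures. For $V\in\hDrY{\b}{\Phi}$ define $m_V\colon\FF{\a}{\b}(\Lm)\ten\FF{\a}{\b}(V)\to\FF{\a}{\b}(V)$ by $m_V(x)\,v=(\ipd\ten\id)\circ\PhiU^{-1}\circ(\id\ten v)\circ x$, with $u_-\in\FF{\a}{\b}(\Lm)$ the unit element defined as in \S\ref{ss:eksum}. As in loc.\ cit., the associativity $m_V\circ(m_{\Lm}\ten\id)=m_V\circ(\id\ten m_V)$ and the unit relation $m_V(u_-)=\id_V$ follow from Proposition~\ref{pr:coalgebra}(ii) and the pentagon --- the factor $S^{\ten 3}(\PhiD^{-1})^\rho$ occurring there is precisely what allows the relative computation to close up. Hence $\FF{\a}{\b}(\Lm)$ is a unital associative algebra acting on the functor $\FF{\a}{\b}$, the action being faithful since $\FF{\a}{\b}(\Lm)$ acts on itself; and because $m_V$ is compatible with the right $\gd$--action (as $J\resped$ is, by the argument of \S\ref{ss:start tensor}) and $\ipd$ is a morphism of right $\gd$--modules, $m_{\Lm}$ is a morphism in $\hDrY{\a}{\Phi}$, so $\FF{\a}{\b}(\Lm)$ is an algebra object there. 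The coalgebra structure is then immediate: $\Lm$ is a coalgebra object in $\hDrY{\b}{\Phi}$ by Proposition~\ref{pr:coalgebra}(i), and a tensor functor carries coalgebras to coalgebras, so $\FF{\a}{\b}(\Lm)$ is a cocommutative coalgebra object in $\hDrY{\a}{\Phi}$, with coproduct $\Delta=J_{\Lm,\Lm}^{-1}\circ\FF{\a}{\b}(\im)$ and counit induced by $\epsilon_{\Lm}\colon\Lm\to\sfk$.

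Next I would construct the coaction and the Drinfeld--Yetter lift, which is part (ii). Following \S\ref{ss:qRep}, introduce the $J\resped$--twisted $R$--matrix $R^J_{U,V}$ determined by $J\resped$ and the braiding $\beta$ of $\hDrY{\b}{\Phi}$, and set $m_V^*(v)=R^J_{\Lm,V}\circ(u_-\ten v)$. Repeating the verification of \S\ref{ss:qRep} inside $\hDrY{\a}{\Phi}$ instead of $\vect_\sfK$ --- all the maps that occur ($J\resped$, $\beta$, $R^J$, $\ipd$, $\im$) being morphisms of $\gd$--modules, the calculation is formally unchanged --- shows that $(\FF{\a}{\b}(V),m_V,m_V^*)$ is an admissible \DYt module over $\FF{\a}{\b}(\Lm)$ in $\hDrY{\a}{\Phi}$, whence the functor $\wtFF{\a}{\b}$ lifting $(\FF{\a}{\b},J\resped)$; it is braided because the braiding of the target is computed by $R^J$, which by its defining diagram is intertwined by $\FF{\a}{\b}$ with $\beta$. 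Applying this lift to $V=\Lm$ (or, equivalently, invoking the braided Radford/bosonisation dictionary, cf.\ Proposition~\ref{pr:DY equiv} and \S\ref{ss:splitHA}) shows that the algebra and coalgebra structures on $\FF{\a}{\b}(\Lm)$ are compatible in the braided sense, that is $\FF{\a}{\b}(\Lm)$ is a bialgebra in $\hDrY{\a}{\Phi}$; alternatively one checks directly, as in \S\ref{ss:eksum}, that $\Delta$ is an algebra map. Finally, modulo $\hbar$ the object $\FF{\a}{\b}(\Lm)$ recovers the enveloping algebra $U\m$ of $\m=\Ker(p)\subset\b$, which --- $\m$ being a Lie algebra object in $\hDrY{\a}{}$ --- is a Hopf algebra there; since $\FF{\a}{\b}(\Lm)$ is a bialgebra deformation of it over $\sfK$, it admits an antipode, with the usual formula $S=\sum_{n\geq0}(-1)^n m^{(n)}(\id-\iota\circ\epsilon)^{\ten n}\Delta^{(n)}$ as in \S\ref{ss:eksum}. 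This proves (i) and (ii). Part (iii) is unwinding of definitions: when $\a=0$ one has $\Lpp=\b_+$ and $\Lmm=\b_-$, so $\Lm=M_-$, $\Npv=\Mpv$ and $\hDrY{0}{\Phi}=\vect_\sfK$, and the above reduces verbatim to \S\ref{ss:eksum}--\S\ref{ss:qRep}, giving $\FF{\a}{\b}(\Lm)=\noEKff{\b}(M_-)=\Uhb$ and $\wtFF{\a}{\b}=\noEKeq{\b}$; when $\a=\b$ and $\iD=\id$ one has $\Lpp=\gu$ and $\Lmm=0$, so $\Lm=\sfk$ is the unit object, $\FF{\a}{\b}(\Lm)=\sfK$ is the trivial Hopf algebra in $\hDrY{\b}{\Phi}$, and $\wtFF{\a}{\b}$ is $\FF{\b}{\b}$ viewed as landing in \DYt modules over $\sfK$, namely $\hDrY{\b}{\Phi}$ itself, which by the remark at the end of Section~\ref{s:Gamma} is tensor equivalent to the identity functor.

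I expect the main obstacle to be the Drinfeld--Yetter compatibility between $m_V$ and $m_V^*$ inside $\hDrY{\a}{\Phi}$ --- equivalently, the braided bialgebra axiom for $\FF{\a}{\b}(\Lm)$. Although formally the same as the verification in \S\ref{ss:qRep}, it requires carrying the right $\a$--action through every step and invoking the $\PhiU$-- and $\PhiD$--twisted (co)associativity relations of Proposition~\ref{pr:coalgebra} at exactly the right points. A subsidiary subtlety is that ``\DYt $\FF{\a}{\b}(\Lm)$--modules in $\hDrY{\a}{\Phi}$'' should be taken to mean the admissible ones, so that the coaction built from $R^J$ does land in the correct subcategory, exactly as in Lemma~\ref{le:Fb admissible}.
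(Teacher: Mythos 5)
Your proposal is correct and follows essentially the same route as the paper: the same product $(\ipd\ten\id)\,{\PhiU}^{-1}(\id\ten v)\,x$ on $\FF{\a}{\b}(\Lm)$, the coproduct $J\resped^{-1}\circ\FF{\a}{\b}(\im)$, the coaction built from the $J\resped$--twisted $R$--matrix, the antipode obtained from the fact that $\FF{\a}{\b}(\Lm)$ is a bialgebra deformation of $U\Lmm$, and the same specialisations at $\a=0$ and $\a=\b$ (with the same appeal to the remark that $\FF{\b}{\b}$ is tensor equivalent to the identity). One caveat: deducing the braided bialgebra axiom by ``applying the lift to $V=\Lm$'' is circular, since formulating the \DYt compatibility over $\FF{\a}{\b}(\Lm)$ already presupposes its bialgebra structure; the paper (like your fallback of a direct check) instead proves the compatibility through the embedding $\FF{\a}{\b}(\Lm)\hookrightarrow\sfEnd{\FF{\a}{\b}}$, by a graphical computation whose crux is the identity $\ipd\circ\beta\circ(\beta_{\a}^{-1})^{\rho}=\ipd$ relating $\ipd$ to the relative braiding --- which is exactly the point where the right $\a$--action you flag as the main obstacle genuinely enters.
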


The proof is carried out in \S\ref{ss:algstructure}--\S\ref{ss:compcoaction}.

\subsubsection{Algebra structure on $\FF{\a}{\b}(\Lm)$}\label{ss:algstructure}
For every $V\in\hDrY{\gums}{\Phi}$, we define a map $\mu_V: 
\FF{\a}{\b}(\Lm)\ten\FF{\a}{\b}(V)\to\FF{\a}{\b}(V)$  
\[\mu_V(x\ten v)=(\ipd\ten\id){\PhiU}^{-1}(\id\ten v)x.\]
Let $u\in\FF{\a}{\b}(\Lm)$ be the element satisfying $u(1_-)=\varepsilon\ten 1_-$. 
\begin{proposition}\label{prop:prod}
The map $\mu_V$ is a morphism in $\hDrY{\gdms}{\Phi}$, it is natural in $V$, 
and it satisfies
\[
\mu_V(\id\ten\mu_V)\PhiD=\mu_V(\mu_{\Lm}\ten\id)
\quad\mbox{and}\quad
\mu_V\circ(u\ten\id)=\id_V.
\]
Therefore, $(\FF{\a}{\b}(\Lm),\mu,u)$ is an associative algebra object in 
$\hDrY{\gdms}{\Phi}$ acting on the functor $\FF{\a}{\b}$.
\end{proposition}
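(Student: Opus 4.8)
The plan is to mimic, in the relative setting, the proof of the corresponding properties of the Etingof--Kazhdan action map $m_V$ given in \S\ref{ss:eksum}, replacing the Verma modules $M_\pm$ by the relative Verma modules $\Lm,\Npv$ and keeping track of the right $\gd$--action throughout. First I would check that $\mu_V$ is well defined, i.e. that $\mu_V(x\ten v)$ is a continuous $\gu$--morphism $\Lm\to\Npv\ten V$: this is immediate since $\ipd$ is a $\gu$--morphism, $\PhiU$ acts by $\gu$--morphisms, and $x,v$ are $\gu$--morphisms, so the composite lands in $\FF{\a}{\b}(V)$. Naturality in $V$ is clear from the definition of $\FF{\a}{\b}$ on morphisms ($\FF{\a}{\b}(\varphi)f=(\id\ten\varphi)\circ f$) together with the fact that $\varphi$ commutes with the $\gu$--action and hence with $\PhiU$ and $\ipd$.

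Next I would verify that $\mu_V$ is a morphism in $\hDrY{\gdms}{\Phi}$, i.e. that it intertwines the left $\gd$--actions (and, under the identification $\DrY{(\gu,\gd)}\simeq\DrY{\gu\oplus\gd\op}$, the coactions). On $\FF{\a}{\b}(-)$ the $\gd$--action is $x.f=(S(x)^\rho\ten\id)\circ f$, exactly as in \S\ref{ss:gammafun-1}. The argument is the same bookkeeping as in \S\ref{ss:start tensor}: one pushes the right $\gd$--action $S(x)^\rho$ through $\ipd$ (which is a morphism of right $\gd$--modules by Proposition \ref{pr:coalgebra} and the construction of $\ipd$ via the element $T$), through $\PhiU^{-1}$ (using that $\PhiU S^{\ten 3}(\PhiD^{-1})^\rho$ relates the two associators, or more simply that $\PhiU$ commutes with the diagonal right $\gd$--action), and through $v$, arriving at the $\gd$--action on $\mu_V(x\ten v)$; the compatibility of $\mu_V$ with the associativity constraint $\PhiD$ of $\hDrY{\gdms}{\Phi}$ (so that $\mu_V$ is literally a morphism in that category, not just $\sfk$--linear) follows from the pentagon relation applied to $\PhiU$ together with the coalgebra relation (ii) of Proposition \ref{pr:coalgebra}. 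The unit condition $\mu_V(u\ten\id)=\id_V$ is a direct computation using $u(1_-)=\varepsilon\ten 1_-$, $(\varepsilon\ten1\ten1)(\PhiU)=1^{\ten 2}$, and the description of $\ipd$: plugging in, $\mu_V(u\ten v)$ evaluated on $1_-$ reduces to $(\ipd\ten\id)(1\ten v(1_-))(\varepsilon\ten 1_-)=v(1_-)$ after using that $\varepsilon$ kills the nontrivial terms of $\PhiU^{-1}$ and that $\ipd$ pairs the counit $\varepsilon$ trivially.

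The associativity relation $\mu_V(\id\ten\mu_V)\PhiD=\mu_V(\mu_{\Lm}\ten\id)$ is the heart of the statement, and I expect it to be the main obstacle, since it is where the mismatch between $\PhiD$ and $\PhiU$ must be absorbed. I would write both sides as composites of intertwiners $\Lm\to\Npv\ten V$ and evaluate on $\1_-$, unwinding the definitions: the left side produces, schematically, two nested copies of $\ipd$, two copies of $\PhiU^{-1}$, and a $\PhiD$ coming from the source category $\hDrY{\gdms}{\Phi}$; the right side produces the associated composite with $(\mu_{\Lm}\ten\id)$ expanded via $\im$. The key algebraic inputs are exactly Proposition \ref{pr:coalgebra}: relation (i), ${\PhiU}(i_-\otimes 1)i_-=(1\otimes i_-)i_-$, lets one rewrite the iterated coproduct on $\Lm$, and relation (ii), $i_+^{\vee}(1\otimes i_+^{\vee}){\PhiU}=i_+^{\vee}(i_+^{\vee}\otimes 1)S^{\ten 3}(\PhiD^{-1})^{\rho}$, lets one reassociate the iterated $\ipd$, the residual $S^{\ten 3}(\PhiD^{-1})^\rho$ being precisely what converts the $\PhiD$ on the right-hand side of the target equation. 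The remaining $\PhiU$-pentagon manipulations are formally identical to those in \cite[Part II]{ek-1} for the EK associativity of $m_V$; I would organise the computation exactly as the chain of equalities in \S\ref{ss:end tensor}, citing functoriality of $\PhiU$ for the routine steps and Proposition \ref{pr:coalgebra} for the two nontrivial ones. Once associativity and the unit axiom are established, the statement that $(\FF{\a}{\b}(\Lm),\mu,u)$ is an associative algebra object in $\hDrY{\gdms}{\Phi}$ acting on $\FF{\a}{\b}$ follows formally by taking $V=\Lm$ for the algebra structure itself and by naturality in $V$ for the action.
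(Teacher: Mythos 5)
Your proposal is correct and follows essentially the same route as the paper: unwind both sides of the associativity identity, use the definition of the $\gd$--action on $\FF{\a}{\b}(-)$ to turn the source associator $\PhiD$ into a right action of the form $S^{\ten 3}(\PhiD)^{\rho}$, and conclude via the pentagon identity for $\PhiU$ together with the twisted associativity of $\ipd$ from Proposition \ref{pr:coalgebra}(ii); the paper likewise treats the $\gd$--equivariance, naturality, and unit axiom as straightforward. One small correction: part (i) of Proposition \ref{pr:coalgebra} and the coproduct $\im$ play no role here, since $\mu_{\Lm}(x\ten y)=(\ipd\ten\id)\PhiU^{-1}(\id\ten y)\,x$ involves no $\im$ --- the coproduct enters only in $J\resped$ and in $\Delta_L$ --- so the computation is shorter than the tensor--structure check of \S\ref{ss:end tensor} and needs only (ii) and the pentagon.
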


\begin{pf}
The fact that the map $\mu_V$ is a morphism in $\hDrY{\gdms}{\Phi}$, 
its naturality in $V$, and its compatibility with $u\in\FF{\a}{\b}(\Lm)$ are 
straightforward.

Let now $x,y\in\FF{\a}{\b}(\Lm)$, $v\in\FF{\a}{\b}(V)$
and $\PhiD=\sum P_{\sfK}\ten Q_{\sfK}\ten R_{\sfK}$. Then
\begin{align*}
\mu_V&(\id\ten\mu_V)\PhiD(x\ten y\ten v)\\
&=(\ipd\ten\id){\PhiU}^{-1}(\id\ten\ipd\ten\id)(\id\ten{\PhiU}^{-1})
(\id\ten\id\ten R_{\sfK}.v)(\id\ten Q_{\sfK}.y)(P_{\sfK}.x)\\
&=(\ipd\ten\id)(\id\ten\ipd\ten\id){\PhiU}^{-1}_{1,23,4}{\PhiU}^{-1}_{234}
{\PhiD}_{123}^{\rho}(\id\ten\id\ten v)(\id\ten y)x\\
&=(\ipd\ten\id)(\id\ten\ipd\ten\id){\PhiU}_{123}{\PhiD}_{123}^{\rho}
{\PhiU}^{-1}_{12,3,4}{\PhiU}^{-1}_{1,2,34}(\id\ten\id\ten v)(\id\ten y)x\\
&=(\ipd\ten\id)(\ipd\ten\id\ten\id){\PhiU}^{-1}_{12,3,4}{\PhiU}^{-1}_{1,2,34}
(\id\ten\id\ten v)(\id\ten y)x\\
&=(\ipd\ten\id){\PhiU}^{-1}(\id\ten\id\ten v)(\ipd\ten\id){\PhiU}^{-1}(\id\ten y)x\\
&=\mu_V(\mu_{\Lm}\ten\id)(x\ten y\ten v)
\end{align*}
by applications of the pentagon axiom and the associativity of $\ipd$.
\end{pf}

\subsubsection{Coalgebra structure on $\FF{\a}{\b}(\Lm)$}
\newcommand{\DeltaL}{{\Delta_L}}
The coproduct $\im$ on $\Lm$ induces a coproduct on $\FF{\a}{\b}(\Lm)$ by
the formula
\[\DeltaL: \FF{\a}{\b}(\Lm)\to\FF{\a}{\b}(\Lm)\ten\FF{\a}{\b}(\Lm)\qquad
\DeltaL=J\resped^{-1}\circ\FF{\a}{\b}(\im).\]
\begin{proposition}\label{prop:coprod}
The map $\DeltaL$ is a morphism in $\hDrY{\gdms}{\Phi}$ and it satisfies
\[\PhiD(\DeltaL\ten\id)\DeltaL=(\id\ten\DeltaL)\DeltaL.\]
\end{proposition}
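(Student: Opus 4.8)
The plan is to deduce the proposition formally from three facts already at hand: the coassociativity of the coproduct $\im$ on $\Lm$ proved in Proposition~\ref{pr:coalgebra}(i), the tensor--structure axiom for $J\resped$ of Proposition~\ref{pr:tensor structure}, and the naturality of $J\resped$. This mirrors exactly the proof that the Etingof--Kazhdan coalgebra $\noEKff{\b}(M_-)$ is coassociative, with $J\resped$ in place of the absolute twist. The first assertion is immediate: $\im\colon\Lm\to\Lm\ten\Lm$ is a morphism in $\hEq{\gu}{\Phi}$, so $\FF{\a}{\b}(\im)$ is a morphism in $\hEq{\gd}{\Phi}=\hDrY{\a}{\Phi}$ since $\FF{\a}{\b}$ is a functor; as $J\resped$ is a natural isomorphism, $J\resped^{-1}$ is a morphism in $\hDrY{\a}{\Phi}$ as well, hence so is $\Delta_L=J\resped^{-1}\circ\FF{\a}{\b}(\im)$.

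For coassociativity I would start from the tautology $J\resped\circ\Delta_L=\FF{\a}{\b}(\im)$ and post--compose both sides of the desired identity with the isomorphism $J_{\Lm,\Lm\ten\Lm}\circ(\id\ten J_{\Lm,\Lm})\colon\FF{\a}{\b}(\Lm)\ten(\FF{\a}{\b}(\Lm)\ten\FF{\a}{\b}(\Lm))\to\FF{\a}{\b}(\Lm\ten(\Lm\ten\Lm))$, which is invertible by Proposition~\ref{pr:tensor structure}. On the $(\id\ten\Delta_L)\Delta_L$ side one has $(\id\ten J_{\Lm,\Lm})(\id\ten\Delta_L)=\id\ten\FF{\a}{\b}(\im)$, and naturality of $J\resped$ applied to the pair $(\id_{\Lm},\im)$ gives $J_{\Lm,\Lm\ten\Lm}\circ(\id\ten\FF{\a}{\b}(\im))=\FF{\a}{\b}(\id\ten\im)\circ J_{\Lm,\Lm}$; thus this side becomes $\FF{\a}{\b}\bigl((\id\ten\im)\im\bigr)$. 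On the $\PhiD(\Delta_L\ten\id)\Delta_L$ side one uses first $(J_{\Lm,\Lm}\ten\id)(\Delta_L\ten\id)=\FF{\a}{\b}(\im)\ten\id$, then the tensor--structure axiom
\[\FF{\a}{\b}(\PhiU)\circ J_{\Lm\ten\Lm,\Lm}\circ(J_{\Lm,\Lm}\ten\id)=J_{\Lm,\Lm\ten\Lm}\circ(\id\ten J_{\Lm,\Lm})\circ\PhiD,\]
and finally naturality of $J\resped$ applied to $(\im,\id_{\Lm})$, turning this side into $\FF{\a}{\b}\bigl(\PhiU\,(\im\ten\id)\im\bigr)$. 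The two expressions agree by Proposition~\ref{pr:coalgebra}(i), which is precisely $\PhiU\,(\im\ten\id)\im=(\id\ten\im)\im$; since the isomorphism $J_{\Lm,\Lm\ten\Lm}\circ(\id\ten J_{\Lm,\Lm})$ can be cancelled, this establishes $\PhiD(\Delta_L\ten\id)\Delta_L=(\id\ten\Delta_L)\Delta_L$.

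I expect no genuine obstacle: the argument is formal. The only point requiring care is bookkeeping --- keeping straight which instance of $J\resped$ is in play and applying naturality to the correct pair of arrows in $\hEq{\gu}{\Phi}$. A more conceptual but longer alternative would run the computation directly through the description $\FF{\a}{\b}\cong\Hom_{\hEq{\gu}{\Phi}}(\Lm,\Npv\ten-)$ of \S\ref{ss:gammafun} together with the coalgebra structure of $\Lm$, so that coassociativity of $\Delta_L$ follows verbatim from that of $\im$; this requires unwinding the explicit form of $J\resped$ from \S\ref{s:tensor-structure} and does not appear to be shorter.
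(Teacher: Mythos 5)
Your proposal is correct and is essentially the paper's own argument: the paper likewise gets the first assertion by observing that $\DeltaL$ is a composite of morphisms in $\hDrY{\gdms}{\Phi}$, and proves coassociativity by the same three ingredients — naturality of $J\resped$, the tensor--structure axiom of Proposition \ref{pr:tensor structure}, and Proposition \ref{pr:coalgebra}(i) — arranged as a direct chain of equalities rather than by composing both sides with the invertible map $J_{\Lm,\Lm\ten\Lm}\circ(\id\ten J_{\Lm,\Lm})$ and cancelling, which is only a cosmetic difference.
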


\begin{pf}
The maps $J\resped$ and $\FF{\a}{\b}(\im)$ 
are both morphisms $\hDrY{\gdms}{\Phi}$, therefore $\DeltaL$ is. 
Moreover, 
\begin{align*}
\PhiD(\DeltaL\ten\id)\DeltaL
&=\PhiD J_{\FF{\a}{\b}, 1,2}^{-1}(\FF{\a}{\b}(\im)\ten\id)J\resped^{-1}\FF{\a}{\b}(\im)\\
&=\PhiD J_{\FF{\a}{\b},1,2}^{-1}J_{\FF{\a}{\b}, 12,3}^{-1}\FF{\a}{\b}(\im\ten\id)\FF{\a}{\b}(\im)\\
&=J_{\FF{\a}{\b},2,3}^{-1}J_{\FF{\a}{\b}, 1,23}^{-1}\FF{\a}{\b}({\PhiU})\FF{\a}{\b}(\im\ten\id)\FF{\a}{\b}(\im)\\
&=J_{\FF{\a}{\b},2,3}^{-1}(\id\ten\FF{\a}{\b}(\im))J\resped^{-1}\FF{\a}{\b}(\im)\\
&=(\id\ten\DeltaL)\DeltaL.
\end{align*}
\end{pf}
For simplicity, from now on, we omit the action of the associator $\PhiD$ 
since we proved that it is natural with respect to $\mu$, $\DeltaL$ and $J\resped$.

\subsubsection{Relation with $\sfEnd{\FF{\a}{\b}}$}
It follows from Proposition \ref{ss:algstructure} that the collection of morphisms
$\mu_V$, $V\in\hDrY{\gums}{\Phi}$, defines a morphism of algebras from 
$\FF{\a}{\b}(\Lm)$ to $\sfEnd{\FF{\a}{\b}}$, whose injectivity follows 
immediately from the action of $\FF{\a}{\b}(\Lm)$ on itself.
We now show that this embedding is in fact a morphism of coalgebras.
To this extent, we denote by $\Delta$ the coproduct on $\sfEnd{\FF{\a}{\b}}$, 
by $\varphi^{(1)}$ the embedding defined in \S\ref{ss:algstructure}, and 
by $\varphi^{(2)}$ the embedding from $\FF{\a}{\b}(\Lm)^{\ten 2}$ into $\sfEnd{\FF{\a}{\b}^2}$
defined for every $V,W\in\hDrY{\gdms}{\Phi}$ by
\[
\varphi^{(2)}_{V,W}=(\mu_V\ten\mu_W)\circ\beta_{\a,23}
\]
as a map from $\FF{\a}{\b}(\Lm)\ten\FF{\a}{\b}(\Lm)\ten\FF{\a}{\b}(V)\ten\F\noEKff{\b}(W)$ to $\FF{\a}{\b}(V)\ten\FF{\a}{\b}(W)$.
\begin{proposition}\label{prop:tenprodgamma}
One has
\begin{equation}\label{eq:end-f}
\Delta\circ\varphi^{(1)}=\varphi^{(2)}\circ\DeltaL.
\end{equation}
In particular, the counit of $\sfEnd{\FF{\a}{\b}}$, \ie the evaluation
on the identity object $V=K\in\hDrY{\gums}{\Phi}$, restricts to a counit 
$\varepsilon=\mu_K$ on $\FF{\a}{\b}(\Lm)$ and the object 
$(\FF{\a}{\b}(\Lm),\mu_L,u,\DeltaL,\varepsilon)$ is a bialgebra object in 
$\hDrY{\gdms}{\Phi}$.
\end{proposition}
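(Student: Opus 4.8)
The plan is to verify the identity \eqref{eq:end-f} by unwinding both sides on a pair of objects $V,W\in\hDrY{\gums}{\Phi}$ and a pair of elements $x\in\FF{\a}{\b}(\Lm)$, $v\in\FF{\a}{\b}(V)$, $w\in\FF{\a}{\b}(W)$, using the defining diagram for the coproduct $\Delta$ on $\sfEnd{\FF{\a}{\b}}$ (the square in \S\ref{ss:gammalift}), the formula $\mu_V(x\ten v)=(\ipd\ten\id){\PhiU}^{-1}(\id\ten v)x$ from \S\ref{ss:algstructure}, and the formula $\DeltaL=J\resped^{-1}\circ\FF{\a}{\b}(\im)$ from \S\ref{ss:compcoaction}'s predecessor. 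First I would note that, by definition of $\Delta$, the endomorphism $\Delta\circ\varphi^{(1)}(x)$ of $\FF{\a}{\b}(V)\ten\FF{\a}{\b}(W)$ is characterised by $J\resped\circ\Delta(\varphi^{(1)}(x))_{V,W}=\mu_{V\ten W}(x\ten-)\circ J\resped$; so it suffices to show that $\varphi^{(2)}(\DeltaL(x))$ satisfies the same intertwining relation, i.e. that $J\resped\circ\varphi^{(2)}_{V,W}\circ(\DeltaL\ten\id^{\ten2})=\mu_{V\ten W}\circ(\id\ten J\resped)\circ(\FF{\a}{\b}(\im)\ten\id^{\ten2})$ after the appropriate reshuffle.

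The key computational step is to expand $\mu_{V\ten W}(x\ten J\resped(v\ten w))$ using the explicit description of $J\resped$ from \S\ref{s:tensor-structure} (the six-arrow composition through $\Lm\xrightarrow{\im}\Lm\ten\Lm\to(\Npv\ten V)\ten(\Npv\ten W)\to\cdots\xrightarrow{\ipd\ten1}\Npv\ten(V\ten W)$), to feed in the coassociativity of $\im$ (i.e. $(\im\ten\id)\im=(\id\ten\im)\im$ up to ${\PhiU}$, Proposition \ref{pr:coalgebra}(i)) and the associativity of $\ipd$ (Proposition \ref{pr:coalgebra}(ii)), and then to recognise the resulting composite as $\mu_V\ten\mu_W$ precomposed with a braiding $\beta_{\a,23}$ on the two $\FF{\a}{\b}(\Lm)$-factors — this braiding appearing precisely because $J\resped$ threads the second copy of $\Lm$ past $V$ via $\beta^{-1}_{23}$. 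The coassociativity of $\im$ is what produces the two separate copies of $x$ (after $\DeltaL$) acting on the $V$- and $W$-legs respectively, while the associativity of $\ipd$ collapses the three $\Npv$-factors coming from $\ipd$ inside each $\mu$ and from the outer $\ipd$ of $J\resped$ into the single $\Npv$ that $\mu_{V\ten W}$ outputs.

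For the counit assertion, I would observe that evaluating $\mu_V$ at $V=K$ (the unit object) gives a morphism $\mu_K:\FF{\a}{\b}(\Lm)\ten K\to K$, hence an element $\varepsilon=\mu_K\in\FF{\a}{\b}(\Lm)^*$, and that the counit axiom $(\varepsilon\ten\id)\DeltaL=\id=(\id\ten\varepsilon)\DeltaL$ follows from \eqref{eq:end-f} by specialising one of the two tensor factors to $K$ and using the unit relation $\mu_V\circ(u\ten\id)=\id_V$ from Proposition \ref{prop:prod} together with the fact that the counit of $\sfEnd{\FF{\a}{\b}}$ is evaluation on $K$; alternatively one checks directly from $\mu_K(u\ten-)=\id_K$ that $\varepsilon(u)=1$ and that the relevant compositions collapse. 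Combined with Proposition \ref{prop:prod} (associative algebra), Proposition \ref{prop:coprod} (coassociative coalgebra), and \eqref{eq:end-f} (compatibility of $\mu$ and $\DeltaL$, which is exactly the bialgebra axiom once transported through the embedding $\varphi^{(1)}$ into $\sfEnd{\FF{\a}{\b}}$ where the bialgebra axioms hold tautologically), this yields that $(\FF{\a}{\b}(\Lm),\mu_L,u,\DeltaL,\varepsilon)$ is a bialgebra in $\hDrY{\gdms}{\Phi}$.

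The main obstacle I anticipate is the bookkeeping of associativity constraints: $\hDrY{\gdms}{\Phi}$ has the nontrivial associator $\PhiD$, and both $\mu_V$ and $J\resped$ already carry ${\PhiU}$'s (and, via Proposition \ref{pr:coalgebra}(ii), an $S^{\ten3}(\PhiD^{-1})^{\rho}$), so making the two sides of \eqref{eq:end-f} literally match requires a careful pentagon/hexagon argument to show all the stray associator and braiding factors cancel — essentially the same bracketing juggling already performed in \S\ref{ss:end tensor} for the tensor-structure axiom, and the remark immediately after Proposition \ref{prop:coprod} ("we omit the action of the associator $\PhiD$") is precisely the license to suppress these once their naturality has been checked. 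Modulo that suppression, the computation is a direct diagram chase of the kind carried out in the proof of Proposition \ref{prop:prod}.
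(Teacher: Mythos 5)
Your overall strategy is the same as the paper's: reduce \eqref{eq:end-f}, via the defining square for $\Delta$ on $\sfEnd{\FF{\a}{\b}}$, to the single identity $\mu_{V\ten W}\circ(\id\ten J\resped)=J\resped\circ(\mu_V\ten\mu_W)\circ\beta_{\a,23}\circ(\Delta_L\ten\id^{\ten 2})$, establish that by a diagram chase, and then obtain the counit by restricting the counit of $\sfEnd{\FF{\a}{\b}}$ and the bialgebra axiom by transport through the embedding $\varphi^{(1)}$ --- this last part of your argument matches the paper essentially verbatim. Two slips in your reformulation: the right-hand side of your displayed intertwining relation should be $\mu_{V\ten W}\circ(\id\ten J\resped)$, with no extra factor $\FF{\a}{\b}(\im)\ten\id^{\ten 2}$ (as written the domains do not even match), and it is the \emph{definition} $\Delta_L=J_{\Lm,\Lm}^{-1}\circ\FF{\a}{\b}(\im)$, not the coassociativity of $\im$, that produces the two copies of $x$ on the two legs.

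The genuine gap is in the list of ingredients for the central computation. Coassociativity of $\im$, associativity of $\ipd$ (Proposition \ref{pr:coalgebra}) and pentagon/hexagon bookkeeping are formal inputs and cannot by themselves make the $\a$--braiding $\beta_{\a,23}$ of $\varphi^{(2)}$ appear: the braiding $\beta_{23}^{-1}$ inside $J\resped$ threads $\Npv$ (not the second copy of $\Lm$) past $V$, and it is a braiding of $\hDrY{\b}{\Phi}$, built from $\Omega^{\b}$, not $\Omega^{\a}$. What the paper's graphical proof actually uses, besides the defining relation $J_{\Lm,\Lm}\circ\Delta_L=\FF{\a}{\b}(\im)$ and the fact that the right $\gd$--action (the ``bullets'') commutes with morphisms of left $\gu$--modules, is the braided commutativity of $\ipd$ with respect to the $(\gu,\gd)$--bimodule braiding, namely $\ipd\circ\beta\circ(\beta_{\a}^{-1})^{\rho}=\ipd$, i.e.\ that $\Npv$ is a commutative algebra in $\hEq{(\gb,\ga)}{\Phi}$. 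It is precisely this identity that lets $\ipd$ absorb the leftover $\exp(\hbar\Omega^{\b}/2)$--factors coming from $J\resped$ and from the $J_{\Lm,\Lm}$ hidden in $\Delta_L$, at the cost of a residual $\exp(-\hbar\Omega^{\a}/2)$ acting through the right $\a$--action --- which is exactly the $\beta_{\a,23}$ on the two $\FF{\a}{\b}(\Lm)$--legs (the $a_i\ten b_i$ in the paper's Sweedler-notation formula). Without this ingredient your chase stalls with uncancelled $\b$--braidings; with it added, your outline, including the counit argument and the passage to a bialgebra via $\sfEnd{\FF{\a}{\b}}$, closes along the same lines as the paper.
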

\begin{pf}
From the definition of $\Delta$, it follows that \eqref{eq:end-f} is equivalent 
to
\[\mu_{V\ten W}(\id\ten J\resped)=J\resped(\mu_V\ten\mu_W)\beta_{\a,23}(\DeltaL\ten\id\ten\id)\]
as morphisms from $\FF{\a}{\b}(\Lm)\ten\FF{\a}{\b}(V)\ten\FF{\a}{\b}(W)$ to $\FF{\a}{\b}(V\ten W)$,
where $V,W\in\hDrY{\gums}{\Phi}$.
In $\FF{\a}{\b}(V\ten W)$, one has to show
\begin{align*}
(\ipd\ten\id\ten\id)&(\id\ten\ipd\ten\id\ten\id)\beta_{34}(\id\ten v\ten w)(\id\ten\im)x\\
&=(\ipd\ten\id\ten\id)\beta_{23}(\ipd\ten\id\ten\ipd\ten\id)(\id\ten b_i.v \ten \id\ten w)
(x_1\ten a_i.x_2)\im,
\end{align*}
where $v\in\FF{\a}{\b}(V), w\in\FF{\a}{\b}(W), x\in\FF{\a}{\b}(\Lm)$, and, in sumless Sweedler notation,
$\DeltaL(x)=x_1\ten x_2$, $\exp(-\hbar\OmegaD/2)=a_i\ten b_i$. 

It is more convenient, in this case, to give a pictorial proof, relying on the 
graphical calculus for Lie bialgebras (e.g. \cite[Lecture 19]{es}).
We read the diagrams from top to bottom and from left to right.

We have to show that the following diagrams are equivalent:
\[
\xy
0;/r.20pc/:
(0,0)*{
\xy
(0,-5)*{}="A";
(0,-10)*{}="B";
(-10,-20)*=<30pt,10pt>[F]{v}="C"; 
(10,-20)*=<30pt,10pt>[F]{w}="D";
"A";"B" **\dir{-};
"C";"B" **\dir{-};
"D";"B" **\dir{-};
(-14,-22)*{};(-14,-40)**\dir{-};
(14,-22)*{};(14,-40)**\dir{-};
(-6,-22)*{};(-0.5,-30.5)**\crv{(-6,-28)};
(6,-22)*{};(0,-31)**\crv{(6,-28)};
(-6,-40)*{};(0,-31)**\crv{(-6,-34)};
(6,-40)*{};(1,-31.5)**\crv{(6,-34)};
(-14,-40)*{}; (-10,-42)**\dir{-}; 
(-6,-40)*{}; (-10,-42)**\dir{-};
(-10,-42)*{};(-10,-45)**\dir{-};
(6,-40)*{};(6,-45)**\dir{-};
(14,-40)*{};(14,-45)**\dir{-};
(-10,-3)*=<70pt,10pt>[F]{x}="X";
(-10,3)*{};"X" **\dir{-};
(-20,-5)*{};(-20,-45)**\dir{-};
(-10,-45);(-15,-47)**\dir{-};
(-20,-45);(-15,-47)**\dir{-};
(-15,-47);(-15,-50)**\dir{-};
(6,-45);(6,-50)**\dir{-};
(14,-45);(14,-50)**\dir{-};
\endxy
}="A";
(60,0)*{
\xy
(-10,3);(-10,0)*{}="O" **\dir{-};
(-22,-10)*=<30pt,10pt>[F]{x_1}="X1";
(2,-10)*=<30pt,10pt>[F]{a_i.x_2}="X2";
"O";"X1"**\dir{-};
"O";"X2"**\dir{-};
(-2,-12);(-2,-27)**\dir{-};
(-26,-12);(-26,-27)**\dir{-};
(-18,-20)*=<30pt,10pt>[F]{b_i.v}="V";
(6,-20)*=<30pt,10pt>[F]{w}="W";
(-18,-12);"V"**\dir{-};
(6,-12);"W"**\dir{-};
(10,-22);(10,-50)**\dir{-};
(2,-22);(2,-27)**\dir{-};
(-22,-22);(-22,-27)**\dir{-};
(-26,-27);(-24,-30)**\dir{-};
(-22,-27);(-24,-30)**\dir{-};
(-24,-30);(-24,-42)**\dir{-};
(-2,-27);(0,-30)**\dir{-};
(2,-27);(0,-30)**\dir{-};
(-14,-22)*{};(-7,-34.5)**\crv{(-14,-30)};
(0,-30)*{};(-7,-35)**\crv{(-3.5,-34)};
(-14,-42)*{};(-7,-35)**\crv{(-10.5,-36)};
(0,-50)*{};(-5.5,-35)**\crv{(1,-40)};
(-24,-42);(-19,-47)**\dir{-};
(-14,-42);(-19,-47)**\dir{-};
(-19,-47);(-19,-50)**\dir{-};
\endxy
}="B";
(30,0)*{=};
\endxy
\]
The coproduct $\DeltaL$ is defined by the equation
\[
\xy
0;/r.20pc/:
(30,0)*{
\xy
(0,-5)*{}="A";
(0,-10)*{}="B";
(-10,-18)*{}="C";
(0,-20)*=<90pt,10pt>[F]{\DeltaL(x)}; 
(10,-18)*{}="D";
"A";"B" **\dir{-};
"C";"B" **\dir{-};
"D";"B" **\dir{-};
(-14,-22)*{};(-14,-40)**\dir{-};
(14,-22)*{};(14,-40)**\dir{-};
(-6,-22)*{};(-0.5,-30.5)**\crv{(-6,-28)};
(6,-22)*{};(0,-31)**\crv{(6,-28)};
(-6,-40)*{};(0,-31)**\crv{(-6,-34)};
(6,-40)*{};(1,-31.5)**\crv{(6,-34)};
(-14,-40)*{}; (-10,-45)**\dir{-}; 
(-6,-40)*{}; (-10,-45)**\dir{-};
(-10,-45)*{};(-10,-50)**\dir{-};
(6,-40)*{};(6,-50)**\dir{-};
(14,-40)*{};(14,-50)**\dir{-};
\endxy
};
(-20,0)*{
\xy
(0,-15)*=<50pt,10pt>[F]{x}="X";
(0,-5);"X" **\dir{-};
(-7,-17);(-7,-50)**\dir{-};
(7,-17);(7,-25)**\dir{-};
(7,-25);(0,-30)**\dir{-};
(7,-25);(14,-30)**\dir{-};
(0,-30);(0,-50)**\dir{-};
(14,-30);(14,-50)**\dir{-};
\endxy
};
(5,0)*{=};
\endxy
\]
Since the action of $\gd$ on the objects $\FF{\a}{\b}(V)$ is given by right action on $\Npv$,
we represent the braiding $\beta_{\a}^{-1}$ as
\[
\xy
(0,0)*{
\xy
(0,-10)*=<30pt,10pt>[F]{a_i.x_2}="X";
(0,0);"X"**\dir{-};
(-4,-12);(-4,-20)**\dir{-};
(4,-12);(4,-20)**\dir{-};
\endxy
};
(10,0)*{=};
(20,0)*{
\xy
(0,-10)*=<30pt,10pt>[F]{x_2}="X";
(0,0);"X"**\dir{-};
(-4,-12);(-4,-20)**\dir{-};
(4,-12);(4,-20)**\dir{-};
(-4,-16)*{\bullet};
\endxy
};
\endxy
\qquad\qquad
\xy
(0,0)*{
\xy
(0,-10)*=<30pt,10pt>[F]{b_i.v}="X";
(0,0);"X"**\dir{-};
(-4,-12);(-4,-20)**\dir{-};
(4,-12);(4,-20)**\dir{-};
\endxy
};
(20,0)*{
\xy
(0,-10)*=<30pt,10pt>[F]{v}="X";
(0,0);"X"**\dir{-};
(-4,-12);(-4,-20)**\dir{-};
(4,-12);(4,-20)**\dir{-};
(-4,-16)*{\circ};
\endxy
};
(10,0)*{=};
\endxy
\]
We are allowed to move the black and the white bullets along the lines, since they commute
with the left action of $\gu$. The RHS corresponds to\\
\[
\xy
0;/r.20pc/:
(0,0)*{
\xy
(-10,3);(-10,0)*{}="O" **\dir{-};
(-22,-10)*=<30pt,10pt>[F]{x_1}="X1";
(2,-10)*=<30pt,10pt>[F]{x_2}="X2";
"O";"X1"**\dir{-};
"O";"X2"**\dir{-};
(-2,-12);(-2,-27)**\dir{-};
(-2,-18)*{\bullet};
(-26,-12);(-26,-27)**\dir{-};
(-18,-20)*=<30pt,10pt>[F]{v}="V";
(6,-20)*=<30pt,10pt>[F]{w}="W";
(-18,-12);"V"**\dir{-};
(6,-12);"W"**\dir{-};
(10,-22);(10,-50)**\dir{-};
(2,-22);(2,-27)**\dir{-};
(-22,-22);(-22,-27)**\dir{-};
(-22,-25)*{\circ};
(-26,-27);(-24,-30)**\dir{-};
(-22,-27);(-24,-30)**\dir{-};
(-24,-30);(-24,-42)**\dir{-};
(-2,-27);(0,-30)**\dir{-};
(2,-27);(0,-30)**\dir{-};
(-14,-22)*{};(-7,-34.5)**\crv{(-14,-30)};
(0,-30)*{};(-7,-35)**\crv{(-3.5,-34)};
(-14,-42)*{};(-7,-35)**\crv{(-10.5,-36)};
(0,-50)*{};(-5.5,-35)**\crv{(1,-40)};
(-24,-42);(-19,-47)**\dir{-};
(-14,-42);(-19,-47)**\dir{-};
(-19,-47);(-19,-50)**\dir{-};
\endxy
};
(50,0)*{
\xy
(-10,3);(-10,0)*{}="O" **\dir{-};
(-22,-10)*=<30pt,10pt>[F]{x_1}="X1";
(2,-10)*=<30pt,10pt>[F]{x_2}="X2";
"O";"X1"**\dir{-};
"O";"X2"**\dir{-};
(-2.5,-18)*{\bullet};
(-26,-12);(-26,-27)**\dir{-};
(-18,-20)*=<30pt,10pt>[F]{v}="V";
(6,-20)*=<30pt,10pt>[F]{w}="W";
(-18,-12);"V"**\dir{-};
(6,-12);"W"**\dir{-};
(10,-22);(10,-50)**\dir{-};
(2,-22);(-14,-42)**\crv{(2,-30)&(-12,-40)};
(-22,-22);(-22,-27)**\dir{-};
(-22,-25)*{\circ};
(-26,-27);(-24,-30)**\dir{-};
(-22,-27);(-24,-30)**\dir{-};
(-24,-30);(-24,-42)**\dir{-};
(-14,-22);(-11.5,-29.5)**\crv{(-14,-27)};
(-10.5,-30.5);(-6.5,-34)**\crv{};
(0,-50)*{};(-5,-35.5)**\crv{(1,-40)};
(-24,-42);(-19,-47)**\dir{-};
(-14,-42);(-19,-47)**\dir{-};
(-19,-47);(-19,-50)**\dir{-};
(-2,-12);(-14,-42)**\crv{(-2,-27)&(-16,-30)&(-16,-40)};
\endxy
};
(100,0)*{
\xy
(-10,3);(-10,0)*{}="O" **\dir{-};
(-22,-10)*=<30pt,10pt>[F]{x_1}="X1";
(2,-10)*=<30pt,10pt>[F]{x_2}="X2";
"O";"X1"**\dir{-};
"O";"X2"**\dir{-};
(-26,-12);(-18,-47)**\crv{(-26,-42)};
(6,-20)*=<30pt,10pt>[F]{w}="W";
(6,-12);"W"**\dir{-};
(-10,-20)*=<30pt,10pt>[F]{v}="V";
(-18,-12);(-15,-14.5)**\dir{-};
(-13.5,-16.5);"V"**\dir{-};
(-2,-12);(-14,-43.5)**\crv{(-2,-18)&(-30,-5)};
(-14,-22);(-17,-31)**\crv{(-14,-25)};
(-18.5,-33);(-22,-42)**\crv{(-21,-40)};
(-18,-47);(2,-22)**\crv{(2,-30)};
(10,-22);(10,-50)**\dir{-};
(-18,-47);(-18,-50)**\dir{-};
(-6,-22);(-2,-30)**\crv{(-6,-26)};
(-1,-31);(2,-50)**\crv{(2,-35)};
(-19, -25)*{\bullet}; (-14.5,-25)*{\circ};
\endxy
};
(25,0)*{=};
(75,0)*{=};
\endxy
\]
We now use the fact that the map $\ipd$ satisfies
$\ipd\circ\beta\circ(\beta_{\a}^{-1})^{\rho}=\ipd$,
\ie
\[
\xy
0;/r.20pc/:
(0,0)*{
\xy
(10.5,-2.5)*{\bullet};
(20, -2.5)*{\circ};
(0,0)*{}="A";
(10,0)*{}="B";
(20,0)*{}="C";
(30,0)*{}="D";
"A";(15,-20)**\crv{(5,-15)};
"B";(20,-16.5)**\crv{(10,-7)&(20,-13)};
"C";(16,-9.5)**\crv{(20,-7)};
(15,-10); (10,-16.5)**\crv{(10,-13)};
"D";(15,-20)**\crv{(25,-15)};
(15,-20);(15,-25)**\dir{-};
\endxy
};
(25,0)*{=};
(50,0)*{
\xy
(10.5,-2.5)*{\bullet};
(20, -2.5)*{\circ};
(0,0)*{}="A";
(10,0)*{}="B";
(20,0)*{}="C";
(30,0)*{}="D";
"A";(15,-23)**\crv{(5,-22)};
"B";(15,-18)**\crv{(10,-7)&(23,-13)};
"C";(16,-9.5)**\crv{(20,-7)};
(15,-10); (21,-20)**\crv{(7.5,-17)};
"D";(15,-23)**\crv{(25,-22)};
(15,-23);(15,-25)**\dir{-};
\endxy
};
(75,0)*{=};
(100,0)*{
\xy
(0,0)*{}="A";
(10,0)*{}="B";
(20,0)*{}="C";
(30,0)*{}="D";
"A";(15,-23)**\crv{(5,-22)};
"B";(21,-20)**\crv{(10,-20)};
"C";(15,-18)**\crv{(20,-15)};
"D";(15,-23)**\crv{(25,-22)};
(15,-23);(15,-25)**\dir{-};
\endxy
};
\endxy
\]
Finally we get
\[
\xy
0;/r.22pc/:
(0,0)*{
\xy
(-15,-5)*=<50pt,10pt>[F]{x_1}="X1";
(5,-5)*=<50pt,10pt>[F]{x_2}="X2";
(-10,-20)*=<30pt,10pt>[F]{v}="C"; 
(10,-20)*=<30pt,10pt>[F]{w}="D";
(-5,0);"X1"**\dir{-};
(-5,0);"X2"**\dir{-};
(-5,0);(-5,2)**\dir{-};
(-14,-22)*{};(-14,-40)**\dir{-};
(14,-22)*{};(14,-40)**\dir{-};
(-6,-22)*{};(-0.5,-30.5)**\crv{(-6,-28)};
(6,-22)*{};(0,-31)**\crv{(6,-28)};
(-6,-40)*{};(0,-31)**\crv{(-6,-34)};
(6,-40)*{};(1,-31.5)**\crv{(6,-34)};
(-14,-40)*{}; (-10,-42)**\dir{-}; 
(-6,-40)*{}; (-10,-42)**\dir{-};
(-10,-42)*{};(-10,-44)**\dir{-};
(6,-40)*{};(6,-50)**\dir{-};
(14,-40)*{};(14,-50)**\dir{-};
(10,-7);(10,-18)**\dir{-};
(1,-7);(-18,-20)**\crv{(1,-10)&(-17,-15)};
(-12,-7);(-10,-12.5)**\crv{(-12,-10)};
(-9,-14);"C" **\crv{(-7,-17)};
(-22,-7);(-22,-20)**\dir{-};
(-22,-20);(-20,-22)**\dir{-};
(-18,-20);(-20,-22)**\dir{-};
(-20,-22);(-20,-44)**\dir{-};
(-20,-44);(-15,-48)**\dir{-};
(-10,-44);(-15,-48)**\dir{-};
(-15,-48);(-15,-50)**\dir{-};
\endxy
};
(25,0)*{=};
(50,0)*{
\xy
(0,-5)*{}="A";
(0,-10)*{}="B";
(-10,-20)*=<30pt,10pt>[F]{v}="C"; 
(10,-20)*=<30pt,10pt>[F]{w}="D";
"A";"B" **\dir{-};
"C";"B" **\dir{-};
"D";"B" **\dir{-};
(-14,-22)*{};(-14,-40)**\dir{-};
(14,-22)*{};(14,-40)**\dir{-};
(-6,-22)*{};(-0.5,-30.5)**\crv{(-6,-28)};
(6,-22)*{};(0,-31)**\crv{(6,-28)};
(-6,-40)*{};(0,-31)**\crv{(-6,-34)};
(6,-40)*{};(1,-31.5)**\crv{(6,-34)};
(-14,-40)*{}; (-10,-42)**\dir{-}; 
(-6,-40)*{}; (-10,-42)**\dir{-};
(-10,-42)*{};(-10,-45)**\dir{-};
(6,-40)*{};(6,-45)**\dir{-};
(14,-40)*{};(14,-45)**\dir{-};
(-10,-3)*=<70pt,10pt>[F]{x}="X";
(-10,3)*{};"X" **\dir{-};
(-20,-5)*{};(-20,-45)**\dir{-};
(-10,-45);(-15,-47)**\dir{-};
(-20,-45);(-15,-47)**\dir{-};
(-15,-47);(-15,-50)**\dir{-};
(6,-45);(6,-50)**\dir{-};
(14,-45);(14,-50)**\dir{-};
\endxy
}
\endxy
\]
It follows immediately from \eqref{eq:end-f} that the restriction of the counit
$\varepsilon$ of $\sfEnd{\FF{\a}{\b}}$ to $\FF{\a}{\b}(\Lm)$ satisfies
\[
(\varepsilon\ten\id)\circ\DeltaL=\id=(\id\ten\varepsilon)\circ\DeltaL.
\]
The compatiblity of product and coproduct on $\FF{\a}{\b}(\Lm)$ follows
from that in $\sfEnd{\FF{\a}{\b}}$, and the tuple 
$(\FF{\a}{\b}(\Lm),\mu,u,\DeltaL,\varepsilon)$ gives a bialgebra object in $\hDrY{\gdms}{\Phi}$.
Moreover, since $\FF{\a}{\b}(\Lm)$ reduces to $U\Lmm$ modulo $\hbar$, there exists a unique
antipode defining $\FF{\a}{\b}(\Lm)$ a Hopf algebra object in $\hDrY{\gdms}{\Phi}$.
This complete the proof of part $(i)$ in \ref{thm:braidhopf}.
\end{pf}

\subsubsection{Twisted $R$--matrix and coactions}\label{ss:compcoaction}

The tensor functor $(\FF{\a}{\b}, J\resped)$ induces a natural braiding
on the subcategory generated in $\hDrY{\gdms}{\Phi}$ by the objects $\FF{\a}{\b}(V)$,
for any $V\in\hDrY{\gums}{\Phi}$. The braiding is given by the usual formula
\[\beta_{J\resped}= J_{\FF{\a}{\b}, 21}^{-1}\circ\FF{\a}{\b}(\beta)\circ J\resped,\]
where $\beta$ is the usual braiding in $\hDrY{\gums}{\Phi}$. Clearly, 
$\beta_{J\resped}$ is a morphism in $\hDrY{\gdms}{\Phi}$.\\

For every $V\in\hDrY{\gums}{\Phi}$, the object $\FF{\a}{\b}(V)$ is endowed with
a trivial comodule structure over $\FF{\a}{\b}(\Lm)$ by the map
\[\eta_V:\FF{\a}{\b}(V)\to\FF{\a}{\b}(\Lm)\ten\FF{\a}{\b}(V)\qquad
\eta_V(v)=u\ten v.\]
The maps $\eta$ are morphisms in $\hDrY{\gdms}{\Phi}$ and they satisfy
\[\PhiD(\eta\ten\id)\eta_V=(\id\ten\eta_V)\eta_V,\]
where $\eta:=\eta_{\Lm}$ and $\eta(u)=u\ten u=\DeltaL(u)$. 
Set 
\newcommand{\Lcoa}{\mu^*}
\[\Lcoa_V:\FF{\a}{\b}(V)\to\FF{\a}{\b}(\Lm)\ten\FF{\a}{\b}(V),\qquad
\Lcoa_V=\Rd_{V}\circ\eta_V,\]
where $\Rd:=\sigma\cdot\beta_{J\resped}$ is the \emph{relative} $R$--matrix.
\begin{proposition}\label{prop:comod}
The map $\Lcoa_V$ defines on $\FF{\a}{\b}(V)$ a structure
of comodule over $\FF{\a}{\b}(\Lm)$. This is compatible with
the action of $\FF{\a}{\b}(\Lm)$ and defines on $\FF{\a}{\b}(V)$
a \DYt structure over $\FF{\a}{\b}(\Lm)$.
\end{proposition}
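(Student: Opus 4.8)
This proposition completes, together with Proposition \ref{prop:prod}, the Drinfeld--Yetter $\FF{\a}{\b}(\Lm)$--module structure on $\FF{\a}{\b}(V)$ underlying Theorem \ref{thm:braidhopf}~(ii); it is the relative counterpart of the statement, recalled in \S\ref{ss:qRep} and established in \cite{ek-2}, that $(\noEKff{\b}(V),m_V,m_V^*)$ is a Drinfeld--Yetter module over $\Uhb=\noEKff{\b}(\Mm)$. The plan is to repeat that argument, now internally to the braided tensor category $\hDrY{\gdms}{\Phi}$, in which $\FF{\a}{\b}(\Lm)$ is a Hopf algebra by Theorem \ref{thm:braidhopf}~(i). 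Three points must be checked: (a) $\Lcoa_V$ is a coaction, i.e. coassociative up to the associator $\PhiD$ and counital; (b) $\Lcoa_V$ is natural in $V$; and (c) $\Lcoa_V$ is compatible with the action $\mu_V$ of Proposition \ref{prop:prod}, in the sense of the braided Drinfeld--Yetter axiom \eqref{eq:DY Hopf} read in $\hDrY{\gdms}{\Phi}$. The ingredients are the Hopf algebra structure $(\mu,u,\Delta_L,\varepsilon)$ on $\FF{\a}{\b}(\Lm)$, the relative $R$--matrix $\Rd=\sigma\cdot\beta_{J\resped}$, and the identity $\ipd\circ\beta\circ(\beta_{\a}^{-1})^{\rho}=\ipd$ already used in the proof of Proposition \ref{prop:tenprodgamma}.

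For (a) and (b): the trivial structure map $\eta_V(v)=u\otimes v$ is already a coaction, since $u$ is grouplike, $\Delta_L(u)=u\otimes u$ and $\varepsilon(u)=1$, as recorded when $\FF{\a}{\b}(\Lm)$ was shown to be a bialgebra. Moreover $\beta_{J\resped}=J_{\FF{\a}{\b},21}^{-1}\circ\FF{\a}{\b}(\beta)\circ J\resped$ is the braiding of $\hDrY{\gums}{\Phi}$ transported along the tensor functor $(\FF{\a}{\b},J\resped)$, and therefore satisfies the two hexagon identities with respect to $\PhiD$; rewriting these in terms of $\Rd$ gives the usual ``$R$--matrix hexagons'' relating $\Rd_{\Lm,V}$ to $\Delta_L\otimes\id$ and $\id\otimes\Delta_L$, up to the appropriate $\PhiD$-- and $\Delta_L$--coassociativity insertions. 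Feeding $\eta_V$ into these relations shows $\Lcoa_V=\Rd_{\Lm,V}\circ\eta_V$ to be coassociative up to $\PhiD$ and counital, which is (a); naturality in $V$ is immediate from that of $\beta$, $J\resped$ and $\eta$.

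Part (c) is the real content. It is the braided Drinfeld--Yetter identity for the Hopf algebra $\FF{\a}{\b}(\Lm)$ in $\hDrY{\gdms}{\Phi}$, involving $\mu$, $\Delta_L$, the antipode, the braiding $\beta_{J\resped}$, and $\PhiD$ inserted according to the bracketing. I would prove it by the graphical computation in $\hDrY{\gdms}{\Phi}$ underlying the absolute case of \cite[\S2]{ek-2} --- adapted by inserting $\PhiD$ and $\beta_{\a}$ at the places where the right $\gd$--action intervenes --- since that computation uses only the Hopf algebra axioms together with the pentagon and hexagon, and thus carries over to any braided tensor category. An alternative, which also streamlines the bookkeeping, is to regard $\mu_V$, $\Lcoa_V$ and the Drinfeld--Yetter relation as universal identities in the colored $\PROP$ describing split pairs of Lie bialgebras of Section \ref{se:rel prop}, and to deduce them from the case $\a=0$, where they reduce to \S\ref{ss:qRep}.

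The step I expect to be the main obstacle is exactly (c): keeping track of $\PhiD$ and of the braidings $\beta_{\a}$ that enter $\Rd$ through the right $\gd$--action, so that the braided Drinfeld--Yetter identity closes up. This is mitigated by the fact that the auxiliary identities it relies on --- coassociativity of $i_-$, the intertwining $\ipd\circ\beta\circ(\beta_{\a}^{-1})^{\rho}=\ipd$, the naturality of $\PhiU$, and the pentagon --- were all already established in \S\ref{ss:algstructure}--\S\ref{ss:compcoaction}, and the graphical manipulation required is of the same kind as the one carried out for Proposition \ref{prop:tenprodgamma}.
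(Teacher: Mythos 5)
Your proposal is correct and follows essentially the same route as the paper: coassociativity of $\mu^*_V=\Rd_V\circ\eta_V$ is deduced from the hexagon relations of the relative $R$--matrix $\Rd$ together with the triviality of $\eta_V$ (the grouplike property of $u$), which is exactly the paper's computation. For the compatibility with the action, the paper simply appeals to the quasi--triangular structure that $\Rd$ defines on $\sfEnd{\FF{\a}{\b}}$, which packages the graphical verification you plan to carry out, so the content of both arguments is the same.
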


\begin{pf}
For any $V\in\hDrY{\gums}{\Phi}$
\[
(\id\ten\eta_V)\Rd\eta_V={\Rd}_{23}(\id\ten\eta_V)\eta_V,\qquad
(\eta_V\ten\id)\Rd\eta_V={\Rd}_{13}(\eta_V\ten\id)\eta_V.\]
Therefore, by the hexagon relations of $\Rd$ it follows
\begin{align*}
(\id\ten\Lcoa_V)\Lcoa_V&={\Rd}_{13}(\id\ten\eta_V)\Rd\eta_V\\
&={\Rd}_{13}{\Rd}_{23}(\id\ten\eta_V)\eta_V\\
&=(\DeltaL^{21}\ten\id)\Lcoa_V.
\end{align*}
The compatibility with the action is similarly proved, relying on the fact that
the relative $R$--matrix defines a quasi--triangular structure on 
$\sfEnd{\FF{\a}{\b}}$.
\end{pf}

Let now denote by $\hDrY{L,\a}{\Phi}$ the braided category of \DYt
$\FF{\a}{\b}(\Lm)$--modules in $\hDrY{\a}{\Phi}$ with braiding $\beta_{J\resped}$.
It follows from the last Proposition that the tensor functor $\FF{\a}{\b}$ factors
through $\hDrY{L,\a}{\Phi}$, \ie
\[
\xymatrix{
\hDrY{\gums}{\Phi} \ar[d]_{\FF{\a}{\b}}\ar[r]^{\wtFF{\a}{\b}} & \hDrY{L,\a}{\Phi} \ar[dl]^{\ff}\\
\hDrY{\gdms}{\Phi}
}
\]
where $\ff$ denotes the forgetful functor.
This completes the proof of part $(ii)$ and $(iii)$ of Theorem \ref{ss:gammalift}.

\subsection{Quantisation of a split pair}\label{ss:twostepq}

The application of Etingof--Kazhdan functor ${\noEKeq{\a}}:\hDrY{\gdms}{\Phi}\to\DrY{\EK\gdms}$
allows to further transform $\FF{\a}{\b}(\Lm)$ into a Hopf algebra object 
\[
\HL=({\noEKeq{\a}}\circ\FF{\a}{\b}(\Lm), {\noEKeq{\a}}(\mu), {\noEKeq{\a}}(\delta))
\]
in the category $\aDrY{\EK{\gdms}}$. 
The action and coaction of $\FF{\a}{\b}(\Lm)$
on $\FF{\a}{\b}(V)$ provide through $\noEKeq{\a}$ an action and an admissible coaction of $\HL$
on $\noEKeq{\a}\circ\FF{\a}{\b}(V)$ in the category $\hDrY{\EK{\a}}{}$. 
Namely, we have a diagram of functors
\begin{equation}\label{eq:diag1}
\xymatrix{
\hDrY{\gums}{\Phi} \ar[d]_{\FF{\a}{\b}}\ar[r]^{\wtFF{\a}{\b}} & 
\hDrY{L,\a}{\Phi} \ar[dl]^{\ff} \ar[r]^(.4){\noEKeq{\a}}& \aDrY{\HL,\EK{\a}} \ar[d]^{\ff}\\
\hDrY{\gdms}{\Phi} \ar[rr]^{\noEKeq{\a}} & & \aDrY{\EK{\a}}
}
\end{equation}
where $\hDrY{\HL,\EK{\a}}{}$ denotes the category of \DYt modules over $\HL$
in $\hDrY{\EK\a}{}$.

\subsubsection{}
Since $\HL$ is a Hopf algebra in $\DrY{\EK\a}$, it follows from \S\ref{ss:splitHA}--\S\ref{ss:rad-dy} that
we can construct a Hopf algebra $\HB=\Rad{\HL}{\EK\a}$,
and include the equivalence $\aDrY{\HL,\EK\a}\simeq\aDrY{\HB}$ in the diagram \eqref{eq:diag1},
\begin{equation}\label{eq:diag2}
\xy
(0,0)*+{\hDrY{\gums}{\Phi}}="A";
(0,-15)*+{\hDrY{\gdms}{\Phi}}="E";
(20,0)*+{\hDrY{L,\a}{\Phi}}="B";
(50,0)*+{\aDrY{\HL,\EK{\a}}{}}="C";
(70,0)*+{\aDrY{\HB}}="D";
(70,-15)*+{ \aDrY{\EK\a}}="F";
"A";"E"**\dir{-} ?>*\dir{>}; 
"A";"B"**\dir{-} ?>*\dir{>}; 
"B";"E"**\dir{-} ?>*\dir{>}; 
"B";"C"**\dir{-} ?>*\dir{>}; 
"C";"D"**\dir{-} ?>*\dir{>}; 
"D";"F"**\dir{-} ?>*\dir{>}; 
"C";"F"**\dir{-} ?>*\dir{>}; 
"E";"F"**\dir{-} ?>*\dir{>}; 
(-4,-7.5)*\txt\Small{$\FF{\a}{\b}$};
(80,-7.5)*\txt\Small{$\Res_{\EK\a,\HB}$};
(9,3)*\txt\Small{$\wtFF{\a}{\b}$};
(58,-8.5)*\txt\Small{$\ff$};
(12,-8.5)*\txt\Small{$\ff$};
(33,3)*\txt\Small{$\noEKeq{\a}$};
(33,-12)*\txt\Small{$\noEKeq{\a}$};
(61,2)*\txt\Small{$\simeq$}
\endxy
\end{equation}
The equivalence $\wt{\noEKff{\a}\noEKff{\a,\b}}$, obtained by 
composition along the top row of \eqref{eq:diag2}, is a Tannakian lift of the fiber
functor $\noEKff{\a}\noEKff{\a,\b}$ with tensor structure given by $J_{\a}$ and 
$J_{\a,\b}$.
The isomorphism $\wt{v}\resped$ constructed in Section \S\ref{s:Gammah}
descends to an isomorphism of tensor functors between $\noEKff{\b}$ and
$\noEKff{\a}\circ\noEKff{\a,\b}$, and leads to the following
\begin{theorem}\label{thm:transf}\hfill
\begin{itemize}
\item[(i)]
The pair $(\HB, \EK{\gdms})$ is a split pair of Hopf algebras 
quantising the split pair $(\gums, \gdms)$.
\item[(ii)] There is an isomorphism of split pairs of Hopf algebras 
\[v\resped:(\HB,\EK{\gdms})\simeq(\EK{\gums},\EK{\gdms}).\]
\end{itemize}
\end{theorem}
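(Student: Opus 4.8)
The plan is to combine all the structures built in Sections \ref{s:Gamma}--\ref{s:gammahopf} with the quantisation identifications of Section \ref{se:rel prop}. First I would prove part (i). The category identity $\aDrY{\HB}\simeq\aDrY{\HL,\EK\a}$ of Proposition \ref{pr:DY equiv} (and its admissible refinement \ref{ss:qrad-DY}) together with the top row of \eqref{eq:diag2} shows that $\wt{\noEKff{\a}\noEKff{\a,\b}}$ is a Tannakian lift of the fibre functor $\noEKff{\a}\circ\FF{\a}{\b}:\hDrY{\gums}{\Phi}\to\vect_\sfK$, with tensor structure $J_\a\circ J\resped$. Since by Theorem \ref{th:Gamma} the functor $\FF{\a}{\b}$ is isomorphic to the restriction $\iD^*$, which becomes the identity when composed with the forgetful functor to $\vect_\sfK$, this fibre functor is isomorphic to the \nEK fibre functor $\noEKff{\b}$; the point is that the tensor structure obtained from $J_\a\circ J\resped$ endows $\sfEnd{\noEKff{\b}}$ with a bialgebra structure whose underlying coalgebra object is $\HL$ sitting inside $\Rad{\HL}{\EK\a}=\HB$. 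By Theorem \ref{thm:braidhopf}(i) and the fact that $\FF{\a}{\b}(\Lm)$ reduces to $U\Lmm$ modulo $\hbar$, $\HB$ is a topologically free $\sfK$--module with $\HB/\hbar\HB\simeq U\gums$, and the induced cobracket is the standard one on $\gums$; hence $\HB$ is a QUE quantising $\gums$. That $\EK\gdms\hookrightarrow\HB$ is a split embedding of QUEs follows from the Radford construction (\S\ref{ss:rad-biprod}), which always produces a split pair $\EK\a\to\Rad{\HL}{\EK\a}\to\EK\a$, and the semiclassical limit of this split pair is $(\gums,\gdms)$ by Proposition \ref{le:q rel verma}(iii). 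This proves (i).

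For part (ii), the key input is the isomorphism $\wt v\resped:\FF{\a}{\b}^\hbar\circ\noEKeq{\b}\cong\noEKeq{\a}\circ\FF{\a}{\b}$ of Theorem \ref{thm:ek-fac}, which by Proposition \ref{prop:hGamma-res} is an isomorphism of tensor functors intertwining $\noEKff{\b}$ and $\noEKff{\a}\circ\noEKff{\a,\b}$ as fibre functors on $\hDrY{\gums}{\Phi}$ valued in $\vect_\sfK$. Applying Tannakian reconstruction (i.e. taking endomorphism bialgebras), an isomorphism of fibre functors intertwining the two tensor structures yields an isomorphism of the corresponding reconstructed bialgebras. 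On one side this bialgebra is $\EK\gums=\noEKff{\b}(\Mm)$ by \S\ref{ss:eksum}; on the other it is $\HB$ by the discussion above. The isomorphism produced this way is moreover compatible with the coalgebra maps $\noEKeq{\a}(\Lm)\simeq\Lmh$ and $\noEKeq{\b}\noEKeq{\a}(\Lm)\simeq\Lmh$ of Theorem \ref{thm:quantization-LN}, which is precisely what makes it restrict to the identity on $\EK\gdms$: the subalgebra $\EK\gdms\subset\HB$ is the image of the unit section $\EK\a\to\Rad{\HL}{\EK\a}$, and under $\wt v\resped$ this matches the subalgebra $\EK\gdms\subset\EK\gums$ coming from the split embedding $\Uha\hookrightarrow\Uhb$ because both are characterised as $\noEKeq{\a}$-images of the trivial \DYt $\a$--module structure on $\Lm$. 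Checking that $v\resped$ is simultaneously an algebra and coalgebra map reduces to the bialgebra compatibility already recorded in Proposition \ref{prop:tenprodgamma}, transported across $\noEKeq\a$ and across the Radford identification, together with the fact that $\noEKeq\b$ preserves the braiding so that the $R$--matrices (hence the coactions) correspond.

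I would organise the write-up as: (1) assemble diagram \eqref{eq:diag2} and identify the composite $\aDrY{\HB}\simeq\aDrY{\HL,\EK\a}\simeq$ (image of $\wt{\noEKff{\a}\noEKff{\a,\b}}$); (2) deduce $\HB\in\QUE$ quantising $\gums$ and split over $\EK\gdms$ from Theorem \ref{thm:braidhopf}(i), \S\ref{ss:rad-biprod}, and Proposition \ref{le:q rel verma}(iii) — this gives (i); (3) invoke Theorem \ref{thm:ek-fac}, rephrase $\wt v\resped$ via Proposition \ref{prop:hGamma-res} as an isomorphism of tensor fibre functors $\noEKff{\b}\cong\noEKff{\a}\circ\noEKff{\a,\b}$; (4) apply Tannakian reconstruction to obtain a Hopf algebra isomorphism $v\resped:\HB\simeq\EK\gums$; (5) check compatibility with the split structures using Theorem \ref{thm:quantization-LN} and the characterisations of $\EK\gdms$ on both sides.

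The main obstacle is step (4)--(5): making precise that ``an isomorphism of tensor fibre functors gives an isomorphism of reconstructed bialgebras'' in the present topological, $\sfK$--adic, non-finite-dimensional setting, and that it is automatically a bialgebra — not merely algebra or coalgebra — isomorphism compatible with the split embeddings. The subtlety is that $\EK\gums$ is reconstructed from $\noEKff{\b}$ using the Verma module $\Mm$, while $\HB$ is reconstructed using $\Lm$; bridging these requires the quantisation isomorphisms $\noEKeq\b\noEKeq\a(\Lm)\simeq\Lmh\simeq\noEKeq\b(M_-)$ (when $\a=0$ these collapse, but in general one must use Theorem \ref{thm:quantization-LN} together with the factorisation $U\gums\simeq L_-\otimes N_+$ at the quantum level), and tracking that the coproducts, the $R$--matrix-induced coactions, and the unit sections all match under $\wt v\resped$. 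This is where the bulk of the careful diagram-chasing lies, and it is essentially the content of Theorem \ref{thm:quantization-LN}, which is proved only later in \S\ref{se:rel prop}; so strictly this theorem is conditional on that, exactly as Theorem \ref{thm:ek-fac} is.
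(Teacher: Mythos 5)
Your route is essentially the paper's: both proofs are conditional on Theorem \ref{thm:quantization-LN} (via Theorem \ref{thm:ek-fac}), both obtain (i) from the biproduct construction of $\HB$, and both obtain (ii) by transporting structure along $\wt{v}\resped$ and comparing the algebras sitting inside $\sfEnd{\noEKff{\b}}$ and $\sfEnd{\noEKff{\a}\FF{\a}{\b}}$. There is, however, a concrete gap in your steps (4)--(5). The Tannakian comparison only produces an isomorphism of the (large, topological) endomorphism bialgebras of the two fibre functors; to land the statement $v\resped:\HB\simeq\EK\b$ you must still know that the distinguished subalgebra $\EK\b=\noEKff{\b}(\Mm)$ on one side corresponds to $\HB=\HL\star\EK\a$ on the other, and the mechanism the paper uses for this is Radford's structure theorem applied to the \emph{quantum} split pair $(\EK\b,\EK\a)$ (which is split by functoriality of quantisation): it gives $\EK\b\simeq\Lmh\star\EK\a$, where $\Lmh=L_{\EK\b,\EK\a}$ is the quantum relative Verma module of \S\ref{ss:relquantumverma1}; combined with the identification $\HL=\noEKeq{\a}\FF{\a}{\b}(\Lm)\simeq\noEKeq{\b}(\Lm)\simeq\Lmh$ (Theorem \ref{thm:quantization-LN}, with the algebra structures matched through the inclusions into the two endomorphism algebras, which $\wt{v}\resped$ identifies), this immediately yields $\EK\b\simeq\Rad{\Lmh}{\EK\a}\simeq\Rad{\HL}{\EK\a}=\HB$, restricting to the identity on $\EK\a$. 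You never state this decomposition of $\EK\b$, and the factorisation you invoke in its place, ``$U\b\simeq\Lm\otimes\Np$ at the quantum level,'' is not correct: $\Lm\otimes\Np\simeq U\Lmm\otimes U\Lpp$ is a factorisation of $U\gb$, the enveloping algebra of the double, not of $U\b$; the relevant classical statement is $U\b\simeq U\Lmm\otimes U\a$, whose quantum analogue is precisely the Radford decomposition above. For the same reason the chain ``$\noEKeq{\b}\noEKeq{\a}(\Lm)\simeq\Lmh\simeq\noEKeq{\b}(\Mm)$'' you write is false unless $\a=0$: by Proposition \ref{pr:quantumverma}, $\noEKeq{\b}(\Mm)\simeq\EK\b$, which strictly contains $\Lmh$ in general.

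Two smaller points. In (i), the assertion that the induced cobracket on $\HB/\hbar\HB\simeq U\b$ is the original one does not follow from Theorem \ref{thm:braidhopf}(i) and the mod--$\hbar$ identification of $\FF{\a}{\b}(\Lm)$ alone; it requires the $1$--jet of the composed twist $J_{\a}\circ J\resped$ to reproduce the $r$--matrix of $\b$, i.e.\ Proposition \ref{pr:jets-rel-twist} together with the known $1$--jet of the Etingof--Kazhdan twist, which is exactly how the paper settles (i). In (5), your characterisation of the two copies of $\EK\a$ as ``$\noEKeq{\a}$--images of the trivial \DYt $\a$--module structure on $\Lm$'' is not what identifies them; once the Radford decomposition of $\EK\b$ is in place, both copies are the unit sections of the respective biproducts, and the compatibility is automatic.
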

\begin{pf}
$(i)$ follows by construction. It is in fact immediate to
show that the $1$--jet of the composition of the twists $J\resped$ and 
$J_{\a}$ gives back the $r$--matrix of $\b$.\\
In order to prove $(ii)$, we observe that, by functoriality of the quantisation, 
$(\EK\b,\EK\a)$ is a split pair and, by Radford theorem, there is an isomorphism
of Hopf algebras $\EK\b\simeq\Rad{\Lmh}{\EK\a}$, where $\Lmh$ is the 
Verma module constructed in \S\ref{ss:relquantumverma1}.
Through $\wt{v}\resped$ we obtain an identification
\begin{equation}\label{eq:Liso-2}
\noEKeq{\a}\FF{\a}{\b}(\Lm)\simeq\noEKeq{\b}(\Lm)\simeq{\Lmh}
\end{equation}
as Hopf algebra objects in $\DrY{\EK\a}$. More specifically, it is
clear that \eqref{eq:Liso-2} preserves the structures of coalgebra and
\DYt $\EK{\a}$--module. The algebra structure on
$\Lmh\simeq\noEKeq{\b}(\Lm)$ is induced by its inclusion in 
$\EK\b\simeq\noEKeq{\b}(\Mm)$, which is a subalgebra in $\sfEnd{\noEKff{\b}}$.
Similarly, $\noEKeq{\a}\FF{\a}{\b}(\Lm)$ is a subalgebra in $\sfEnd{\noEKff{\a}\FF{\a}{\b}}$,
which is isomorphic to $\sfEnd{\noEKff{\b}}$ through $v\resped$.
Therefore, we obtain an isomorphism of Hopf algebras
\[
\EK\b\simeq\Rad{\Lmh}{\EK\a}\simeq\Rad{\noEKeq{\a}\FF{\a}{\b}(\Lm)}{\EK\a}=\HB,
\]
which restricts to the identity on $\EK\a$.
\end{pf}


\section{Universal constructions}\label{s:props}

We review in this section the $\PROP$ic quantisation of a Lie bialgebra
$\b$ following \cite{ek-2}. We then extend that description to the Tannakian
functor $\noEKeq{\b}:\hDrY{\b}{\Phi}\to\aDrY{\EK{\b}}$, and use it to give
an alternative proof that $\noEKeq{\b}$ is an equivalence.

\subsection{PROPs \cite{mac,La}}\label{ss:prop-intro} %

A $\PROP$ is a $\sfk$--linear, strict, symmetric monoidal category
$\C$ whose objects are the non--negative integers, and such that
$[n]\ten[m]=[n+m]$. In particular $[0]$ is the unit object, and $[1]
^{\ten n}=[n]$. A morphism between two $\PROP$s $\C,\D$ is a
symmetric tensor functor $\G:\C\to\D$, which is the identity on
objects, \ie $\G([n]_{\C})=[n]_{\D}$, $n\geqslant0$, and has a
trivial tensor structure.

\subsection{Associative algebras}\label{ss:associative}

Let $\preAlg$ be the $\PROP$ whose morphisms are generated
by two elements $\iota:[0]\to[1]$ (the unit) and $m:[2]\to[1]$ (the
multiplication) satisfying the relations
\begin{gather*}
m\circ(m\ten\id_{[1]})=m\circ(\id_{[1]}\ten m),\\
m\circ(\iota\ten\id_{[1]})=\id_{[1]}=m\circ(\id_{[1]}\ten\iota).
\end{gather*}

A (unital, associative) $\sfk$--algebra is the same as a symmetric tensor
functor  $\Alg\to\vectk$. More precisely, there is functor $\ev$ from the
category $\Fun^{\ten}(\Alg,\vectk)$ of symmetric tensor functors from
$\preAlg$ to $\vectk$, to the category $\mathsf{Alg}(\vectk)$ of $\sfk
$--algebras. The functor $\ev$ sends $(\G,J)$ to $\left(\G([1]),\G(m)
\circ J_{[1],[1]},\G(\iota)\right)$, and is easily seen to be an equivalence
of categories.

The equivalence $\mathsf{ev}$ may be restricted to an isomorphism as
follows. Choose a family $\mathbf{b}=\{b_n\}_{n\geq 2}$, where $b_n$
is a complete bracketing on $x_1\cdots x_n$, and restrict $\mathsf{ev}$
to the subcategory $\mathsf{Fun}^{\ten}_{\mathbf{b}}(\preAlg, \vectk)$
of symmetric tensor functors $(\G,J)$ such that
\begin{equation}\label{eq:F on n}
\G([n])=\G([1])^{\otimes n}_{b_n}
\end{equation}
and the following diagram is commutative
\begin{equation}\label{eq:J on n,m}
\xymatrix{
\G([m])\otimes \G([n]) \ar@{=}[d]\ar[rr]^{J_{[m],[n]}} && \G([m+n])\ar@{=}[d]\\
\G([1])^{\otimes m}_{b_m}\otimes \G([1])^{\otimes n}_{b_n}\ar[rr]_{\Phi}&& \G([1])^{\otimes(m+n)}_{b_{m+n}}
}
\end{equation}
where $\Phi$ is the corresponding associativity constraint
in $\vect$. Then, $\ev$ restricts to an isomorphism of categories 
\[\ev:\Fun_{\mathbf{b}}^\otimes(\Alg,\vectk)\to\preAlg(\vectk).\] 
Moreover, it is clear that, for any choices $\mathbf{b}, \mathbf{b}'$,
there is a canonical isomorphism $\Fun_{\mathbf{b}}^\otimes(\Alg,
\vectk)\to\Fun_{\mathbf{b}'}^\otimes(\Alg,\vectk)$. 

\subsection{Modules over a $\PROP$}

The discussion in \ref{ss:associative} may be extended to an arbitrary
$\PROP$ $\sfP$ as follows. Fix henceforth a choice ${\bf b}=\{b_n\}_
{n\geq 2}$ of bracketings. 

\begin{definition}
A module over $\sfP$ in a symmetric monoidal category
$\N$ is a symmetric tensor functor $\sfP\to\N$ such that \eqref{eq:F on n} and 
\eqref{eq:J on n,m} above hold, where $\Phi$ is the 
associativity constraint in $\N$. A morphism of modules over $\sfP$ is a natural
transformation of functors. The category of $\sfP$--modules is
denoted by $\Fun_{\bfb}^\otimes(\sfP,\N)$.
\end{definition}

In particular, by definition, a $\preAlg$--module in $\vectk$ is a $\sfk$--algebra.
The notion of $\sfP$--module can be rephrased in term of morphisms of
$\PROP$s (cf. \cite[\S2.4]{ee}). Namely, for any object $X\in\N$, we can
consider the $\PROP$ $\sfP_{\mathbf{b}, X}$ with morphisms 
\[
\sfP_{\mathbf{b}, X}([n],[m])=\Hom_{\N}(X^{\ten n}_{b_n}, X^{\ten m}_{b_m})
\]
Then, $X$ is a $\sfP$--module in $\N$ if and only if there is a morphism
of $\PROP$s $\sfP\to\sfP_{\mathbf{b}, X}$.

\subsection{Universal Constructions}
Let $\preLA$ be the $\PROP$ generated by a morphism $\mu:[2]
\to[1]$ (the bracket), subject to the relations
\[\mu\circ(\id_{[2]}+(1\,2))=0
\aand \mu\circ(\mu\ten\id_{[1]})\circ(\id_{[3]}+(1\,2\,3)+(3\,1\,2))=0\]
as morphisms $[2]\to[1]$ and $[3]\to[1]$ respectively. Thus, a
$\preLA$--module in $\vectk$ is $\sfk$--Lie algebra.

The fact that any associative algebra $A$ is naturally a Lie algebra
may be described in terms of $\PROP$s as follows. 
Let $(\G_{A},J_A)\in\mathsf{Fun}^{\ten}_{\bfb}(\preAlg,\vectk)$ be such that
$\G_{A}[1]=A$. The corresponding Lie algebra arises from the
composition
\[
\xymatrix{
\preLA\ar[r]^L & \preAlg \ar[r]^{(\G_{A},J_A)} & \vectk},
\]
where $L:\preLA\to \preAlg$ is the strict symmetric tensor functor
mapping $[1]_{\preLA}$ to $[1]_{\preAlg}$, and $\mu$ to $m-m\circ
(21)$. The functor from the category of associative algebras to the
category of Lie algebras is then realized as a morphism of $\PROP$
from $\preLA$ to $\preAlg$.

\subsection{\BCH multiplication}\label{ss:BCH}

The $\PROP$ $\preLA$ is not well--suited for the description of the adjoint
construction, which assigns to a Lie algebra $\g$ its universal enveloping
algebra $\Ug$, however. The latter is isomorphic to the symmetric algebra
$S\g$ of $\g$ as a vector space via the symmetrisation map $\sigma:S\g
\to\Ug$. The multiplication $m$ on $U\g$ can therefore be transported to
$S\g$ as the star product
\[\star:=
\sigma^{-1}\circ m\circ (\sigma\ten\sigma)=
\bigoplus_{k\leq i+j}m_{i,j}^k,\]
where $m_{i,j}^k: S^i\g\ten S^j\g\to S^k\g$ can be expressed in terms of
the bracket on $\g$ (see for example \cite{cor}). Indeed, $S\g$ is spanned
by the elements $Z^k$, $Z\in\g$, $k\in\IN$, and the corresponding generating
series $\exp_S(vZ)\in S\g[[v]]$ is mapped to $\exp_U(vZ)\in\Ug[[v]]$ by
$\sigma$. It follows that, for $X,Y\in\g$,
\[\exp_S(t X)\star\exp_S(sY)=
\sigma^{-1}\left(\exp_U(t X)\exp_U(t X)\right))=
\exp_S(B(tX,sY)),
\]
where $B$ is the \BCH Lie series. Thus,
\[ X^i\star Y^j=
\left.\partial_t^i\partial_s^j\exp_S(B(tX,sY))\right|_{t=s=0}.\]

To describe the above procedure in terms of $\PROP$s, ones needs to
construct the subobjects $S^k[1]\subset [k]$ in $\preLA$. This requires
replacing the $\PROP$ $\preLA$ with its Karoubi envelope.

\subsection{The Karoubi envelope}\label{ss:karoubi}

The Karoubi envelope of a category $\C$ is the category $\Kar{\C}$ whose
objects are pairs $(X,\pi)$, where $X\in\C$ and $\pi:X\to X$ is an idempotent.
The morphisms in $\Kar{\C}$ are defined as
\[
{\Kar{\C}}((X,\pi), (Y,\rho))=\{f\in\C(X,Y)\;|\; \rho\circ f=f=f\circ\pi\}
\]
with $\id_{(X,\pi)}=\pi$. In particular, 
\[
{\Kar{\C}}((X,\id),(Y,\id))={\C}(X,Y)
\]
and the functor $\C\to\Kar{\C}$, mapping $X\mapsto(X,\id)$, $f\mapsto f$, 
is fully faithful.

Every idempotent in $\Kar{\C}$ splits. Namely, if $q\in\Kar{\C}((X,\pi),(X,\pi))$
satisfies $q^2=q$, the maps
\[i=q:(X,q)\to(X,\pi)\aand p=q:(X,\pi)\to(X,q)\]
satisfy $i\circ p=q$ and $p\circ i=\id_{(X,q)}$. In particular, the Karoubi envelope
$\Kar{\C}$ of a $\PROP$ $\C$ contains the image of all the idempotents in $\sfk
\SS_n$. This allows to consider the objects $S^n[1]:=([n],\Sym_n)$ for any $n\geq
0$, where $\Sym_n$ is the symmetriser $\frac{1}{n!}\sum_{\sigma\in\SS_n}\sigma$.
If one further takes the closure $\cKar{\C}$ of $\Kar{\C}$ under possibly
infinite inductive limits, then $\cKar{\C}$ contains, in particular, the symmetric algebra 
$S[1]=\bigoplus_{n\geq0} S^n[1]$.
The construction of the universal enveloping algebra of a Lie algebra corresponds
to a functor $U: \preAlg\to \cKar{\LA}$ mapping $[1]_{\preAlg}$ to $S[1]_{\LA}$,
and $m$ to $\bigoplus_{k\leq i+j}m_{ij}^k$.

\subsection{Etingof--Kazhdan quantisation}\label{ss:EKpropq}
We consider the following categories
\begin{itemize}
\item the $\PROP$ $\LBA$ generated by morphisms $[\,,\,]:[2]\to[1]$ and
$\delta:[1]\to[2]$ such that $([1],[\,,\,],\delta)$ is a Lie bialgebra;
\item the $\hext{\sfk}$--linear category $\slbah$ with the
same objects as $\cKar{\LBA}$, and morphisms given by
\[\Hom_{\slbah}(V,W)=\Hom_{\cKar{\LBA}}(V,W)\fml;\]
\item the $\PROP$ $\BA$ generated by morphisms
$m:[2]\to[1]$, $\iota:[0]\to[1]$, $\Delta:[1]\to[2]$, and $\varepsilon:[1]\to[0]$,
such that $([1],m,\iota,\Delta,\varepsilon)$ is a bialgebra;
\item the $\PROP$ $\qcBA$ obtained by extending $\BA\fml$ with a morphism 
$\delta:[2]\to[1]$ satisfying the relation $\Delta-(1\,2)\circ\Delta=\hbar\delta$,
and modding out the torsion ideal (cf. \cite[\S4.1]{ee});
\item the topological $\PROP$ $\QUE$ obtained by completing $\qcBA$ with respect to the ideal generated by $(\id-\eta\circ\varepsilon)^{\ten n}\circ\Delta^{(n)}$.
\end{itemize}

The quantisation functor can be described in this context \cite
[Thm. 1.2]{ek-2}.

\begin{theorem}\label{thm:propEK}
There exists a universal quantisation functor 
\[Q:\QUE\longrightarrow\slbah\]
such that
\[
\begin{array}{cccl}
Q[B]&=&S[\b],&\\
Q(m)&=&m_0&\mod\hbar,\\
Q(\Delta)&=&\Delta_0&\mod\hbar,\\
\left.Q(\Delta-\Delta^{21})\right|_{[\b]}&=&\hbar\delta&\mod\hbar^2.
\end{array}
\]
where $[B]$ and $[\b]$ the generating objects in $\QUE$ and
$\LBA$, respectively, and $m_0, \Delta_0$ are the product and coproduct
of the enveloping algebra $U[\b]$.
\end{theorem}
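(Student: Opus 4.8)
The statement to prove is Theorem~\ref{thm:propEK}, the existence of a universal quantisation functor $Q:\QUE\to\slbah$ with the stated semiclassical behaviour. Since this is quoted from \cite[Thm.~1.2]{ek-2}, the plan is essentially to recall the Etingof--Kazhdan construction and recast it in the $\PROP$ic language set up in \S\ref{ss:prop-intro}--\S\ref{ss:EKpropq}.

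\textbf{Overall strategy.} The idea is to perform the entire Etingof--Kazhdan construction (fiber functor $\noEKff{\b}$, tensor structure $J$, Hopf algebra $\Uhb$) ``universally'', i.e.\ inside the Karoubi-completed $\PROP$ $\cKar{\LBA}$ rather than in $\vect_\sfk$. First I would package the data of the Drinfeld double and its Verma modules $\Mm,\Mpv$ as objects and morphisms of $\slbah$: using \S\ref{ss:karoubi}, the enveloping algebra $U[\b]$ and the symmetric algebra $S[\b]$ exist as objects of $\cKar{\LBA}$, and the double $\ga=\b\oplus\b^*$, being built from $[1]$ and a formal dual of $[1]$, can be modelled by working in a suitably enlarged $\PROP$ (the ``double'' $\PROP$ of \cite{ek-2}) whose morphisms still land in $\cKar{\LBA}$ after applying the duality functor $(V\ctp W)^*\cong V^*\ctp W^*$. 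The Verma modules $\Mm\simeq U[\b]$ and $\Mpv$ then become specific objects of $\slbah$, with the coalgebra/algebra structures $i_-, i_+^*$ of \S\ref{ss:vermaEK}--\ref{ss:EK fiber} as morphisms there.

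\textbf{Key steps in order.} (1) Construct the universal fiber functor: the assignment $V\mapsto\Hom(\Mm,\Mpv\ten V)$ makes sense internally, and on the generating object it produces $\noEKff{\b}(\Mm)\cong\Mm=U[\b]\cong S[\b]$ as an object of $\cKar{\LBA}$ (via the PBW/symmetrisation identification of \S\ref{ss:BCH}). (2) Construct the universal twist $J$: the formula for $J_{V,W}$ in \S\ref{ss:EK fiber} involves only the associator $\Phi(\hbar\Omega_{12},\hbar\Omega_{23})$, the braiding $\exp(\hbar\Omega/2)$, and the structure maps $i_\pm$; since $\Omega$ is the canonical element representing the pairing on $\ga$, it is expressible as a morphism in the double $\PROP$, so $J$ lifts to an invertible morphism in $\slbah$ (invertibility because it is the identity mod $\hbar$, as in the Theorem of \S\ref{ss:EK fiber}). (3) Transport the multiplication $m_{M_-}$, unit $u_-$, and coproduct $J_{M_-,M_-}^{-1}\circ\noEKff{\b}(i_-)$ of \S\ref{ss:eksum} into $\slbah$, obtaining morphisms $Q(m), Q(\iota), Q(\Delta), Q(\varepsilon)$ on $S[\b]$; the bialgebra axioms hold because they held in every $\vect_\sfk$-valued specialisation, or directly because the universal identities in $\cKar{\LBA}$ are detected by evaluation on free Lie bialgebras (a faithfulness/Etingof--Kazhdan-rigidity argument). (4) Check $Q$ factors through the $\PROP$ $\QUE$: the generating relation $\Delta-\Delta^{21}=\hbar\delta$ must be verified, which is exactly the $1$-jet computation of the Etingof--Kazhdan twist showing $\left.Q(\Delta-\Delta^{21})\right|_{[\b]}=\hbar\delta\bmod\hbar^2$ (cf.\ the infinitesimal computation analogous to Proposition~\ref{pr:jets-rel-twist} with $\a=0$), and the torsion-freeness needed to pass to $\qcBA$ follows since $S[\b]\fml$ is topologically free; completing with respect to the coproduct filtration gives the functor on $\QUE$. (5) Read off $Q(m)=m_0$, $Q(\Delta)=\Delta_0$ mod $\hbar$ from the fact that the twist and associator are trivial mod $\hbar$, so the quantum product/coproduct reduce to those of $U[\b]$.

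\textbf{Main obstacle.} The genuine difficulty is \emph{universality}: one must show that the Etingof--Kazhdan recipe, which a priori defines a Hopf algebra for each individual Lie bialgebra, actually arises from a single morphism of $\PROP$s $\QUE\to\slbah$ --- equivalently, that all the structure maps and all the bialgebra identities can be written purely in terms of the universal generators $[\,,\,],\delta$ without reference to a particular $\b$. The subtle points are (a) handling the \emph{infinite-dimensional} double $\b^*$ and the infinite sums (e.g.\ in $\Omega$, in $\Phi(\hbar\Omega_{12},\hbar\Omega_{23})$, in $\Mpv$) within a $\PROP$ framework --- this is why one passes to $\cKar{\LBA}$, its inductive-limit closure, and the topological completions, and why the ``double PROP'' of \cite{ek-2} is needed; and (b) proving that the universal identities hold, for which the cleanest route is to invoke that the functor $\mathsf{Fun}^\ten_{\bfb}(\cdot,\vect_{\sfk})$ evaluated on enveloping algebras of free Lie bialgebras is faithful enough to detect equality of morphisms in $\slbah$, so that identities verified pointwise (which is the content of \cite{ek-1}) upgrade to universal ones. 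I would present steps (1)--(5) in detail and reference \cite{ek-2} for the careful construction of the double $\PROP$ and the faithfulness argument, since reproducing that machinery in full is beyond the scope here.
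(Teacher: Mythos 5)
Your proposal stalls exactly at the point you yourself flag as the main obstacle, and it resolves it differently from — and less soundly than — the paper. First, you assert that the fiber functor $V\mapsto\Hom(\Mm,\Mpv\ten V)$ ``makes sense internally'' in $\slbah$; it does not. As the paper stresses (footnote to \S\ref{sss:prop-twist}), $\Hom_{\pb}^{\pb}(\PMm,\PMpv\ten V)$ is a vector space, not an object of the $\PROP$. The lift carried out in \S\ref{ss:EK PROP start}--\S\ref{sss:qpropmod} instead equips the \emph{tautological forgetful functor} $\hDrY{\pb}{\Phi}\to\slbah$ with a tensor structure, transported through the canonical isomorphism $\Hom_{\pb}^{\pb}(\PMm\ten V^0,\PMpv\ten W)\simeq\Hom_{\slbah}(V,W)$ and the distinguished morphisms $\eta_V$, and the Hopf structure maps on $Q[B]=S\pb$ are then written as explicit compositions in $\slbah$ (\S\ref{sss:HA-struct}). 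Second, you propose to model $\b^*$, $\Omega$ and $\Mpv$ inside an enlarged ``double'' $\PROP$ and then claim the resulting morphisms ``still land in $\cKar{\LBA}$''; this transfer is precisely what would need proof, and you give no argument for it. The paper avoids the double altogether: $\Omega_{UV}=r_{UV}+r_{VU}^{21}$ with $r_{UV}=\pi_U\ten\id\circ(1\,2)\circ\id\ten\pi_V^*$ is built from the \DYt structure maps alone; $\PMm=S\pb$ carries the CBH action and a coaction uniquely determined by compatibility; and $\PMpv$ is obtained by applying the duality involution $\Theta$ of $\LBA$ (exchanging bracket and cobracket) to $\PMm$ (\S\ref{sss:Mpv-prop}), so no dual object or double is ever introduced.

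Third, your fallback for establishing the universal bialgebra identities — verify them for each Lie bialgebra and upgrade via ``faithfulness on free Lie bialgebras'' — is not how the theorem is proved and is itself unjustified: a faithfulness statement for $\slbah$, which involves the Karoubi envelope, inductive limits and the $\hbar$--adic extension, is a nontrivial claim you would have to establish separately. In the paper (following \cite{ek-2}) the identities are checked directly in the $\PROP$, since the arguments of \S\ref{ss:eksum} only use the braided tensor structure of $\hDrY{\pb}{\Phi}$ and the universal properties of $\PMm,\PMpv$, all of which exist at the universal level. The peripheral items of your outline (invertibility of the twist mod $\hbar$, the $1$--jet computation giving $\hbar\delta$, reduction of the product and coproduct to $m_0,\Delta_0$) are fine, but as written the core mechanism making the construction $\PROP$ic is missing.
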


The functor $Q$ depends upon the choice of an associator $\Phi$, and is
equivalent to the construction of a Hopf algebra structure on $U\pb=S\pb$, 
which lifts to the category $\slbah$ the construction
described in \S\ref{ss:eksum}. We outline this lift in \S\ref{ss:EK PROP start}--\ref
{sss:qpropmod}
below.

\subsection{\DYt modules}\label{ss:EK PROP start}

Let $\DrY{\pb}$ be the strict, symmetric tensor category of
\DYt modules over the Lie bialgebra $[\b]$ in $\slbah$. For
any $U,V\in\hDrY{[\b]}{}$, define $r_{UV}\in\End_{\cKar{\LBA}\fml}(U\otimes V)$
and $\Omega_{UV}\in\End_{\hDrY{[\b]}{}}(U\otimes V)$ by
\[r_{UV}=\pi_{U}\otimes\id\circ (1\,2)\circ \id\otimes\pi^*_V
\aand
\Omega_{UV}=r_{UV}+r^{21}_{V,U}.\]
Let $\Phi$ be a Lie associator, and $\DrY{\pb}^\Phi$ the category
$\DrY{\pb}$ with deformed commutativity and associativity
respectively given by
\[\beta_{UV}=(1\,2)\circ \exp{\left(\frac{\hbar}{2}\Omega_{UV}\right)}
\aand
\Phi_{UVW}=\Phi(\hbar\Omega_{UV},\hbar\Omega_{VW}).
\]

\subsection{The Verma module $\PMm$}\label{sss:PMm}

The Verma modules $\Mm,\Mpv$ may be lifted to \DYt modules
$\PMm,\PMpv$ over $\pb$ in $\cKar{\LBA}$. $\PMm$ is
obtained as follows.
\begin{itemize}
\item As an object, $\PMm=S\pb$.
\item The action $\pi$ of $\pb$ on $\PMm$ is given by the
multiplication map
\[\bigoplus_{0\leq j\leq i+1}m_{1,i}^j: \pb\ten S\pb\to S\pb\]
obtained from the \BCH series described in
\ref{ss:BCH}.
\item The coaction $\pi^*$ of $\pb$ is uniquely determined 
by requiring that it be
\begin{itemize}
\item trivial on $S^0\pb$;
\item compatible with the action of $\pb$ via the relation 
\[
\quad\qquad\pi^*\circ\pi=
\id\ten\pi\circ(1\,2)\circ \id\ten\pi^*-
\id\ten\pi\circ\delta\ten\id+
[\,,\,]\ten\id\circ\id\ten \pi^*.
\]
\end{itemize}
Specifically, the above requirements uniquely determine
a map $\pi^*:\PMm\to\pb\otimes\PMm$, by induction on
the $\IN$--grading of $S\pb$. A diagrammatic computation
then shows that $\pi_*$ is a coaction of $\pb$.
\end{itemize}

\subsection{Universal property of $\PMm$}\label{ss:univ-pr-PMm}

\begin{lemma}
For any $[V]\in\DrY{\pb}$, there is an isomorphism
\begin{equation}\label{eq:univ-pr-PMm}
\Hom_{\pb}^{\pb}(\PMm,[V])\simeq\Hom^{\pb}([0],[V])
\end{equation}
which is natural in $[V]$.
\end{lemma}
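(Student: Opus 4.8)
The statement to prove is a universal property for the \DYt module $\PMm$ over the Lie bialgebra $\pb$ in the Karoubi-completed PROP $\cKar{\LBA}\fml$, namely that
\[
\Hom_{\pb}^{\pb}(\PMm,[V])\simeq\Hom^{\pb}([0],[V])
\]
naturally in $[V]\in\DrY{\pb}$. This is the PROP-ic counterpart of the isomorphism \eqref{eq:univ-prop-1} for $\MmB$ over a Hopf algebra, and of the classical fact (implicit in \S\ref{ss:gammafun-1}) that $\Hom_{\gu}(\Mm,\cdot)\simeq\Hom_{\gup}(\sfk,\cdot)$ by Frobenius reciprocity. So the approach is to mimic the proof of the Hopf-algebra case given in \S\ref{ss:quantum-verma}, but carried out entirely diagrammatically inside $\cKar{\LBA}\fml$, using the description of $\PMm$ in \S\ref{sss:PMm}.

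\textbf{The maps.}
First I would write down the two maps that will be mutually inverse. In one direction, given a morphism of \DYt modules $g\colon\PMm\to[V]$, set $\phi(g)=g\circ\iota$, where $\iota\colon[0]\to\PMm=S[\b]$ is the inclusion of $S^0[\b]$. Since the coaction $\pi^*$ on $\PMm$ is trivial on $S^0[\b]$ by the first bullet of \S\ref{sss:PMm}, the composite $\phi(g)$ is automatically a morphism of $[\b]$-comodules, i.e. lands in $\Hom^{\pb}([0],[V])$. In the other direction, given $f\in\Hom^{\pb}([0],[V])$, define $\psi(f)=\pi_{[V]}\circ(\id_{[\b]\text{-bialgebra part}}\otimes f)$ read through the identification $\PMm\simeq S[\b]$ with its action by the \BCH multiplication — concretely $\psi(f)=\pi_{[V]}^{\mathrm{iter}}\circ(\id_{\PMm}\otimes f)$ where $\pi_{[V]}^{\mathrm{iter}}\colon\PMm\otimes[V]\to[V]$ is the "multiplication-then-act" map obtained by expanding $S[\b]$ via the symmetrisation isomorphism and acting iteratively. (This is the PROP-ic avatar of $\psi(f)=\pi_V\circ\id\otimes f$ from \S\ref{ss:quantum-verma}.)

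\textbf{The two checks.}
There are then two things to verify, both by diagram calculus in $\cKar{\LBA}$ extended $\hbar$-adically. (1) $\psi(f)$ is a morphism of \DYt modules: that it respects the $[\b]$-action is the associativity/module-axiom for the action on $[V]$ together with the fact that the action on $\PMm$ is the free one generated by $\iota$; that it respects the coaction is precisely the compatibility relation imposed on $\pi^*$ in the second bullet of \S\ref{sss:PMm} — one checks that $\psi(f)$ intertwines the coaction inductively along the $\IN$-grading of $S[\b]$, which is exactly how $\pi^*$ on $\PMm$ was defined, so the verification is forced. (2) $\phi$ and $\psi$ are inverse: $\phi(\psi(f))=\psi(f)\circ\iota=\pi_{[V]}^{\mathrm{iter}}\circ(\iota\otimes f)=f$ by the unit axiom for the action (the unit $\iota$ acts as identity), and $\psi(\phi(g))=\pi_{[V]}^{\mathrm{iter}}\circ(\id_{\PMm}\otimes(g\circ\iota))=g$ because $g$ is a module map and $\PMm$ is generated over $\pb$ by the image of $\iota$, so $g$ is determined by $g\circ\iota$ together with $\pi_{[V]}$. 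Naturality in $[V]$ is immediate from the definitions.

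\textbf{Main obstacle.}
The one genuinely delicate point is that $\PMm=S[\b]$ is an \emph{infinite} inductive limit in $\cKar{\LBA}\fml$, so "generated over $\pb$ by $\iota$" and the inductive verification that $\psi(f)$ intertwines the coaction need to be phrased carefully using the $\IN$-grading $S[\b]=\bigoplus_{n\geq 0}S^n[\b]$ and the fact that the \BCH action map $\bigoplus_{0\leq j\leq i+1}m_{1,i}^j$ raises grading by at most one; one argues degree by degree and then passes to the limit. Everything else is a routine unwinding of the axioms, directly parallel to \S\ref{ss:quantum-verma}, and I do not expect any real surprises — the diagrammatic identities needed are exactly the \DYt axioms and the defining relations of $\PMm$, both of which are available by hypothesis.
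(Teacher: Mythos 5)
Your proposal is correct and follows essentially the same route as the paper: one defines $\phi(g)=g\circ\iota$ and $\psi(f)=\ol{\pi}_{[V]}\circ\id_{\PMm}\ten f$, where $\ol{\pi}_{[V]}$ is the extension of the $\pb$--action to $\PMm=S\pb$ via $\pi^{(n)}_{[V]}\circ\Sym_n$, and checks they are mutually inverse using the unit axiom and the freeness of $\PMm$ over itself (equivalently, that a $\pb$--module map is determined by its value on $\iota$), with the comodule compatibility of $\psi(f)$ forced by the inductive characterisation of the coaction on $\PMm$, exactly as you indicate.
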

\begin{pf}
We proceed as in the case of the quantum Verma module $M_B$
(cf. \S\ref{ss:quantum-verma}). $\PMm$ is a unital algebra, with
multiplication given by the BCH series $m_{\b}$, and unit given
by the inclusion $\iota:[0]\to\PMm$. $\PMm$ is therefore a module
over itself, with $\ol{\pi}_{\PMm}=m_{\b}$, and the action of $\pb$
on $\PMm$ is recovered by restriction to $\pb$, \ie $\pi_{\PMm}=
m_{\b}\circ\iota_{\b}\ten\id_{\PMm}$ where $\iota_{\b}$ is the
canonical inclusion $\iota_{\b}:\pb\to\PMm$. 
Similarly, any action $\pi_{[V]}:\pb\ten[V]\to[V]$ extends to an
action $\ol{\pi}_{[V]}:\PMm\ten[V]\to[V]$ of $(\PMm,m_{\b},\iota)$
on $[V]$, which is defined on $S^n[\b]$ by $\pi^{(n)}_{[V]}\circ
\Sym_n$, where $\pi^{(n)}_{[V]}$ denotes the $n$th iterated action.

The description of \eqref{eq:univ-pr-PMm} is the following. 
In one direction, for any Drinfeld--Yetter morphism
$g:\PMm\to[V]$, one obtains a $\pb$--comodule map 
$\phi(g)=g\circ\iota:[0]\to[V]$. Conversely, for any morphism
of comodules $f:[0]\to[V]$, one defines a module map 
$\psi(f)=\ol{\pi}_{[V]}\circ\id_{\PMm}\ten f:\PMm\to[V]$.
One has $\psi\circ\phi(g)=g$, since $\PMm$ is
free of rank one on itself (\ie $\ol{\pi}_{\PMm}\circ\id_{\PMm}\ten\iota=\id_{\PMm}$),
and $\phi\circ\psi(f)=f$, since $\ol{\pi}_{[V]}$ is an action map
(\ie $\ol{\pi}_{[V]}\circ\iota\ten\id_{[V]}=\id_{[V]}$). 
\end{pf}

The same argument shows that, if $\b$ is a Lie bialgebra, and 
$\G_\b:\LBA\to\vect_\sfk$ a symmetric tensor functor such
 that $\G_\b\pb=\b$, then $\G_\b(\PMm)$ is a solution of the 
 universal property 
 \begin{equation*}
\Hom_{\b}^{\b}(\G_\b(\PMm),V)\simeq\Hom^{\b}(\sfk,V)
\end{equation*}
for any $V\in\DrY{\b}$.
In particular, $\G_\b(\PMm)$ is the \DYt module
$\Mm=\ind_{\b^*}^{\gb}\sfk$ over $\b$. Alternatively,
the result follows by identifying $\G_\b(\PMm)$ and
$\Mm$ as modules over $\pb$, and using the fact
that the coaction of $\pb$ on $\Mm$ is uniquely determined.

\subsection{Dualising Drinfeld--Yetter modules}

The $\PROP$ic description of the module $\Mpv$ given in \S
\ref{sss:Mpv-prop} below relies on the following considerations. 

Let $V$ be a \DYt module over a Lie bialgebra $(\b,[\,,\,],\delta)$
with left action and right coaction
\[\pi_V:\b\otimes V\to V\aand\pi^*_V:V\to\b\otimes V.\]
Then, the dual vector space $V^*$ is a \DYt module over the dual
(topological) Lie bialgebra $(\b^*,\delta^t,[\,,\,]^t)$, with action and
coaction given by
\begin{equation}\label{eq:flip dual DY}
\rho_{V^*}=-(\pi_V^*)^t:\b^*\otimes V^*\to V^*\aand
\rho^*_{V^*}=-\pi_V^t:V^*\to\b^*\otimes V^*.
\end{equation}

If $\b$ if finite--dimensional, the corresponding functor $\DY{\b}\to
\DY{\b^*}$ is readily seen to coincide with the duality enodfunctor
on $\Rep(\gb)$, via the identifications $\DY{\b}{}=\Rep(\gb)=\DY
{\b^*}$.

To lift this to the $\PROP$ic setting, notice the following.
\begin{itemize}
\item Let $\Theta:\LBA\to\LBA$ be the strict, contravariant
symmetric tensor functor mapping $[n]$ to $[n]$, a permutation
$\sigma\in\LBA([n],[n])$ to $\sigma^{-1}$, and permuting the bracket
$[\,,\,]:[2]\to[1]$ and the cobracket $\delta:[1]\to[2]$. Since
$\Theta\circ\Theta$ is the identity functor, $\Theta$ is an
isomorphism of $\LBA$ onto its opposite category.
\item $\Theta$ is a $\PROP$ic lift of the (symmetric, tensor)
duality functor $\sfD:\vect_\sfk\to\vect_\sfk$ in the following
sense. If $\G_\b:\LBA\to\vect_\sfk$ is a symmetric tensor
functor such that $\G_\b[1]=\b$, the functor $\G_{\b^*}=D\circ
\G_\b\circ\Theta$ maps $[1]$ to the Lie bialgebra $\b^*$
and, evidently, makes the following a commutative diagram
\begin{equation}\label{eq:Theta D}
\xymatrix{
\LBA \ar[r]^\Theta \ar[d]_{\G_\b} & \LBA \ar[d]^{\G_\b^*}\\
\vect_\sfk\ar[r]_\sfD & \vect_\sfk
}
\end{equation} 
\item If $(V,\pi_V,\pi^*_V)\in\DrY{\b}$ is $\PROP$ic, that is
of the form $\G_\b([V],\pi,\pi^*)$, where $[V]\in\DrY{\pb}$,
then so is $V^*$ as a \DYt module over $\b^*$. Specifically,
\begin{equation}\label{eq:V* PROP}
(V^*,\rho_{V^*},\rho_{V^*}^*)=
\G_{\b^*}\left(\Theta[V],\Theta(-\pi^*),\Theta(-\pi)\right)
\end{equation}
as follows from \eqref{eq:flip dual DY} and \eqref
{eq:Theta D}.
\end{itemize}

\subsection{The Verma module $\PMpv$}\label{sss:Mpv-prop}

Let $\b$ be a \fd Lie bialgebra. Since the action and coaction
of $\b^*$ on the Verma module $\Mp=\ind_{\b}^{\gb}\sfk$ are
described by the same formulae as those of $\b$ on $\Mm=
\ind_{\b^*}^{\gb}\sfk$, \eqref{eq:flip dual DY} may be used to
describe $\Mpv=\Mp^*$ as a \DYt module over $\b=(\b^*)^*$.
It then follows from \eqref{eq:V* PROP} that the \DYt module
$\Mpv$ possesses a lift to the $\PROP$ $\cKar{\LBA}$, given
by
\begin{equation}\label{eq:PMpv}
(\PMpv,\pi_+,\pi_+^*)=
\left(\Theta\PMm,\Theta(-\pi_-^*),\Theta(-\pi_-)\right),
\end{equation}
where $\pi_-,\pi_-^*$ are the action and coaction on $\PMm$.
In other words, $\PMpv$ is obtained from $\PMm$ by exchanging
action and coaction, permuting brackets and cobrackets, reversing
the order of operations, and applying a minus sign, as stated
in \cite[\S 1.4]{ek-2}. Note in particular that the application of
the functor $\Theta$ turns the ind--object $\PMm=S\pb$ into
the pro--object $\PMpv=\wh{S\pb}$.

Let now $\b$ an arbitrary Lie bialgebra, and $\G_\b:\LBA\to\vect
_\sfk$ a symmetric tensor functor such that $\G_\b\pb=\b$. We
show in \S\ref {ss:univ-pr-PMp} that $\G_\b\PMpv$ satisfies the
same universal property as $\Mpv$. It follows that $\PMpv$ is
$\PROP$ic lift of the module $\Mpv$ for any $\b$. Alternatively,
to show that $\G_{\b}\PMpv=\Mpv$ in $\DrY{\b}$, it suffices to
prove that their duals are equal in $\DrY{\b^*}$. By \eqref{eq:Theta D}
and Definition
\eqref{eq:PMpv},
\[\G_{\b}\PMpv^*=\G_{\b^*}\PMm=\wh{\Mp}\] 
where $\wh{\Mp}$ is the completion of the (algebraic) symmetric
algebra of $\b^*$ \wrt the weak topology. The action of $\b^*$
on $\wh{\Mp}$ is given by the CBH product, and its coaction is
uniquely determined by the fact that it kills the generating vector,
and is compatible with the action. Since $(\Mpv)^*$ also
satisfies these properties, the two coincide.

\subsection{Universal property of $\PMpv$}\label{ss:univ-pr-PMp}

\begin{lemma}\hfill
\begin{enumerate}
\item For any $[V]\in\DrY{\pb}$, there is an isomorphism
\[\Hom_{\pb}^{\pb}([V],\PMpv)\simeq\Hom^{\pb}([V],[0])\]
which is natural in $[V]$.
\item If $\b$ is a Lie bialgebra, and $\G_\b:\LBA\to\vect_\sfk$
a symmetric tensor functor such that $\G_\b\pb=\b$, then
$\G_\b(\PMpv)$ is a solution of the universal property
\[\Hom_{\b}^{\b}(V,\Mpv)\simeq\Hom^{\b}(V,\sfk)\]
for any $V\in\DrY{\b}$.
\end{enumerate}
\end{lemma}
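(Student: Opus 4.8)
The plan is to derive both statements from the corresponding properties of $\PMm$ proved in \S\ref{ss:univ-pr-PMm}, by passing to duals, that is by applying the contravariant symmetric tensor functor $\Theta$, which carries $\PMm$ to $\PMpv$ by \eqref{eq:PMpv}.

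For (1), recall that $\Theta$ extends to a contravariant symmetric tensor autoequivalence of a suitable completion of $\cKar{\LBA}\fml$ containing both the ind--object $S\pb$ and the pro--object $\wh{S\pb}$, and, by \eqref{eq:flip dual DY}--\eqref{eq:V* PROP}, that it sends a Drinfeld--Yetter $\pb$--module $([V],\pi,\pi^*)$ to the Drinfeld--Yetter $\pb$--module $(\Theta[V],\Theta(-\pi^*),\Theta(-\pi))$, interchanging the action and the coaction. In particular $\Theta$ exchanges morphisms of $\pb$--modules with morphisms of $\pb$--comodules, fixes $[0]$ with its trivial structures, and identifies $\PMm$ with $\PMpv$. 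Applying $\Theta$ to the natural isomorphism $\Hom_{\pb}^{\pb}(\PMm,[W])\simeq\Hom^{\pb}([0],[W])$ of Lemma \ref{ss:univ-pr-PMm}, setting $[V]=\Theta[W]$, and using that $\Theta\circ\Theta\cong\id$ (so that $\Theta$ is essentially surjective), one obtains precisely the natural isomorphism of (1). Alternatively, one may repeat the proof of Lemma \ref{ss:univ-pr-PMm} verbatim with the roles of (co)action, (co)multiplication and (co)unit interchanged throughout: $\PMpv=\wh{S\pb}$ is the coalgebra object of $\DrY{\pb}$ dual to the algebra $\PMm$, it is cofree of rank one over itself, and the distinguished covector $\varepsilon_{\PMpv}=\Theta(\iota_{\PMm})$ supplies the universal arrow, whose inverse is reconstructed from the cofreeness exactly as in \S\ref{ss:univ-pr-PMm}.

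Statement (2) then follows by transporting (1) along the symmetric tensor functor $\G_\b$, precisely as in the second half of \S\ref{ss:univ-pr-PMm}: being symmetric tensor, $\G_\b$ carries the coalgebra structure, the cofreeness and the universal arrow of $\PMpv$ to the corresponding data of $\G_\b(\PMpv)$, which therefore satisfies the stated universal property for every $V\in\DrY{\b}$; and $\G_\b(\PMpv)=\Mpv$ in $\DrY{\b}$ by \S\ref{sss:Mpv-prop}.

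The main point deserving care is the bookkeeping attached to $\Theta$: one must check that $(\Theta[V],\Theta(-\pi^*),\Theta(-\pi))$ really is a Drinfeld--Yetter module over $\pb$ — and not over an ``opposite'' structure — which is exactly where the fact that $\Theta$ interchanges bracket and cobracket, combined with the invariance of the Drinfeld--Yetter compatibility \eqref{eq:act-coact-lba} under this interchange together with the sign in \eqref{eq:flip dual DY}, enters; and one must keep track of the fact, already noted after \eqref{eq:PMpv}, that $\Theta$ turns the ind--object $S\pb$ into the pro--object $\wh{S\pb}$, so that the universal statement (1) is most safely phrased in a category of pro--objects, while (2) takes place honestly in $\vect_\sfk$ once $\G_\b$ has been applied.
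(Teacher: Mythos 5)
Your proposal is correct, and your ``alternatively'' branch is in fact the paper's own proof: the paper argues exactly by dualising \S\ref{ss:univ-pr-PMm}, observing that $\PMpv=\wh{S\pb}$ is a counital coalgebra with comultiplication $\Theta(m_{\b})$ and counit $\varepsilon=\Theta(\iota)$, that it is cofree of rank one over itself, and that any coaction $\pi^*_{[V]}$ extends to a coaction $\ol{\pi}^*_{[V]}=\sum_n\Sym_n\circ(\pi^*)^{(n)}_{[V]}$ of $\PMpv$; the mutually inverse maps are then $\phi(g)=\varepsilon\circ g$ and $\psi(f)=\id\ten f\circ\ol{\pi}^*_{[V]}$, and part (2) is the same argument run after applying $\G_\b$, which is legitimate precisely because the extension $\ol{\pi}^*_V$ is given by the same formula for an arbitrary $V\in\DrY{\b}$, not only for propic ones. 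Your lead argument --- applying the contravariant involution $\Theta$ directly to the isomorphism $\Hom_{\pb}^{\pb}(\PMm,[W])\simeq\Hom^{\pb}([0],[W])$ and using $\Theta\circ\Theta\cong\id$ together with \eqref{eq:flip dual DY}--\eqref{eq:PMpv} --- is a genuinely different (and shorter) route for part (1); what it buys is that no cofreeness computation needs to be redone, at the price of the bookkeeping you correctly flag: one must work in a completion of $\cKar{\LBA}$ containing both ind-- and pro--objects, and check that the flipped triple is again a \DYt module over $\pb$ (not an opposite structure), which is where the interchange of bracket and cobracket and the signs enter; note also that this route by itself only yields (1), so for (2) you still need the transport-plus-rerun argument, as you do. One small logical point: in (2) the citation of \S\ref{sss:Mpv-prop} for $\G_\b(\PMpv)\cong\Mpv$ is not needed and is potentially circular, since the paper deduces that identification \emph{from} the present lemma (the independent argument in \S\ref{sss:Mpv-prop} via duals saves you, but the cleaner reading of (2) is simply that $\G_\b(\PMpv)$ represents the functor $V\mapsto\Hom^{\b}(V,\sfk)$, with no reference to $\Mpv$ required).
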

\begin{pf}
We argue as in the case of the quantum Verma module $M_B^
{\vee}$ (cf. \S\ref{ss:quantum-verma-p}--\ref{ss:Mpv}). $\PMpv=
\wh{S}\pb$ is a counital coalgebra with comultiplication $\Delta_
{\b}=\Theta(m_{\b})$, and unit given by the canonical projection
$\varepsilon=\Theta(\iota):\PMpv\to[0]$. Therefore it is a comodule
over itself, with $\ol{\pi}^*_{\PMpv}=\Theta(m_{\b})^{\scs\operatorname
{op}}$, and the coaction of $\pb$ on $\PMm$ is recovered by projection
to $\pb$, \ie $\pi^*_{\PMm}=p_{\b}\ten\id_{\PMpv}\circ\Delta_{\b}
^{\scs\operatorname{op}}$ where $p_{\b}$ is the canonical projection
$p_{\b}=\Theta(\iota_{\b}):\PMpv\to\pb$. Similarly, any coaction
$\pi^*_{[V]}:[V]\to\pb\ten[V]$ extends to a coaction of $(\PMpv,
\Delta_{\b},\varepsilon)$ on $[V]$, $\ol{\pi}_{[V]}^*:[V]\to\PMpv\ten[V]$, 
given by $\sum_{n\geqslant0}\Sym_n\circ(\pi^*)^{(n)}_{[V]}$, where
$(\pi^*)^{(n)}_{[V]}$ denotes the $n$th iterated coaction.

(i) For any \DYt morphism $g:[V]\to\PMpv$, one obtains a $\pb$--module
map $\phi(g)=\varepsilon\circ g:[V]\to[0]$. Conversely, for any morphism
of modules $f:[V]\to[0]$, one defines a module map 
$\psi(f)=\circ\id_{\PMpv}\ten f\circ \ol{\pi}^*_{[V]}:[V]\to\PMpv$.
One has $\psi\circ\phi(g)=g$, since $\PMpv$ is
cofree of rank one on itself, \ie $\id_{\PMpv}\ten\varepsilon\circ\ol{\pi}_{\PMpv}^*
=\id_{\PMpv}$, and $\phi\circ\psi(f)=f$, since $\ol{\pi}^*_{[V]}$ is a coaction
map (\ie $\varepsilon\ten\id_{[V]}\circ\ol{\pi}^*_{[V]}=\id_{[V]}$). 
(ii) Follows by the same argument.
\end{pf}

\subsection{Fiber functor} \label{sss:prop-twist}

There is a tensor structure on the tautological forgetful functor $\sff_
{[\b]}:\hDrY{\pb}{\Phi}\to\slbah$. The latter is obtained by imitating the
construction of the fiber functor $\noEKff{\b}$ defined in \S\ref{ss:EK fiber},
once the latter has been identified with the forgetful functor $\sff_\b:
\hDrY{\b}{\Phi}\to\vect_{\sfk\fml}$.\footnote{The functor $\noEKff{\b}
(V)=\Hom_{\b}^{\b}(\Mm,\Mpv\wh{\ten}V)$ itself is not $\PROP$ic
since, for $V\in\hDrY{\pb}{\Phi}$, $\Hom_{\pb}^{\pb}(\PMm,\PMpv
\ten V)$ is a vector space, not an object in $\slbah$.}

Specifically, the modules $\PMm,\PMpv$ satisfy,
for any $V,W\in\hDrY{\pb}{}$ 
\begin{equation}\label{eq:thetaEK}
\Hom_{\pb}^{\pb}(\PMm\ten V^0, \PMpv\ten W)\simeq\Hom_{\cKar{\LBA}\fml}(V,W),
\end{equation}
where $V^0$ denotes the object $V$ with trivial action and coaction. Let
$\psi_V: \PMm\ten V^0\to\PMpv\ten V$ be the morphism corresponding to
$\id_V$ under \eqref{eq:thetaEK}, and let $\eta_V\in\Hom_{\cKar{\LBA}\fml}(V,
\PMpv\ten V)$ be defined by $\eta_V=\psi_V\circ 1_-\otimes\id_V$, where
$1_-:[0]\to\PMm$ is the canonical inclusion in $S\pb$.

The tensor structure on the forgetful functor from $\hDrY{\pb}{\Phi}$
to $\slbah$ is then given by
\[
J_{VW}=(1_+\ten\id)^{\ten 2}\circ A_{\Phi}\circ \eta_V\ten \eta_W
\in
\End_{\cKar{\LBA}\fml}(V\otimes W),
\]
where $V,W\in\hDrY{\pb}{\Phi}$, and $A_{\Phi}$ is the composition of
associativity and commutativity constraints defined in \S\ref{ss:EK fiber}.

\subsection{Quantisation of $U[\b]$}\label{sss:HA-struct}

The object $\PMm$ has a structure of algebra and coalgebra with unit
$\uh$ and counit $\couh$ given by the inclusion of and the projection
to $[0]$, respectively, and with product and coproduct given by the
maps
\[\mh=1_+\ten 1_+\ten\id\circ\id\ten\psi_{\PMm}\circ\Phi\circ\eta_{\PMm}\ten\id
\aand
\Dh=J^{-1}_{\PMm,\PMm}\circ\Delta_0,\]
where $1_+$ is the canonical projection of $\PMpv=\wh{S\pb}$ onto
$[0]$, and
$\Delta_0$ is the standard coproduct on $\PMm=S\pb$. Moreover,
$\PMm$ acts on any $V\in\hDrY{\pb}{\Phi}$, with action
\[\rho_V=1_+\ten 1_+\ten\id\circ\id\ten\psi_{V}\circ\Phi\circ\eta_{\PMm}\ten\id\]
satisfying $\rho_V\circ \mh\ten\id = \rho_V\circ (\id\ten \rho_V)$ and
\begin{align*}
\rho_{V\ten W}=J_{VW}\circ\rho_V\ten\rho_W\circ(2\,3)\circ\Dh\ten J_{VW}^{-1}.
\end{align*}
Since the bialgebra structure is a deformation of that on $U\pb$,
it follows that $(\PMm,\mh,\Dh)$ admits a Hopf algebra structure
(with antipode $\Sh$), and we set
\[Q([B], m, \iota, \Delta, \epsilon, S)=(\PMm, \mh, \uh, \Dh, \couh, \Sh).\]
It is easy to check that $Q[B]$ is a quantisation of the Lie bialgebra $\pb$
and gives rise to a functor $Q:\QUE\to\slbah$.

\subsection{Tannakian lift of $\sff_{\pb}$}\label{sss:qpropmod}

$\PMm$ coacts on any $V\in\hDrY{\pb}{\Phi}$, with coaction
\[\rho_V^*=R^J\circ\uh\ten\id_V,\]
where $R^J$ is the twisted $R$--matrix defined by $R^J=(1\,2)J_{V,W}^{-1}
\beta_{V,W}J_{V,W}$. The fiber functor $\sff_{[\b]}$ then lifts to
a braided monoidal functor $\wt{\sff}_{\pb}:\hDrY{\pb}{\Phi}\to\aDrY{Q[B]}$,
where the latter is the category of \DYt modules over the Hopf algebra
$Q[B]$ in $\cKar{\LBA}\fml$.

The functor $\wt{\sff}_{\pb}$ is a propic version of the Tannakian equivalence
$\noEKeq{\b}$. More precisely, any symmetric tensor functor $\G_{\b}:\LBA
\to\vectk$, which maps $\pb$ to a Lie bialgebra $\b$ gives rise to functors
\[\G_{\b}^\hbar:\hext{\cKar{\LBA}}\to\vect_{\sfk\fml},
\qquad
\G^{\DrY{}}_{\b}:\hDrY{\pb}{\Phi}\to\hDrY{\b}{\Phi}
\aand
\G^{\DrY{}}_{\EK\b}:\aDrY{Q[B]}\to\aDrY{\EK\b},
\]
where $\G_{\b}^{\hbar}(\pb)=
\hext{\b}$, $\G_{\b}^{\hbar}(\mu)=\mu$, $\G_{\b}^{\hbar}(\delta)=\delta$,
and $\aDrY{Q[B]}$ is the $\PROP$ defined in \S\ref{sss:col-prop-adm},
such that
\[
\xymatrix{
 & \aDrY{Q[B]} \ar[r]^{\G^{\DrY{}}_{\EK\b}} \ar@{-->}[dd] & \aDrY{\EK\b} \ar[dd]\\
\hDrY{\pb}{\Phi} \ar[dr]^{\sff_{\pb}} \ar[ur]^{\wt{\sff}_{\pb}} \ar[r]^{\G^{\DrY{}}_{\b}} 
& \hDrY{\b}{\Phi} \ar[dr]^{\sff_\b}\ar[ur]^{\wt{\sff}_\b} & \\
&\hext{\cKar{\LBA}}  \ar[r]^{\G_{\b}^{\hbar}} & \vectk
}
\] 
is a commutative diagram of tensor functors, where $\wt{\sff}_\b\cong
\noEKeq{\b}$ is obtained through the isomorphism $\sff_\b\cong F_\b$.

\subsection{The equivalence $\wt{\sff}_{\b}$}\label{sss:qpropmod-2}

We now sketch an alternative proof of Theorem \ref{thm:EK-equiv} that
the functor $\wt{\sff}_{\b}:\hDrY{\b}{\Phi}\to\aDrY{\EK\b}$ is a braided
tensor equivalence.

It was shown in \cite[Cor. 6.4]{ee} that the existence of a quantisation
functor $Q:\QUE\to\slbah$, together with Hensel's Lemma, imply that
$Q$ gives rise to an isomorphism of $\QUE$ and the $\PROP$ $\hext
{\cP}$ of co--Poisson universal enveloping algebras.
We shall adopt a similar strategy to show that $\sff_{\b}$ is an equivalence.

\subsubsection{}\label{sss:col-prop-adm}

Consider the \emph{colored} $\PROP$s (see \S\ref{ss:plba}) $\DrY{\LBA}$
and $\aDrY{\QUE}$ describing, respectively, a \DYt module over a Lie
bialgebra, and an admissible \DYt module over a quantised enveloping
algebra. 

The $\PROP$ $\DrY{\LBA}$ is generated by a Lie bialgebra object
$\pb$, together with an object $[V]$ endowed with the structure of
\DYt module over $\pb$. There is a canonical equivalence of categories 
\begin{equation}\label{eq:prop-eq-1}
\hDrY{\b}{\Phi}\simeq
\{\F\in\mathsf{Fun}_{\bf b}^{\ten}(\DrY{\LBA},\Kvect)\;|\; \F\pb=\b\}.
\end{equation}
The $\PROP$ $\aDrY{\QUE}$ is generated by a QUE object $[B]$,
together with a \DYt module $[V]$ over $[B]$, whose coaction factors
through the $\QQFSH$ subalgebra of $[B]$. This last condition is
encoded by requiring the existence of a map $\Xi_{[V]}:[V]\to[B]\ten[V]$
such that 
\[
(\id-\eta_{[B]}\circ\epsilon_{[B]})\ten\id_{[V]}\circ\pi_{[V]}^*=\hbar\,\Xi_{[V]}
\]
and modding out the torsion ideal.
In particular, it follows, as in the proof of Proposition \ref{prop:adm-cond},
\[
\left((\id-\eta_{[B]}\circ\epsilon_{[B]})^{\ten n}\circ\Delta^{(n)}\right)\ten\id_{[V]}\circ\pi_{[V]}^*
=\hbar^n\,\Xi_{[V]}^{(n)},
\]
where $\Xi_{[V]}^{(n)}:[V]\to[B]^{\ten n}\ten[V]$ is the $n$th iteration of $\Xi_{[V]}$.
Therefore, for any quantised universal
enveloping algebra $B$, there is an equivalence
\begin{align}\label{eq:prop-eq-2}
\aDrY{B}\simeq
&\;\{\F\in\mathsf{Fun}_{\bf b}^{\ten}(\aDrY{\QUE},\Kvect)\;|\; \F[B]=B\}.
\end{align}

\subsubsection{}

The formulae from \S\ref{sss:HA-struct} and \S\ref{sss:qpropmod} allow to
extend the quantisation functor $Q:\QUE\to\LBA\fml$ to a tensor functor $\wt{Q}:
\aDrY{\QUE}\to\hext{\DrY{\LBA}}$. 
Under the equivalences \eqref{eq:prop-eq-1}--\eqref{eq:prop-eq-2}, the braided 
tensor functor $\wt{\sff}_{\b}$ corresponds to the pullback of $\wt{Q}$.

Much like $Q$, the functor $\wt{Q}$ is not essentially surjective since
$\hext{\DrY{\LBA}}$ has many more objects. To remedy this, one can
introduce, by analogy with \cite[Cor. 6.4]{ee}, a $\PROP$ $\aDrY{\cP}$
describing the notion of \DYt module over a co--Poisson universal enveloping
algebra.

The presentation of $\aDrY{\cP}$ is tailored to describe the
subcategory of $\DrY{\LBA}$ generated by the $\cP$--module $S\pb$
and the object $[V]$. The latter is endowed with an action map $\mu
_{[V]}: S\pb\ten[V]\to[V]$ given by
\[
\mu_{[V]}=\bigoplus_{n\geq0}\pi_{[V]}^{(n)}\circ \Sym_{n}\ten\id_{[V]},
\]
where $\Sym_{n}:S^n\pb\to\pb^{\otimes n}$ is the inclusion, and the
maps $\pi_{[V]}^{(n)}:\pb^{\otimes n}\otimes[V]\to[V]$ are recursively
defined by $\pi_{[V]}^{(1)}=\pi_{[V]}$, with $\pi_{[V]}$ the action of
$\pb$ on $[V]$, and, for $n\geq 2$, $\pi_{[V]}^{(n)}=\pi_{[V]}\circ\id_{\pb}
\ten\pi_{[V]}^{(n-1)}$. It is also endowed with a family of \emph{coaction}
maps $\xi_{[V]}^{(n)}:[V]\to S\pb\ten[V]$
\[
\xi_{[V]}^{(n)}=i_n\circ\Sym_n\ten\id_{[V]}\circ(\pi_{[V]}^*)^{(n)},
\]
where $(\pi_{[V]}^*)^{(n)}:[V]\to\pb^{\otimes n}\otimes[V]$ is the
iterated coaction, $\Sym_n:\pb^{\otimes n}\to S^n\pb$ the projection,
and $i_n:S^n\pb\hookrightarrow S\pb$ the inclusion. The relations
satisfied by $\mu_{[V]}$ and $\xi_{[V]}^{(n)}$ are deduced from
those in $\DrY{\LBA}$.

\subsubsection{}

The functor $\wt{Q}$ restricts to a morphism of $\PROP$s $\aDrY
{\QUE}\stackrel{\sim}{\longrightarrow}\hext{\aDrY{\cP}}$ which is
essentially surjective by construction and fully faithful by Hensel's
Lemma. Finally, since the category of \DYt $\b$--modules is
equivalent to the category of $\aDrY{\cP}$--modules, \ie
\begin{align}
\nonumber
\hDrY{\b}{}&\simeq
\{\F\in\mathsf{Fun}_{\bf b}^{\ten}(\aDrY{\cP},\vect)\;|\; \F(S\pb)=S\b\},
\end{align}
one concludes that the pullback of $\wt{Q}$, and therefore the functor $\wt{\sff}_{\b}$
from $\hDrY{\b}{\Phi}$ to $\DrY{\EK\b}$, 
is an equivalence of braided tensor categories.


\section{Universal relative constructions}\label{se:rel prop}

In this section, we show that the quantisations of the Verma modules $\Lm,\Npv$
are isomorphic to their quantum counterparts $\Lmh,\Npvh$, thus proving Theorem
\ref{ss:relquantumverma2}. We also show that the constructions of Sections \S\ref
{s:Gamma}, \S\ref{s:Gammah} and \S\ref{s:gammahopf} can be realised in the
context of $\PROP$s, 
and are therefore functorial \wrt morphisms of split pairs of Lie bialgebras.

\subsection{Colored $\PROP$s}

The definition of $\PROP$ is easily generalised to allow a larger set of 
generating objects. A \emph{colored} $\PROP$ $\mathsf{P}$ is a
$\sfk$--linear, strict, symmetric monoidal category whose objects are
finite sequences over a set $\sfA$, \ie
\[\mathsf{Obj}(\mathsf{P})=\coprod_{n\geq0}\sfA^n\]
with tensor product given by concatenation of sequences, and tensor
unit given by the empty sequence. The notion of module over a 
colored $\PROP$ is easily generalised from \S\ref{ss:prop-intro}.

\subsection{$\PROP$ for split pairs of Lie bialgebras}\label{ss:plba}

Let $\PLBA$ be the $\PROP$ generated by two Lie bialgebra objects
$\pa,\pb$ and Lie bialgebra morphisms $i:\pa\to\pb$, $p:\pb\to\pa$
satisfying $p\circ i=\id_{\pa}$. 

The $\PROP$ $\PLBA$ is endowed with a pair of (strict) tensor functors
\[\funU, \funL:\LBA\to\PLBA\qquad\text{given by}
\qquad \funU[1]=\pb,\quad \funL[1]=\pa\]
and natural transformations $i:\funL\to\funU$, $p:\funU\to\funL$ such
that $p\circ i=\id_{\funL}$.
\[
\xymatrix{\LBA\ar@/^15pt/[rr]^{\funU}="U"\ar@/_15pt/[rr]_{\funL}="L"  & & 
\PLBA \ar@{=>}@<1ex> "U";"L"^p \ar@{=>}@<1ex> "L";"U"^i}
\]
Moreover, $\PLBA$ satisfies the following universal property: for any
symmetric tensor category $\C$ with the same property above as \PLBA,
there exists, up to a unique equivalence, a unique symmetric tensor
functor $\PLBA\to\C$ such that the following diagram commutes
\[
\xymatrix{\LBA\ar@/^15pt/[rr]^(.55){\funU}="U"\ar@/_15pt/[rr]_(.55){\funL}="L"  
\ar@/^30pt/[rrr] \ar@/_30pt/[rrr] & & 
\PLBA \ar@{=>}@<1ex> "U";"L"^p \ar@{=>}@<1ex> "L";"U"^i  \ar@{-->}[r]^{\exists!} & \C}
\]\\
\newcommand{\spLBA}{\G^{\PLBA}}

A module over $\PLBA$ in $\vectk$ is a split pair $(\b,\a)$ of Lie bialgebras
over $\sfk$, and a morphism $(\b,\a)\to(\b',\a')$ between two such pairs is a
pair of morphisms $(f,g)\in\LBA(\a,\a')\times\LBA(\b,\b')$ such that
the following diagrams commute
\[
\xymatrix{
\a'\ar[r]^{i'} & \b' \\
\a\ar[r]_{i}\ar[u]^{f} & \b\ar[u]_{g}
}
\qquad\qquad
\xymatrix{
\a' & \b'\ar[l]_{p'} \\
\a\ar[u]^{f} & \b\ar[l]^{p} \ar[u]_{g}
}\]

\subsection{$\PROP$ description of the Verma module $\Lm$}\label{ss:propL}

Let $\pi\in\End_{\PLBA}(\pb)$ be the idempotent $i\circ p$. The kernel
of $\pi$, $\ppm:=([\b],1-\pi)$, is an object in the Karoubi envelope of
$\PLBA$, which is a both a Lie ideal and coideal of $\pb$.

The module $\Lm$ introduced in \S \ref{ss:rel Verma} can be realised
in $\cKar{\PLBA}$. As an object, $\PLm$ is equal to $S\ppm$. The
structure of \DYt $\pb$--module is determined in the following way
\begin{itemize}
\item The action $\pi_{[\m]}$ of $\ppm$ is defined by the multiplication map
\[\bigoplus_{0\leq j\leq i+1}m_{1,i}^j: \ppm\ten S\ppm\to S\ppm\]
given by \BCH series.
\item The action $\pi_{[\a]}$ of $\pa$ is given by extending its action on
$[\m]$, which is an ideal in $[\b]$, to $T[\m]=\bigoplus_{n\geq 0}[\m]^
{\otimes n}$ and restricting it to $S[\m]$.
\item The action of $[\b]$ is determined by those of $[\a],[\m]$ since
$[\b]$ is the semi--direct product $[\a]\ltimes[\m]$, and $\pi_{[\a]},
\pi_{[\m]}$ satisfy the following identity 
\[\pi_{[\a]}\circ(\id_{[\a]}\ten\pi_{[\m]})-
\pi_{[\m]}\circ(\id_{[\m]}\ten\pi_{[\a]})\circ (1\,2)=
\pi_{[\m]}\circ([\,,\,]\ten\id_V)\]
as morphisms $[\a]\otimes[\m]\otimes\PLm\to\PLm$.
\item The coaction $\pi^*$ of $\pb$ on $\PLm$ is then uniquely
determined by requiring that it be
\begin{itemize}
\item trivial on $S^0\ppm$;
\item compatible with the action of the ideal $[\m]$ via the relation 
\[
\quad\qquad\pi^*\circ\pi_{[\m]}=
\id\ten\pi_{[\m]}\circ(1\,2)\circ \id\ten\pi^*-
\id\ten\pi_{[\m]}\circ\delta\ten\id+
[\,,\,]\ten \id\circ\id\ten \pi^*.
\]
\end{itemize}
\end{itemize}

When $\pa=0$, the description of $\PLm$ reproduces that of the
Verma module $\PMm$ given in \S\ref{sss:PMm}. More precisely,
since $0\hookrightarrow\pb$ is a split pair in $\cKar{\LBA}$, there
is a unique symmetric tensor functor $\G_{0,\pb}:\cKar{\PLBA}
\to\cKar{\LBA}$ mapping $\pa$ to $0$ and $\pb$ to $\pb$, and
$\G_{0,\pb}\PLm=\PMm$ as \DYt modules over $\pb$. Moreover,
the uniqueness of the coaction $\pi^*$ implies that if $(\a,\b)$ is
a split pair of Lie bialgebras, and $\G_{\a,\b}:\LBA\to \vectk$ a
corresponding realisation functor, $\G_{\a,\b}\PLm$ is the
\DYt module $\Lm=\ind_{\Lpp}^{\gb}\sfk$ introduced in \S\ref{ss:rel Verma}.

\subsection{Matched pairs of Lie bialgebras}\label{ss:majid}

In order to describe the \propic construction of the Verma 
module $\Npv$, we shall need the following notion from
\cite[Section 8.3]{majid-2}, which provides a generalisation 
of the notion of Drinfeld double.
\newcommand{\cond}{\coad}
\newcommand{\donc}{\coadd}

Two Lie algebras $(\c,[,]_{\c})$ and $(\d,[,]_{\d})$ form a
{\it matched pair} if there are maps
\[
\cond:\c\ten\d\to\d\aand\donc:\c\ten\d\to\c
\] 
such that
\begin{enumerate}
\item $\cond$ is a left action of $\c$ on $\d$, \ie
\[\cond\circ[,]_{\c}\ten\id=\cond\circ\id\ten\cond\circ(\id-(1\,2))\]
and $\donc$ is a right action of $\d$ on $\c$, \ie
\[\donc\circ\id\ten[,]_{\d}=\donc\circ\donc\ten\id\circ(\id-(2\,3));\]
\item $\donc, \cond$ satisfy the compatibility conditions
\begin{align*}
\donc\circ[,]_{\c}\ten\id
&=
[,]_{\c}\circ\donc\ten\id\circ(2\,3)+[,]_{\c}\circ\id\ten\donc+
\donc\circ\id\ten\cond\circ(\id-(1\,2))\\
\intertext{and}
\cond\circ\id\ten[,]_{\d}
&=
[,]_{\d}\circ\cond\ten\id+[,]_{\d}\circ\id\ten\cond\circ(1\,2)+
\cond\circ\donc\ten\id\circ(\id-(2\,3)).
\end{align*}
\end{enumerate}

The conditions (i)--(ii) are equivalent to the requirement that the
vector space $\c\oplus\d$ is endowed with a Lie bracket for which
$\c,\d$ are Lie subalgebras and, for $X\in\c$ and $Y\in\d$,
\[[X,Y]=X\cond Y+X\donc Y\]
With the above bracket, $\c\oplus\d$ is denoted by $\c\dcs\d$ and
called the \emph{double cross sum Lie algebra} of $\c,\d$.\\

\noindent\example
If $(\a,[,]_{\a},\delta_{\a})$ is a Lie bialgebra, the Lie algebras $(\a,[,]_{\a})$
and $(\a^*,\delta_{\a}^t)$ form a matched pair with respect to the coadjoint
action of $\a$ on $\a^*$ and the opposite coadjoint action of $\a^*$ on $\a$.
The corresponding double cross sum Lie algebra $\a\dcs\a^*$ is the Drinfeld
double of $\a$.

\subsection{Extended $\PLBA$}\label{ss:est-plba}

The definition of the Verma module $\Npv$ relies on the parabolic subalgebra
$\Lp_-=\gum\oplus\gdp=\Lmm\oplus\ga\subset\gb$, which cannot be realised
in $\PLBA$. We therefore introduce the colored $\PROP$ $\PLBAD$ obtained
by adding to the presentation of $\PLBA$ a Lie algebra object $\pad$
in matched pairing with the split pair $\pa\stackrel{i}{\longrightarrow}\pb\stackrel{p}{\longrightarrow}\pa$. 
Namely, we assume that $\pb$ and $\pad$
(and, respectively, $\pa$ and $\pad$) are endowed with mutual left and right
actions 
\[\coad_{\pb}:\pb\ten\pad\to\pad\aand\coadd_{\pb}:\pb\ten\pad\to\pb,\]
\[\coad_{\pa}:\pa\ten\pad\to\pad\aand\coadd_{\pa}:\pa\ten\pad\to\pa,\]
satisfying the compatibility conditions of \S\ref{ss:majid}. Further, we assume
that $i,p$ are morphisms of matched pairs, \ie
\[\coad_{\pb}\circ i\ten\id=\coad_{\pa}\aand
\coad_{\pb}=\coad_{\pa}\circ p\ten\id\]
as maps $\pa\otimes\pad\to\pad$ and $\pb\otimes\pad\to\pa$ respectively
(similarly for $\coadd_{\pb},\coadd_{\pa}$).
It follows that $(\pa\dcs\pad, \pb\dcs\pad)$ is a split pair of Lie
algebras. Additionally, we assume there is a morphism $r_{\pa}:[0]\to\pa\ten\pad$
satisfying the classical Yang--Baxter equation, together with the relations
\begin{eqnarray}
\delta_{\pa}\ten\id_{\pad}\circ r_{\pa}&=&[{r_{\pa}}_{13},{r_{\pa}}_{23}]\label{eq:r-mx},\\
\id\ten[,]_{\pad}\circ r_{\pa}\ten\id&=&\coadd_{\pa}\ten\id\circ\id\ten r_{\pa}\label{eq:r-mx-coadd},\\
\,[,]_{\pa}\ten\id\circ\id\ten r_{\pa}&=&\id\ten\coad_{\pa}\circ r_{\pa}\ten\id\label{eq:r-mx-coad},
\end{eqnarray}
as morphisms $[0]\to[\a]\otimes[\a]\otimes\pad$, $\pad\to\pa\ten\pad$, and 
$\pa\to\pa\ten\pad$, respectively. 

In particular, equation \eqref{eq:r-mx} encodes
the fact that the copairing $r_{[\a]}$ between $[\a]$ and $[\a^*]$
identifies the transpose of $\delta_{[\a]}$ with the bracket on $\pad$.
\footnote{The $\PROP$ $\PLBAD$ is a generalisation to the relative 
case of the Drinfeld double $\PROP$ $D_{\oplus}(\LBA)$ introduced
in \cite[Section 3]{EG}.}

Let $(\b,\a,\a^\sharp), (\b',\a',(\a^\sharp)')$ be two modules over $\PLBAD$.
Then a morphism of $\PLBAD$--modules $(\b,\a,\a^\sharp)\to(\b',\a',(\a^\sharp)')$ 
is a triple $(h,g,f)$ where $(h,g): (\b,\a)\to(\b',\a')$ is a morphism
of split pairs, $(g,f):(\a,(\a^\sharp))\to(\a',(\a^\sharp)')$ and $(h,f):(\b,\a^\sharp)\to
(\b',(\a^\sharp)')$ are morphisms of matched pairs, and $(h\otimes g)
(r_\a)=r_{\a'}$.

We shall call a module $(\b,\a,\a^\sharp)$ over $\PLBAD$ in $\vectk$ an 
{\it enhanced split pair} if $\a^\sharp=\a^*$ and $r_{\a}$ is the $r$--matrix of $\a$. 

\subsection{$\PROP$ description of the Verma module $\Npv$}\label{ss:propN}

The module $\pNpv$ can be realised on the object $\wh{S\pp}$, $\pp=\pb\oplus\pad$, 
formally added to $\cKarPLBAD$. 
In describing the structure of the \DYt module on $\pNpv$, we proceed
as in \S\ref{sss:Mpv-prop}. 
This is obtained as the propic solution of the universal property
\begin{equation}\label{eq:pNpv-univ-prop}
\xymatrix{
\Hom_{\b}^{\b}(V,\G\pNpv)
\ar@<2pt>[r]^(.6){\psi}
&\Hom_{{\m}}(V, \sfk)
\ar@<2pt>[l]^(.4){\phi}
},
\end{equation}
where $\psi$ denotes the composition with the projection $\varepsilon_{\pNpv}:\pNpv\to[0]$ and $\phi$ is the map
\[
\phi=\id\ten - \circ \sum_{n,m\geq 0}\Sym_n\ten\Sym_m\ten\id\circ\id^{\ten m}\ten\pi_V^{(m)}\circ\id^{\ten n}\ten r_{\a}^{(m)}\circ(\pi_V^*)^{(n)}.
\]
More precisely, $\pNpv$ is constructed on the object $\wh{S}\pp=\wh{S}\pb\ten\wh{S}\pad$
with the following \DYt structure.
\begin{itemize}
\item As in \S\ref{sss:Mpv-prop}, the coaction $\pi^*_{\pNpv}$ is obtained from
that of $\PMpv$, \ie $\pi^*_{\pNpv}=\pi^*_{\PMpv}\ten\id_{\wh{S}\pad}$.
In particular, the projection from $\pNpv$ to $\PMpv$ is a map of $\pb$--comodules. 
\item The formula for the action $\pi_{\pNpv}$ is obtained by imposing the following 
two conditions:
\begin{align}
\varepsilon_{\pNpv}\circ\pi_{\pNpv}=\varepsilon_{\pNpv}\circ\pi_{\pNpv}\circ p\ten\id
\end{align}
as maps from $\pb\ten\pNpv\to[0]$, and
\begin{align}
\phi(\varepsilon_{\pNpv})=\id_{\pNpv}
\end{align}
as maps from $\pNpv$ to $\wh{S}\pb\ten\wh{S}\pad=\pNpv$. 
One sees easily that the action $\pi^*_{\pNpv}$ exists and it is uniquely defined by these properties. 
Namely, one has
\begin{align*}
\pi_{\pNpv}&=\id_{\pNpv}\circ\pi_{\pNpv}\\
&=\left(\sum_{n,m\geq 0}
\id_{S^n\pb\ten S^m\pad}\ten\varepsilon\circ\Sym_n\ten\Sym_m\ten\id\circ\right.\\
&\left.\hspace{1cm}\circ\id^{\ten m}\ten\pi_{\pNpv}^{(m)}\circ\id^{\ten n}\ten r_{\a}^{(m)}
\circ(\pi_{\pNpv}^*)^{(n)}\right)
\circ\pi_{\pNpv}.
\end{align*}
By iterated application of the compatibility condition \eqref{eq:act-coact-lba} on 
$(\pi_{\pNpv}^*)^{(n)}\circ\pi_{\pNpv}$, relations \eqref{eq:action-lba} and \eqref{eq:r-mx-coad},
one obtains an explicit description of $\pi_{\pNpv}$, given exclusively in terms of morphisms in
$\PLBAD$. 
\item The description of the \DYt structure over $\pa\op$ is obtained with a similar argument.
\end{itemize}
When $\pa=0=\pad$, the description of $\pNpv$ reproduces that of the Verma modules $\PMpv$
given in \S\ref{sss:Mpv-prop}. More precisely, since $(\pb,0,0)$ is an enhanced split pair in $\LBA$,
there is a unique symmetric tensor functor $\G_{(\pb,0,0)}:\PLBAD\to\LBA$ mapping $\pa,\pad$ to $0$, 
$\pb$ to $\pb$, and $\G\pNpv=\PMpv$ as \DYt $\pb$--modules. Moreover, the uniqueness of
the action $\pi_{\Npv}$ implies that if $(\b,\a)$ is a split pair of Lie bialgebras and 
$\G_{(\b,\a,\a^*)}:\PLBAD\to\vectk$ a corresponding realisation functor, 
$\G_{(\b,\a,\a^*)}\pNpv$ is the \DYt $(\b,\a\op)$--module $\Npv$ introduced in \S\ref{ss:rel Verma}

\subsection{The propic construction of the twist}\label{ss:proptwist}

Let $\pDrY{\splbah}{\pa}{\Phi}$ and $\pDrY{\splbah}{\pb}{\Phi}$ be the
categories of deformed \DYt modules in the symmetric monoidal category
$\splbah$ over the Lie bialgebras $\pa$ and $\pb$ respectively.
By Frobenius reciprocity, for every $V,W\in\pDrY{\splbah}{\pb}{\Phi}$,
we get an isomorphism
\begin{align*}
\Hom_{\pb}^{\pb}(\PLm\ten V^0, \PNpv\ten W)\simeq\Hom_{\cKarPLBAD}(V,W).
\end{align*}
Let $\prelpsi{V}:\PLm\ten V^0\to\PNpv\ten V$ be the map corresponding to the identity
on $V$ and set $\preleta{V}=\prelpsi{V}\circ 1_-\ten\id_V$, where $1_-$ denotes the
inclusion of $[0]$ in $\PLm$.

We define a map $J\presped:V\ten W\to V\ten W$ by
\[
J\presped=1_+^{\ten 2}\ten\id^{\ten 2}\circ A_{\Phi}\circ\preleta{V}\ten\preleta{W},
\]
where $1_+$ is the projection from $\PNpv$ to $[0]$, and $A_{\Phi}$ is as in
\S\ref{s:tensor-structure}.

\begin{proposition}\hfill
\begin{enumerate}
\item The map $J\presped$ defines a tensor structure on the propic restriction
functor $\Res\presped:\hDrY{\pb}{\Phi}\to\hDrY{\pa}{\Phi}$ given by
\[\Res\presped(V,\pi,\pi^*)=
(V,\pi\circ
i\otimes\id_V,p\otimes\id_V\circ\pi^*)\]
\item Let $(\b,\a,\a^*)$ be an enhanced split pair with a realisation functor $\G=\G_{(\b,\a,\a^*)}:\PLBA^+\to\vectk$.
Then the tensor structure $\G(J_{[\a],[\b]})$ on the restriction
functor $\Res\resped:\hDrY{\b}{\Phi}\to\hDrY{\a}{\Phi}$ coincides, 
under the identification $\Res\resped\simeq F\resped$, with the
twist $J\resped$ constructed in Section \S\ref{s:Gamma}.
\end{enumerate}
\end{proposition}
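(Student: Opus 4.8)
The statement splits into two parts: (1) that $J\presped$, built from the propic relative Verma modules $\PLm,\PNpv$, is a genuine tensor structure on the propic restriction functor $\Res\presped$; and (2) that, after evaluating along a realisation functor $\G=\G_{(\b,\a,\a^*)}$, it reproduces the analytic twist $J\resped$ of Section \S\ref{s:Gamma}. For (1), the argument is a verbatim transcription of the proof of Proposition \ref{pr:tensor structure} (carried out in \S\ref{ss:start tensor}--\S\ref{ss:end tensor}) into the symmetric monoidal category $\splbah$: the only inputs used there were the coalgebra relation for $i_-$ on $\PLm$, the algebra relation for $i_+^\vee$ on $\PNpv$ (the propic analogues of Proposition \ref{pr:coalgebra}), the pentagon and hexagon axioms for $\Phi$ in a braided category, and the universal properties \eqref{eq:univ-pr-PMm}-type identifications that realise $\Res\presped(V)\cong\Hom_{\pb}^{\pb}(\PLm,\PNpv\ten V)$. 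All of these hold in $\cKarPLBAD$ by the constructions of \S\ref{ss:propL} and \S\ref{ss:propN}, so the same diagram chase goes through without change; I would simply cite \S\ref{ss:start tensor}--\S\ref{ss:end tensor} and note that no property specific to $\vectk$ was used. The compatibility of $J\presped$ with the $\pa$-action, and its invertibility (checked mod $\hbar$ exactly as in \S\ref{ss:start tensor}), are likewise immediate.

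For (2), the key point is that the realisation functor $\G_{(\b,\a,\a^*)}:\PLBA^+\to\vectk$ is a symmetric tensor functor, hence commutes with all the categorical constructions used to define the twist. Concretely, I would verify that $\G$ sends $\PLm$ and $\PNpv$ to the modules $\Lm$ and $\Npv$ of \S\ref{ss:rel Verma} (this is exactly the last sentence of \S\ref{ss:propL} and of \S\ref{ss:propN}), that it sends the propic intertwiner $\prelpsi{V}$ to the intertwiner $\psi_V$ implicit in the Frobenius reciprocity isomorphism of \S\ref{ss:gammafun-1}, and hence $\preleta{V}$ to $\eta_V$, and that it carries $A_\Phi$, $1_+$, $i_-$, $i_+^\vee$ to their analytic counterparts. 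Since $J\resped$ in \S\ref{s:tensor-structure} and $J\presped$ in \S\ref{ss:proptwist} are given by literally the same formula $1_+^{\ten 2}\ten\id^{\ten 2}\circ A_\Phi\circ\eta_V\ten\eta_W$ in terms of these ingredients, applying $\G$ to the propic formula yields the analytic one. The identification $\Res\resped\simeq F\resped$ is precisely the natural transformation $\alpha_V$ of Proposition \ref{pr:isom to forget}, which is itself the image under $\G$ of the Frobenius reciprocity isomorphism, so the two tensor structures match under that identification.

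The main obstacle is bookkeeping rather than conceptual: one must check that the propic solutions $\PLm,\PNpv$ to their defining universal properties are \emph{uniquely} determined (so that $\G$ has no choice but to send them to $\Lm,\Npv$), and that the coaction on $\PLm$ and the action on $\PNpv$ — which in \S\ref{ss:propL}, \S\ref{ss:propN} are only pinned down recursively by compatibility with the action/coaction of the ideal $\ppm$ — are genuinely well defined in $\cKarPLBAD$, i.e. that the recursions close up. This is where one inherits the delicate points of \S\ref{sss:PMm} and \S\ref{sss:Mpv-prop}, now complicated by the presence of the extra object $\pad$ and the matched-pair structure. Once that is in hand, the remaining verification — that a symmetric tensor functor intertwines the two formulas — is formal. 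I would therefore organise the proof as: first record that $\G$ preserves $\PLm,\PNpv$ and the structure maps (citing \S\ref{ss:propL}, \S\ref{ss:propN}), then observe the two defining formulas coincide, then conclude via naturality of $\alpha_V$.
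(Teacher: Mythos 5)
Your proposal is correct and follows the same route as the paper: the paper's proof of (i) is precisely "a straightforward adaptation of \S\ref{ss:start tensor}--\S\ref{ss:end tensor}", and (ii) is "by direct inspection", which your unwinding of the realisation functor $\G$ on $\PLm$, $\PNpv$, $\preleta{V}$ and $A_\Phi$ makes explicit. The well-definedness of the propic structure maps that you flag as the main obstacle is established in \S\ref{ss:propL}--\S\ref{ss:propN} prior to the proposition, so it is an input rather than part of this proof.
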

\begin{pf}
(i) follows from a straighforward adaptation of \S\ref{ss:start tensor}--\S\ref{ss:end tensor}.
(ii) follows by direct inspection.
\end{pf}

It follows that the twist $J\resped$ is functorial with respect to morphisms
of enhanced split pairs. Namely, we have the following

\begin{corollary}
Let $[V_i]\in\hDrY{\pb}{\Phi}$, $i=1,2$, $(\b,\a,\a^*)$ and $(\b',\a',(\a^*)')$
two enhanced split pairs with realisation functors $\G,\G':\PLBAD\to\vect_\sfk$,
and set $V_i=\G[V_i], V_i'=\G'[V_i]$. 
Then for any $\rho\in\mathsf{Fun}^{\ten}(\G,\G')$ (or equivalently, for any 
morphism $(f,g,h):(\b,\a,\a^*)\stackrel{(f,g,h)}{\longrightarrow}(\b',\a',(\a')^*)$),
the following diagram is commutative
\[
\xymatrix@C=0.8in{
V'_1\ten V'_2\ar[r]^{J_{\a',\b'}} & 
V'_1\ten V'_2\\
V_1\ten V_2 \ar[u]^{\rho_{[V_1]\ten[V_2]}} \ar[r]_{J\resped} & 
V_1\ten V_2 \ar[u]_{\rho_{[V_1]\ten[V_2]}}
}
\]
\end{corollary}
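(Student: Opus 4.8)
The approach is to observe that, just as in part (ii) of the Proposition of \S\ref{ss:proptwist}, the relative twist is in fact a \emph{propic} morphism, so that its functoriality with respect to morphisms of enhanced split pairs is forced by the naturality of $\rho$. Concretely, the map
\[
J\presped=1_+^{\ten 2}\ten\id^{\ten 2}\circ A_{\Phi}\circ\preleta{[V_1]}\ten\preleta{[V_2]}
\]
is a morphism $[V_1]\ten[V_2]\to[V_1]\ten[V_2]$ in the category $\splbah$: the intertwiners $\prelpsi{[V_i]}$, hence the maps $\preleta{[V_i]}$, are the universal morphisms realising the Frobenius reciprocity isomorphism of \S\ref{ss:proptwist}; the maps $1_\pm$ are the canonical inclusion of, resp. projection onto, $[0]$ for $\PLm$ and $\PNpv$; and $A_{\Phi}$ is assembled from the associativity and commutativity constraints of $\pDrY{\splbah}{\pb}{\Phi}$. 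Thus $J\presped$ is defined prior to, and independently of, any realisation functor.

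The first step is to recall, along the lines of \S\ref{sss:qpropmod} and \S\ref{ss:karoubi}, that a realisation functor $\G:\PLBAD\to\vectk$ of an enhanced split pair $(\b,\a,\a^*)$ extends canonically to a $\kfml$--linear symmetric tensor functor $\G^{\hbar}:\splbah\to\kvect$, compatible with idempotent splittings and with the inductive, resp. projective, limits defining $\PLm=S\ppm$, resp. $\PNpv=\wh{S\pp}$ with $\pp=\pb\oplus\pad$; and that, under the induced functor on deformed \DYt modules, $\G^{\hbar}$ carries the propic data $(\PLm,\PNpv,\prelpsi{[V]},1_\pm,A_{\Phi})$ to the corresponding objects and morphisms of Section~\S\ref{s:Gamma} attached to $(\b,\a)$. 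By part (ii) of the Proposition of \S\ref{ss:proptwist} this means precisely $\G^{\hbar}(J\presped)=J\resped$ under the identification $\Res\resped\simeq\FF{\a}{\b}$, and likewise $\G'^{\hbar}(J\presped)=J_{\a',\b'}$. The second step is to note that a morphism $\rho\in\Fun^{\ten}(\G,\G')$ --- equivalently, by the universal property of $\PLBAD$ recorded in \S\ref{ss:est-plba}, a morphism $(f,g,h):(\b,\a,\a^*)\to(\b',\a',(\a')^*)$ of enhanced split pairs --- extends to a monoidal natural transformation $\rho^{\hbar}:\G^{\hbar}\Rightarrow\G'^{\hbar}$. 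Applying naturality of $\rho^{\hbar}$ to the morphism $J\presped$ of $\splbah$ gives
\[
\rho_{[V_1]\ten[V_2]}\circ\G^{\hbar}(J\presped)=\G'^{\hbar}(J\presped)\circ\rho_{[V_1]\ten[V_2]},
\]
and substituting the two identifications above yields exactly the commutative square of the statement, since $V_i=\G[V_i]$ and $V_i'=\G'[V_i]$.

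The only genuinely delicate point is foundational rather than computational: one must check that both $\G$ and the natural transformation $\rho$ extend to $\splbah$, i.e. to the $\kfml$--linearisation of the limit--closed Karoubi envelope $\cKarPLBAD$ of $\PLBAD$, as is needed to make sense of $S\ppm$ and $\wh{S\pp}$, and that this extension remains symmetric monoidal and natural. This is routine given the formalism of \S\ref{ss:karoubi} --- a symmetric tensor functor out of a $\PROP$ extends uniquely along idempotent splittings and along the limits it preserves, and a (monoidal) natural transformation of such functors extends along with them --- but it is worth making explicit, since the twist $J\presped$ genuinely lives at the level of the completed, limit--closed Karoubi envelope and not at the level of $\PLBAD$ itself.
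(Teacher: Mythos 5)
Your argument is exactly the one the paper intends: the Corollary is stated as an immediate consequence of the Proposition of \S\ref{ss:proptwist}, since $J\presped$ is a morphism in $\splbah$ realised by $\G,\G'$ as $J\resped$ and $J_{\a',\b'}$ respectively (part (ii)), so naturality of $\rho$ applied to this propic morphism yields the commutative square. Your added remark on extending $\G$ and $\rho$ through the Karoubi envelope and the limit/colimit objects $S\ppm$, $\wh{S\pp}$ is a correct and harmless elaboration of a point the paper treats as routine.
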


\subsection{Quantisation of propic modules in $\PLBAD$}

The quantisation formulae given in \S\ref{sss:prop-twist}--\S\ref{sss:qpropmod}
have their analogues in $\cKarPLBAD$. Specifically, since the modules $\Mm^{\pb},{\Mpv}^{\pb}$
and $\Mm^{\pa},{\Mpv}^{\pa}$ have their own realisation in $\cKarPLBAD$, there are 
isomorphisms
\begin{equation}
\Hom_{\pb}^{\pb}(\Mm^{\pb}\ten V^0, {\Mpv}^{\pb}\ten W)\simeq\Hom_{\cKarPLBAD}(V,W),
\end{equation}
\begin{equation}
\Hom_{\pa}^{\pa}(\Mm^{\pa}\ten V^0, {\Mpv}^{\pa}\ten W)\simeq\Hom_{\cKarPLBAD}(V,W),
\end{equation}
for every \DYt $\pb$--modules $V,W$ in $\cKarPLBAD$, with 
analogous distinguished morphisms $\psi_V:\Mm^{\pb}\ten V\to{\Mpv}^{\pb}\ten V$
and $\eta_V=\psi_V\circ 1_-\ten\id_V$, and similarly for $\Mm^{\pa},{\Mpv}^{\pa}$. 
These allow to define an action
of $Q[B]=\Mm^{\pb}$ and $Q[A]=\Mm^{\pa}$ on $V$ 
\begin{equation}
\rho_V=1_+\ten 1_+\ten\id\circ\id\ten\psi_{V}\circ\Phi\circ\eta_M\ten\id
\end{equation}
and a coaction
\[
\rho_V^*=R^J\circ\uh\ten\id_V
\]
(similarly for $\pa$) providing a propic version $\wt{\sff}_{\pb}$, $\wt{\sff}_{\pa}$
of the equivalences $\noEKeq{\b},\noEKeq{\a}$ over the symmetric category $\hext{\cKarPLBAD}$.
In particular, the quantisation of \DYt $\pb$--modules in $\hext{\cKarPLBAD}$ is functorial
with respect to morphisms of enhanced split pairs. Namely we have the following 

\begin{corollary}\label{co:functoriality of quantisation}
Let $(\Vm,\pi,\pi^*)$ be a \DYt $\pb$--module in $\PLBA$ (resp. $\PLBAD$), 
$(\b,\a)$ and $(\b',\a')$ two split pairs with realisation functors $\G,\G':\PLBA\to\vectk$
(resp. $(\b,\a,\a^*)$ and $(\b',\a',(\a^*)')$
two enhanced split pairs with realisation functors $\G,\G':\PLBAD\to\vect_\sfk$),
and set
\[(V,\pi_V,\pi^*_V)=\G([V],\pi,\pi^*)\in\DrY{\b}
\quad\text{and}\quad
(V',\pi_{V'},\pi^*_{V'})=\G'([V],\pi,\pi^*)\in\DrY{\b'}.\]
Then, for any $\rho\in\mathsf{Fun}^{\ten}(\G,\G')$ (or equivalently,
for any morphism $(f,g):(\b,\a)\to(\b',\a')$  
(resp. $(f,g,h):(\b,\a,\a^*)\stackrel{(f,g,h)}{\longrightarrow}(\b',\a',(\a')^*)$)),
the following diagrams are commutative
\[\xymatrix@C=0.8in{
U_\hbar{\b'}\ten \sff_{\b'}(V')\ar[r]^(.55){\pi_V} & \sff_{\b'}(V')\\
\Uhb\ten \sff_{\b}(V)\ar[u]^{\rho_{S\pb}\ten\rho_{[V]}}\ar[r]_(.55){\pi_{V'}} & 
\sff_{\b}(V)\ar[u]_{\rho_{[V]}}
}
\]
and
\[
\xymatrix@C=0.8in{
\sff_{\b'}(V')\ar[r]^(.45){\pi^*_{V'}} &U_\hbar{\b'}\ten\sff_{\b'}(V')\\
\sff_{\b}(V)\ar[u]^{\rho_{[V]}}\ar[r]_(.45){\pi^*_V} & 
\Uhb\ten\sff_{\b}(V)\ar[u]_{\rho_{S\pb}\ten\rho_{[V]}}
}\]
\end{corollary}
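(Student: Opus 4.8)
The plan is to deduce the corollary formally from two facts already in place: first, that a morphism of split pairs (resp. of enhanced split pairs) is the same datum as a monoidal natural transformation between the corresponding realisation functors; and second, that the quantum action $\pi_V$, the quantum coaction $\pi_V^*$, and the Hopf algebra map $\rho_{S\pb}$ are all images, under such a realisation functor, of universal morphisms living inside the colored $\PROP$ $\cKar{\PLBA}$ (resp. $\cKarPLBAD$). Once these are granted, the two commuting squares are nothing more than the \emph{naturality} squares of the transformation, evaluated at those universal morphisms.

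For the first point I would invoke the universal property of $\PLBA$ recalled in \S\ref{ss:plba}: a morphism $(f,g)\colon(\b,\a)\to(\b',\a')$ of split pairs is the same as a monoidal natural transformation $\rho\in\Fun^{\ten}(\G,\G')$ between the realisation functors $\G,\G'\colon\PLBA\to\vectk$, with $\rho_{\pb}=g$ and $\rho_{\pa}=f$ on the generating objects (and their tensor powers elsewhere), the compatibility of $\rho$ with $i,p$ being forced by $p\circ i=\id$. The $\PLBAD$ case is identical once one adjoins to $(f,g)$ the component $h$ on $\pad$ and imposes $(h\otimes g)(r_\a)=r_{\a'}$ together with preservation of the matched pairings, which is precisely the definition of a morphism of $\PLBAD$--modules in \S\ref{ss:est-plba}. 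Evaluating $\rho$ at the ind--object $S\pb$ yields a map $\rho_{S\pb}\colon\Uhb=\G(S\pb)\to\G'(S\pb)=U_\hbar\b'$, and since the entire Hopf algebra structure of $Q[B]$ described in \S\ref{sss:HA-struct} is assembled from morphisms of $\cKar{\PLBA}$, this $\rho_{S\pb}$ is a morphism of Hopf algebras; it coincides with the Etingof--Kazhdan quantisation of $g$ supplied by the functoriality recalled in \S\ref{ss:eksum}.

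For the second point I would observe that the $\PROP$ic quantisation package of Section \ref{s:props} pulls back along the tensor functor $\funU\colon\LBA\to\PLBA$ (and thence into $\PLBAD$), so that for a \DYt $\pb$--module $[V]$ one has
\[
\pi_V=\G\bigl(1_+\ten1_+\ten\id\circ\id\ten\psi_{[V]}\circ\Phi\circ\eta_{\Mm^{\pb}}\ten\id\bigr),
\qquad
\pi^*_V=\G\bigl(R^J\circ\uh\ten\id_{[V]}\bigr),
\]
where $\psi_{[V]},\eta_{[V]},\eta_{\Mm^{\pb}}$ and the twisted $R$--matrix $R^J=(1\,2)\circ J^{-1}\circ\beta\circ J$ are all morphisms of $\cKar{\PLBA}$ (resp. $\cKarPLBAD$): this holds because the Verma modules $\Mm^{\pb},{\Mpv}^{\pb}$, the isomorphism \eqref{eq:thetaEK}, and the fiber--functor twist $J$ were shown to be $\PROP$ic in \S\ref{sss:prop-twist}--\S\ref{sss:qpropmod}, and their realisations inside $\cKarPLBAD$ were recorded in the discussion of the quantisation of propic modules in $\PLBAD$.

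The corollary then follows by naturality: applying the commuting square of $\rho$ to the morphism $Q[B]\ten[V]\to[V]$ realising $\pi_V$, and using that $\rho$ is monoidal so that $\rho_{Q[B]\ten[V]}=\rho_{S\pb}\ten\rho_{[V]}$, yields the first diagram, while applying it to the morphism $[V]\to Q[B]\ten[V]$ realising $\pi^*_V$ yields the second; the $\PLBAD$ statement is word--for--word the same with $\G,\G'\colon\PLBAD\to\vectk$. I expect the only genuinely non-formal work to be in the second point, namely checking that every structure morphism entering $\pi_V$ and $\pi^*_V$ is truly a morphism of the colored $\PROP$ and is intertwined by realisation functors; but most of this is already carried out in Section \ref{s:props} and in the relative--$\PROP$ constructions of this section, the subtlest such verification being that the relative modules $\PLm,\pNpv$ and their structure maps descend to $\cKarPLBAD$, which in \S\ref{ss:propN} rested on the $r$--matrix relations \eqref{eq:r-mx-coad}--\eqref{eq:r-mx-coadd}.
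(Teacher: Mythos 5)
Your proposal is correct and is essentially the argument the paper intends: the corollary is stated as an immediate consequence of the fact that the quantisation formulae of \S\ref{sss:prop-twist}--\S\ref{sss:qpropmod} (the morphisms $\psi_{[V]}$, $\eta_{[V]}$, the twist $J$ and hence $R^J$, and the resulting action and coaction of $Q[B]=\Mm^{\pb}$) all live in $\cKar{\PLBA}$ (resp.\ $\cKarPLBAD$), so that the two squares are just the naturality squares of a monoidal natural transformation $\rho\in\mathsf{Fun}^{\ten}(\G,\G')$, which by the universal property of the colored $\PROP$ is the same datum as a morphism of (enhanced) split pairs. Your closing remark about $\PLm,\pNpv$ is peripheral here (those modules enter only in the later application of the corollary, \S\ref{ss:Liso}--\S\ref{ss:iso N}), but this does not affect the argument.
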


\subsection{The isomorphism $\noEKeq{\b}(\Lm)\simeq\Lmh$}\label{ss:Liso}

We now prove part (i) of Theorem \ref{thm:quantization-LN}. Part
(ii) is proved in \S\ref{ss:iso N}.

By Lemma \ref{le:Fb admissible} and Proposition \ref{le:q rel verma},
the semiclassical limits of $\noEKeq{\b}(\Lm)$ and $\Lmh$ are both
equal to $\Lm$. It therefore suffices to construct an intertwiner
$\Lmh\to\noEKeq{\b}(\Lm)$ in $\DrY{\Uhb}$ whose reduction mod
$\hbar$ is the identity.
By the universal property of $\Lmh$ \eqref{eq:L-univ-p}, this amounts to constructing
a linear map $\ellh:\sfk\fml\to\noEKeq{\b}(\Lm)$ which intertwines
the action of $\Uha$ and the coaction of $\Uhb$, and whose reduction
mod $\hbar$ is the inclusion $\ell$ of the generating vector $1_-\in
\Lm$.

To this end, it is instructive to note that the intertwining properties
of $\ell$ correspond to the commutativity of the diagrams
\[
\xymatrix{
\b\otimes\Lm \ar[r]^(.55){\pi} &  \Lm\\
\a\otimes\sfk \ar[u]^{i\otimes\ell}\ar[r]_(.55){0\otimes\id} &  \sfk\ar[u]_{\ell}
}
\qquad\qquad
\xymatrix{
\Lm  \ar[r]^{\pi^*} &  \b\otimes\Lm\\
\sfk \ar[u]^{\ell}\ar[r]_{0\otimes\id} &  \a\otimes\sfk\ar[u]_{i\otimes\ell}
}
\]
Moreover, if the trivial module $\sfk\in\DrY{\a}$ is thought of as the
Verma module $\Lm$ corresponding to the split pair $\a\hookrightarrow
\a$, these diagrams arise from the functoriality of $\Lm$ \wrt the
morphism of split pairs $(i,\id):(\a,\a)\to(\b,\a)$.

Similarly, regarding $\sfk\fml$ as the trivial \DYt module over $\Uha$,
the required interwining properties of the map $\ellh$ correspond to
the diagrams
\[
\xymatrix{
\Uhb\otimes\noEKeq{\b}(\Lm)  \ar[r]^(.55){\pi} &  \noEKeq{\b}(\Lm)\\
\Uha\otimes\sfk\fml  \ar[u]^{i_\hbar\otimes\ellh}\ar[r]_(.55){\epsilon\otimes\id} &  \sfk\fml\ar[u]_{\ellh}
}
\qquad\qquad
\xymatrix{
\noEKeq{\b}(\Lm)  \ar[r]^(.4){\pi^*} &  \noEKeq{\b}(\Lm)\otimes\Uhb\\
\sfk\fml \ar[u]^{\ellh}\ar[r]_(.45){1\otimes\id} &  \Uha\otimes\sfk\fml\ar[u]_{i_\hbar\otimes\ellh}
}
\]
Since $\DrY{\Uha}\ni\sfk\fml$ is equal to $\noEKeq{\a}(\sfk\fml)$,
the existence of $\ellh$, the fact that it reduces to $\ell$ mod
$\hbar$, and the commutativity of the above diagrams follow
from the functoriality of the Tannakian lift of $\Lm$ \wrt the morphism
of split pairs $(i,\id):(\a,\a)\to(\b,\a)$
(Corollary \ref{co:functoriality of quantisation}). Moreover, $\ellh$
is a morphism of coalgebras since it maps the group--like generating
vector of $\Lmh$ to a group--like element of $\noEKeq{\b}(\Lm)$.
\qed

\subsection{The isomorphism $\noEKeq{\b}\circ\noEKeq{\a}(\Npv)\simeq\Npvh$}\label{ss:iso N}
\newcommand{\enh}{n_{\hbar}}
\newcommand{\aNpv}{(\Npv)_{\a}}
We adopt the same strategy of \S\ref{ss:Liso} to prove 
part (ii) of Theorem \ref{thm:quantization-LN}.

By  Proposition \ref{le:q rel verma},
the semiclassical limits of $\Npvh$ is
equal to $\Npv$ as \DYt $(\b,\a)$--bimodules. 
Similarly, combining Lemma \ref{le:Fb admissible} and Proposition \ref{ss:quant-bimod},
one concludes that the semiclassical limit of $\noEKeq{\b}\circ\noEKeq{\a}(\Npv)\simeq\noEKeq{\b\oplus\a^{\scs\op}}(\Npv)$ is
equal to $\Npv$ as \DYt $(\b,\a)$--bimodules.
It therefore suffices to construct an intertwiner
$\noEKeq{\b}\circ\noEKeq{\a}(\Npv)\to\Npvh$ in $\DrY{\Uhb}$ whose reduction mod
$\hbar$ is the identity.
By the universal property of $\Npvh$ \eqref{eq:N-univ-prop-int}, 
this amounts to constructing a linear map 
$\enh:\noEKeq{\b}\circ\noEKeq{\a}(\Npv)\to p_{\hbar}^*\aNpvh$ which intertwines
the action of $\Uhb$ and coaction of $\Uha$, and whose reduction
mod $\hbar$ is the projection $n$ of $\Npv$ onto $\aNpv$ (induced by 
the projection of (topological) Lie bialgebras $\Lp=\b\oplus\a^*\to\a\oplus\a^*$).

We observe as in \S\ref{ss:Liso} that the intertwining properties of $n$ corresponds to 
the commutativity of the diagrams 
\[
\xymatrix{
\b\otimes\Npv \ar[r]^(.55){\pi} \ar[d]_{p\otimes n}&  \Npv \ar[d]^{n}\\
\a\otimes\aNpv \ar[r]_(.55){\pi_{\a}} &  \aNpv
}
\qquad\qquad
\xymatrix{
\Npv  \ar[r]^{\pi^*} \ar[d]_{n}&  \b\otimes\Npv\ar[d]^{p\otimes n}\\
\aNpv \ar[r]_(.45){\pi_{\a}^*} &  \a\otimes\aNpv
}
\]
and analogue diagrams for the right action and coaction of $\a$.

If the module $\aNpv\in\DrY{\a}$ is thought of as the Verma module $\Npv$
corresponding to the split pair $\a\hookrightarrow\a$, there diagrams arise
from the functoriality of $\Npv$ with respect to the morphism of enhanced 
split pairs $(p,\id,\id):(\b,\a,\a^*)\to(\a,\a,\a^*)$.

The required intertwining properties of $\enh$ correspond to the commutativity of
the diagrams
\[
\xymatrix{
\EK\b\otimes\noEKeq{\b}\circ\noEKeq{\a}(\Npv) \ar[r]^(.55){\pi} \ar[d]_{p_{\hbar}\otimes \enh}&  
\noEKeq{\b}\circ\noEKeq{\a}(\Npv) \ar[d]^{\enh}\\
\EK\a\otimes\aNpvh \ar[r]_(.55){\pi_{\a}} & \aNpvh
}
\quad
\xymatrix{
\noEKeq{\b}\circ\noEKeq{\a}(\Npv)  \ar[r]^(.45){\pi^*} \ar[d]_{\enh}&  
\EK\b\otimes\noEKeq{\b}\circ\noEKeq{\a}(\Npv)\ar[d]^{p_{\hbar}\otimes\enh}\\
\aNpvh \ar[r]_(.4){\pi_{\a}^*} &  
\EK\a\otimes\aNpvh
}
\]
and their analogues for the right action and coaction of $\EK\a$.
The existence of $\enh$, the fact that it reduces to $n \mod\hbar$, and the commutativity
of the above diagrams follows from the functoriality of the Tannakian lift of $\Npv$
with respect to the morphism of enhanced split pairs $(p,\id,\id):(\b,\a,\a^*)\to(\a,\a,\a^*)$.
Finally, $\enh$ is a morphism of algebras, since the algebra structure is uniquely determined 
by the universal property.
\qed

\subsection{Propicity}

The results from Sections \S\ref{s:Gammah} and \S\ref{s:gammahopf}
have analogous counterparts in the category $\cKarPLBAD$, since they rely
exclusively on the propic realisations of the relative Verma modules $\Lm$, $\Npv$ 
in $\cKarPLBAD$. In particular, we obtain a propic version of Theorem \ref{thm:transf}. 

\begin{theorem}\label{thm:L-prop}\hfill
\begin{enumerate}
\item The object $[\Lm]$ has a natural structure of Hopf algebra
object in $\hDrY{\pa}{\Phi}$ with product
\[
m_{[L]}=1_+\ten1_+\ten\id\circ\id\ten\phi_{[L]}\circ\Phi\circ\eta_{[L]}\ten\id
\]
and coproduct
\[
\Delta_{[L]}=J\presped^{-1}\circ\Delta_0,
\]
where $\Delta_0$ is the standard coproduct on $[\Lm]=S\ppm$ 
and $J\resped$ is the twist constructed in \S\ref{ss:proptwist}.
\item The quantised module $\wt{\sff}_{\pa}[\Lm]$ is a Hopf algebra
object in $\DrY{Q[A]}$ and the Radford biproduct $\HBp=\wt{\sff}_{\a}[\Lm]\star Q[A]$
is a quantisation of the Lie bialgebra $\pb$.
\item $\HBp$ splits over $\EK\pa$ and there is an isomorphism of split 
pairs of Hopf algebras in $\splbah$
\[
u\presped: (\EK\pb, \EK\pa)\to(\HBp,\EK\pa).
\]
\end{enumerate}
\end{theorem}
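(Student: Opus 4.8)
The plan is to mirror the proofs of Theorems~\ref{thm:braidhopf} and \ref{thm:transf}, but carried out inside the colored $\PROP$ $\cKarPLBAD$ rather than in $\vectk$. The point is that every ingredient of those proofs --- the relative fiber functor and its twist, the Hopf algebra $\FF{\a}{\b}(\Lm)$, the Radford biproduct, and the isomorphism $v\presped$ --- depends only on the propic realisations of the relative Verma modules given in \S\ref{ss:propL}--\S\ref{ss:propN}, on the propic twist $J\presped$ of \S\ref{ss:proptwist}, and on the propic quantisation and Tannakian functors of \S\ref{sss:qpropmod}, all of which are available, by the very same formulae, over $\hext{\cKarPLBAD}$. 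Consequently the substance of the argument is diagrammatic transcription together with the bookkeeping of which objects and functors live in which category, and I expect the latter to be the main obstacle: one must check that the machinery of Sections~\ref{s:Gammah} and \ref{s:gammahopf}, in particular the propic Tannakian lifts $\wt{\sff}_{\pa},\wt{\sff}_{\pb}$ and the Verma--module quantisation isomorphisms of \S\ref{ss:Liso}--\S\ref{ss:iso N}, genuinely factors through $\cKarPLBAD$ with mutually compatible tensor structures, so that functoriality of quantisation can then be invoked verbatim.

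For part~(i), I would first define $m_{[L]}$ and $\Delta_{[L]}$ by the stated formulae and check that they endow $[\Lm]$ with the structure of a bialgebra object in $\hDrY{\pa}{\Phi}$. Associativity of $m_{[L]}$ is the propic analogue of Proposition~\ref{prop:prod} and follows from the pentagon axiom, the universal property of $[\Lm]$ --- the propic counterpart of \eqref{eq:Lmh-univ} --- and the associativity of the algebra structure on $[\Npv]$ established in Proposition~\ref{pr:coalgebra}(ii). Coassociativity of $\Delta_{[L]}$ is the propic analogue of Proposition~\ref{prop:coprod} and follows from the hexagon relations for $J\presped$, while the compatibility of $m_{[L]}$ with $\Delta_{[L]}$ is the propic analogue of \eqref{eq:end-f} in Proposition~\ref{prop:tenprodgamma}, proved by the same graphical calculation performed now in $\cKarPLBAD$. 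Finally, since $[\Lm]=S\ppm$ reduces modulo $\hbar$ to the enveloping algebra $U\ppm$ with its standard Hopf structure, the bialgebra $[\Lm]$ admits a unique antipode, making it a Hopf algebra object in $\hDrY{\pa}{\Phi}$, exactly as in Theorem~\ref{thm:braidhopf}(i).

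For part~(ii), the propic Tannakian lift $\wt{\sff}_{\pa}:\hDrY{\pa}{\Phi}\to\aDrY{Q[A]}$, defined over $\hext{\cKarPLBAD}$ by the formulae of \S\ref{sss:qpropmod}, is a braided tensor functor, so it carries the Hopf algebra $[\Lm]$ of part~(i) to a Hopf algebra $\wt{\sff}_{\pa}[\Lm]$ in $\DrY{Q[A]}$. The Radford biproduct $\HBp=\wt{\sff}_{\pa}[\Lm]\star Q[A]$ is then formed as in \S\ref{ss:rad-biprod}, and automatically contains $Q[A]$ as a Hopf subalgebra with a splitting. To see that $\HBp$ quantises $\pb$, I would compute its semiclassical limit: $\wt{\sff}_{\pa}$ preserves semiclassical limits, $[\Lm]=S\ppm$ reduces to $U\ppm$, and $Q[A]=\EK\pa$ reduces to $U\pa$ by Theorem~\ref{thm:propEK}, so the limit is the Radford biproduct of $U\ppm$ with $U\pa$, which is $U(\pa\ltimes\ppm)=U\pb$ as a Hopf algebra. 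The $1$--jet of the cobracket is then read off from the $1$--jet of the composite of the propic relative twist $J\presped$ with the propic Etingof--Kazhdan twist for $\pa$, which, exactly as in the proof of Theorem~\ref{thm:transf}(i) (cf.\ the propic analogue of Proposition~\ref{pr:jets-rel-twist}), returns the $r$--matrix of $\pb$ and hence $\delta_{\pb}$.

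For part~(iii), functoriality of the propic quantisation (Theorem~\ref{thm:propEK}, Corollary~\ref{co:functoriality of quantisation}) sends the split pair $\pa\to\pb\to\pa$ to a split pair of Hopf algebra objects $(Q[B],Q[A])=(\EK\pb,\EK\pa)$ in $\hext{\cKarPLBAD}$; Radford's theorem applied to it yields $\EK\pb\simeq\Rad{\Lmh}{\EK\pa}$, where $\Lmh$ is the quantum relative Verma module of \S\ref{ss:relquantumverma1} for this split pair, \ie the image of the corresponding Radford idempotent. The propic version of the isomorphism $v\presped$ (Theorem~\ref{thm:ek-fac}) together with the propic version of Theorem~\ref{thm:quantization-LN}, established in \S\ref{ss:Liso}--\S\ref{ss:iso N}, then identify $\wt{\sff}_{\pa}[\Lm]$ with $\Lmh$ as Hopf algebra objects in $\DrY{Q[A]}$. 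Composing the three isomorphisms gives $\EK\pb\simeq\Rad{\Lmh}{\EK\pa}\simeq\Rad{\wt{\sff}_{\pa}[\Lm]}{\EK\pa}=\HBp$, which restricts to the identity on $\EK\pa$; this is the required isomorphism of split pairs $u\presped:(\EK\pb,\EK\pa)\to(\HBp,\EK\pa)$.
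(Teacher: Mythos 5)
Your proposal is correct and follows exactly the route the paper intends: the paper omits the proof of Theorem \ref{thm:L-prop}, stating only that it follows the same steps as Section \ref{s:gammahopf}, and your argument is precisely that transcription --- the propic analogues of Propositions \ref{prop:prod}, \ref{prop:coprod} and \ref{prop:tenprodgamma} for part (i), the Tannakian lift, Radford biproduct and semiclassical/$1$--jet computation of Theorem \ref{thm:transf}(i) for part (ii), and the composition of Radford's theorem with the propic versions of $v\resped$ and Theorem \ref{thm:quantization-LN} for part (iii). Nothing essential is missing.
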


The proof follows the same steps of Section \S\ref{s:gammahopf}
and it is therefore omitted.


\section{Alternative constructions}\label{se:Severa}

In this section we discuss the relation between our construction 
and the quantisation functor described by P. \v{S}evera in \cite{sev},
whose construction and general principle we briefly review in \S\ref{ss:Sev-1}.

In \S\ref{ss:Sev-2}, we rephrase the case of a coisotropic subalgebra \cite[Example 3]{sev}
in the language of Section \S\ref{s:Gamma}, and we show that this allows to construct a relative twist in the
$\PROP$ $\hext{\cKar{\PLBA}}$. From the identification of the two constructions, it
follows that the universal twist $J\resped$, constructed in Sections \S\ref{s:Gamma} and \S\ref{se:rel prop}
in the $\PROP$ $\hext{\cKar{\PLBA}^+}$,
is gauge equivalent to a universal twist in $\hext{\cKar{\PLBA}}$,
a result which is needed in \cite{ATL2}.

\subsection{Quantisation of Lie bialgebras revisited}\label{ss:Sev-1}

In \cite{sev}, \v{S}evera provides an alternative construction of a
universal quantisation functor, based on the following observations.

\subsubsection{}
Let $(\C,\ten_{\C}, \bf{1}_{\C}, \Phi_{\C}, \beta_{\C})$ be a braided monoidal category.
For any cocommutative coalgebra object $(M,\Delta_M,\epsilon_M)\in\mathsf{CoCoAlg}(\C)$, the 
functor $M\ten_{\C}\bullet:\C\to\C$ is endowed with a canonical (lax)
\footnote{\ie $J_{VW}$ is not necessarily invertible.}
tensor structure
\[
J_{VW}:M\ten_{\C}(V\ten_{\C} W)\to (M\ten_{\C} V)\ten_{\C}(M\ten_{\C} W),
\qquad
J_{VW}=A_{\C}\circ\id\ten\Delta,
\]
where $A_{\C}$ is the canonical isomorphism in $\C$ between
$(M\ten_{\C} M)\ten_{\C}(V\ten_{\C} W)$ and $(M\ten_{\C} V)\ten_{\C}(M\ten_{\C} W)$.

\subsubsection{}
Let $(\D,\ten_{\D},\bf{1}_{\D}, \Phi_{\D}, \beta_{\D})$ be a braided monoidal category
and $G:\C\to\D$ a braided monoidal functor.
Then it is possible to describe necessary and sufficient conditions for the composition 
functor $G_M=G\circ M\ten_{\C}\bullet:\C\to\D$ to be a tensor functor. These conditions 
motivate the notion of \emph{$M$--adapted functor} \cite[Definition 1]{sev}. Specifically,
$G$ is $M$--adapted if
\begin{enumerate}
\item the composition $u_M$
\begin{equation}\label{eq:adapt-1}
G(M)\stackrel{G(\epsilon_M)}{\longrightarrow} G(\bf{1}_{\C})\simeq\bf{1}_{\D}
\end{equation}
is an isomorphism;
\item for every $V,W\in\C$, the composition $\tau_{VW}$
\begin{equation}\label{eq:adapt-2}
\xymatrix@C=.7in{
G((V\ten_{\C}M)\ten_{\C}W)\ar[r]^(.45){G(\id\ten\Delta_M\ten\id)}\ar[dr]_{\tau_{VW}}&
G((V\ten_{\C} (M\ten_{\C} M)\ten_{\C} W)\ar[d]^{\psi_{VW}} \\
& G(V\ten_{\C} M)\ten_{\D}G(M\ten_{\C}W)
}
\end{equation}
where $\psi_{VW}$ is the composition of the tensor structure on 
$G$ with the canonical isomorphism in $\C$ between the bracketing 
$\bullet (\bullet\bullet)\bullet$ and $(\bullet\bullet)(\bullet\bullet)$,
is an isomorphism.
\end{enumerate}
It follows that $G$ is $M$--adapted if and only if $G_M$ is a tensor functor.

\subsubsection{}\label{sss:sev-hopf}
For any $M$--adapted functor $G:\C\to\D$, $G_M(M)$ has a natural structure of
Hopf algebra in $\D$. Specifically, 
\begin{enumerate}
\item the coalgebra structure is induced by that of $M$, with coproduct
$G_M(M)\to G_M(M)\ten G_M(M)$
\begin{equation}\label{eq-sev-coprod}
J_G\circ G(A_{\C}\circ\Delta_M\ten\Delta_M)
\end{equation}
and counit $G(\epsilon_M\ten\epsilon_M)$;
\item the algebra structure is defined by the product $G_M(M)\ten G_M(M)\to G_M(M)$
\begin{equation}\label{eq-sev-prod}
G(\id\ten\epsilon_M\ten\id)\circ\tau_{MM}^{-1}
\end{equation}
and unit $G(\Delta_M)\circ u_M^{-1}$;
\item finally, the antipode is given by $G((\beta_{\C})_{MM}^{-1})$.
\end{enumerate}

Moreover $G_M(M)$ acts and coacts on any $G_M(V)$, $V\in\C$, with action
\begin{equation}\label{eq:sev-act}
G(\id\ten\epsilon_M\ten\id)\circ\tau_{MV}^{-1}
\end{equation}
and coaction
\begin{equation}\label{eq:sev-coact}
R_{MV}\circ 1_{G_M(M)}\ten \id
\end{equation}
where $R_{MV}$ is defined by
\[
\xymatrix{
G_M(M)\ten G_M(V)\ar[r]^{\beta_{\D}^{-1}R_{MV}} \ar[d]_{J_{G_M}} & 
G_M(V)\ten G_M(M)\ar[d]^{J_{G_M}}\\
G_M(M\ten V) \ar[r]^{G_M(\beta_{MV})} & G_M(V\ten M)
}
\]
One verifies that \eqref{eq:sev-act} and \eqref{eq:sev-coact} are compatible and 
therefore $G_M$ factors through the category of \DYt $G_M(M)$--modules.

\subsubsection{}
Finally, this construction applies to the case of Lie bialgebras. Set $\D=\Kvect$, $\C=\hDrY{\b}{\Phi}$, and consider 
the cocommutative coalgebra object in $\hDrY{\b}{\Phi}$ given by the Verma module $M=\Mm$.
One can easily check that the functor of coinvariants $G:\hDrY{\b}{\Phi}\to\Kvect$, $G(V)=V_{\b}$ 
is a $M$--adapted lax braided tensor functor. Moreover, there is a natural identification
\begin{equation}\label{eq:sev-isom}
\phi_V: G_M(V)=(M\ten V)_{\b}\to V,
\end{equation}
where $\phi_V([x\ten v])=S_0(x)v$ and $\phi_V^{-1}(v)=[1\ten v]$.
It is easy to check that the Hopf algebra $G_{M}(M)$ is a quantisation of the Lie bialgebra $\b$.\\

This is easily reproduced in the $\PROP$ $\slbah$, giving rise
to another universal quantisation functor.
The advantage with respect to the Etingof--Kazhdan
quantisation is that the construction of $G_M(M)$ does not require to consider \emph{topological}
\DYt modules. On the other hand, one can easily see that this is 
equivalent to the Etingof--Kazhdan functor on the category of discrete 
\DYt $\b$--modules, as for any discrete $V\in\hDrY{\b}{\Phi}$, 
there is a canonical isomorphism of tensor functors
\[
G_M(V)\simeq F_{\b}(V^*)^*,
\]
given by the identifications
\begin{align*}
(M\ten V)_{\b}\simeq\Hom_{\b}^{\b}(\Mm\ten V, \Mpv)^*\simeq\Hom_{\b}^{\b}(\Mm, \Mpv\ten V^*)^*.
\end{align*}

\subsection{Tensor structure on the restriction functor}\label{ss:Sev-2}
The construction of the fiber functor $G_M:\hDrY{\b}{\Phi}\to\vectk$ generalises
to the relative case obtaining analogous results to Theorems \ref{th:Gamma} and \ref{thm:braidhopf}.

\subsubsection{}
For any split pair $\a\to\b\to\a$, one can consider
the \DYt $\b$--module $L=S\Lmms$. This is endowed with a canonical
structure of cocommutative coalgebra in $\hDrY{\b}{\Phi}$, so that the functor 
$L\ten\bullet:\hDrY{\b}{\Phi}\to\hDrY{\b}{\Phi}$
is naturally endowed with a lax tensor structure. 
Then one replaces the functor $G_{\b}$ of $\b$--coinvariants with
\[
G_{\Lmms}:\hDrY{\b}{\Phi}\to\hDrY{\a}{\Phi},
\qquad
G_{\Lmms}(V)=V_{\Lmms}.
\]
One sees immediately that $G_{\Lmms}$ is a lax braided tensor functor, which
is adapted to $L$. Moreover, there is a canonical isomorphism
\begin{equation}\label{eq:sev-rel-isom}
\phi_V: G_{\Lmms}(L\ten V)\to V,
\end{equation}
where $\phi_V([x\ten v])=S_0(x)v$ and $\phi_V^{-1}(v)=[1\ten v]$.
It follows that
\begin{enumerate}
\item  the functor $G_L(V)=(L\ten V)_{\Lmms}$ from $\hDrY{\b}{\Phi}$
to $\hDrY{\a}{\Phi}$ is isomorphic to $\Res\resped$ and is naturally 
endowed with a tensor structure;
\item $G_L(L)$ has a Hopf algebra structure defined by the formulae
\eqref{eq-sev-prod}, \eqref{eq-sev-coprod}, corresponding to $L$ and $G_{\Lmms}$;
\item $G_L(L)$ acts and coacts by \eqref{eq:sev-act}, \eqref{eq:sev-coact}, 
on any $G_L(V)$, $V\in\hDrY{\b}{\Phi}$ and the functor $G_L$
factors through the category of \DYt $G_L(L)$--modules in $\hDrY{\a}{\Phi}$.
\end{enumerate}

As in the case $\a=0$, restricted to the category
of discrete \DYt $\b$--modules, there is an isomorphism
of tensor functors
\begin{equation}\label{eq:sev-rel-ident}
G_L(V)\simeq F\resped(V^*)^*,
\end{equation}
given by the identifications
\begin{align*}
(L\ten V)_{\m}\simeq\Hom_{\b}^{\b}(\Lm\ten V, \Npv)^*\simeq\Hom_{\b}^{\b}(\Lm, \Npv\ten V^*)^*.
\end{align*}

\subsubsection{}
The tensor functor $G_L$ has an obvious propic realisation in $\hext{\cKar{\PLBA}}$, which
relies on the identification \eqref{eq:sev-rel-isom} and leads to an analogue of Theorem 
\ref{thm:L-prop} $(i)$--$(iii)$. 
Namely, for every $V,W\in\hDrY{\pb}{\Phi}$, one defines the 
twist $J_{G_L}$ as the composition
\begin{equation}
\xymatrix@C=0.5in{
V\ten W \ar[r]^(.45){\phi^{-1}_{V\ten W}} &
L\ten V\ten W \ar[r]^{A_{\Phi}\circ\Delta\ten\id^{\ten 2}} &
L\ten V\ten L\ten W \ar[r]^(.6){\phi_V\ten\phi_W} &
V\ten W,
}
\end{equation}
where $A_{\Phi}$ denotes the usual isomorphism from $(L\ten L)\ten (V\ten W)$
to $(L\ten V)\ten (L\ten W)$.
The Hopf algebra structure on $G_L(L)$, its action and coaction on any \DYt
$\pb$--module have their analogues in $\hext{\cKar{\PLBA}}$. 

In particular, from \eqref{eq:sev-rel-ident} we get the following

\begin{proposition}
The tensor structure $J\resped$, constructed in Sections \S\ref{s:Gamma} and \S\ref{se:rel prop}, is 
gauge equivalent to a universal relative tensor structure from $\hext{\cKar{\PLBA}}$.
\end{proposition}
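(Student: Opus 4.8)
The plan is to compare the two constructions of a relative tensor structure on the restriction functor --- the one built in Sections \S\ref{s:Gamma} and \S\ref{se:rel prop} out of the relative Verma modules $\Lm,\Npv$ and realised in $\hext{\cKar{\PLBA}^+}$, and the one coming from \v{S}evera's $L$--adapted functor $G_L$, which is realised in $\hext{\cKar{\PLBA}}$. The key point is the identification \eqref{eq:sev-rel-ident}, namely the isomorphism of tensor functors $G_L(V)\simeq\FF{\a}{\b}(V^*)^*$ on discrete \DYt $\b$--modules, together with the fact that both $J\resped$ and $J_{G_L}$ admit \emph{propic} lifts: $J\resped$ lives in $\hext{\cKar{\PLBA}^+}$ by the construction of \S\ref{ss:proptwist}, while $J_{G_L}$ lives already in $\hext{\cKar{\PLBA}}$ by the construction just given. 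So the statement reduces to showing that these two universal twists become equal, up to a gauge transformation, after pulling back along the obvious forgetful functor $\hext{\cKar{\PLBA}}\to\hext{\cKar{\PLBA}^+}$ (or, more precisely, after noting that the $G_L$ construction needs no auxiliary object $\pad$ at all).

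First I would set up the dualisation carefully at the propic level. The functor $\Theta$ of \S\ref{sss:Mpv-prop} exchanges $\PMm$ and $\PMpv$ and, in the relative setting, should exchange $\PLm$ and $\PNpv$; combined with the chain of identifications $(L\ten V)_{\m}\simeq\Hom_{\b}^{\b}(\Lm\ten V,\Npv)^*\simeq\Hom_{\b}^{\b}(\Lm,\Npv\ten V^*)^*$ displayed at the end of \S\ref{ss:Sev-2}, this transports the tensor structure $J_{G_L}$ on $G_L$ to a tensor structure on the functor $V\mapsto\FF{\a}{\b}(V^*)^*$. Dualising once more --- i.e.\ applying $\Theta$ again and using that $\Theta^2=\id$ --- yields a tensor structure on $\FF{\a}{\b}$ itself, realised in $\hext{\cKar{\PLBA}}$. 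The next step is to compute the $1$--jet of this transported structure and compare it with Proposition \ref{pr:jets-rel-twist}: both should give $\id^{\ten 2}+\tfrac{\hbar}{2}(r_\b+\iD^{\ten 2}(r_\a^{21}))$ modulo $\hbar^2$, which pins down the twist up to a gauge. Then I would invoke the standard rigidity argument (as in \cite[Prop. 9.7]{ek-1} and \cite[Prop. 3.10]{brochier}, already cited in \S\ref{s:tensor-structure}): two tensor structures on the same functor with the same $1$--jet, both valued in a pro-nilpotent completion, differ by an invertible gauge element $u\in\pEnd{}{\FF{\a}{\b}}$ with $u\equiv\id\mod\hbar$. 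Finally, one checks that the gauge element can itself be taken propic, i.e.\ to lie in $\hext{\cKar{\PLBA}}$, which follows because all the data entering both constructions (the Verma modules, the associator action, the maps $\ipd$, $\phi_V$) are propic over $\hext{\cKar{\PLBA}}$ --- the object $\pad$ is only needed to \emph{describe} $\Npv$ via its universal property, not to describe $J\resped$ once $\Npv$ is in hand.

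The main obstacle I expect is the bookkeeping around the parabolic $\Lp_-=\b\oplus\a^*$ versus the ideal $\m=\ker(p)$: the Etingof--Kazhdan--style construction of $J\resped$ in \S\ref{s:tensor-structure} genuinely uses $\Npv=\ind_{\Lmm}^{\gb}\sfk$ and hence the extended $\PROP$ $\PLBAD$, whereas \v{S}evera's construction only ever uses $L=S\m$ as a coalgebra in $\DrY{\b}$ and the functor of $\m$--coinvariants, which lives in $\cKar{\PLBA}$. Reconciling these requires showing that the \emph{twist} --- as opposed to the module $\Npv$ --- does not actually see the $\pad$--structure, i.e.\ that every morphism appearing in the definition of $J\presped$ factors through $\cKar{\PLBA}$ after the dual identification. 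Concretely, one must verify that the distinguished morphisms $\prelpsi{V}:\PLm\ten V^0\to\PNpv\ten V$ and $\ipd$, when dualised via $\Theta$, are expressible purely in terms of $\PLm$, its coalgebra structure, and the $\m$--coaction, with no residual use of $r_{\pa}$, $\coad_{\pa}$, $\coad_{\pb}$; granting this, the identification of the two twists is forced by the common $1$--jet and the uniqueness of gauge equivalences, and the proposition follows. Since the referenced rigidity and $1$--jet computations are already in place (Proposition \ref{pr:jets-rel-twist} and the arguments it cites), the proof reduces to this propicity check, which is why the paper states it can be omitted in detail.
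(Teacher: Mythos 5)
Your framing — compare $J\resped$ with the \v{S}evera--type twist $J_{G_L}$, which manifestly lives in $\hext{\cKar{\PLBA}}$ since it only uses $L=S\m$, its coalgebra structure, the isomorphisms $\phi_V$ and the associator, and then pass through the dual identification \eqref{eq:sev-rel-ident} — is indeed the paper's route. The gap is in how you close the argument: you transport $J_{G_L}$ to a tensor structure on $\FF{\a}{\b}$, match $1$--jets against Proposition \ref{pr:jets-rel-twist}, and then invoke a ``standard rigidity argument'' to the effect that two tensor structures on the same functor with the same $1$--jet, valued in a pro--nilpotent completion, differ by a gauge. No such general principle exists: agreement of $1$--jets does not determine a twist up to gauge, and the results you cite (\cite[Prop.~9.7]{ek-1}, \cite[Prop.~3.10]{brochier}) do not assert one — they construct explicit gauge transformations in specific situations. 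In the universal setting, uniqueness statements for twists with prescribed semiclassical limit are deep cohomological theorems; they are in fact the subject of \cite{ATL2}, which \emph{uses} the present proposition as input, so appealing to rigidity of that kind here would be circular as well as unjustified. Your closing remark that ``the referenced rigidity \dots is already in place'' is therefore the weak point: it is not.

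The paper needs no rigidity at all. The identification \eqref{eq:sev-rel-ident}, i.e.\ the explicit chain $(L\ten V)_{\m}\simeq\Hom_{\b}^{\b}(\Lm\ten V,\Npv)^*\simeq\Hom_{\b}^{\b}(\Lm,\Npv\ten V^*)^*$, is an isomorphism of \emph{tensor} functors, and as such it directly carries the universal twist $J_{G_L}$ (propic over $\hext{\cKar{\PLBA}}$) into a gauge of $J\resped$; the object $\pad$ enters only through the universal property used to define $\Npv$, not through the transported twist. The genuine content — which your plan correctly isolates as the ``propicity check'' in your last paragraph but then sidesteps by the rigidity appeal — is precisely the verification that this explicit identification intertwines the two tensor structures, i.e.\ that the dualised $\prelpsi{V}$ and $\ipd$ are matched with $\Delta_L$, $\phi_V$ and $A_{\Phi}$ under \eqref{eq:sev-rel-ident}. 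Once that compatibility is checked the gauge equivalence is immediate; without it, the $1$--jet comparison yields nothing.
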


Finally, we point out that the proof of Theorems \ref{th:first main} and \ref{th:second main} 
does not get any simpler in this context.
Namely, in order to obtain an alternative proof, it would be necessary to construct a natural 
transformation of tensor functors
\[
\xymatrix{
\hDrY{\b}{\Phi}\ar[r]^{\noEKeq{\b}} \ar[d]_{\Res\resped} & 
\DrY{\EK\b} \ar[d]^{\Res_{\EK\a,\EK\b}}\ar@{=>}[dl]_{v\resped}\\
\hDrY{\a}{\Phi}\ar[r]_{\noEKeq{\a}} & \DrY{\EK\a}
}
\]
where the tensor structure on the restriction functors $\Res\resped$ and $\Res_{\EK\a,\EK\b}$ are
now induced by the identifications
\[
\Res\resped(V)\simeq(L\ten V)_{\m}
\aand
\Res_{\EK\a,\EK\b}(\V)\simeq(L^{\hbar}\ten\V)_{L^{\hbar}}.
\]
Adapting the proof of Theorem \ref{ss:nat-trans}, it would be necessary to show that 
the spaces of coinvariants are preserved by the quantisation as \DYt modules. 
The most obvious strategy to approach the problem relies on the equivalences $\noEKeq{\b}$
and $\noEKeq{\a}$ and therefore on the identifications
\begin{eqnarray*}
(L\ten V)_{\m}&\simeq&\Hom_{\b}^{\b}(\Lm,\Npv\ten V^*)^*,\\
(L^{\hbar}\ten\V)_{L^{\hbar}}&\simeq&\Hom_{\EK\b}^{\EK\b}(\Lmh,\Npvh\ten\V^*)^*.
\end{eqnarray*}
Ultimately, it becomes necessary, exactly as in \S\ref{ss:nat-trans}, to identify 
the quantisation of the relative Verma modules with their quantum analogues, 
producing the same proof we followed in Section \S\ref{s:Gammah}.

\appendix

\section{Quantum double and Drinfeld--Yetter modules}\label{s:app}

Let $B$ be a \fd Hopf algebra or a quantised enveloping algebra.
We review in this section the construction of the quantum double 
of $B$ as a double cross product of Hopf algebras, following \cite
[\S7.2]{majid-2}. We also describe the equivalence of braided tensor
categories between (admissible) \DYt modules over $B$ and
modules over its quantum double.

\subsection{Double cross product Hopf algebras \cite{majid-2}}\label{ss:app-1}

Two Hopf algebras $(B, m_B,$ $\iota_B, \Delta_B, \varepsilon_B, S_B)$
and $(C, m_C, \iota_C, \Delta_C, \varepsilon_C, S_C)$ form a \emph
{matched pair} if they are endowed with maps
\[
\BonC :B\ten C\to C\aand \ConB :B\ten C\to B
\]
such that
\begin{enumerate}
\item $(C, \BonC)$ is a left $B$--module coalgebra, \ie
\begin{equation*}\label{eq:maj-mp-1}
\Delta_C\circ \BonC = 
\BonC\ten\BonC\circ(2\,3)\circ\Delta_B\ten\Delta_C;
\end{equation*}
\item $(B, \ConB)$ is a right $C$--module coalgebra, \ie
\begin{equation*}\label{eq:maj-mp-2}
\Delta_B\circ \ConB = 
\ConB\ten\ConB\circ(2\,3)\circ\Delta_B\ten\Delta_C;
\end{equation*}
\item $\ConB$ and $\BonC$ are compatible, respectively, with $m_B$ and $m_C$, \ie 
\begin{eqnarray*}
\label{eq:maj-mp-3}
\ConB\circ m_B\ten\id&=&m_B\circ\ConB\ten\id\circ\id\ten\BonC\ten\ConB\circ(3\,4)\circ\id\ten\Delta_B\ten\Delta_C,\\
\label{eq:maj-mp-4}
\BonC\circ \id\ten m_C&=&m_C\circ\id\ten\BonC\circ\ConB\ten\BonC\ten\id\circ(2\,3)\circ\Delta_B\ten\Delta_C\ten\id;
\end{eqnarray*}
\item the units $\iota_B$ and $\iota_C$ are module maps, \ie
\begin{eqnarray*}\label{eq:maj-mp-5}
\ConB\circ\iota_B\ten\id&=&\iota_B\ten\varepsilon_C,\\
\label{eq:maj-mp-6}
\BonC\circ\id\ten\iota_C&=&\varepsilon_B\ten\iota_C;
\end{eqnarray*}
\item finally, $\ConB$ and $\BonC$ satisfy the following compatibility condition:
\begin{equation*}\label{eq:maj-mp-7}
\ConB\ten\BonC\circ(2\,3)\circ\Delta_B\ten\Delta_C =
\ConB\ten\BonC\circ(2\,3)\circ\Delta^{21}_B\ten\Delta^{21}_C.
\end{equation*}
\end{enumerate}

Any matched pair $(B,C,\BonC,\ConB)$ gives rise to a
Hopf algebra structure on $B\ten C$, called the \emph{double cross
product of $B$ and $C$}, and denoted $\dcp{B}{C}$. The product is
defined by requiring that $\dcp{B}{C}$ contains $B,C$ as subalgebras
under the natural inclusions $i_B:B\to B\ten C$ and $i_C:C\to B\ten
C$, and
\begin{eqnarray*}
\label{eq:maj-dcp-1}
m_{B,C}\circ i_B\ten i_C&=& id_{B\ten C},\\
\label{eq:maj-dcp-2}
m_{B,C}\circ i_C\ten i_B&=&\ConB\ten\BonC\circ(2\,3)\circ\Delta_B\ten\Delta_C\circ(1\,2).
\end{eqnarray*}
The remaining structure is defined as follows: 
\begin{eqnarray*}
\iota_{B,C}&=&\iota_B\ten\iota_C \label{eq:maj-dcp-4},\\
\Delta_{B,C}&=&(2\,3)\circ\Delta_B\ten\Delta_C \label{eq:maj-dcp-3},\\
\varepsilon_{B,C}&=&\varepsilon_B\ten\varepsilon_C\label{eq:maj-dcp-5},\\
S_{B,C}&=&m_{B,C}\circ i_C\ten i_B \circ S_C\ten S_B\circ (1\,2) \label{eq:maj-dcp-6}.
\end{eqnarray*}
In particular, $\dcp{B}{C}$ contains $B$ and $C$ as Hopf subalgebras.


\subsection{Quantum double}\label{ss:app-2}
Let $B$ be a finite--dimensional Hopf algebra. The quantum double $DB$  
(cf.  \cite{drin-2} and \S\ref{ss:DY-qD}) has a natural description as a double cross 
product Hopf algebra. 
Namely, let $\BonC_B:B\ten B^*\to B^*$ be the left coadjoint action of $B$ on $B^*$, \ie
\begin{equation*}
\BonC_B=\iip{}{}\ten\id\circ\id\ten m_{B^*}\ten\id\circ (2\,3\,4)\circ
\id\ten S^{-1}_{B^*}\ten \id^{\ten 2}\circ\id\ten\Delta_{B^*}^{(3)},
\end{equation*}
and $\ConB_B:B\ten B^*\to B$ the right coadjoint action of $B^*$ on $B$, \ie
\begin{equation*}
\ConB_B=\id\ten\iip{}{}\circ\id\ten m_{B}\ten\id\circ (3\,2\,1)\circ
\id^{\ten 2}\ten S^{-1}_{B}\ten \id\circ\Delta_{B}^{(3)}\ten\id.
\end{equation*}
Then, the product on $DB$ given by \eqref{eq:q-double-mult}  reads
\begin{equation}\label{eq:DB-prod-2}
m_{DB}\circ i_{B^*}\ten i_B=
\ConB_B\ten\BonC_B\circ(2\,3)\circ\Delta_B\ten\Delta^{21}_{B^*}\circ(1\,2).
\end{equation}
In particular, it follows that $(B, B^\circ, \BonC_B,\ConB_B)$, where $B^{\circ}
={B^*}^{\scs\operatorname{cop}}$,
is a matched pair of Hopf algebras and $DB=\dcp{B}{B^{\circ}}$ is the 
associated double cross product Hopf algebra.


\subsection{Drinfeld--Yetter modules revisited}\label{ss:app-3}
\newcommand{\btd}{\blacktriangledown}
\newcommand{\btu}{\blacktriangle}
Let $\btd_B: B\ten B\to B$ and $\btu_B:B\to B\ten B$ be, respectively, the adjoint action and 
the adjoint coaction of $B$ on itself (cf. \eqref{eq:adj-action}, \eqref{eq:adj-coact}), \ie
\begin{eqnarray*}
\btu_B&=&m_B\ten\id\circ(1\,2\,3)\circ S^{-1}_B\ten\id^{\ten 2}\circ\Delta_B^{(3)},\\
\btd_B&=&m_B^{(3)}\circ(3\,2\,1)\circ S^{-1}_B\ten\id^{\ten 2}\circ\Delta_B\ten\id.
\end{eqnarray*}
Then, for every Drinfeld--Yetter $B$--module $(V, \pi_V,\pi_V^*)$, the 
compatibility condition \eqref{eq:DY Hopf} between the action and the coaction reads
\begin{equation}\label{eq:DY-comp-2}
\pi_V^*\circ\pi_V=m_B\ten\pi_V\circ(3\,2\,1)\circ\btu_B\ten\btd_B\ten\id\circ\Delta_B\ten\pi_V^*.
\end{equation}


\subsection{Drinfeld--Yetter modules and quantum double}\label{ss:app-4}
We mentioned in \S\ref{ss:DY-qD} that the category of $DB$--modules is 
equivalent to the category of Drinfeld--Yetter $B$--modules. 
This follows, in particular, from the duality between the maps $\ConB_B$ (resp. $\BonC_B$) and 
$\btu_B$ (resp. $\btd_B$), appearing in the formula \eqref{eq:DB-prod-2} for the product of $DB$
and in the compatibility condition \eqref{eq:DY-comp-2} for Drinfeld--Yetter $B$--modules.
Indeed, the coadjoint action of $B^*$ on $B$ corresponds to the adjoint coaction of $B$ on $B$, \ie
\begin{eqnarray}
\ConB_B&=&\id_B\ten\iip{}{}\circ(2\,3)\circ\btu_B\ten\id_{B^*}\label{eq:coadj-dual-1},\\
\btu_B&=&(1\,2)\circ\ConB_B\ten\id_{B}\circ\id_B\ten R_B^{21}\label{eq:coadj-dual-2},
\end{eqnarray}
where $\iip{}{}$ denotes the pairing between $B$ and $B^*$ and $R_B\in B\ten B^*$ is the $R$--matrix. 
Similarly, the coadjoint action of $B$ on $B^*$ corresponds to the adjoint
action of $B$ on $B$, \ie
\begin{eqnarray}
\iip{}{}^{21}\circ\id_{B^*}\ten\btd_B&=&\iip{}{}^{21}\circ\BonC_B\ten\id_B\circ(1\,2)\label{eq:coadj-dual-3},\\
\btd_B\ten\id_{B^*}\circ\id_B\ten R_B&=&\id_B\ten\BonC_B\circ(2\,3)\circ R_B\ten\id_B\label{eq:coadj-dual-4}.
\end{eqnarray}

\begin{theorem}\label{thm:DY-qd-eq}
There is a canonical equivalence of braided tensor categories
\[
\xymatrix{
\DrY{B} \ar@<2pt>[r]^(.4){\Xi} \ar@<-2pt>@{<-}[r]_(.4){\Theta}
&\Rep DB
}
\]
where
\begin{enumerate}
\item  for any Drinfeld--Yetter $B$--module $(V,\pi_V,\pi^*_V)$, 
$\Xi(V,\pi_V,\pi^*_V)=(V,\xi_V)$ with
\begin{equation*}\label{eq:DY-funct-1}
\xi_V=\pi_V\circ\id_B\ten\iip{}{}^{21}\ten\id_V\circ\id_B\ten\id_{B^*}\ten\pi_V^*;
\end{equation*}
\item for any $DB$--module $(V,\xi_V)$, $\Theta(V,\xi_V)=(V, \pi_V, \pi_V^*)$ with
\begin{eqnarray*}
\pi_V&=&\xi_V\circ i_B\ten\id_V\label{eq:DY-funct-2},\\
\pi_V^*&=&\id_B\ten\xi_V\circ \id_B\ten i_{B^*}\ten\id_V\circ R_B\ten\id_V\label{eq:DY-funct-3}.
\end{eqnarray*}
\end{enumerate}
\end{theorem}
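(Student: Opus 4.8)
The plan is to exhibit $\Xi$ and $\Theta$ as mutually quasi--inverse functors which are compatible with the tensor products and braidings, and then observe that everything restricts to the admissible subcategory when $B$ is a quantised enveloping algebra.

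First I would check that $\Xi$ is well defined. Given $(V,\pi_V,\pi_V^*)\in\DrY{B}$, the map $\xi_V$ restricts along $i_B$ to the $B$--action $\pi_V$, and along $i_{B^\circ}$ to the $B^\circ$--action obtained by composing the coaction $\pi_V^*$ with the evaluation pairing; both are module structures since $\pi_V$ is an action, $\pi_V^*$ a coaction, and $\iip{}{}$ a Hopf pairing. The one nontrivial point is that $\xi_V$ respects the cross relation \eqref{eq:DB-prod-2} defining the product of $DB=\dcp{B}{B^\circ}$. I would verify this by graphical calculus: rewriting $\ConB_B$ and $\BonC_B$ in \eqref{eq:DB-prod-2} in terms of the adjoint coaction $\btu_B$ and adjoint action $\btd_B$ via the duality identities \eqref{eq:coadj-dual-1}--\eqref{eq:coadj-dual-4}, the relation $\xi_V\circ m_{DB}\ten\id_V=\xi_V\circ\id\ten\xi_V$ becomes precisely the Drinfeld--Yetter compatibility \eqref{eq:DY-comp-2} for $(V,\pi_V,\pi_V^*)$, contracted against the canonical element $R_B$. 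Conversely, for $\Theta$: given a $DB$--module $(V,\xi_V)$, the restriction $\pi_V=\xi_V\circ i_B\ten\id_V$ is an action since $i_B$ is an algebra morphism, and the stated formula for $\pi_V^*$ uses $R_B$; that $\pi_V^*$ is a coaction follows from the quasi--cocommutativity and hexagon identities for $R_B$ as the universal $R$--matrix of $DB$ restricted to $B\ten B^\circ$, and the Drinfeld--Yetter compatibility \eqref{eq:DY-comp-2} is again obtained from \eqref{eq:DB-prod-2} through \eqref{eq:coadj-dual-1}--\eqref{eq:coadj-dual-4}.

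Next I would check that $\Xi$ and $\Theta$ are mutually inverse. Both composites reduce to the fact that $R_B\in B\ten B^*$ is the canonical element dual to the evaluation pairing, together with the antipode axiom $m_B\circ\id\ten S_B\circ\Delta_B=\iota_B\circ\varepsilon_B$: contracting $R_B$ against $\iip{}{}$ and collapsing the resulting (co)multiplications recovers $\pi_V,\pi_V^*$ from $\xi_V$ and vice versa. I would then verify monoidality: the tensor product on $\DrY{B}$ from \S\ref{ss:DY Hopf} is built from $\Delta_B$ on actions and $m_B^{21}$ on coactions, while the tensor product on $\Rep DB$ uses $\Delta_{DB}=(2\,3)\circ\Delta_B\ten\Delta_{B^\circ}$; a direct computation shows that $\Xi$ carries one structure to the other with trivial associativity constraints, so the identity is a tensor structure on $\Xi$. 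Finally, for the braiding: the Drinfeld--Yetter braiding $\beta_{VW}=(1\,2)\circ R_{VW}$ with $R_{VW}=\pi_V\ten\id\circ(2\,3)\circ\id\ten\pi_W^*$ is exactly the action on $V\ten W$ of the universal $R$--matrix $R\in B\ten B^\circ\subset DB\ten DB$ of the quantum double under $\Xi$, its first leg acting through $\pi_V$ and its second through $\pi_W^*$ followed by evaluation; hence $\Xi$ is a braided tensor functor. The admissible case of \S\ref{ss:admDY} then follows by restriction, since when $B$ is a QUE the $R$--matrix of $DB$ lies in the $\hbar$--adic completion of $B'\ten(B^\vee)^\circ$, so the coaction $\pi_V^*$ produced by $\Theta$ automatically factors through $B'\ten V$, and conversely admissibility of $\pi_V^*$ is exactly what is needed for the $(B^\vee)^\circ$--action inside $\xi_V$ to be well defined on a topologically free module.

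The main obstacle I anticipate is the first step --- matching the single Drinfeld--Yetter compatibility axiom \eqref{eq:DY Hopf} with the cross relation \eqref{eq:DB-prod-2} of the double cross product. This is a somewhat intricate identity best handled diagrammatically, and the whole point of recording the adjoint (co)actions $\btu_B,\btd_B$ and the duality relations \eqref{eq:coadj-dual-1}--\eqref{eq:coadj-dual-4} in \S\ref{ss:app-1}--\S\ref{ss:app-3} is to reduce it to a manipulation of dual bases; once this is in place, the remaining verifications (comodule axiom, mutual inverseness, monoidality, braiding) are routine Hopf--algebraic bookkeeping.
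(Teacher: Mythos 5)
Your proposal is correct and follows essentially the same route as the paper: the paper's proof likewise isolates the well-definedness of $\Xi$ and $\Theta$ as the only nontrivial point, and establishes it by translating between the cross relation \eqref{eq:DB-prod-2} of $DB=\dcp{B}{B^{\circ}}$ and the Drinfeld--Yetter compatibility \eqref{eq:DY-comp-2} via the duality identities \eqref{eq:coadj-dual-1}--\eqref{eq:coadj-dual-4}, exactly as you outline. The remaining checks (mutual inverseness, compatibility with tensor products and braidings, and the admissible/QUE case treated in \S\ref{ss:app-5}) are the same routine verifications you describe, which the paper records only briefly.
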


\begin{proof}
The only non trivial part consists in showing that the functors $\Theta$ and $\Xi$
are well--defined. It is then clear that they are inverse of each other and preserve 
the braided tensor structures on $\DrY{B}$ and $\Rep DB$.\\

$(i)$ We have to show that $\xi_V$ defines an action of $DB$ on $V$. 
Namely, we need to check that,
\begin{equation*}
\xi_V\circ\id_{DB}\ten\xi_V\circ i_{B^*}\ten i_{B}\ten\id_V=
\xi_V\circ m_{DB}\ten\id_V\circ i_{B^*}\ten i_{B}\ten\id_V.
\end{equation*}
By definition of $\xi_V$, the LHS reads
\begin{align*}
\xi_V\circ&\id_{DB}\ten\xi_V\circ i_{B^*}\ten i_{B}\ten\id_V\\
&=\iip{}{}^{21}\ten\id_V\circ\pi_V^*\circ\id_{B^*}\ten\pi_V\\
&=\iip{}{}^{21}\ten\id_V\circ\id_{B^*}\ten(\pi^*_V\circ\pi_V)\\
&=\iip{}{}^{21}\ten\id_V\circ\id_{B^*}\ten(m_B\ten\pi_V\circ(3\,2\,1)\circ\btu_B\ten\btd_B\ten\id\circ\Delta_B\ten\pi_V^*)\\
&=(\iip{}{}^{21}\circ\id_{B^*}\ten\iip{}{}^{21}\ten\id_B)\ten\pi_V\circ(4\,5)\circ\id_{B^*}^{\ten 2}\ten\btu_B\ten\btd_B\circ\Delta_{B^*}\ten\Delta_B\ten\pi_V^*.
\end{align*}
Applying \eqref{eq:coadj-dual-1} and \eqref{eq:coadj-dual-3}, we get 
\begin{align*}
\xi_V\circ&\id_{DB}\ten\xi_V\circ i_{B^*}\ten i_{B}\ten\id_V\\
&=\pi_V\circ\id_B\ten\iip{}{}^{21}\ten\id_V\circ\ConB_B\ten\BonC_B\ten\id_B\ten\id_V
\circ(4\,3\,1)\circ\Delta_{B^*}\ten\Delta_B\ten\pi_V^*\\
&=\pi_V\circ\id_B\ten\iip{}{}^{21}\ten\id_V\circ\id_B\ten\id_{B^*}\ten\pi_V^*
\circ\ConB_B\ten\BonC_B\ten\id_V\circ(2\,3)\circ\\
&\qquad\qquad\qquad\qquad\qquad\qquad\qquad\qquad\qquad\qquad
\circ\Delta_{B}\ten\Delta_{B^*}^{21}\ten\id_V\circ(1\,2)\\
&=\xi_V\circ m_{DB}\ten\id_V\circ i_{B^*}\ten i_{B}\ten\id_V
\end{align*}
as required.

$(ii)$ It is clear that $\pi_V$ and $\pi_V^*$ define an action and a coaction of $B$ on $V$. We 
have to show that they satisfy the compatibility condition \eqref{eq:DY-comp-2}. One has
\begin{align*}
\pi_V^*\circ\pi_V&=\id_B\ten\xi_V\circ\id_B\ten\id_{B^*}\ten\xi_V\circ R_B\ten\id_B\ten\id_V\\
&=\id_B\ten\xi_V\circ\id_B\ten m_{DB}\ten\id_V\circ R_B\ten\id_B\ten\id_V\\
&=
m_B\ten\ConB_B\ten\BonC_B\ten\id_V
\circ
(6\,5\,3\,4)
\circ\id^{\ten 4}\ten\Delta_B\ten\id_V\circ \\
&\qquad\qquad\qquad\qquad
\id_{B}\ten R_B\ten\id_{B^*}\ten\id_B\ten\id_V\circ R_B\ten\id_B\ten\id_V.
\end{align*}
Applying \eqref{eq:coadj-dual-2} and $\eqref{eq:coadj-dual-4}$, we get
\begin{align*}
\pi_V^*\circ\pi_V=m_B\ten\pi_V\circ(3\,2\,1)\circ\btu_B\ten\btd_B\ten\id\circ\Delta_B\ten\pi_V^*
\end{align*}
as required.
\end{proof}

\subsection{QUE algebras}\label{ss:app-5}

Let now $B$ be a QUE algebra, and $B'\subset B$ the corresponding
QFSH algebra (see \S\ref{ss:dualityQUE}). In \S\ref{ss:adm-Mm} and
\S\ref{ss:Mpv}, we proved the following

\begin{proposition}\hfill
\begin{enumerate}
\item The adjoint action of $B$ on itself preserves $B'$. In particular, 
$(B',\btd_B)$ is a left $B$--module.
\item The adjoint coaction of $B$ on itself factors through $B'$. In particular,
$(B,\btu_B)$ is an admissible right $B$--comodule.
\end{enumerate}
\end{proposition}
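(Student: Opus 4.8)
The plan is to prove both statements by reducing them, via the Karoubi-envelope calculus for Hopf algebras, to properties of the idempotents $\pi=i\circ p$ and $\Pi=\id*(S\circ\pi)$ already recorded in the Radford-biproduct section, together with the defining inclusion $B'\subset B$. Concretely, for $(1)$ I would argue that the adjoint action $\btd_B=m_B^{(3)}\circ(3\,2\,1)\circ S_B^{-1}\ten\id^{\ten 2}\circ\Delta_B\ten\id$ preserves $B'$ exactly by the computation already used in the proof that $\gLm{B,A}$ (and in particular $\MpBp$) is admissible: namely one shows by induction on $n$ that $\delta^{(n)}\circ\btd_B\subset\hbar^n B^{\ten n}$, where $\delta^{(n)}=(\id-\iota\circ\epsilon)^{\ten n}\circ\Delta^{(n)}$. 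The base case $n=1$ is the identity $\delta^{(1)}\circ\btd_B=\btd_B\circ\id\ten\delta^{(1)}$, verified by the same four-line Sweedler manipulation as in the proof of the proposition in \S\ref{ss:Mpv}; the inductive step uses $\Delta_B-\Delta_B^{21}=\hbar\,\Xi$ to split $\delta^{(n+1)}\circ\btd_B$ into a term $(\delta^{(n)}\circ\btd_B)\ten(\delta^{(1)}\circ\btd_B)\circ(2\,3)\circ\Delta_B\ten\Delta_B$, which is divisible by $\hbar^{n+1}$ by the inductive hypothesis, plus an explicitly $\hbar$-divisible remainder. Hence $\btd_B(B\ten B')\subset B'$, so $(B',\btd_B)$ is a left $B$-module, which is assertion $(1)$.

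For $(2)$ I would invoke Proposition \ref{prop:adm-Mm} directly: it states that the adjoint coaction $\btu_B=m_B\ten\id\circ(1\,2\,3)\circ S_B^{-1}\ten\id^{\ten 2}\circ\Delta_B^{(3)}$ (equivalently $\pi_-^*$ for $\MmB$) factors through $B'\ten B$. Combined with the admissibility criterion of Proposition \ref{prop:adm-cond} and the definition of admissible \DYt module, this is precisely the statement that $(B,\btu_B)$ is an admissible right $B$-comodule. So the second part is essentially a restatement of results already proved, and I would just spell out the translation: the image of $(\id-\iota\circ\epsilon)\ten\id\circ\btu_B$ lies in $\hbar B\ten B$ (shown in \S\ref{ss:adm-Mm}), whence by Proposition \ref{prop:adm-cond} the full tower of conditions holds and $\btu_B$ factors through $B'\ten B$.

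I expect the only genuine content to be the inductive argument for $(1)$, and even there the main obstacle is bookkeeping rather than ideas: one must track which composite is divisible by which power of $\hbar$ through the permutations $(3\,2\,1)$ and the iterated coproducts, exactly as in the $\MpBp$ computation. Since the problem explicitly says this was already proved in \S\ref{ss:adm-Mm} and \S\ref{ss:Mpv}, I would present the proof as: \emph{Part $(1)$ is the content of the first assertion of the proposition in \S\ref{ss:Mpv} applied to $\pi_+=\btd_B$ with $A=\sfk$; part $(2)$ is Proposition \ref{prop:adm-Mm} together with Proposition \ref{prop:adm-cond}.} If a self-contained argument is wanted, I would reproduce the inductive estimate above for $(1)$ and the four-line base-case identity for $(2)$, noting throughout that all manipulations are the standard graphical-calculus identities for Hopf algebras (associativity of $m_B$, coassociativity of $\Delta_B$, and $m_B\circ\id\ten S_B\circ\Delta_B=\iota\circ\epsilon$), so no new verification beyond what the Radford section already carried out is required.
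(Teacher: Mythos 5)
Your proposal is correct and coincides with the paper's own treatment: the appendix proposition is proved there simply by noting that part (i) is the assertion already established for the adjoint action $\pi_+=\btd_B$ in \S\ref{ss:Mpv} (the induction on $\delta^{(n)}\circ\pi_+$ you sketch), and part (ii) is Proposition \ref{prop:adm-Mm} combined with the admissibility criterion of Proposition \ref{prop:adm-cond}. Your optional self-contained reproduction of the inductive estimate is the same computation as in the paper, so nothing further is needed.
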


Set $B^{\vee}={(B')^*}^{\scs\operatorname{cop}}$. The adjoint action of $B$ on $B'$
induces a coadjoint action $\BonC_B: B\ten B^{\vee}\to B^{\vee}$ of $B$ on $B^{\vee}$.
Similarly, the adjoint coaction of $B'$ on $B$ induces a coadjoint action 
$\ConB_B:B\ten B^{\vee}\to B$ of $B^{\vee}$ on $B$. 
One checks easitly that the tuple $(B, B^{\vee}, \BonC_B, \ConB_B)$ is a matched 
pair of Hopf algebras and the QUE quantum double of $B$ is precisely the double
cross product $DB=\dcp{B}{B^{\vee}}$.

The computations carried out in \S\ref{ss:app-2}--\S\ref{ss:app-4} apply verbatim
to this case and yield the following.

\begin{theorem}
The formulae from Theorem \ref{thm:DY-qd-eq} give an equivalence of braided tensor categories
between the category $\aDrY{B}$ of admissible Drinfeld--Yetter $B$--modules and the category
$\Rep DB$ of modules over the quantum double.
\end{theorem}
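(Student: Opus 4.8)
The plan is to reduce the QUE case to the finite--dimensional case treated in Theorem \ref{thm:DY-qd-eq} by checking that each ingredient used there survives verbatim in the topological setting, once one restricts to \emph{admissible} Drinfeld--Yetter modules. Concretely, I would verify that the double cross product presentation $DB=\dcp{B}{B^{\vee}}$ is well defined, that the functors $\Xi$ and $\Theta$ of Theorem \ref{thm:DY-qd-eq} are defined on the relevant categories, and that they are mutually inverse braided tensor equivalences --- all of which is formally the same computation, provided the maps involved exist and are continuous.

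First I would recall, citing the proposition stated just before the theorem, that the adjoint action $\btd_B$ of $B$ on itself preserves $B'$ and that the adjoint coaction $\btu_B$ factors through $B'$; this is exactly what makes $(B',\btd_B)$ a left $B$--module and $(B,\btu_B)$ an \emph{admissible} right $B$--comodule. From these two facts, together with the definition $B^{\vee}=((B')^*)^{\scs\operatorname{cop}}$, the coadjoint maps $\BonC_B:B\ten B^{\vee}\to B^{\vee}$ and $\ConB_B:B\ten B^{\vee}\to B$ are obtained by dualising, and one checks --- exactly as in the dictionary \eqref{eq:coadj-dual-1}--\eqref{eq:coadj-dual-4} --- that these are the maps appearing in the product \eqref{eq:DB-prod-2} of $DB$. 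The matched pair axioms of \S\ref{ss:app-1} for $(B,B^{\vee},\BonC_B,\ConB_B)$ then follow from the Hopf algebra axioms for $B$ by the same graphical manipulations used in the finite--dimensional case, the only new point being that all tensor products are $\hbar$--adically completed and all maps are continuous, which is automatic since they are built from the (continuous) structure maps of $B$ and the pairing $\iip{-}{-}:B'\ten B^\vee\to\sfK$. This identifies $DB$ with $\dcp{B}{B^{\vee}}$.

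Next, for the equivalence itself, I would run the two verifications in the proof of Theorem \ref{thm:DY-qd-eq} verbatim. For $\Xi$: given an \emph{admissible} Drinfeld--Yetter $B$--module $(V,\pi_V,\pi_V^*)$, the coaction $\pi_V^*$ lands in $B'\ten V$, so the formula $\xi_V=\pi_V\circ\id_B\ten\iip{-}{-}^{21}\ten\id_V\circ\id_B\ten\id_{B^\vee}\ten\pi_V^*$ makes sense as a map $DB\ten V\to V$ (the evaluation is the pairing of $B^\vee$ against $B'$), and the computation that $\xi_V$ is an action --- which uses \eqref{eq:coadj-dual-1}, \eqref{eq:coadj-dual-3} and the compatibility \eqref{eq:DY-comp-2} --- goes through unchanged. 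Conversely, for $\Theta$: given a $DB$--module $(V,\xi_V)$, one sets $\pi_V=\xi_V\circ i_B\ten\id_V$ and $\pi_V^*=\id_B\ten\xi_V\circ\id_B\ten i_{B^\vee}\ten\id_V\circ R_B\ten\id_V$, where crucially $R_B$ lies in the $\hbar$--adic completion of $B'\ten(B^\vee)^{\circ}$ (as recalled in \S\ref{ss:dualityQUE}); hence $\pi_V^*$ automatically factors through $B'\ten V$, i.e.\ the resulting Drinfeld--Yetter module is admissible, and the compatibility condition \eqref{eq:DY-comp-2} follows from \eqref{eq:coadj-dual-2} and \eqref{eq:coadj-dual-4} exactly as before. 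That $\Xi$ and $\Theta$ are mutually inverse, and that they preserve the tensor product and the braiding, is then the same formal check as in Theorem \ref{thm:DY-qd-eq}.

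The main obstacle is the bookkeeping of topologies and admissibility: one must be careful that the pairing $\iip{-}{-}$ is only defined between $B'$ (or $A'$) and its continuous dual, so every appearance of an evaluation or of $R_B$ must be checked to respect the restriction of coactions to the $\QQFSH$ subalgebra, and one must confirm that the completed tensor products behave well enough (exactness of $\ctp$, as used in \S\ref{ss:admissibility criterion}) for the inverse formulas to be well defined and for admissibility to be preserved in both directions. Once this is granted, no genuinely new computation is required beyond what appears in \S\S\ref{ss:app-1}--\ref{ss:app-4}.
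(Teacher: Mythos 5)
Your proposal is correct and follows essentially the same route as the paper, whose proof consists precisely of the observation that the computations of the finite--dimensional case apply verbatim once one knows that the adjoint action preserves $B'$, that the adjoint coaction factors through $B'$, and that the $R$--matrix lies in the completion of $B'\ten(B^\vee)^{\circ}$. Your expansion of why admissibility makes $\Xi$ well defined and why $\Theta$ automatically lands in admissible modules is exactly the content the paper leaves implicit.
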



\end{document}